\numberwithin{equation}{section}
\newtheorem{thma}{Theorem}[section]
\newtheorem{lemma}[thma]{Lemma}
\newtheorem{defi}[thma]{Definition}
\renewcommand{\l}{{\lambda}}
\begin{document}

\title{Coupling Functions for Domino tilings of  Aztec diamonds}

\begin{abstract}
The inverse Kasteleyn matrix of a bipartite graph holds much information about the perfect matchings of the system such as local statistics which can be used to compute local and global asymptotics.  In this paper, we consider three different weightings of domino tilings of the Aztec diamond and show using recurrence relations, we can compute the inverse Kasteleyn matrix.  These weights are the one-periodic weighting where the horizontal edges have one weight and the vertical edges have another weight, the $q^{\mathrm{vol}}$ weighting which corresponds to multiplying the product of tile weights by $q$ if we add a `box' to the height function and the two-periodic weighting which exhibits a flat region with defects in the center.

\end{abstract}

\author{Sunil Chhita}
\thanks{Department of Mathematics, Royal Institute of Technology (KTH), Stockholm, Sweden. E-mail: chhita@kth.se.  The support of the Knut and Alice Wallenberg Foundation grant KAW 2010:0063 is gratefully acknowledged.}

\author{Benjamin Young}
\thanks{Department of Mathematics, University of Oregon, Eugene, USA. Email: bjy@uoregon.edu. }

\keywords{ Aztec diamond, Kasteleyn matrix, dimer, domino tilings}
\maketitle
\tableofcontents

\section{Introduction}

\subsection{Terminology}
Domino tilings of bounded lattice regions have been extensively researched during the last twenty years.   These tilings are the same as \emph{perfect matchings} of a bounded portion $G$ of the dual square lattice,  in the following way: a \emph{matched edge} corresponds to a domino; the fact that the dominos do not overlap means that no two matched edges share a vertex, and the fact that the dominos cover the entire region means that each vertex in the region is covered by a matched edge.  In the statistical mechanics literature, one speaks of \emph{dimer covers} rather than \emph{perfect matchings}, and \emph{dimers} rather than \emph{matched edges}.

The most well-studied example of such a model is domino tilings of the \emph{Aztec diamond} which was introduced in \cite{EKLP:92}.  Here, one tiles the region $\{(x,y): |x|+|y|\leq n+1\}$ with 2 by 1 rectangles where $n$ is the size of the Aztec diamond.  There are other examples of the theory, but they involve replacing the graph $G$ with a different one, such as the regular square-octagon lattice (giving the so-called \emph{diabolo tilings}) or the hexagonal mesh (giving \emph{lozenge tilings}).

By giving each edge a multiplicative weight, we can consider \emph{random} dimer coverings: the probability of each covering is proportional to the product of the edge weights of the dimer covering.  The corresponding discrete probability space is called the \emph{dimer model}.  
If the graph $G$ is bipartite (as it shall be for the rest of this paper) then each dimer covering can be encoded by a three dimensional discrete surface, where the third coordinate is derived from the specific dimer covering and is called the \emph{height function} \cite{Thu:90}.  For random tilings, \cite{CKP:01, KO:07} showed that with probability tending to one, the height function of a  randomly tiled large bounded region tends to a deterministic \emph{limit shape}.  This shape is not smooth over the entire region: typically, there are macroscopic regions wherein the tiling is ``frozen'' (i.e. exhibits deterministic correlations), called \emph{facets}; as such the measure is often said to be in a \emph{solid state} here~\cite{KOS:06}.  
Outside of the facets, the correlations between pairs of dimers are only mesoscopic, tending to zero as the dimers move farther apart. If the decay is polynomial, the measure is said to be \emph{liquid}; if it is exponential it is said to be \emph{gaseous}~\cite{KOS:06}.  Not all tilings possess a gaseous region; however, all but the most degenerate have liquid regions.  The limiting height function is smooth in these regions.  Figure~\ref{fig:pretty pictures of Aztec diamonds1} shows two random tilings of relatively large Aztec diamonds. 

\subsection{Local asymptotics for nice regions}

For particular bounded regions, one approach to study these models uses an interlaced particle system which can be derived from the underlying tiling \cite{Joh:05, BF:08}.  Using the Lindstr\"{o}m-Gessel-Viennot theorem \cite{Joh:06,Ste:90} combined with the Eynard-Mehta theorem (e.g. see \cite{Bor:11}), it is often possible to find the correlation kernel for a determinantal process and compute finer statistics for the model\cite{Joh:05, BF:08}.  Using these statistics, one can study the fluctuations between the interface of the solid and liquid regions when the system size gets large.  Amazingly, these fluctuations have the same distributions arising from the study of eigenvalues of random matrices (see for example \cite{Joh:05, JN:06, FF:11}).  Furthermore, one can even change the boundary conditions of the underlying tiling problem to find more degenerate kernels which also appear in the random matrix literature, for example, see \cite{AJvM:11}.

For bipartite graphs, the \emph{Kasteleyn matrix} is a signed weighted adjacency matrix indexed by the white and black vertices of the graph \cite{Kas:61}.  The inverse of the Kasteleyn matrix, known as the \emph{inverse Kasteleyn matrix}, for bipartite graphs provides much information about the model -- by \cite{Ken:97} the edges form a determinantal process with the kernel given by the inverse Kasteleyn matrix.  Hence, by knowing the inverse Kasteleyn matrix for a bipartite graph one can compute all finite, local and global asymptotics of the edges in the dimer model.   For lozenge tilings, the interlaced particle system kernel can be used to compute the inverse Kasteleyn matrix as the particle system kernel and the inverse Kasteleyn matrix are in bijection \cite{Pet:12}.   However, for domino tilings on the Aztec diamond, the most natural kernel from the interlaced particle system contains different information to the inverse Kasteleyn matrix.  Although, one can derive the particle system correlation 
kernel from the inverse Kasteleyn matrix, the particle system correlation kernel gives a better description of the interface between the unfrozen and frozen regions, see \cite{CJY:12}.  By knowing both the interlaced particle system and the inverse Kasteleyn matrix, we believe that one understands the full asymptotic picture of the system. 

\subsection{Purpose}
The aim of this paper to highlight an elementary procedure which allows one to compute the correlation kernel of the determinantal process associated to the edges of tilings of Aztec diamonds.  For pedagogical reasons, we do this first for the most well-understood instance of the dimer model (thereby recovering the work of~\cite{Hel:00}), followed by two, substantially more complicated new settings: the so-called $q^{\mathrm{vol}}$ weighting (Section~\ref{section q-vol for the Aztec Diamond}, and the two-periodic weighting (Section~\ref{section Diablo Tilings on the Fortress}) which includes as a special case the uniform measure on diabolo tilings on a \emph{fortress graph}~\cite{Pro:03}.  The two-periodic weighting has a $2$ by $2$ fundamental domain which is defined in \cite{KOS:06}.
%Although this paper does not study any specific asymptotics, the asymptotics of the kernels from the $q^{\mathrm{vol}}$ weighting and two-periodic weighting of the Aztec diamond are both current works in progress.

For the $q^{\mathrm{vol}}$ weighting, large random tilings of the Aztec diamond possess a limit shape  when $q \to 1^-$ as the  system size tends to infinity.  The existence of a limit shape is due to  the results from  \cite{KO:07}.  However, when  $q\to 1^-$ and the $a \to \infty$, the results from \cite{KO:07} will no longer apply but simulations seem to suggest that there may be a limit shape and possibly interesting local and global asymptotic behavior.  Figure~\ref{fig:pretty pictures of Aztec diamonds2} shows relatively large tilings with $q^{\mathrm{vol}}$ weighting.

Large random tilings of the two-periodic weighting of the Aztec diamond feature all three phases where the limit shape is described by an `octic'  curve.  An explicit formula for this curve, at the special case corresponding to the uniform measure of tiling diabolos on the fortress, is given in \cite{Pro:03} and is derived in \cite{KO:07} using general machinery. Figure~\ref{fig:pretty pictures of Aztec diamonds3} shows a relatively large random tiling of a two-periodic weighting of the Aztec diamond.  By having an expression for the inverse Kasteleyn matrix for the two-periodic weighted Aztec diamond, it may be possible to study this model on all the phase interfaces.  Computing the interlaced particle system kernel for the two-periodic weighting of the Aztec diamond using the Lindstr\"{o}m-Gessel-Viennot theorem \cite{Joh:06,Ste:90} combined with the Eynard-Mehta theorem (e.g. see \cite{Bor:11}) seems somewhat complicated -- one can either proceed by inverting either a block LGV matrix or a block Toeplitz matrix.

\subsection{Explicit inversion of Kasteleyn matrices}
We turn now to a discussion of our methods.  It is possible to derive the inverse Kasteleyn matrix for domino tilings of the Aztec diamond with weight 1 for horizontal tiles and weight $a$ for vertical tiles (one-periodic weighting).  We also show that it is possible to extend part of the method to the $q$-analog of domino tilings of the Aztec diamond ($q^{\mathrm{vol}}$ weighting).  The height function of a domino tiling can be viewed as the surface of a pile of \emph{Levitov blocks }\cite{Lev:89,Lev:90}; one can modify the edge weights so that the weight of each tiling is proportional to $q^{\#\{\text{Levitov blocks}\}}$ which is $q^{\mathrm{vol}}$ weighting.  Finally, we show that it is possible to compute the inverse Kasteleyn matrix with a two-periodic weighting of the Aztec diamond and do so for an Aztec diamond of size $4m$ for $m\in \mathbb{N}$. 

It is often quite difficult to invert a matrix whose entries have parameters; indeed, the typical methods in the literature involve first orthoganalizing, so that the matrix to be inverted is diagonal~\cite{BR:05}.  However, in the particular case of the Aztec diamond, one can make some progress by computing a multivariate generating function for the entries of $K^{-1}$.  This is possible essentially using graphical transformations similar to those used in~\cite{EKLP:92} which is the same procedure used in the generalized domino shuffle~\cite{Pro:03} to compute the weights in the transitional steps of the shuffling algorithm. 

We use three relations among the entries of $K^{-1}$.  The first two are the matrix identities $K \cdot K^{-1} = \mathbbm{I}$, $K^{-1} \cdot K = \mathbbm{I}$; the third is a recurrence relation in $n$, the order of the Aztec diamond, generated from the generalized domino shuffling algorithm which determines only those elements of $K^{-1}$ which correspond to two dimers on the \emph{boundary} of the region.  We solve this recurrence by computing the ordinary power-series generating function for its coefficients, which we call the \emph{boundary generating function}.

Having done this, the entries of $K^{-1}$ are slightly overdetermined.  As such, we treat the equations $K \cdot K^{-1} = \mathbbm{I}$, $K^{-1} \cdot K = \mathbbm{I}$ as \emph{linear recurrence relations}, for which the \emph{boundary generating function} serves as a boundary condition.

\subsection{Details}

The heuristic in the preceding sections drastically oversimplifies the intricacy of the combinatorics involved in these calculations, which consume the bulk of this paper.  We also found that the computation of the boundary generating function of the two-periodic weighting was intractable  without the use of computer algebra.  Here is a description of the calculation in somewhat greater detail.  
   The recurrences $K . K^{-1} = K^{-1}.K = I$ allow us to write each entry of the inverse Kasteleyn matrix as a linear combination of entries of $K^{-1}(\mathtt{w}, \mathtt{b})$ where $\mathtt{b}$ and $\mathtt{w}$ are black and white vertices on the boundary.  
To find $K^{-1}(\mathtt{w}, \mathtt{b})$ where $\mathtt{b}$ and $\mathtt{w}$ are black and white vertices on the boundary, we first find $Z(\mathtt{w},\mathtt{b})/Z$ where $Z$ is the partition function (the sum of the weights of all tilings) and  $Z(\mathtt{w},\mathtt{b})$ is the partition function of the tiling with the vertices $\mathtt{b}$ and $\mathtt{w}$ removed from the Aztec diamond.  

As removing two vertices $\mathtt{w}$ and $\mathtt{b}$ from the boundary does not change the so-called \emph{Kasteleyn orientation}, we are able 
to recover $K^{-1}(\mathtt{w},\mathtt{b})$ from $Z(\mathtt{w},\mathtt{b})/Z$.  The boundary recurrence relation gives a relation for $Z(\mathtt{w},\mathtt{b})/Z$ as a linear combination of $Z(\mathtt{w}_1,\mathtt{b}_1)/Z$ for a smaller Aztec diamond where $w$, $w_1$ and $b$, $b_1$ are white and black vertices on the boundaries of their corresponding Aztec diamonds.  To solve this recurrence, we use a generating function approach and hence find $K^{-1}(\mathtt{w},\mathtt{b})$ for $\mathtt{b}$ and $\mathtt{w}$ are black and white vertices on the boundary.  % However, for the $q^{\mathrm{vol}}$ weighting, it is simpler to solve the boundary recurrence relation directly using double contour integral which we obtained from its boundary generating function.   

For the one-periodic and $q^{\mathrm{vol}}$ weighting, we can use the three sets of recurrence relations mentioned above to find  formulas for their inverse Kasteleyn matrices.  For the one-periodic weighting, we derive the formula as a generating function.  For the $q^{\mathrm{vol}}$ weighting, we guessed a double contour integral based on our methods and verify that this guess is correct.  Indeed, sending $q \to 1$, we find a double contour integral formula of the inverse Kasteleyn matrix in the one-periodic case, see \cite{CJY:12} for a comparison.  

 For the two-periodic weighting of the Aztec diamond, we must use a further two recurrence relations; $K^* \cdot K  \cdot K^{-1} = K^* \cdot \mathbbm{I}$ and $K^{-1} \cdot K \cdot K^* = \mathbbm{I}\cdot K^*$ due to the complexity of the model; of course, these are completely equivalent and in principle carry no new information; however, they are simpler in a certain sense as they involve the \emph{discrete Laplacian}.  
Additionally, the boundary recurrence relation for the two-periodic weighting of the Aztec diamond has order 4 with the size of the Aztec diamond and is dependent on the parity of the white and black vertices on the boundary.  This leads to a matrix equation for the recurrence relations describing the boundary generating function.  We only find the solution of the matrix recurrence relation when the size of the Aztec diamond is equal to $4m$, though in principle the main obstacles to handling the other sizes of the Aztec diamonds are the difficulty and length of the computations.  This leads to a formula for the inverse Kasteleyn matrix when the size of the Aztec diamond is equal to $4m$.
\begin{figure}
 \caption{Random Simulations of the Aztec diamond of size 100 with one-periodic weight for $a=1$ (left) and $a=1/2$ (right)(see Section~\ref{section One periodic case} for a description of edge weights).}
\includegraphics[height=3in]{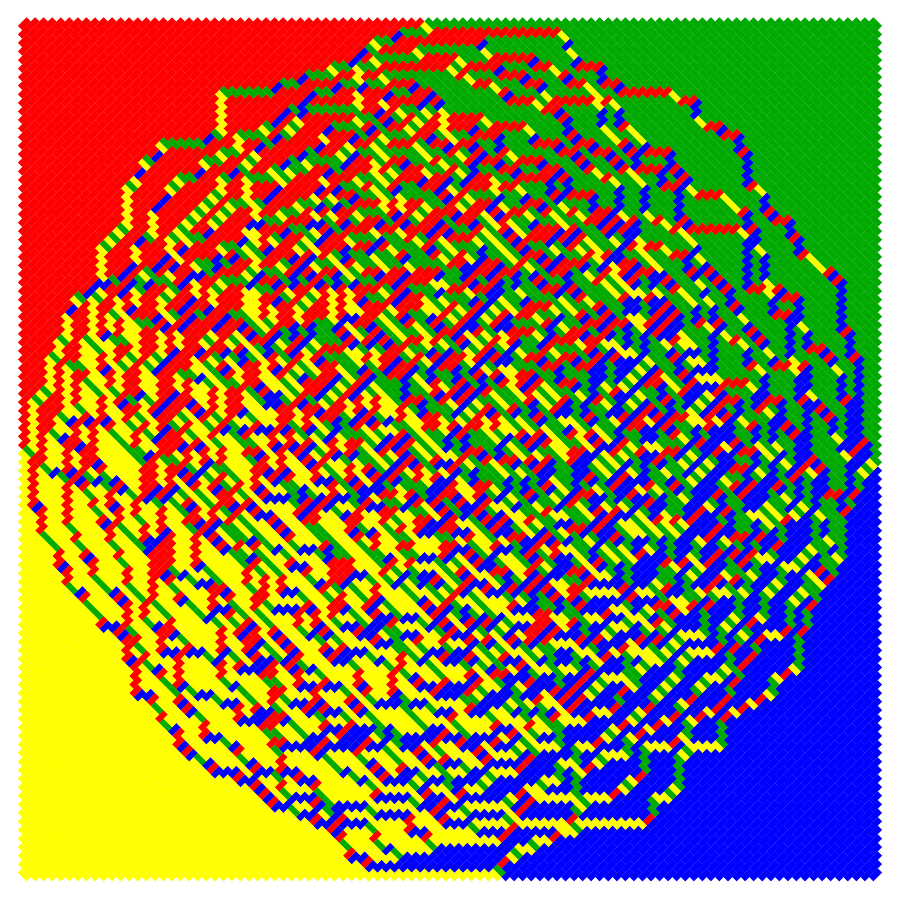}
\includegraphics[height=3in]{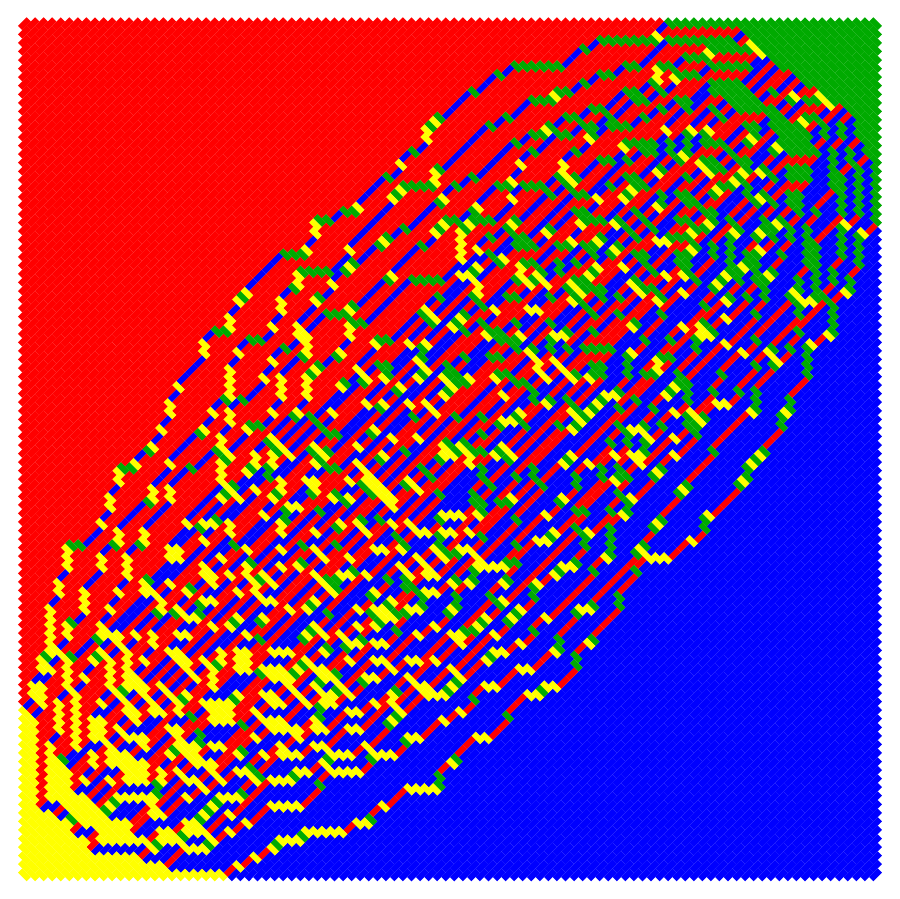}
\label{fig:pretty pictures of Aztec diamonds1}

\end{figure}

\begin{figure}
\caption{Random simulations of the Aztec diamond of size 200 with $q^{\mathrm{vol}}$ weighting (see Section~\ref{section q-vol for the Aztec Diamond}).  The top picture has weights $q=0.99$ and $a=1$.  The picture below has $q=0.98$ and $a=10$.  }
\includegraphics[height=4in]{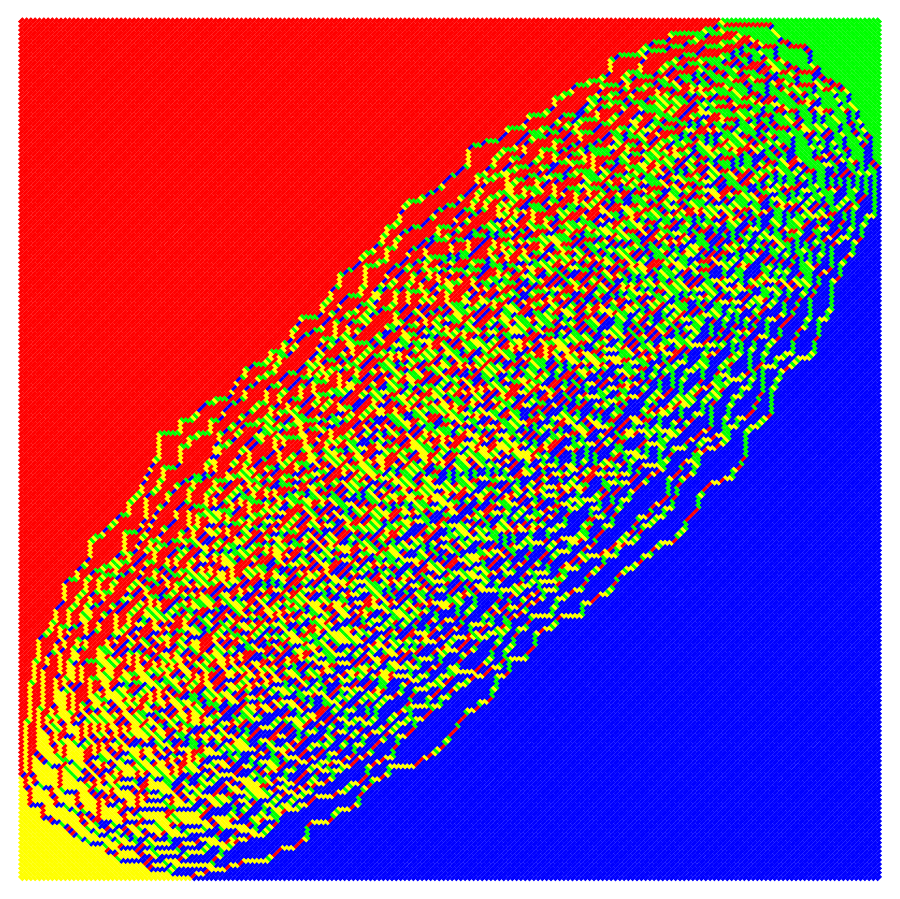}
\includegraphics[height=4in]{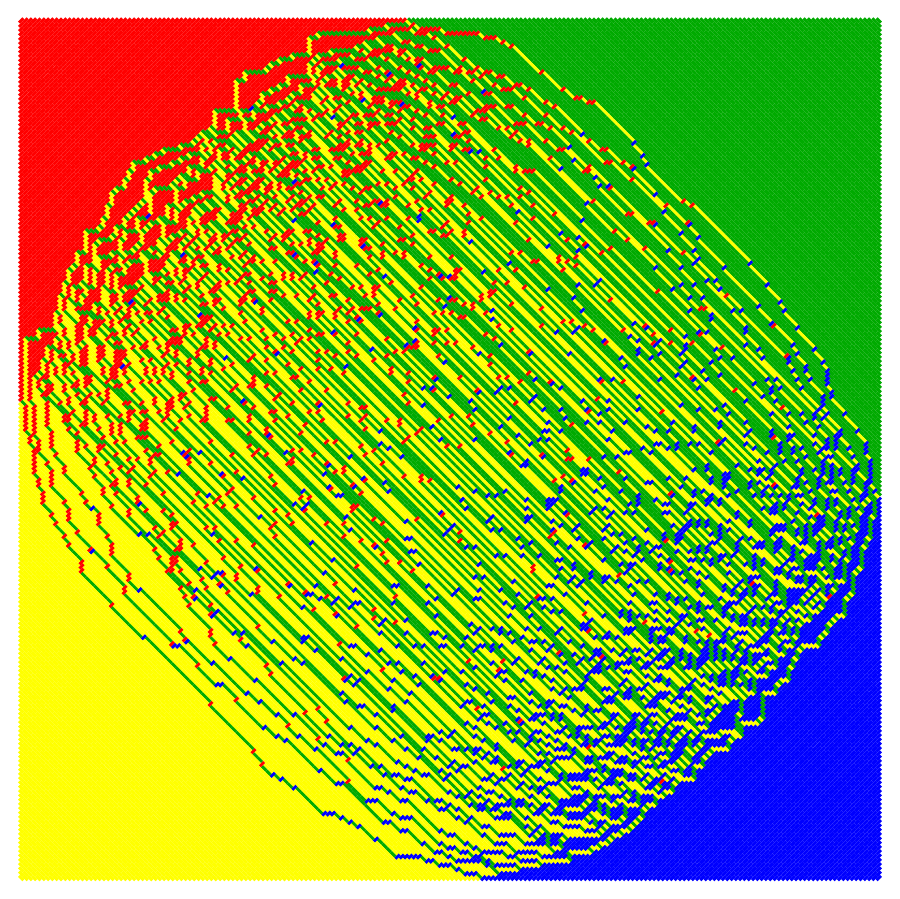}
\label{fig:pretty pictures of Aztec diamonds2}
\end{figure}

\begin{figure}
 \caption{Random simulations of the Aztec diamond of size 200 with two-periodic weights (see Section~\ref{section Diablo Tilings on the Fortress}.  The first picture has $a=1/2$ and $b=1$ with 8 colors. The second picture is the same tiling as the first but contains four colors to highlight the three phases for black and white printing. }
\includegraphics[height=4in]{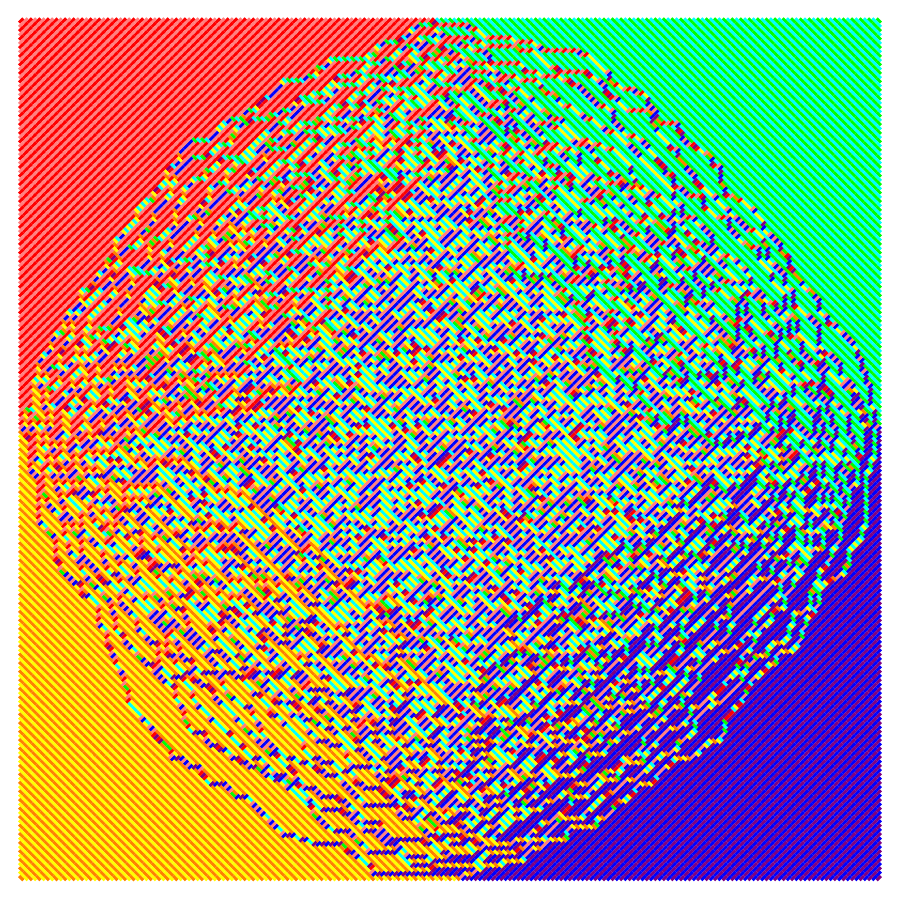}
\includegraphics[height=4in]{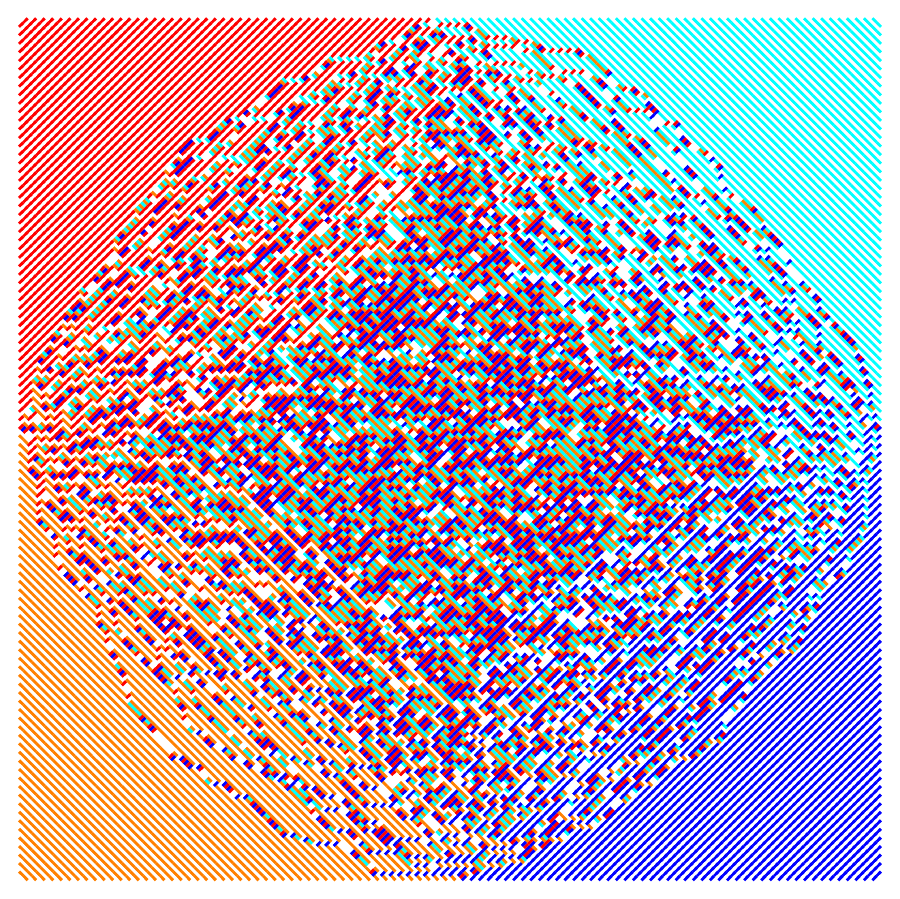}
\label{fig:pretty pictures of Aztec diamonds3}
  \end{figure}

\subsection{Remarks about Asymptotics}

As remarked above, this paper might provide a gateway to computing fine asymptotics of the $q^{\mathrm{vol}}$ and two-periodic weightings of the Aztec diamond. That is, one will hopefully be able to compute global correlations and local correlations at the phase boundaries using the formulas found in this paper.

From simulations, the $q^{\mathrm{vol}}$ weighting seems to exhibit interesting limiting behavior  when $q \to 1^-$  and also when $q \to 1^-$ and $a \to \infty$ as $n \to \infty$ simultaneously. We believe that a possible parametrization would be to set $a=e^{c/2}$ and $q=e^{-c/n}$ where $n$ tends to infinity.  Although it is possible to derive to the limit shape using \cite{KO:07}, we think  our formula could be used to find the height fluctuations in the unfrozen region when $c>0$. When $c=0$, the model is equivalent to the one-periodic weighting of the Aztec diamond and so the height fluctuations are governed by the so-called \emph{Gaussian Free field} (details of this process can be found in  \cite{she:05}) as shown in \cite{CJY:12}. When $c$ tends to infinity, we suspect that the unfrozen region is a \emph{flat square} given by alternating (diagonal) columns of east and west dominos.  That is, for a rescaled Aztec diamond with corners given by $(0,0),(0,1),(1,0)$ and $(1,1)$, the unfrozen region is given by $(1/2,0),(0,1/2),(1/2,1)$ and $(1,1/2)$. From initial computations, the asymptotic analysis to find these height fluctuations is encouraging and is current work in progress.

 It may also be possible to derive the $q^{\mathrm{vol}}$ correlation (particle) kernel using established techniques (e.g. \cite{Joh:05,BF:08}). This works quite cleanly, since the process in question is \emph{Schur}~\cite{OR:03}, although, we have not tried this computation.

For the two-periodic weighting, the process does not appear to be Schur.  As mentioned above, under the right choice of parameters the model exhibits a third phase which has been named gaseous in which the height function correlations decay exponentially  \cite{KOS:06}.      Other models that might possess similar phenomenon are the three periodic lozenge tiling in a large hexagon and the six vertex model with domain wall boundary conditions away from the so-called free fermion line \cite{CP:10}.  Indeed, the six vertex model on the free fermion line with domain wall boundary conditions can be recovered from the one periodic Aztec diamond \cite{FS:06}.

The main motivation behind this paper was to find the correlation kernel of the two-periodic weighting of the Aztec diamond, so that one can find the local correlations of dominos at the gaseous-liquid boundary.
As mentioned above, this boundary represents the transition from the correlations of dominos having power law decay to the correlations of dominos having exponential decay.  For tiling models, the solid-liquid boundary  (with no cusps)  has been well studied (see \cite{Pet:12} for the most general results); the interlaced particle system associated with the tiling has fluctuations of size $n^{1/3}$ and the distributions of particles are normally given by the so-called \emph{Airy process}, a natural distribution originally formulated in  Random matrix theory \cite{AGZ:10}. 
As far as we are aware, the gaseous-liquid boundary of a tiling model has not been previously studied in any such probabilistic model.

The formula for the inverse Kasteleyn matrix for two-periodic weightings of the Aztec diamond, is given as a four variable generating function and it is not immediate how to analyze asymptotically.   However, Kurt Johansson, using our formula,  has been able  to  derive a double contour integral formula.  From this double contour integral formula, it should be possible to use a saddle point analysis. 
  Indeed, early computations using this approach show that the limiting octic curve can be recovered which agrees with limiting curve computed using the techniques from \cite{KO:07}.   We believe that this approach will lead to finding the correlations of the dominos at the gaseous-liquid.  This is current work in progress which will hopefully appear elsewhere \cite{CJ:13}.

\subsection{Overview of the Paper}

The paper is organized as follows: Section~\ref{section Prerequisites} we give some of the prerequisites and notation for understanding the proofs of the rest of the paper.  In Section~\ref{section Uniform case}, we compute the generating function of the inverse Kasteleyn matrix for a uniformly weighted one-periodic Aztec diamond which provides a blueprint computation.   We extend this result to biased tilings in Section~\ref{section One periodic case} as well as formulate a general boundary recurrence relation which makes computations simpler in the following sections.
In Section~\ref{section q-vol for the Aztec Diamond}, we give a contour integral formula for the inverse Kasteleyn matrix for $q^{\mathrm{vol}}$ weighting of an Aztec diamond.  Finally in Section~\ref{section Diablo Tilings on the Fortress} we derive the generating function for two-periodic weightings of the Aztec diamond for size $4m$.

\subsection*{Acknowledgements}
We would very much like to thank James Propp for the original question of computing $K^{-1}$ for diabolo tilings which prompted this research and the very useful discussions which followed,  Kurt Johansson for valuable discussions and improvements to this paper.
 We would also like to thank Alexei Borodin, C\'{e}dric Boutillier, Maurice Duits, Harald Helfgott, Richard Kenyon, Anthony Metcalfe and Andrei Okounkov for some useful insights of tiling models. We would particularly like to thank MSRI Berkeley, where this work was initiated, and the Knut and Alice Wallenberg foundation grant KAW2010.0063, which partially supported the authors' work during its completion.

\section{Notation and Background Information} \label{section Prerequisites}

Let $\mathtt{W}=\{(x_1,x_2): x_1 \mod 2=1, x_2 \mod 2=0, 1\leq x_1 \leq 2n-1, 0 \leq x_2 \leq 2n \}$ and let $\mathtt{B}=\{(x_1,x_2): x_1 \mod 2=0, x_2\mod 2=1,  0 \leq x_1 \leq 2n, 1 \leq x_2 \leq 2n-1 \}$. The  set $\mathtt{W} \cup \mathtt{B}$ denotes the vertex set of the dual graph of the Aztec diamond (rotated by $-\pi/4$ and translated), where $\mathtt{W}$ denotes the set of white vertices and $\mathtt{B}$ denotes the set of black vertices.  
We call the above coordinate system (of the dual graph) of the  Aztec Diamond, the \emph{Kasteleyn coordinates} - see Figure \ref{fig:coordinates} for an example. To avoid any confusion, we only consider dimer coverings of the dual graph of the Aztec diamond and so we refer to the Aztec diamond as the graph which has vertices given by $\mathtt{W} \cup \mathtt{B}$.  We also set $e_1=(1,1)$ and $e_2=(-1,1)$. 

\begin{figure}
 \caption{The Kasteleyn coordinates for an Aztec diamond of order 3.}
\includegraphics[height=2in]{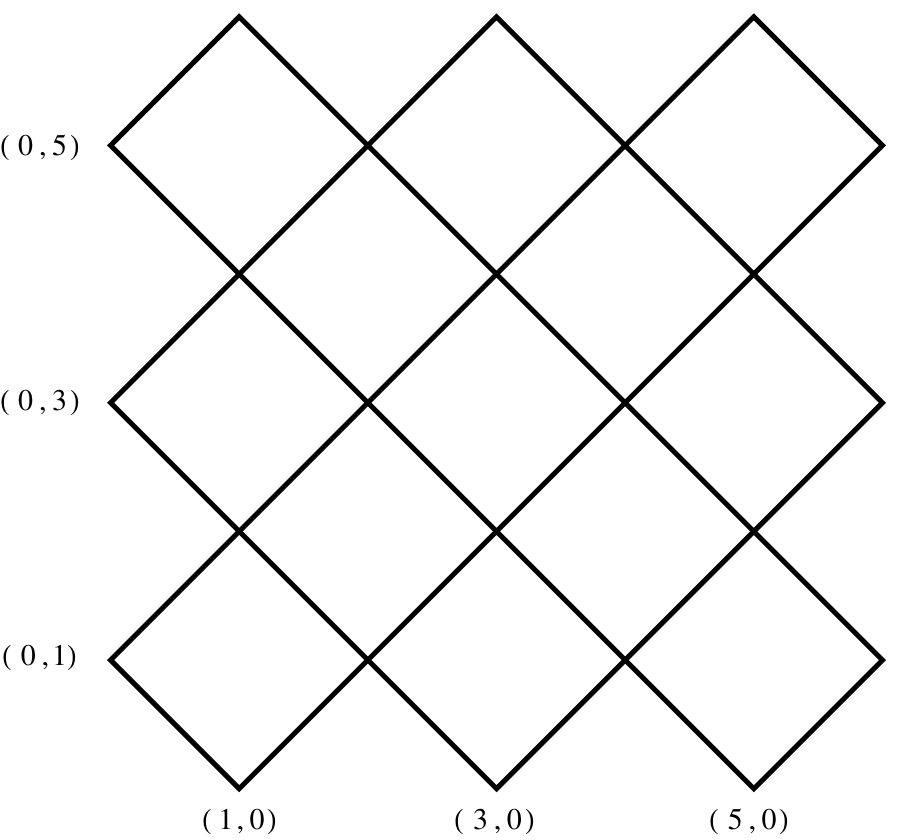}
\label{fig:coordinates}

\end{figure}

For a planar bipartite finite graph $G=(V,E)$, the \emph{Kasteleyn orientation} is a choice of edge weights so that the product of edge weights around each face is odd.  In this paper, $G$ is the graph formed from (the dual graph of) the Aztec diamond.  We consider the edge weights given by   positive numbers for edges parallel to $e_1$ and   positive numbers multiplied by $\mathtt{i}=\sqrt{-1}$ for  edges parallel to $e_2$.  As $G$ is a bipartite graph, we consider the \emph{Kasteleyn matrix}, whose rows are indexed by black vertices and columns indexed by white vertices, with entries given by
\begin{equation}
    K(\mathtt{b} , \mathtt{w}) = \left\{
    \begin{array}{ll}
    wt(\mathtt{e}) \mathtt{i}^{j-1} & \mbox{for } \mathtt{e}=(\mathtt{b},\mathtt{w}) \mbox{ and } \mathtt{b} - \mathtt{w} = e_j, j \in\{1,2\} \\
    0 & \mbox{otherwise}
    \end{array} \right.
\end{equation}
where $wt(\mathtt{e})$ is edge weight of edge $\mathtt{e}$.  As $G$ is a  bipartite finite graph,  $|\det K|$ is equal to the number of weighted dimer covers of $G$.  This was first proved by Kasteleyn~\cite{Kas:61} but in a more general setting. 
The explicit formula for the inverse Kasteleyn matrix and correlation kernel of the dominos is given as follows: suppose that $e_1=(b_1,w_1), \dots, e_n=(b_m,w_m)$ then the probability of seeing a perfect matching with the edges $e_1,\dots, e_m$ is given by \cite{Ken:97}
\begin{equation} \label{localstatistics}
      \det \left(  K(b_i,w_i)K^{-1}(w_i,b_j) \right)_{i,j=1}^m. 
\end{equation}
That is, the edges form a determinantal point process with the correlation kernel given by $L(e_i,e_j)=K(b_i,w_i) K^{-1}(w_i,b_j)$~\cite{Ken:97}.  In the case of the two-periodic weighting of the Aztec diamond, this point process is a block determinantal point process. With an explicit formula for the inverse Kasteleyn matrix, equation~\eqref{localstatistics} means that we can compute the joint probabilities of any subset of edges appearing in the matching.  For instance, we can compute edge placement probabilities when $m=1$.

We now summarize the graph theory techniques used in this paper.

Two dimer models are said to be \emph{gauge equivalent} if their partition function differs by a constant multiple.  The dimer model obtained from multiplying all the edge weights surrounding one specific vertex by the same constant is called a \emph{gauge transformation}.  As a slight abuse of terminology, ``multiplying the vertex $v$ by $a$'' means applying a gauge transformation where the weights of all the edges incident to $v$ are multiplied by $a$. Note that to keep the partition function the same under this operation, we divide the partition function of the new graph by $a$.  That is, gauge transformed dimer models are gauge equivalent.

Other than gauge transformations, we make use of three other graph transformations.  All three of these alter the graph itself, but leave the partition function invariant up to a gauge transformation.

\begin{enumerate}

\item Suppose we have a large square with edge weights $a$, $b$, $c$ and $d$ where the labelling is clockwise, contained in some graph $H$.  Suppose we deform this large square to a smaller square with edge weights $A,B,C$ and $D$ (same labelling convention as the large square) and also include an edge, with edge weight 1, between each vertex of the smaller square and its corresponding original vertex --- see Figure~\ref{fig:urbanrenewal}. We call this new graph $H^*$.
 We set $A=c/(ac+bd), B=d/(ac+bd),C=a/(ac+bd)$ and $D=b/(ac+bd)$ so that the local configurations and the weights of the matchings of $H$ are preserved under the transformation to $H^*$.  For example, a dimer covering the edge with weight $a$ and no dimer covering the edge with weight $c$ in  $H$  corresponds to dimers covering the edge with weight $C$ and the two diagonal edges incident to the edge with weight $A$ in $H^*$.
  This graphical transformation is called \emph{urban renewal}~\cite{Pro:03} and we have that 
$$ Z_{H}= (a c+b d) Z_{H^*}$$
where $Z_{H}$ and $Z_{H^*}$ are the partition functions of $H$ and $H^*$ respectively.

\begin{figure}
 \caption{The Urban renewal step which maps the large square on the left the smaller square on the right and multiplies the partition function by $a c+bd$.  The diagonal edges have weight 1} 
\includegraphics[height=1in]{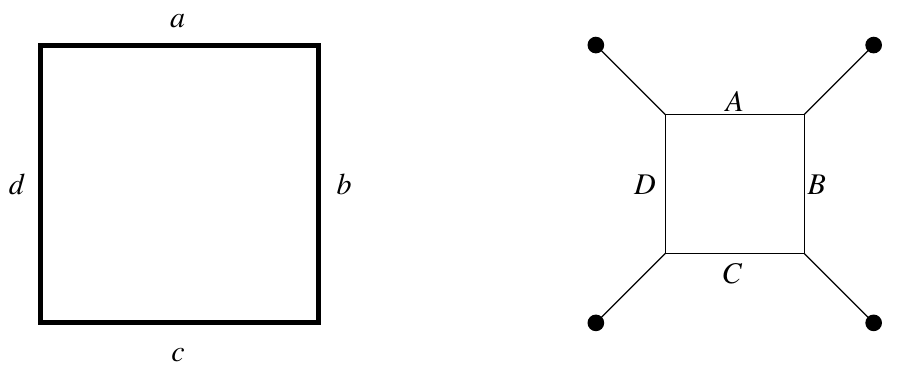}
\label{fig:urbanrenewal}

\end{figure}

\item If a vertex, $v$, is incident to two edges, each having weight 1, we can contract  the two incident edges and vertices of $v$, to $v$.  The new edge set of $v$ is the union of the edge set of the two contracted vertices omitting the two edges previously incident to $v$, all with the same edge weight before the contraction.  This procedure is called \emph{edge contraction} and has no effect on the partition function of the dimer covering.

\item If a vertex $v$ is incident to one edge $e = (v,v')$, \emph{i.e.} it is a \emph{pendant vertex}, then $v$, $v'$ and \emph{all} edges incident to $v'$ can be removed, since $e$ is forced to be present in \emph{every} perfect matching.  This procedure is called \emph{removal of pendant edges}. The partition function of the new graph formed by this procedure is equal to the original partition function divided by the weight of $e$. 
\end{enumerate}

The above three procedures can be used to compute the partition function \cite{EKLP:92}, the edge probabilities \cite{CEP:96, Pro:03} and the inverse Kasteleyn matrix for one-periodic, two-periodic and $q^{\mathrm{vol}}$ Aztec diamonds.  For general weightings of the Aztec diamond and also other stepped square lattices, although it may be theoretically possible to find the inverse Kasteleyn matrix, we were not able to solve the recurrence relations.

%\begin{figure}
%\caption{Shuffling algorithm: Applying urban }
%\includegraphics[height=3in]{shuffling1.pdf}
%\includegraphics[height=3in]{shuffling3.pdf}
%\includegraphics[height=2in]{shuffling3.pdf}
%\includegraphics[height=2in]{shuffling4.pdf}
%\label{fig:shuffling}
%\end{figure}

\section{Uniform measure case} \label{section Uniform case}

In this section, we derive the inverse Kasteleyn matrix for uniform dimer coverings as it provides a simplest example for our new method.    The inverse Kasteleyn matrix for uniform tilings of the Aztec diamond was originally computed in \cite{Hel:00}.  

The new results here are:

\begin{itemize}
\item the \emph{boundary generating function} for the Aztec diamonds, defined below, and
\item the observation that the generating function for the entries of $K^{-1}$, for all Aztec diamonds together, is a rational function in four variables: two variables marking the row of $K^{-1}$, two marking the column of $K^{-1}$.  There is also one parameter, $n$, which is the order of the Aztec diamond.
\end{itemize}

In the next section, we will give vertical edges weight $a>0$ and horizontal edges weight one; this section covers the special case $a=1$.    

The Kasteleyn matrix for the uniform Aztec diamond is given by
\begin{equation} \label{bgf:K}
      K(x,y)=\left\{ \begin{array}{ll}
              1 & \mbox{if } x-y=\pm e_1 \\ 
                \mathtt{i} &\mbox{if } x-y=\pm e_2\\
              0 &\mbox{otherwise} 
             \end{array} \right.
\end{equation}
for $x \in \mathtt{B}$ and $y \in \mathtt{W}$.  Unless stated otherwise, we shall always assume that $x=(x_1,x_2)$ and $y=(y_1,y_2)$.

Let $f_n(t)=(1-t^n)/(1-t)$ denote the sum of the geometric series $1+t+\dots+ t^{n-1}$, and let
\begin{equation}  \nonumber
	F_n(w,b)=-  \mathtt{i}/2 f_n \left( \frac{(1 + b  \mathtt{i}) (1 + w \mathtt{i} )}{2} \right).
\end{equation}

Further, for $\mathtt{w}=(w_1,w_2)$ and $\mathtt{b}=(b_1,b_2)$ set 
\begin{equation} \label{unif_bfga:h00}
	F^{0,0}_n( \mathtt{w},\mathtt{b})= F_n(w_1^2,b_2^2) w_1 b_2,
\end{equation}
\begin{equation}\label{unif_bfga:h02n}
	F^{0,1}_n(\mathtt{w},\mathtt{b})= F_n(-1/w_1^2,-b_2^2) w_1^{2n-1} b_1^{2n} b_2 \mathtt{i}, 
\end{equation}
\begin{equation}\label{unif_bfga:h2n0}
	F^{1,0}_n(\mathtt{w},\mathtt{b})= F_n(-w_1^2,-1/b_2^2) w_1 w_2^{2n} b_2^{2n-1} \mathtt{i},
\end{equation}
and
\begin{equation} \label{unif_bfga:h2n2n}
	F^{1,1}_n(\mathtt{w},\mathtt{b})= F_n(1/w_1^2,1/b_2^2) w_1^{2n-1} w_2^{2n} b_1^{2n} b_2^{2n-1}. 
\end{equation}

Let 
\begin{equation}
	G_n(\mathtt{w},\mathtt{b})=\sum_{x \in \mathtt{W}, y \in \mathtt{B}} \mathtt{w}^x \mathtt{b}^y K^{-1}(x,y)
\end{equation}
where $\mathtt{w}^x=w_1^{x_1} w_2^{x_2}$ and $\mathtt{b}^y=b_1^{y_1} b_2^{y_2}$ denote the generating function of the inverse Kasteleyn matrix of the one-periodic domino tilings of the Aztec diamond with Kasteleyn orientation given by multiplying all vertical edges (the vector $e_2$ is vertical) by $\mathtt{i}$.  

The following theorem gives the entries of $K^{-1}$ as the coefficients of a generating function.

\begin{thma} \label{thm:uniform}
\begin{equation} 
\begin{split}
&G_n(\mathtt{w},\mathtt{b})= \frac{w_1   w_2^2 b_2 f_{n+1}(w_1^2 b_1^2) f_n(w_2^2 b_2^2)}{C(w_1,w_2)} \\
&+(1+\mathtt{i} w_1)\frac{ (1+ \mathtt{i} b_2^2)F^{0,0}_n+b_1^2\left(b_2w_1f_n(b_1^2 w_1^2)+(\mathtt{i}  +b_2^2) F^{0,1}_n \right)}{C(w_1,w_2) C(b_1,b_2)}  \\
&+(\mathtt{i} +w_1^2)w_2^2 \frac{ b_1^2 b_2^{2n+1} w_1 w_2^{2n} f_n(b_1^2 w_1^2) + (1+ \mathtt{i} b_2^2)F_n^{1,0} +b_1^2(\mathtt{i} +b_2^2) F_n^{1,1}}{C(w_1,w_2) C(b_1,b_2)}
\end{split}
\end{equation}
where $C(r_1,r_2)=1+r_1^2 r_2^2+ \mathtt{i} (r_1^2+r_2^2)$ and $F^{i,j}_n=F^{i,j}_n(\mathtt{w},\mathtt{b})$ for $i,j \in\{0,1\}$.

\end{thma}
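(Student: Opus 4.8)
The plan is to convert the two matrix identities $K\cdot K^{-1}=\mathbbm I$ and $K^{-1}\cdot K=\mathbbm I$ into a pair of functional equations for $G_n$, and then to supply the boundary data needed to pin down their common solution via the separate recurrence in $n$. First I would compute the \emph{symbol} of $K$. In the bulk $(Kf)(\mathtt b)=f(\mathtt b-e_1)+f(\mathtt b+e_1)+\mathtt i f(\mathtt b-e_2)+\mathtt i f(\mathtt b+e_2)$, so multiplying the relation $\sum_{\mathtt w}K(\mathtt b,\mathtt w)K^{-1}(\mathtt w,\mathtt b')=\delta_{\mathtt b,\mathtt b'}$ by $\mathtt w^{\mathtt b}\mathtt b^{\mathtt b'}$ and summing converts, after clearing the monomial $w_1w_2$, into multiplication of $G_n$ by the symbol $C(w_1,w_2)=1+w_1^2w_2^2+\mathtt i(w_1^2+w_2^2)$; this is exactly why $C(w_1,w_2)$ appears in the denominators. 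Treating $K^{-1}\cdot K=\mathbbm I$ the same way in the black variable produces the factor $C(b_1,b_2)$. The upshot is two equations of the schematic form $C(w_1,w_2)G_n=(\text{explicit }\delta\text{-term})+(\text{boundary terms in }\mathtt w)$ and its analogue in $\mathtt b$, where the boundary terms involve only entries $K^{-1}(\mathtt w,\mathtt b)$ with $\mathtt w$ or $\mathtt b$ on the boundary, since the bulk shift identity fails precisely where a shifted index leaves $\mathtt W\cup\mathtt B$.

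Second, I would read off the $\delta$-contribution explicitly. The right-hand side $\sum_{\mathtt b\in\mathtt B}\mathtt w^{\mathtt b}\mathtt b^{\mathtt b}$ factors as a product of two geometric series, over the even coordinate $b_1\in\{0,2,\dots,2n\}$ and the odd coordinate $b_2\in\{1,3,\dots,2n-1\}$; after clearing $w_1w_2$ and dividing by $C(w_1,w_2)$ this yields exactly the first line $w_1w_2^2b_2f_{n+1}(w_1^2b_1^2)f_n(w_2^2b_2^2)/C(w_1,w_2)$ of the claimed formula. The remaining two lines, carrying the double denominator $C(w_1,w_2)C(b_1,b_2)$, must then be accounted for by the boundary corrections, with the extra factor $C(b_1,b_2)$ entering precisely when the boundary data is expressed through the second functional equation.

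Third, I would compute the boundary generating function. Here I would use the third relation, namely the recurrence in $n$ generated by the generalized domino-shuffling algorithm, to determine the entries $K^{-1}(\mathtt w,\mathtt b)$ with both vertices on the boundary, via the partition-function ratios $Z(\mathtt w,\mathtt b)/Z$. Solving this recurrence by the generating-function method produces the four localized sums $F_n^{0,0},F_n^{0,1},F_n^{1,0},F_n^{1,1}$, one per corner of the diamond; the geometric series $f_n$ inside each $F_n$ records the recursion in $n$, while the monomial prefactors $w_1^{2n-1}$, $w_2^{2n}$, $b_1^{2n}$, $b_2^{2n-1}$ and their combinations localize the contribution at the appropriate corner. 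Substituting these into the two functional equations and solving the resulting (slightly overdetermined) linear system for $G_n$ assembles the stated rational function; since $K$ is invertible its inverse, and hence $G_n$, is unique, so it suffices to check that the solution is consistent with all three relations.

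I expect the main obstacle to be the boundary bookkeeping: correctly isolating every correction term that appears when the bulk symbol identity breaks down along the four sides of the diamond, and matching these terms to the closed forms $F_n^{i,j}$ coming from the domino-shuffle recurrence. The symbol computation and the $\delta$-term are quick (and the exact match of the first line above is a reassuring anchor), and the final algebraic assembly is routine but lengthy; the genuine difficulty is the simultaneous control of the four corner contributions together with the closed-form solution of the order-$n$ boundary recurrence.
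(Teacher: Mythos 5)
Your plan is essentially the paper's own proof: the symbol computation that produces $C(w_1,w_2)$ from $K\cdot K^{-1}=\mathbbm{I}$ and $C(b_1,b_2)$ from $K^{-1}\cdot K=\mathbbm{I}$ is exactly Lemma~\ref{uniform:lem:movevertices}, the $\delta$-term factors into the two geometric series giving the first line precisely as you describe, and the order-$n$ boundary recurrence and its generating-function solution are Lemmas~\ref{lem:oneperiodicrecurrence} and~\ref{lem:oneperiodicbgf}. The one step you leave implicit inside ``boundary bookkeeping'' is that the shuffle recurrence only determines $|K^{-1}((2i+1,0),(0,2j+1))|=Z(i,j,1,n)/Z_n$, so one must separately compute the Kasteleyn-orientation signs (equal to $-\mathtt{i}^{i+j+1}$ for the bottom-left and top-right corner pairs and $\mathtt{i}^{3i+3j}$ for the other two) and use the symmetries of the diamond to transport the single bottom-left recurrence to the other three corner pairs; this is the content of Lemma~\ref{uniform:lem:sign} and is needed before the four $F_n^{i,j}$ can be substituted into the functional equations.
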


Our proof (like the proofs of the harder theorems in subsequent sections) breaks down into three steps:

\begin{itemize}
\item Computing the boundary generating function (ignoring the Kasteleyn orientation),
\item Moving the white vertices and black vertices to the boundaries,
%\item Moving the black vertices to the boundaries,
\item Computing the sign of the boundary generating function for each boundary.
\end{itemize}

\subsection{Boundary Generating function}

Our method involves first finding a generating function which will eventually be used to compute $K^{-1} (w,b)$, where $w$ and $b$ correspond to vertices on the boundary of the Aztec diamond. This generating function is called the \emph{boundary generating function}.  This will be a generating function in three variables, which encode (respectively) the position of a white boundary vertex, a black boundary vertex, and the size of the Aztec diamond.  

Consider an Aztec diamond with all edge weights equal to 1.  Let $Z_n$ denote the number of dimer coverings of an Aztec diamond of size $n$. From~\cite{EKLP:92}, we have that $Z_n=2^{n(n+1)/2}$.

We will also consider an Aztec diamond of size $n$ with the vertices $(2i+1,0)$ and $(0,2j+1)$ removed for fixed $0\leq i,j \leq n-1$. We shall call this graph $A_n(i,j)$.  Let $Z(i,j,r,n)$ denote the number of dimer coverings of $A_n(i,j)$, with edge weights all equal to  $r$ and the constraint that $Z(i,j,r,n)=0$ if $i$ or $j$ (or both) are not in $\{0,1,\dots, n-1\}$.

 It is important to notice that the Kasteleyn orientation on the original Aztec diamond, restricted to $A_n(i,j)$, remains a Kasteleyn orientation, so the corresponding entry of $K^{-1}$ for the Aztec diamond can be computed from the partition function on this graph and a relevant sign; that is, we can write
\begin{equation}
	|K^{-1}((2i+1,0),(0,2j+1))|= \frac{Z(i,j,1,n)}{Z_n}.
\end{equation}
We will compute the sign of $K^{-1}((2i+1,0),(0,2j+1))$ in Lemma~\ref{uniform:lem:sign}.

\begin{lemma} \label{lem:oneperiodicrecurrence}

\begin{equation}
\label{oneperiodic:recurrence}	\frac{Z(i,j,1,n)}{Z_n}= \frac{1}{2}\sum_{\substack{k,l \in \{0,1\} \\ (i-k,j-l) \not = (-1,-1)} } \frac{Z(i-k,j-l,1,n-1)}{Z_{n-1}} + \frac{1}{2}\mathbb{I}_{(i,j)=(0,0),n \geq1}
\end{equation}

\end{lemma}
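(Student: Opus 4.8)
The plan is to realize the recurrence as a single \emph{deflation} step of the generalized domino shuffle, expressed purely through the three weight-controlled graph transformations of Section~\ref{section Prerequisites} (urban renewal, edge contraction, and removal of pendant edges). Each of these alters the partition function by an explicit scalar and otherwise preserves the weighted dimer count, so by applying one such step to $A_n(i,j)$ and tracking the accumulated scalars I expect to rewrite $Z(i,j,1,n)$ as a weighted sum of partition functions $Z(i-k,j-l,1,n-1)$ of order-$(n-1)$ diamonds with two boundary vertices removed. Since $Z_n = 2^{n(n+1)/2}$ gives $Z_n/Z_{n-1} = 2^n$, dividing through by $Z_n$ and $Z_{n-1}$ should make all bulk scalars cancel and leave precisely the local factor $\frac{1}{2}$ together with the four-term sum.

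First I would treat the bulk. Applying urban renewal to every interior $2\times 2$ cell of $A_n(i,j)$, and then contracting the resulting weight-$1$ edges and removing the pendant edges created along the boundary, recovers (the dual graph of) an Aztec diamond of order $n-1$; this is exactly the reduction underlying the identity $Z_n = 2^n Z_{n-1}$ and the generalized shuffle of~\cite{Pro:03,EKLP:92}. In the uniform case each urban renewal contributes the factor $ac+bd = 1\cdot 1 + 1\cdot 1 = 2$, and for the unpunctured diamond these multiply to the global factor $2^n = Z_n/Z_{n-1}$. Consequently, once both sides are normalized by the respective total partition functions, the entire bulk contribution cancels and only the modification caused by the two removed vertices survives.

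The hard part will be the local analysis at the two punctures. Because $(2i+1,0)$ and $(0,2j+1)$ sit on the boundary, the urban-renewal-and-contraction step near them is modified in two ways that together produce the right-hand side. First, deleting the white vertex $(2i+1,0)$ and the black vertex $(0,2j+1)$ suppresses exactly one of the boundary factors of $2$, so the relevant prefactor becomes $2^{n-1}$ rather than $2^n$; after normalization this is the source of the overall $\frac{1}{2}$. Second, each puncture is determined only up to a one-cell shift under the deflation: the image of the white hole in the order-$(n-1)$ diamond lies at either $(2(i-0)+1,0)$ or $(2(i-1)+1,0)$, and likewise the black hole admits the two positions indexed by $l\in\{0,1\}$. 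Collecting the $2\times 2$ choices yields $Z(i,j,1,n) = 2^{n-1}\sum_{k,l\in\{0,1\}} Z(i-k,j-l,1,n-1)$ up to boundary corrections, which is the claimed sum after dividing by $Z_n$.

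Finally I would reconcile the range conditions and the corner term. The convention $Z(i',j',1,n-1)=0$ whenever $i'$ or $j'$ lies outside $\{0,\dots,n-2\}$ automatically kills every shifted term in which a single hole has been pushed off an edge, so no explicit truncation is needed there; the one genuinely degenerate case is $(i-k,j-l)=(-1,-1)$, which can occur only for $(i,j)=(0,0)$ with $(k,l)=(1,1)$. In that configuration both punctures are driven to the extreme corner and annihilate, so the deflation returns the full \emph{unpunctured} order-$(n-1)$ diamond; the term therefore equals $Z_{n-1}$, contributing $2^{n-1}Z_{n-1}$, i.e.\ $\frac{1}{2}$ after normalization, which is precisely why it is excised from the sum and reinstated as $\frac{1}{2}\,\mathbb{I}_{(i,j)=(0,0),\,n\ge 1}$. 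The main obstacle throughout is this boundary bookkeeping---verifying that a deleted boundary vertex splits into exactly the two-way choice $k\in\{0,1\}$, that precisely one factor of $2$ is lost, and that the degenerate corner yields the clean value $Z_{n-1}$---since the interior argument is the standard EKLP/Propp shuffle. A base-case check at $n=1$, where the only admissible data is $(i,j)=(0,0)$, then pins down the normalization.
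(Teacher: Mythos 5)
Your proposal follows essentially the same route as the paper: a single deflation step (urban renewal on all $n^2$ cells, edge contraction, pendant removal) whose bulk factors cancel against $Z_n/Z_{n-1}=2^n$, with the two-way shift of each boundary hole producing the four-term sum, the net local factor $\tfrac{1}{2}$, and the degenerate corner configuration yielding the unpunctured order-$(n-1)$ diamond and hence the indicator term. The one device you leave implicit is the paper's trick of attaching auxiliary pendant vertices $v_0,v_1$ to the two removed vertices so that the deflation acts on an honest Aztec diamond and the holes can be tracked as the images $v_2,v_3$ with their weight-$\tfrac{1}{2}$ incident edges; this is precisely the ``boundary bookkeeping'' you flag as the hard part, and it resolves exactly as you anticipate.
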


\begin{proof}
%This proof refines the ideas in~\cite{Pro:03} which compute the edge placement probabilities for the Aztec diamond.
It was shown in~\cite{EKLP:92} that one can compute a recurrence for the partition function of an Aztec diamond using the graph transformations given in Section~\ref{section Prerequisites}.  In particular, they showed that $Z_n=2^n Z_{n-1}$. We first review this computation because showing the recurrence for $Z(i,j,1,n)/Z_n$ is an extension of this computation. 

Consider an Aztec diamond graph of size $n$ with Kasteleyn coordinates.  To the faces with centers given by the coordinates $(2i+1,2j+1)$ for all $0 \leq i,j \leq n-1$ we apply the urban renewal transformation --- see Figure~\ref{fig:add edge urban renewal} for an example but ignore the dashed edges.  Having applied urban renewal $n^2$ times, we see that $Z_n$ is equal to the partition function of the new graph multiplied by $2^{n^2}$. For this new graph all diagonal edges have weight  $1/2$ while the remaining edges have weight $1$; there are pendant vertices at $(2i+1,0)$,$(2i+1,2n)$, $(0,2j+1)$ and $(2n,2j+1)$ for all $0\leq i,j \leq n-1$ which are all  removed with their pendant edges (all edges have weight 1) with no effect to the partition function; and  there are vertices that are incident to two edges (both weight 1) at coordinates $(2i+1,2j)$ for all $0 \leq i \leq n-1$ and $1 \leq j \leq n-1$ and $(2i,2j+1)$ for all $1\leq i \leq n-1$ and $0 \leq j \leq n-1$.  The edges incident to these vertices are contracted with no effect to the partition function. The effect of the above three graph transformations on an Aztec diamond of size $n$ with edge weights 1 gives an Aztec diamond of size $n-1$ with edges weights $1/2$ and which means  
$$ Z_n = 2^{n^2} \tilde{Z}_{n-1}$$
where $\tilde{Z}_{n-1}$ denotes an Aztec diamond of size $n-1$ with all edge weights equal to $1/2$. 
By multiplying every white vertex by 2, we are applying a gauge transform   so that all edges of the Aztec diamond have edge weight 1.  As there are $n(n-1)$ white vertices of an Aztec diamond of size $n-1$, the effect of the gauge transformation is to divide the partition function by $2^{n(n-1)}$.  We conclude that $Z_n=2^n Z_{n-1}$. We now compute a recurrence for $Z(i,j,1,n)$.

Notice that $Z(i,j,1,n)$ is in fact equal to the number of matchings of an Aztec diamond with two extra vertices $v_0$ and $v_1$ added with an edge (edge weight 1) between $v_0$ and $(2i+1,0)$ and an edge (edge weight 1) between $v_1$ and $(0,2j+1)$.  We shall call $v_0$ and $v_1$ \emph{auxiliary vertices} and their incident edges will be called \emph{auxiliary edges}. 
To this graph, we will apply the same sequence of graph transformations, keeping track of the auxiliary edges.  The remainder of this proof is a careful accounting of the effects of these graph transformations on the partition function.

Explicitly, the computation is as follows. 
   We first apply urban renewal on the faces with centers $(2k+1,2l+1)$ for all $0 \leq k,l \leq n-1$, i.e. we apply urban renewal $n^2$ times, see Figure~\ref{fig:add edge urban renewal} for an example. 
\begin{figure}
\caption{For the partition function computation, ignore the dashed edges.  Otherwise, the dashed edges on the left are the two extra edges added as described in the proof of Lemma~\ref{lem:oneperiodicrecurrence} for an Aztec diamond of size 4.  The figure on the right is obtained from the figure on the left by applying urban renewal 16 times.}
 \includegraphics[height=2.5in,angle=90]{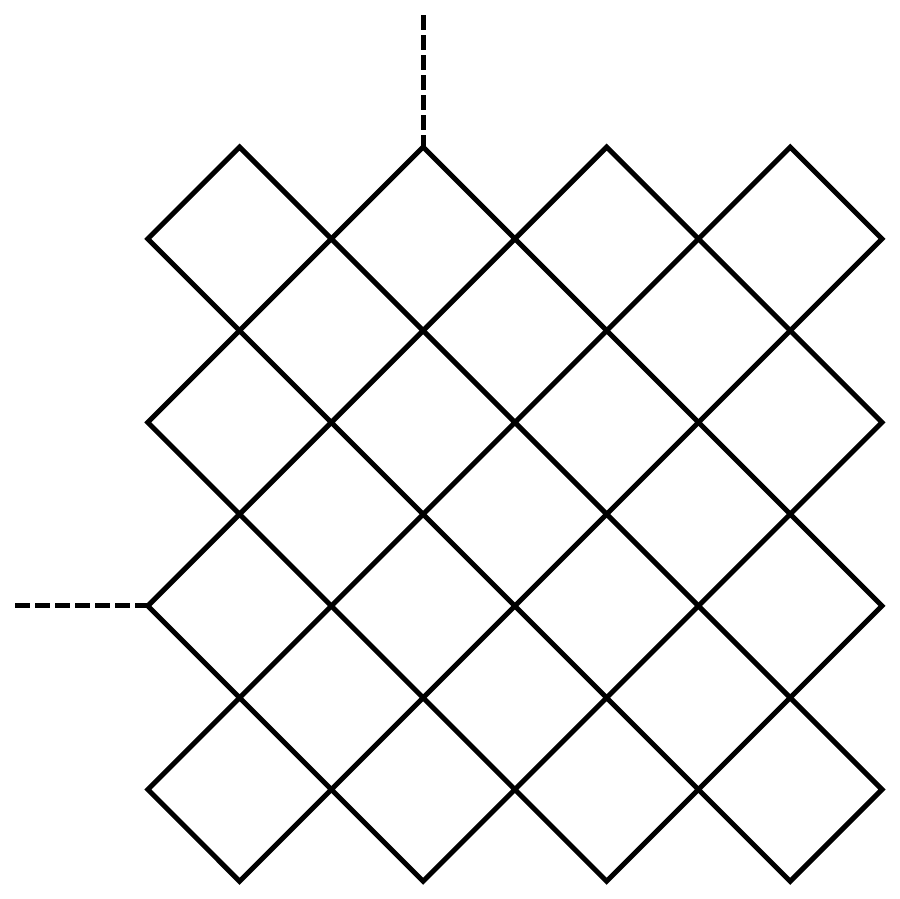}\hspace{10mm}
  \includegraphics[height=2.5in,angle=90]{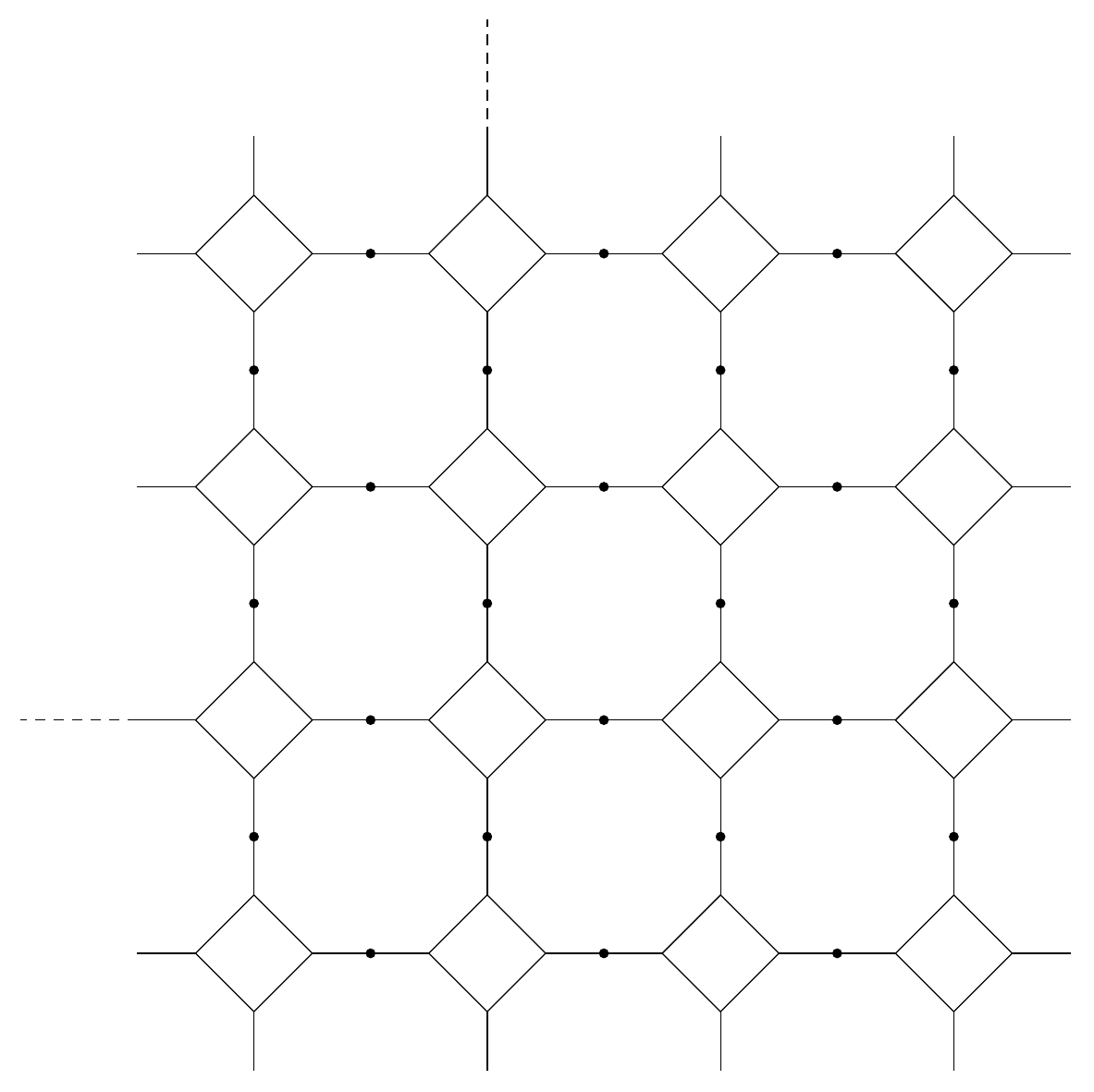} 
 %\includegraphics[height=3in]{shuffle2i.pdf}
 %\caption{The figure on the left represents}
 \label{fig:add edge urban renewal}
\end{figure}
From applying the urban renewal, every diagonal edge has weight $1/2$ and the remaining edges have weight 1. Similar to the computation of the partition function recurrence, we apply edge contractions to all the edges incident to vertices located at $(2k+1,2l)$ for $0 \leq k \leq n-1$ and  $1 \leq l \leq n-1$ and located at $(2k,2l+1)$ for $1\leq k \leq n-1$ and $0 \leq l \leq n-1$.  Some care has to be applied removing the pendant edges; we remove the pendant edges incident to the following pendant vertices:
\begin{itemize} 
\item $(2k+1,0)$ for  $0 \leq k \leq n-1$ but $k \not =i$,
\item $(0,2l+1)$ for  $0 \leq l\leq n-1$ but $l \not = j$, 
\item$(2k+1,2n)$ for  $0 \leq k \leq n-1$ and 
\item$(2n,2l+1)$ for  $0\leq l \leq n-1$.   
\end{itemize}
As the auxiliary edges are also pendant, we remove these edges too (or, equivalently, we contract them). 

The resulting shape is a modified Aztec diamond of size $n-1$: because our initial graph contained auxiliary edges, we have an additional vertex (call it  $v_2)$ and its incident edges on the bottom boundary  and an additional vertex (call it $v_3$) and its incident edges on the left-hand boundary  of the Aztec diamond of size $n-1$. Note that the edges incident to $v_2$ (and similarly to $v_3$) have weight $1/2$. 
 We shall describe the incident edges to $v_2$ and   similar descriptions hold for $v_3$ which are based on our initial choice of $v_0$ and $v_1$.  If $(v_0,(2i+1,0))$ was our initial choice of the auxiliary edge on the bottom boundary for $1 \leq i \leq n-2$,  then the edges incident to $v_2$ are  $(v_2,(2i-1,0))$ and $(v_2,(2i+1,0))$ -- see Figure~\ref{fig:after edge contraction} for an example. If $(v_0, (2n-1,0))$ was our initial choice of the auxiliary edge on the bottom boundary, then the edge incident to $v_2$ is $(v_2,(2n-3,0))$.  If $(v_0,(1,0))$ was our initial choice of the auxiliary edge  on the bottom boundary, in order to determine the edges incident $v_2$, we must also keep track of the initial choice of edges incident to $v_1$.   If we initially chose the auxiliary edges $(v_0,(1,0))$  and $(v_1,(0,2j+1))$ for $j \not=0$, then the edge incident to $v_2$  is  $(v_2,(1,0))$.  If we initially chose the auxiliary edges $(v_0,(1,0))$ and $(v_1,(0,1))$ then the edges incident to $v_2$ (and $v_3$) are $(v_2,v_3)$ and  $(v_2,(1,0))$ (and $(v_3,(0,1))$). 

For $(i,j) \not = (0,0)$, in a dimer covering the edge $v_2$ is matched to a vertex $(2k+1,0)$ where $0\leq k \leq n-2$ with $k=i$ or $k=i-1$ which has the same effect of removing the vertex $(2k+1,0)$ from an Aztec diamond of size $n-1$. A similar statement is true for $v_3$. We conclude that for $(i,j) \not =(0,0)$, the dimer covering consists of edges $(v_2,(2k+1,0))$, $(v_3,(0,2l+1))$ and an Aztec diamond of size $n-1$ with vertices $(2k+1,0)$ and $(0,2l+1)$ removed (i.e. a covering of $A_{n-1}(k,l)$) for $0\leq k \leq n-2$ with $k=i$ or $k=i-1$ and $0 \leq l \leq n-2$ with $l=j$ or $l=j-1$. 
  For $(i,j) =(0,0)$, we either match  $(v_2,(1,0))$, $(v_3,(0,1))$ and an Aztec diamond of size $n-1$ with vertices $(1,0)$ and $(0,1)$ removed (i.e. matching $A_{n-1}(0,0)$)  or we match $(v_2,v_3)$ in which case the remaining graph is an Aztec diamond of size $n-1$ with edge weights $1/2$.    The above formulation means that we have built a recurrence of $Z(i,j,1,n)$ in terms of $Z(k,l,1/2,n-1)$, namely
\begin{figure}
 \caption{The result of edge contraction from Figure~\ref{fig:add edge urban renewal}. The four dashed edges represent the result of the edge contraction on the two dashed edges in Figure~\ref{fig:add edge urban renewal}.}
 \includegraphics[height=2.5in,angle=90]{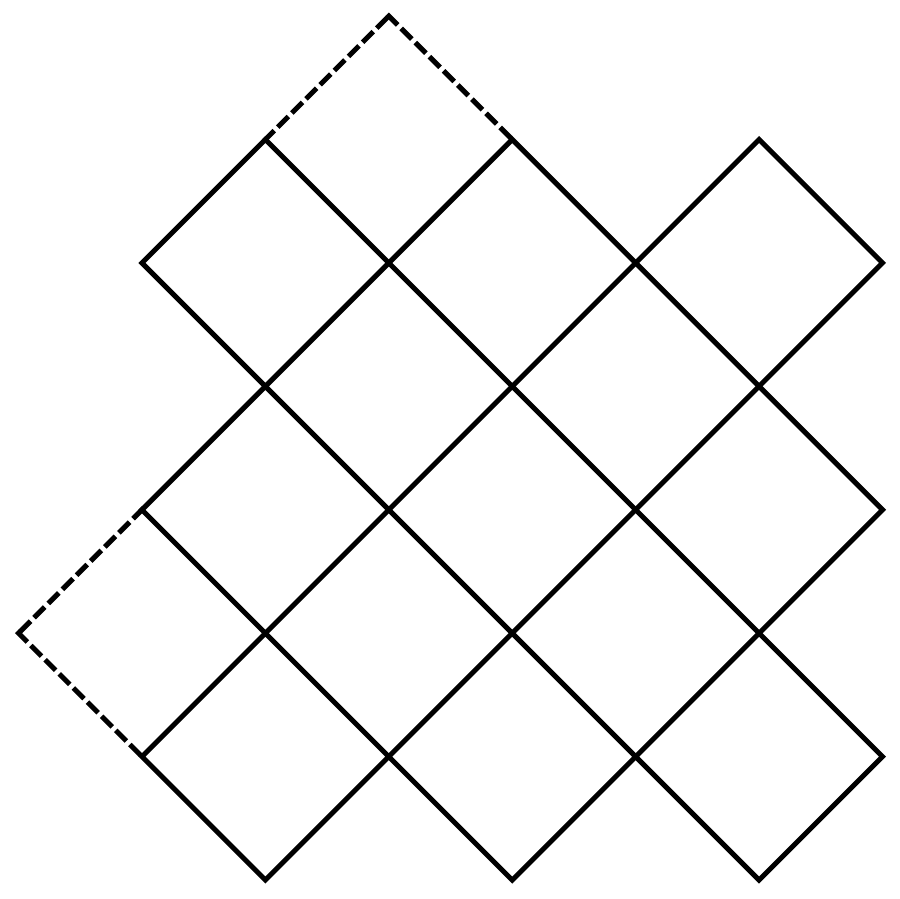}
\label{fig:after edge contraction}
 \end{figure}
\begin{equation} \label{lemproof:recurrencefirststep}
	Z(i,j,1,n)= \sum_{\substack{ k \in \{i-1,i\} \\ j \in \{j-1,j\} \\}} \frac{1}{4} Z(k,l,1/2,n-1) 2^{n^2} \mathbb{I}_{0\leq k\leq n-2} \mathbb{I}_{0\leq l\leq n-2} + \frac{1}{2}    \mathbb{I}_{(i,j)=(0,0)} \tilde{ Z}_{n-1} 2^{n^2}
\end{equation}
where the factor $1/4$ is explained by the fact that the edges incident to $v_2$ and $v_3$ have edge weight $1/2$, $\tilde{Z}_{n-1}$ is an Aztec diamond of size $n-1$ with edge weights equal to $1/2$ and the factor $2^{n^2}$ is explained by the urban renewal steps.  By a change of summation index and setting $Z(n-1,s,1/2,n-1)=Z(r,n-1,1/2,n-1)=Z(-1,s,1/2,n-1)=Z(r,-1,1/2,n-1)=0$ for all $0\leq r,s \leq n-2$, the above equation can be rewritten as 
\begin{equation}
	Z(i,j,1,n)= \sum_{\substack{k,l \in \{0,1\} \\ (i-k,j-l) \not = (-1,-1)} } \frac{1}{4} Z(i-k,j-l,1/2,n-1)2^{n^2}+\frac{1}{2}  \mathbb{I}_{(i,j)=(0,0)} \tilde{ Z}_{n-1} 2^{n^2}.
\end{equation}

We apply the gauge transformation to write $\tilde{Z}_{n-1}=2^{-n^2+n} Z_{n-1}$ which we described above in the partition function recurrence computation.   To rewrite $Z(i-k,j-l,1/2,n)$ in terms of $Z(i-k,j-l,1,n)$ for $k,l \in \{0,1\}$ and $(i-k,j-l) \not = (-1,-1)$, we  apply a gauge transformation   which multiplies all the edge weights incident to the white vertices by $2$.  As there are $n(n-1)-1$ white vertices in $Z(i-k,j-l,1/2,n-1)$ for $(i-k,j-l) \not = (-1,-1)$, we have to divide $Z(i-k,j-l,1,n-1)$ by $2^{n(n-1)-1}$.  These operations give
\begin{equation}
	Z(i,j,1,n)= \sum_{\substack{k,l \in \{0,1\} \\  (i-k,j-l) \not = (-1,-1)} } Z(i-k,j-l,1,n-1)2^{n-1}+ \mathbb{I}_{(i,j)=(0,0)} Z_{n-1} 2^{n-1}.
\end{equation}

Dividing the above equation by $Z_n$ and noting that $Z_n= 2^n Z_{n-1}$ from the partition function recurrence computation gives the lemma.

\end{proof}

\begin{defi}
The \emph{boundary generating function} is
\begin{equation}
	Z_{\partial}(w,b,1,z)=\sum_{n=0}^\infty \sum_{i=0}^{n-1} \sum_{j=0}^{n-1} \frac{Z(i,j,1,n)}{Z_n}w^i b^j z^n.
\end{equation}
\end{defi}

We now compute the boundary generating function.

\begin{lemma} \label{lem:oneperiodicbgf}
\begin{equation}
	Z_{\partial}(w,b,1,z)=\frac{ z}{(1-z)(2-z(1+b)(1+w))}
\end{equation}
\end{lemma}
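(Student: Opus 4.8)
The plan is to solve the linear recurrence of Lemma~\ref{lem:oneperiodicrecurrence} directly, by converting it into a scalar functional equation for the generating function. Write $a_{i,j,n}=Z(i,j,1,n)/Z_n$, and recall the convention (already built into the definition of $Z(i,j,r,n)$) that $a_{i,j,n}=0$ whenever $i\notin\{0,\dots,n-1\}$ or $j\notin\{0,\dots,n-1\}$. In particular the inner double sum in the definition of $Z_\partial$ may be extended to all $i,j\ge 0$ without changing its value, and the $n=0$ layer vanishes because its inner sums are empty.

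First I would dispose of the side condition $(i-k,j-l)\neq(-1,-1)$ appearing in the recurrence, by noting it is vacuous. For $k,l\in\{0,1\}$ and $i,j\ge 0$, the equality $(i-k,j-l)=(-1,-1)$ can only occur when $(i,j)=(0,0)$ and $(k,l)=(1,1)$, and the corresponding summand is $a_{-1,-1,n-1}=0$ by the out-of-range convention. Dropping the constraint therefore changes nothing, and the recurrence may be written, for all $i,j\ge 0$ and $n\ge 1$, as
\[
a_{i,j,n}=\tfrac12\sum_{k,l\in\{0,1\}} a_{i-k,j-l,n-1}+\tfrac12\,\mathbb{I}_{(i,j)=(0,0)}.
\]

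Next I would multiply both sides by $w^i b^j z^n$ and sum over $i,j\ge 0$ and $n\ge 1$. The left-hand side yields exactly $Z_\partial(w,b,1,z)$, since the missing $n=0$ layer is zero. In the first term on the right I reindex via $i'=i-k$, $j'=j-l$, $n'=n-1$; the out-of-range convention guarantees this shift introduces no spurious boundary contributions, and the sum over $k,l$ factors, so the term becomes $\frac{z}{2}(1+w)(1+b)\,Z_\partial$. The indicator term contributes $\frac12\sum_{n\ge 1}z^n=\frac{z}{2(1-z)}$. Collecting the pieces gives the functional equation
\[
Z_\partial=\frac{z}{2}(1+w)(1+b)\,Z_\partial+\frac{z}{2(1-z)},
\]
which I then solve algebraically for $Z_\partial$, obtaining $\dfrac{z}{(1-z)\,(2-z(1+b)(1+w))}$ after clearing denominators.

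There is no genuinely hard step: this is a routine transfer of a constant-coefficient linear recurrence to its generating function. The only points demanding care — the closest thing to an obstacle — are bookkeeping: verifying that the out-of-range convention makes the condition $(i-k,j-l)\neq(-1,-1)$ redundant so the $k,l$-sum factors cleanly, checking that the $n=0$ layer contributes nothing, and confirming that the shift $i\mapsto i-k$, $j\mapsto j-l$, $n\mapsto n-1$ neither loses nor double-counts boundary layers. Treating $Z_\partial$ as a formal power series in $z$ with $w,b$ formal makes all of this rigorous without any convergence concerns.
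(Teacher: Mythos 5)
Your proof is correct and follows essentially the same route as the paper: multiply the recurrence of Lemma~\ref{lem:oneperiodicrecurrence} by $w^i b^j z^n$, sum, obtain the functional equation $Z_\partial = \tfrac{z}{2}(1+w)(1+b)Z_\partial + \tfrac{z}{2(1-z)}$, and solve. The extra bookkeeping you supply (the vacuity of the constraint $(i-k,j-l)\neq(-1,-1)$ given the out-of-range convention, and the harmlessness of extending the summation range) is exactly what the paper leaves implicit.
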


\begin{proof}
We multiply~\eqref{oneperiodic:recurrence} by $w^ib^j z^n$ and sum over $0\leq i\leq n-1$, $0\leq j \leq n-1$ and $n\geq 0$  which gives
\begin{equation}
	Z_{\partial}(w,b,1,z) = \frac{1}{2}(1+b) (1+w)z Z_{\partial}(w,b,1,z)+\frac{1}{2} \frac{z}{1-z}
\end{equation}

Rearranging the above equation gives the result.

\end{proof}

\subsection{Moving the white vertices and black vertices to the boundary} \label{subsection Moving vertices}

In this section, we derive recurrences for $K^{-1}$ from each of the matrix equations $K\cdot K^{-1}=\mathbbm{I}$ and $K\cdot K^{-1}=\mathbbm{I}$.  Using these relations, we find that it is possible to write $G_n(\mathtt{w},\mathtt{b})$ as a function of $K^{-1}(w,b)$ where $w$ and $b$ are white and black vertices (respectively) on the boundary of the Aztec diamond. By this, we mean that $w$ is either $(2i+1,0)$ or $(2i+1,2n)$ for $0\leq i \leq n-1$  and $b$ is either $(0,2j+1)$ or $(2n,2j+1)$ for $0 \leq j \leq n-1$.  We define some additional generating functions: let
\begin{equation}
	G_n^0(w_1,b_1,b_2)= \sum_{\substack{(x_1,0) \in \mathtt{W} \\ y=(y_1,y_2) \in \mathtt{B}}} K^{-1}((x_1,0),y) w_1^{x_1} b_1^{y_1} b_2^{y_2},
\end{equation}
\begin{equation}
	G_n^n(w_1,b_1,b_2)= \sum_{\substack{(x_1,2n) \in \mathtt{W} \\ y=(y_1,y_2) \in \mathtt{B}}} K^{-1}((x_1,2n),y) w_1^{x_1} b_1^{y_1} b_2^{y_2},
\end{equation}
and
\begin{equation} \label{bgfa:def:Hn}
	H_n^{i,j}(\mathtt{w},\mathtt{b})= \sum_{\substack{1\leq x_1 \leq 2n-1,x_1\mod 2=1 \\ 1 \leq y_2 \leq 2n-1,y_2\mod 2=1}} K^{-1} (( x_1,2ni),(2nj, y_2)) w_1^{x_1} w_2^{2ni} b_1^{2nj} b_2^{y_2} 
\end{equation}
where $i,j \in \{0,1 \}$. From the above definition and for each $i,j \in \{0,1\}$, $H_n^{i,j}$ is a boundary generating function (with Kasteleyn orientation) where $i$ and $j$ determin which two boundaries the removed vertices lie on.

We now write $G_n(\mathtt{w},\mathtt{b})$ in terms of the boundary generating functions with the Kasteleyn orientation.
\begin{lemma} \label{uniform:lem:movevertices}
\begin{equation} 
\begin{split}
&G_n(\mathtt{w},\mathtt{b})= \frac{w_1 w_2^2 b_2  f_{n+1}(w_1^2 b_1^2) f_n(w_2^2 b_2^2)}{C(w_1,w_2)} \\
&+(1+\mathtt{i} w_1)\frac{ (1+ \mathtt{i} b_2^2)H^{0,0}_n+b_1^2\left(b_2w_1f_n(b_1^2 w_1^2)+(\mathtt{i}  +b_2^2) H^{0,1}_n \right)}{C(w_1,w_2) C(b_1,b_2)}  \\
&+(\mathtt{i} +w_1^2)w_2^2 \frac{ b_1^2 b_2^{2n+1} w_1 w_2^{2n} f_n(b_1^2 w_1^2) + (1+ \mathtt{i} b_2^2)H_n^{1,0} +b_1^2(\mathtt{i} +b_2^2) H_n^{1,1}}{C(w_1,w_2) C(b_1,b_2)}
\end{split}
\end{equation}
where $C(r_1,r_2)=1+r_1^2r_2^2+ \mathtt{i}(r_1^2+r_2^2)$ and $H_n^{i,j}=H_n^{i,j}(\mathtt{w},\mathtt{b})$.
\end{lemma}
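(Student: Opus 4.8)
The plan is to turn the two matrix identities $K\cdot K^{-1}=\mathbbm{I}$ and $K^{-1}\cdot K=\mathbbm{I}$ into linear recurrences on the entries of $K^{-1}$, and to repackage each recurrence as the operation of multiplying the generating function by a Laurent‑polynomial symbol. The first identity produces a recurrence in the \emph{white} index of $K^{-1}$ and will be used to push the white vertex onto the top and bottom boundaries; the second produces a recurrence in the \emph{black} index and will be used to push the black vertex onto the left and right boundaries. In both cases the symbol turns out to be exactly $C$, which is why $C(w_1,w_2)$ and $C(b_1,b_2)$ appear in the denominators of the claimed formula.

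First I would carry out the white reduction. Fixing a black vertex $x_b=(2p,2q+1)$, the relation $(K\cdot K^{-1})(x_b,y)=\delta_{x_b,y}$ ties together the four values of $K^{-1}$ at the white neighbours of $x_b$, the $e_1$‑edges carrying weight $1$ and the $e_2$‑edges carrying weight $\mathtt{i}$. Summing these relations against $w_1^{x_{b,1}}w_2^{x_{b,2}}$ and shifting indices shows that, after clearing denominators by $w_1w_2$, the associated symbol is precisely $C(w_1,w_2)=1+w_1^2w_2^2+\mathtt{i}(w_1^2+w_2^2)$. Multiplying $G_n$ by $C(w_1,w_2)$ therefore collapses every contribution indexed by a black vertex whose (in‑region) white neighbours satisfy the full relation; each such term contributes $\delta_{x_b,y}$, and summing over all black $x_b=(2p,2q+1)$ gives the diagonal term
\[
\sum_{p=0}^{n}\sum_{q=0}^{n-1} w_1^{2p+1}w_2^{2q+2}b_1^{2p}b_2^{2q+1}=w_1w_2^2b_2\,f_{n+1}(w_1^2b_1^2)\,f_n(w_2^2b_2^2).
\]
The contributions that do not collapse are exactly those whose shifted index leaves the region through the top or bottom, i.e. at the white rows $x_2=0$ and $x_2=2n$; collecting the two surviving stencil terms in each case rewrites them as explicit $\mathtt{i}$‑weighted prefactors times $G_n^0$ and $w_2^{2n+2}G_n^n$.

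Then I would run the same argument on $G_n^0$ and $G_n^n$ in the black variables, now using $K^{-1}\cdot K=\mathbbm{I}$ centred at white vertices, whose symbol (again after clearing $b_1b_2$) is $C(b_1,b_2)$. Multiplying $G_n^0$ by $C(b_1,b_2)$ collapses all interior contributions, and the terms leaving through the left and right columns $y_1=0$ and $y_1=2n$ assemble into $(1+\mathtt{i} b_2^2)H_n^{0,0}$ and $b_1^2(\mathtt{i}+b_2^2)H_n^{0,1}$. The new phenomenon is that the diagonal $\delta$‑contribution is no longer empty: since the white index of $G_n^0$ already lies on the bottom row, the relation centred at a bottom white vertex $(x_1,0)$ keeps only its two in‑region black neighbours, and the resulting $\delta_{x,(x_1,0)}$ terms sum over the bottom row to the geometric series $b_1^2b_2w_1\,f_n(w_1^2b_1^2)$. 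The identical computation on $G_n^n$ yields $H_n^{1,0}$, $H_n^{1,1}$ together with the corner series $b_1^2b_2^{2n+1}w_1w_2^{2n}f_n(w_1^2b_1^2)$. Substituting these two expressions for $C(b_1,b_2)G_n^0$ and $C(b_1,b_2)G_n^n$ back into the white‑reduction identity and dividing through by $C(w_1,w_2)C(b_1,b_2)$ produces the stated formula.

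The main obstacle is the edge‑ and corner‑bookkeeping in the two reductions: one must carefully separate the genuine boundary terms — the stencil positions admitting no valid central vertex, which furnish $G_n^0,G_n^n$ and ultimately the $H_n^{i,j}$ — from the contributions that the boundary vertices themselves make to the diagonal $\delta$‑term, which are what generate the explicit geometric‑series factors $f_{n+1},f_n$. Keeping the factors of $\mathtt{i}$ attached to the $e_2$‑edges, and tracking the precise monomial prefactors (the $w_1$‑, $w_2^{2n}$‑, $b_1^{2n}$‑ and $b_2^{2n}$‑powers coming from the top and right boundaries), is where essentially all of the labour lies; the interior cancellation itself is immediate from $K\cdot K^{-1}=K^{-1}\cdot K=\mathbbm{I}$.
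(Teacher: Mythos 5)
Your proposal is correct and follows essentially the same route as the paper: the five-term relations from $K\cdot K^{-1}=\mathbbm{I}$ and $K^{-1}\cdot K=\mathbbm{I}$ are summed against monomials so that the symbol $C$ appears, the diagonal $\delta$-terms produce the geometric-series factors $w_1w_2^2b_2\,f_{n+1}(w_1^2b_1^2)f_n(w_2^2b_2^2)$ and $w_1b_1^2b_2f_n(w_1^2b_1^2)$ (plus its top-corner analogue), and the out-of-range stencil positions furnish $G_n^0$, $G_n^n$ and then the $H_n^{i,j}$, exactly as in the paper's two-stage reduction. The only remaining work is the bookkeeping you already flag, which the paper carries out term by term in equations \eqref{bgfa:thm:expand:eqn1}--\eqref{internalrecurrence4}.
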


\begin{proof}
%The Kasteleyn matrix $K$, defines the \emph{Kasteleyn operator} which maps $\mathbb{C}^\mathtt{B} \mapsto \mathbb{C}^\mathtt{W}$ with 
%\begin{equation}
%      K f(v)=\sum_{w\sim v} K(v,w)f(v)
%\end{equation}
%for $f:\mathtt{B} \mapsto \mathbb{R}$ and $v\in \mathtt{B}$.

From equation~\eqref{bgf:K}, we know that $K$ is a sparse matrix: each row has at most four nonzero entries, one entry for each neighbor of the vertex indexing the row. As such, we expand the matrix product $K\cdot K^{-1}$ in terms of the unknown matrix entries of $K^{-1}$ and compare them to the entries of $\mathbbm{I}$, this gives
\begin{equation}
	\sum_{v \sim x, v \in \mathtt{W}} K(x,v) K^{-1}(v,y)= \delta_{x=y}
\end{equation}
where $x,y \in \mathtt{B}$ and $v\sim x, v \in \mathtt{W}$ means that $v$ is a white vertex in the Aztec diamond which is nearest neighbored to $x$ (i.e. $v$ is a vertex of the form $x\pm e_1$, $x\pm e_2$ provided that these vertices are inside the Aztec diamond).

In the above equation, we evaluate the entries of $K$ using equation~\eqref{bgf:K} and so we obtain a five term relation
\begin{equation} \label{internalrecurrence1}	
K^{-1} ( x+e_1,y) \delta_{x_1 < 2n} + \mathtt{i} K^{-1} (x+e_2,y)\delta_{x_1>0} + \mathtt{i} K^{-1} (x-e_2,y) \delta_{x_1<2n} + K^{-1} (x-e_1,y) \delta_{x_1>0} = \delta_{x=y} 
\end{equation}
where $x, y \in \mathtt{B}$, $x=(x_1,x_2)$, $y=(y_1,y_2)$ and
\begin{equation}
	\delta_{x_1>0} = \left\{ \begin{array}{ll}
			1 & \mbox{if } x_1>0\\
			0 & \mbox{otherwise} \end{array}\right.	
\end{equation}	
For the left-hand side of the above equation, the delta functions account for the vertices on the boundary. For example, the black vertex $(0,2k+1)$ has two neighboring white vertices $(1,2k)$ and $(1,2k+2)$.  

Similarly, we expand the matrix product $K^{-1}\cdot K$ entry wise and compare with the identity matrix.  We obtain another five term relation which is given by
\begin{equation} \label{internalrecurrence2}
	K^{-1} (x,y+e_1) \delta_{y_2<2n} + \mathtt{i} K^{-1} (x,y+e_2) \delta_{y_2 <2n}  + K^{-1} (x,y-e_1) \delta_{y_2>0} + \mathtt{i} K^{-1}(x,y-e_2) \delta_{y_2>0}  = \delta_{x=y},
\end{equation}
where $x,y \in \mathtt{W}$.  We view equations~\eqref{internalrecurrence1} and~\eqref{internalrecurrence2} as \emph{recurrence relations} with the initial conditions coming from evaluating $K^{-1}$ at the boundary vertices.  We remark that~\eqref{internalrecurrence1} is a relation for the white vertices of $K^{-1}$ which keeps the black vertex of $K^{-1}$ fixed.  On the other hand ~\eqref{internalrecurrence2} is a relation for the black vertices of $K^{-1}$ keeping the white vertex of $K^{-1}$ fixed.

We proceed to use the above recurrence relations to find the generating function for $K^{-1}$.   We first multiply~\eqref{internalrecurrence1} by $\mathtt{w}^x \mathtt{b}^y=w_1^{x_1}w_2^{x_2} b_1^{y_1}b_2^{y_2}$ for $x=(x_1,x_2),y=(y_1,y_2) \in \mathtt{B}$ and sum both quantities over $\mathtt{B}$.  Each term on the left-hand side of~\eqref{internalrecurrence1} can be written using $G_n(\mathtt{w}, \mathtt{b})$ and either $G_n^0(w_1,b_1,b_2)$ or $G_n^n(w_1,b_1,b_2)$
 by taking a sum change of variables so that we sum over $x\in \mathtt{W}$ (as opposed to $x \in \mathtt{B}$).  We list these computations for each term on the left-hand side of~\eqref{internalrecurrence1} and then give the outcome of~\eqref{internalrecurrence1} under these computations: the first term gives
\begin{equation} \label{bgfa:thm:expand:eqn1}
\begin{split} 
	\sum_{x,y \in \mathtt{B}}  K^{-1} (x+e_1,y) \delta_{x_1<2n} \mathtt{w}^x \mathtt{b}^y &=\frac{1}{w_1 w_2} \sum_{x \in \mathtt{W}, x_2\not = 0, y \in \mathtt{B}} K^{-1}(x,y) \mathtt{w}^x \mathtt{b}^y \\
&=\frac{G_n(\mathtt{w},\mathtt{b}) - \sum_{\substack{1\leq x_1 \leq 2n\\ (x_1,0)\in\mathtt{W},y\in\mathtt{B}}} K^{-1}((x_1,0),y) w_1^{x_1} \mathtt{b}^y}{w_1 w_2}\\
&=\frac{G_n(\mathtt{w},\mathtt{b}) -G_n^0(w_1,b_1,b_2)}{w_1 w_2},
\end{split}
\end{equation}
the second term gives
\begin{equation}  \label{bgfa:thm:expand:eqn2}
\begin{split}
\mathtt{i} \sum_{x,y \in\mathtt{B}} K^{-1}(x+e_2,y) \delta_{x_1>0} \mathtt{w}^x \mathtt{b}^y& = \mathtt{i}\frac{w_1}{w_2} \sum_{x \in \mathtt{W}, x_2 \not = 0, y\in \mathtt{B}} K^{-1}(x,y) \mathtt{w}^x \mathtt{b}^y\\
&=\mathtt{i} \frac{w_1}{w_2} \left(G_n(\mathtt{w},\mathtt{b}) -  \sum_{\substack{1\leq x_1 \leq 2n\\ (x_1,0)\in\mathtt{W},y\in\mathtt{B}}} K^{-1}((x_1,0),y) w_1^{x_1} \mathtt{b}^y \right)\\
&=\mathtt{i} \frac{w_1}{w_2} \left(G_n(\mathtt{w},\mathtt{b}) - G_n^0(w_1,b_1,b_2) \right),
\end{split}
\end{equation}
the third term gives 
\begin{equation}\label{bgfa:thm:expand:eqn3}
\begin{split}
\mathtt{i} \sum_{x,y \in\mathtt{B}} K^{-1}(x-e_2,y) \delta_{x_1<2n} \mathtt{w}^x \mathtt{b}^y& = \mathtt{i}\frac{w_2}{w_1} \sum_{x \in \mathtt{W}, x_2 \not = 2n, y\in \mathtt{B}} K^{-1}(x,y) \mathtt{w}^x \mathtt{b}^y\\
&=\mathtt{i} \frac{w_2}{w_1} \left(G_n(\mathtt{w},\mathtt{b}) -w_2^{2n}  \sum_{\substack{1\leq x_1 \leq 2n\\ (x_1,2n)\in\mathtt{W},y\in\mathtt{B}}} K^{-1}((x_1,2n),y) w_1^{x_1} \mathtt{b}^y \right)\\
&=\mathtt{i} \frac{w_2}{w_1} \left(G_n(\mathtt{w},\mathtt{b}) -w_2^{2n}G_n^n(w_1,b_1,b_2) \right),
\end{split}
\end{equation}
the fourth term gives
\begin{equation}\label{bgfa:thm:expand:eqn4}
\begin{split}
\sum_{x,y \in\mathtt{B}} K^{-1}(x-e_1,y) \delta_{x_1>0} \mathtt{w}^x \mathtt{b}^y& = w_1 w_2 \sum_{x \in \mathtt{W}, x_2 \not = 2n, y\in \mathtt{B}} K^{-1}(x,y) \mathtt{w}^x \mathtt{b}^y\\
&=w_1 w_2 \left(G_n(\mathtt{w},\mathtt{b}) -w_2^{2n}  \sum_{\substack{1\leq x_1 \leq 2n\\ (x_1,2n)\in\mathtt{W},y\in\mathtt{B}}} K^{-1}((x_1,2n),y) w_1^{x_1} \mathtt{b}^y \right)\\
&=w_1 w_2 \left(G_n(\mathtt{w},\mathtt{b}) -w_2^{2n}G_n^n(w_1,b_1,b_2)\right),
\end{split}
\end{equation}
and using~\eqref{bgfa:thm:expand:eqn1},~\eqref{bgfa:thm:expand:eqn2},~\eqref{bgfa:thm:expand:eqn3} and~\eqref{bgfa:thm:expand:eqn4},~\eqref{internalrecurrence1} becomes
\begin{equation}
\begin{split}  \label{internalrecurrencefirstincomplete}
&\left( \frac{1}{w_1 w_2} + \frac{w_1}{w_2} \mathtt{i} + \frac{w_2}{w_1} \mathtt{i} + w_1 w_2 \right) G_n(\mathtt{w},\mathtt{b}) -\left(\frac{1}{w_1 w_2} + \frac{w_1}{w_2} \mathtt{i} \right) G_n^0(w_1,b_1,b_2) \\
&-\left(w_1 w_2 + \frac{w_2}{w_1} \mathtt{i}  \right) G_n^n (w_1,b_1,b_2) w_2^{2n} = \sum_{\substack{x \in \mathtt{B} \\ y \in \mathtt{B}}} \delta_{x=y} \mathtt{w}^x \mathtt{b}^y.
\end{split}
\end{equation}
We evaluate the right-hand side of the above equation by computing explicitly the sum, namely
\begin{equation}
 \sum_{\substack{x \in \mathtt{B} \\ y \in \mathtt{B}}} \delta_{x=y} \mathtt{w}^x \mathtt{b}^y=\sum_{\substack{0\leq x_1 \leq 2n,x_1 \mod 2=0\\ 1 \leq x_2 \leq 2n-1, x_2 \mod 2=1}} w_1^{x_1} w_2^{x_2} b_1^{x_1} b_2^{x_2} = \left( \frac{1-(w_1 b_1)^{2n+2}}{1-w_1^2 b_1^2} \right) \left( \frac{ w_2 b_2 (1-(w_2 b_2)^{2n})}{1-w_2^2 b_2^2} \right)
\end{equation}
We use the above equation,  the definitions of $f_n$ and $C(w_1,w_2)$ to rewrite~\eqref{internalrecurrencefirstincomplete}.  After multiplying both sides by $w_1 w_2$ and rearranging,~\eqref{internalrecurrencefirstincomplete} becomes
\begin{equation}
  \label{internalrecurrencefirst}
	\begin{split}
C(w_1,w_2) G_n(\mathtt{w},\mathtt{b})& = w_1 w_2^2 b_2 f_{n+1}(w_1^2 b_1^2)f_{n}(w_2^2 b_2^2)+(1+w_1^2 \mathtt{i}) G_n^0(w_1,b_1,b_2)\\
&+(w_1^2+\mathtt{i})w_2^2 G_n^n(w_1,b_1,b_2) w_2^{2n} 
\end{split}
\end{equation}

We now find expressions for $G_n^0(w_1,b_1,b_2)$ and $G_n^n(w_1,b_1,b_2)$ in terms of $H_n^{i,j}(\mathtt{w},\mathtt{b})$ for $i,j \in \{0,1\}$.  
We first compute $G_n^0(w_1,b_1,b_2)$ using the recurrence given in~\eqref{internalrecurrence2}: we set $x=(x_1,0)$ fixed and  we mulitply~\eqref{internalrecurrence2} by $\mathtt{b}^y$ with $y \in \mathtt{W}$ and we sum over all white vertices $y \in \mathtt{W}$ and the white vertices $x=(x_1,0) \in \mathtt{W}$.  Each term on the left-hand side of~\eqref{internalrecurrence2} can  be written in terms of $G_n^0(w_1,b_1,b_2)$ and either  $H_n^{0,0}(\mathtt{w},\mathtt{b})$ or $H_n^{0,1}(\mathtt{w},\mathtt{b})$ by taking a sum change of variables.  We list these computations for the first four terms of~\eqref{internalrecurrence2} and then give~\eqref{internalrecurrence2} under this computation: the first term is
\begin{equation}
\begin{split}
\sum_{\substack{ (x_1,0) \in \mathtt{W} \\ y \in \mathtt{W}}} K^{-1} ((x_1,0),y+e_1) \delta_{y_2 <2n} w_1^{x_1} \mathtt{b}^y &= \frac{1}{b_1b_2} \sum_{\substack{ (x_1,0) \in \mathtt{W} \\ y_1 \not =0, y \in \mathtt{B}}} K^{-1}((x_1,0),(y_1,y_2))w_1^{x_1} b_1^{y_1} b_2^{y_2} \\
&=\frac{1}{b_1 b_2} \left( G_n^0(w_1,b_1,b_2) - H_n^{0,0}(\mathtt{w},\mathtt{b})\right),
\end{split}
\end{equation}
the second term is
\begin{equation}
\begin{split}
\mathtt{i} \sum_{\substack{ (x_1,0) \in \mathtt{W} \\ y \in \mathtt{W}}} K^{-1} ((x_1,0),y+e_2) \delta_{y_2 <2n} w_1^{x_1} \mathtt{b}^y &= \mathtt{i} \frac{b_1}{b_2} \sum_{\substack{ (x_1,0) \in \mathtt{W} \\ y_1 \not =2n, y \in \mathtt{B}}} K^{-1}((x_1,0),(y_1,y_2))w_1^{x_1} b_1^{y_1} b_2^{y_2} \\
&=\mathtt{i}\frac{b_1}{ b_2} \left( G_n^0(w_1,b_1,b_2) - H_n^{0,1}(\mathtt{w},\mathtt{b}) \right),
\end{split}
\end{equation}
the third term is
\begin{equation}
\begin{split}
 \sum_{\substack{ (x_1,0) \in \mathtt{W} \\ y \in \mathtt{W}}} K^{-1} ((x_1,0),y-e_1) \delta_{y_2 >0} w_1^{x_1} \mathtt{b}^y &=  b_1 b_2 \sum_{\substack{ (x_1,0) \in \mathtt{W} \\ y_1 \not =2n, y \in \mathtt{B}}} K^{-1}((x_1,0),(y_1,y_2))w_1^{x_1} b_1^{y_1} b_2^{y_2} \\
&= b_1 b_2 \left( G_n^0(w_1,b_1,b_2) - H_n^{0,1}(\mathtt{w},\mathtt{b})\right),
\end{split}
\end{equation}
the fourth term is
\begin{equation}
\begin{split}
\mathtt{i}\sum_{\substack{ (x_1,0) \in \mathtt{W} \\ y \in \mathtt{W}}} K^{-1} ((x_1,0),y-e_2) \delta_{y_2 >0} w_1^{x_1} \mathtt{b}^y &=\mathtt{i} \frac{b_2}{b_1} \sum_{\substack{ (x_1,0) \in \mathtt{W} \\ y_1 \not =0, y \in \mathtt{B}}} K^{-1}((x_1,0),(y_1,y_2))w_1^{x_1} b_1^{y_1} b_2^{y_2} \\
&=\mathtt{i}\frac{b_2}{b_1} \left( G_n^0(w_1,b_1,b_2) - H_n^{0,0}(\mathtt{w},\mathtt{b})\right),
\end{split}
\end{equation}
and~\eqref{internalrecurrence2} becomes
\begin{equation} \label{internalrecurrence3pre}
	\begin{split}
	&\left(\frac{1}{b_1b_2} + \frac{b_1}{b_2} \mathtt{i} + b_1 b_2 + \frac{b_2}{b_1} \mathtt{i} \right)G_n^0 (w_1,b_1,b_2) - \left(\frac{1}{b_1b_2} + \frac{b_2}{b_1} \mathtt{i} \right)H^{0,0}_n (\mathtt{w},\mathtt{b}) \\
&-\left( \frac{b_1}{b_2} \mathtt{i} +b_1b_2 \right) H^{0,1}_n(\mathtt{w},\mathtt{b}) = \frac{w_1 b_1( 1- (w_1 b_1)^{2n})}{1-(w_1 b_1)^2}
\end{split}
\end{equation}
where the right-hand side of the above equation follows from
\begin{equation}
\sum_{(x_1,0) \in \mathtt{W}, (y_1,y_2) \in \mathtt{W}} \delta_{(x_1,0)=(y_1,y_2)} w_1^{x_1} b_1^{y_1}b_2^{y_2}=\frac{ w_1 b_1(1-w_1^{2n} b_1^{2n})}{1-w_1^2 b_1^2}
\end{equation}
By multiplying~\eqref{internalrecurrence3pre}  by $b_1 b_2$ and using the definition of $f_n$ and $C(b_1,b_2)$, we obtain
\begin{equation} \label{internalrecurrence3}
\begin{split}
C(b_1,b_2) G_n^0(w_1,b_1,b_2) &= w_1 b_1 f_n(w_1^2 b_1^2) b_1b_2+(1+b_2^2 \mathtt{i}) H_n^{0,0} (\mathtt{w},\mathtt{b}) \\
&+ b_1^2(b_2^2+\mathtt{i} ) H_n^{0,1}(\mathtt{w},\mathtt{b}).
\end{split}
\end{equation}

To find an expression for $G_n^n(w_1,b_1,b_2)$, we mirror the computation used for finding an expression for $G_n^0(w_1,b_1,b_2)$.  From doing this computation, we obtain
\begin{equation}
\begin{split}
\label{internalrecurrence4}
 C(b_1,b_2) G_n^n(w_1,b_1,b_2)w_2^{2n} &= w_2^{2n} b_2^{2n} w_1 b_1 b_1 b_2 f_n(w_1^2 b_1^2)+(1+b_2^2 \mathtt{i} ) H_n^{1,0}(\mathtt{w},\mathtt{b})\\
& + b_1^2 (b_2^2+\mathtt{i}) H_{n}^{1,1} (\mathtt{w},\mathtt{b}).
\end{split} 
\end{equation}

We now substitute the expressions for $G_n^0(w_1,b_1,b_2)$ and $G_n^{2n}(w_1,b_1,b_2)w_2^{2n}$ from~\eqref{internalrecurrence3} and~\eqref{internalrecurrence4} into~\eqref{internalrecurrencefirst}, which gives the formula given in the lemma.

\end{proof}

\subsection{Computing the sign of the boundary generating function for each boundary} \label{subsection Computing sign}

The determinant of a Kasteleyn matrix computes the partition function only up to an overall sign.  It is important to compute this sign whenever two such determinants appear in the same formula (as they do throughout our work).  To do this, it suffices to compute the contribution of any one perfect matching to $\det K$.

In this subsection, we complete the proof of Theorem~\ref{thm:uniform} by computing the sign of the boundary generating function for each boundary.  We prove the following lemma

\begin{lemma} \label{uniform:lem:sign}
For $i,j \in \{0,1\}$, we have
\begin{equation}
H_n^{i,j}(\mathtt{w},\mathtt{b})=F_n^{i,j}(\mathtt{w},\mathtt{b})
\end{equation}	
where $H_n^{i,j}(\mathtt{w},\mathtt{b})$ is defined in~\eqref{bgfa:def:Hn} and $F_n^{0,0}(\mathtt{w},\mathtt{b}),F_n^{0,1}(\mathtt{w},\mathtt{b}), F_n^{1,0}(\mathtt{w},\mathtt{b})$ and $F_n^{1,1}(\mathtt{w},\mathtt{b})$ are given in equations~\eqref{unif_bfga:h00},~\eqref{unif_bfga:h02n},\eqref{unif_bfga:h2n0} and~\eqref{unif_bfga:h2n2n} respectively.
\end{lemma}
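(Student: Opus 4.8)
The plan is to treat the lemma as the sign-resolution step: for each of the four boundary pairs $(i,j)\in\{0,1\}^2$ I must pin down both the \emph{magnitude} and the \emph{complex phase} of the entries of $K^{-1}$ summed in $H_n^{i,j}$, and then recognize the resulting generating function as $F_n^{i,j}$. For the magnitude I would start from the boundary generating function of Lemma~\ref{lem:oneperiodicbgf}: extracting the coefficient of $z^n$ from $Z_{\partial}(w,b,1,z)=z/\big((1-z)(2-z(1+b)(1+w))\big)$ gives
\begin{equation}
\sum_{i,j=0}^{n-1}\frac{Z(i,j,1,n)}{Z_n}\,w^i b^j=\frac12\, f_n\!\left(\frac{(1+w)(1+b)}{2}\right).
\end{equation}
Since $|K^{-1}((2i+1,0),(0,2j+1))|=Z(i,j,1,n)/Z_n$ and the \emph{unoriented} Aztec diamond is invariant under reflection in each of its two diagonals, the magnitudes of the coefficients appearing in all four $H_n^{i,j}$ are governed by this single series. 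What remains is to compute, for each boundary, the fourth root of unity $\phi(\mathtt{w},\mathtt{b})$ with $K^{-1}(\mathtt{w},\mathtt{b})=\phi(\mathtt{w},\mathtt{b})\,Z(\mathtt{w},\mathtt{b})/Z_n$.

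For the phase I would use the observation recorded just before the lemma, that it suffices to evaluate the contribution of a single perfect matching. Fixing a reference matching $M_0$ of the full diamond and any matching $M$ of the diamond with $\mathtt{w},\mathtt{b}$ deleted, the superposition $M\cup M_0$ is a union of doubled edges and a single path $\gamma$ from $\mathtt{w}$ to $\mathtt{b}$; because removing the two boundary vertices preserves the Kasteleyn orientation, the product of the Kasteleyn weights along $\gamma$ is independent of $M$, so $\phi(\mathtt{w},\mathtt{b})$ is read off from one such path. For the $(0,0)$ boundary, with $\mathtt{w}=(2i+1,0)$ and $\mathtt{b}=(0,2j+1)$, taking $\gamma$ along the bottom and left boundary and counting the $e_2$-type (i.e.\ $\mathtt{i}$-weighted) edges yields $\phi=\mathtt{i}^{\,i+j-1}$, hence
\begin{equation}
K^{-1}\big((2i+1,0),(0,2j+1)\big)=\mathtt{i}^{\,i+j-1}\,\frac{Z(i,j,1,n)}{Z_n}.
\end{equation}
Assembling $H_n^{0,0}=\sum_{i,j}K^{-1}((2i+1,0),(0,2j+1))\,w_1^{2i+1}b_2^{2j+1}$ and noting that this phase is exactly the substitution $w\mapsto\mathtt{i}w$, $b\mapsto\mathtt{i}b$ together with an overall factor $-\mathtt{i}$, so that $F_n(w,b)=-\mathtt{i}\cdot\tfrac12 f_n\big(\tfrac{(1+\mathtt{i}w)(1+\mathtt{i}b)}{2}\big)$, identifies the sum with $F_n(w_1^2,b_2^2)w_1 b_2=F_n^{0,0}$, which is the claim in the $(0,0)$ case.

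The three remaining cases I would obtain by transporting this computation under the reflections $x_1\mapsto 2n-x_1$ and $x_2\mapsto 2n-x_2$, which interchange the left/right and the bottom/top boundaries. These reflections fix the magnitude series above (reducing each coefficient to $Z(i,j,1,n)/Z_n$) but, because they exchange the $e_1$- and $e_2$-type edges, they rotate the Kasteleyn phase; tracking this rotation converts the $(0,0)$ answer into the substitutions $w_1^2\mapsto -1/w_1^2$, $-w_1^2$, or $1/w_1^2$ and $b_2^2\mapsto -b_2^2$, $-1/b_2^2$, or $1/b_2^2$, together with the boundary prefactors $w_2^{2n}$, $b_1^{2n}$ and the extra $\mathtt{i}$ appearing in \eqref{unif_bfga:h02n}, \eqref{unif_bfga:h2n0} and \eqref{unif_bfga:h2n2n}. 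Verifying each reflected phase against these closed forms completes the proof, and the outcome can be cross-checked by feeding the four $F_n^{i,j}$ into Lemma~\ref{uniform:lem:movevertices} to recover Theorem~\ref{thm:uniform}. The step I expect to be the main obstacle is precisely this phase bookkeeping: correctly counting the power of $\mathtt{i}$ along the boundary path at each corner and correctly tracking how the diagonal reflections act on the $e_1/e_2$ edge types, since a single misplaced factor of $\mathtt{i}$ or sign propagates identically through all four formulas. I would guard against this with the smallest nontrivial check, $n=1$: there the order-one diamond is a single $4$-cycle with $K=\left(\begin{smallmatrix}\mathtt{i}&1\\1&\mathtt{i}\end{smallmatrix}\right)$, so $K^{-1}((1,0),(0,1))=-\mathtt{i}/2$, in agreement with $\mathtt{i}^{-1}Z(0,0,1,1)/Z_1$ and with $F_1^{0,0}=-\tfrac{\mathtt{i}}{2}w_1 b_2$.
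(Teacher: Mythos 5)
Your proposal is correct and follows essentially the same route as the paper: extract the $z^n$ coefficient of the boundary generating function of Lemma~\ref{lem:oneperiodicbgf} for the magnitudes, determine the fourth-root-of-unity phase of $K^{-1}$ on the bottom--left boundary, and transport the result to the other three boundaries via the diagonal reflections (which swap the $e_1$/$e_2$ edge types and hence the roles of the weights $1$ and $\mathtt{i}$). The only substantive difference is in how the phase is obtained --- you read $\mathtt{i}^{\,i+j-1}=-\mathtt{i}^{\,i+j+1}$ off the superposition path between a reference matching and a matching of the punctured graph, whereas the paper starts from the all-vertical matching and inductively rotates dimers around a face --- and these give the same value, as your $n=1$ check confirms.
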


\begin{proof}

We first remark that $H_n^{i,j}(\mathtt{w},\mathtt{b})$ denotes a boundary generating functions of an Aztec diamond of size $n$ with the Kasteleyn orientation for each $i,j\in \{0,1\}$.
In Lemma~\ref{lem:oneperiodicbgf}, we computed the boundary generating function $Z_{\partial}(w,b,1,z)$ where the parameter $z$ marked the size of the Aztec diamond.  To find the boundary generating function of an Aztec diamond of size $n$, we extract out the $n^{th}$ coefficient of $z$ in $Z_{\partial}(w,b,1,z)$. This is given by
\begin{equation} \label{bgfa:thmproof:signs}
	\frac{1}{2} \frac{ 1-\left(\frac{(1+b)(1+w)}{2} \right)^n}{ 1-\left(\frac{(1+b)(1+w)}{2} \right)}
\end{equation}
Recall that we have $|K^{-1}((2i+1,0),(0,2j+1))|= Z (i,j,1,n)/Z_n$ for $0\leq i,j\leq n-1$.  This follows from the fact that the Kasteleyn orientation remains the same if we remove two vertices on the same face and so each signed count of perfect matchings on the graph with the two removed vertices has the same sign.  We first compute the sign of $K^{-1}((1,0),(0,1))$.   Consider the Aztec diamond with only vertical dimers (parallel to $e_2$). This has weight $\mathtt{i}^{n(n+1)}$.  After removing the edge $((1,0),(1,0))$, the product of edge weights is given by $\mathtt{i}^{n(n+1)-1}$.  This means that $K^{-1}((1,0),(0,1))$ has sign $-\mathtt{i}$.  We proceed in computing $K^{-1}((2i+1,0),(0,2j+1))$ from $K^{-1}((2i+1,0),(0,2j-1))$ by induction: we add a vertex at $(-1,2j)$ with edges $((-1,2j),(0,2j-1))$ and $((-1,2j),(0,2j+1))$  with the edges given the Kasteleyn orientation (i.e. edges parallel to $e_2$ have weight $\mathtt{i})$.  It follows that for $K^{-1}((2i+1,0),(0,2j-1))$ we have the edge $((-1,2j),(0,2j-1))$ matched.  To obtain $K^{-1}((1,0),(0,2j+1))$ we rotate the edges around the face $(0,2j)$ which contributes a factor of $(-1) \mathtt{i}^{-1}$ where the $(-1)$ is from applying the rotation and $\mathtt{i}^{-1}$ is from having one less vertical edge.  Therefore, we have that the sign of $K^{-1}((2i+1,0),(0,2j+1))$ is given by the sign of $K^{-1}((2i+1,0),(0,2j-1))$ multiplied by $\mathtt{i}$.  By reflection we can apply the same argument for computing the sign of $K^{-1}((2i+1,0),(0,2j+1))$ from $K^{-1}((2i-1,0),(0,2j+1))$ but we now add a vertex at $(2i,-1)$ instead.  It follows from the above argument that the sign of $K^{-1}((2i+1,0),(0,2j+1))$ is equal to $-\mathtt{i}^{i+j+1}$.
 We can substitute this sign back into~\eqref{bgfa:thmproof:signs} noting that the coefficient of $w^{i} b^j$ in~\eqref{bgfa:thmproof:signs} corresponds to removing the vertices $(2i+1,0)$ and $(0,2j+1)$.  Under this operation,~\eqref{bgfa:thmproof:signs} is exactly $F_n(w,b)$.  We conclude that $F_n(w,b)$ is the boundary generating function of the uniform Aztec diamond of size $n$ with the Kasteleyn orientation, that is
\begin{equation}
	F_n(w,b)=\sum_{\substack{0 \leq i \leq n-1 \\ 0 \leq j \leq n-1}} K^{-1}((2i+1,0),(0,2j+1)) w^i b^j
\end{equation}
We compare the above equation with $H^{0,0}_n(\mathtt{w},\mathtt{b})$ and we find
\begin{equation} 
	H^{0,0}_n(\mathtt{w},\mathtt{b})= F_n(w_1^2,b_2^2) w_1 b_2,
\end{equation}
because the coefficient of  $w^i b^j$ in $F_n(w,b)$ is $K^{-1}((2i+1,0),(0,2j+1))$.  From~\eqref{bfga:h00}, the right-hand side of the above equation is equal to $F^{0,0}_n(\mathtt{w},\mathtt{b})$ and so we have shown $H^{0,0}_n(\mathtt{w},\mathtt{b})=F^{0,0}_n(\mathtt{w},\mathtt{b})$.
We now have to consider the boundary generating functions on the different boundaries to compute the remaining terms $H^{0,1}_n(\mathtt{w},\mathtt{b})$, $H^{1,0}_n(\mathtt{w},\mathtt{b})$ and $H^{1,1}_n(\mathtt{w},\mathtt{b})$.

To find $H^{0,1}_n(\mathtt{w},\mathtt{b})$, we first consider the boundary generating function (with Kasteleyn orientation) where the horizontal edges have weight $\mathtt{i}$ and vertical edges have weight $1$. By the symmetry of the Aztec diamond, we use the boundary generating function (with no orientation) for a uniform weighted  Aztec diamond of size $n$ given in~\eqref{bgfa:thmproof:signs} and compute the sign from the orientation using the argument given above for attaching a sign to the boundary generating function. For this particular orientation, we find that the coefficient of $w^i b^j$ in~\eqref{bgfa:thmproof:signs} is  given by $\mathtt{i}^{3i+3j}$.  This means that the boundary generating function (with Kasteleyn orientation) with horizontal edges given weight $\mathtt{i}$ and vertical edges with weight $1$ is given by $\mathtt{i} F_n(-w,-b)$. 
 The coefficient of $w^i b^j$ in $\mathtt{i}F_n(-w,-b)$ is exactly equal to $K^{-1}((2n-1-2i,0),(2n,2j+1))$ and so we have
\begin{equation}
 \mathtt{i} F_n(-w,-b)=\sum_{\substack{0\leq i \leq n-1 \\ 0 \leq j \leq n-1}} K^{-1} ((2i+1,0),(2n,2j+1)) w^{n-1-i} b^j.
\end{equation}
We compare the above equation with $H_n^{0,1}(\mathtt{w},\mathtt{b})$ and we find
\begin{equation}
	H^{0,1}_n(\mathtt{w},\mathtt{b})= F_n(-1/w_1^2,-b_2^2) w_1^{2n-1} b_1^{2n} b_2 \mathtt{i}, 
\end{equation}
and from~\eqref{bfga:h02n}, the right-hand side of the above equation is equal to $F^{0,1}_n(\mathtt{w},\mathtt{b})$. We conclude that $H^{0,1}_n (\mathtt{w}, \mathtt{b})= F^{0,1}_n(\mathtt{w},\mathtt{b})$. 

To find $H^{1,0}_n(\mathtt{w},\mathtt{b})$, we first consider the boundary generating function (with Kasteleyn orientation) where the horizontal edges have weight $\mathtt{i}$ and vertical edges have weight $1$ which we computed in the previous paragraph and is given by $\mathtt{i} F_n(-w,-b)$.  The coefficient of $w^i b^j$ in $\mathtt{i}F_n(-w,-b)$ is exactly equal to $K^{-1}((2i+1,2n),(0,2n-2j-1))$ and so we have
\begin{equation}
\mathtt{i}F_n(-w,-b)= \sum_{\substack{0 \leq i \leq n-1 \\ 0 \leq j \leq n-1}} K^{-1}((2i+1,2n),(0,2j+1)) w^i b^{n-1-j} 
\end{equation}
We compare the above equation with $H_n^{1,0}(\mathtt{w},\mathtt{b})$ and we find
\begin{equation}
	H^{1,0}_n(\mathtt{w},\mathtt{b})= F_n(-w_1^2,-1/b_2^2) w_1 w_2^{2n} b_2^{2n-1} \mathtt{i}.
\end{equation}
From~\eqref{bfga:h2n0}, the right-hand side of the above equation is equal to $F^{1,0}_n(\mathtt{w},\mathtt{b})$.  We  conclude that $H^{1,0}_n(\mathtt{w},\mathtt{b}) =F^{1,0}_n(\mathtt{w},\mathtt{b})$. 
 
To find $H^{1,1}_n(\mathtt{w},\mathtt{b})$, we  use the above argument for computing the sign when the horizontal edges have weight $1$ and the vertical edges have weight $\mathtt{i}$.
By following the computation given for $H^{0,0}_n(\mathtt{w},\mathtt{b})$,  the sign of $K^{-1}((2n-2i-1,2n),(2n,2n-2j-1))$ is given by $-\mathtt{i}^{i+j+1}$.
By the symmetry of the Aztec diamond, it follows that $K^{-1}((2i+1,2n),(2n,2j+1))$ is equal to the coefficient of $w^{n-1-i} b^{n-1-j}$ in $F_n(w,b)$ which means that
\begin{equation}
	F_n(w,b) = \sum_{ \substack{0 \leq i \leq n-1 \\ 0\leq j \leq n-1}} K^{-1}((2i+1,2n),(2n,2j+1)) w^{n-1-i} b^{n-1-j}.
\end{equation}
We compare the above equation with $H_n^{1,1}(\mathtt{w},\mathtt{b})$ and we find
\begin{equation} \label{bfg:h2n2n}
	H^{1,1}_n(\mathtt{w},\mathtt{b})= F_n(1/w_1^2,1/b_2^2) w_1^{2n-1} w_2^{2n} b_1^{2n} b_2^{2n-1}. 
\end{equation}
From~\eqref{bfga:h2n2n}, the right-hand side of the above equation is equal to $F^{1,1}_n(\mathtt{w},\mathtt{b})$. We conclude that $H^{1,1}_n(\mathtt{w},\mathtt{b})= F^{1,1}_n(\mathtt{w},\mathtt{b})$. 

\end{proof}

The computation for the sign of $K^{-1}((2i+1,0),(0,2j+1))$ (and other boundary values of the inverse Kasteleyn matrix) is independent of the edge weights and only depends on which  Kasteleyn orientation we choose. As all of the Aztec diamonds considered in this paper have the same Kasteleyn orientation,  from the proof of Lemma~\ref{uniform:lem:sign} we find
\begin{equation}
\begin{split}
&\mathrm{sgn} (K^{-1}((2i+1,0),(0,2j+1)))=\mathrm{sgn}(K^{-1}((2n-2i-1,2n),(2n,2n-2j-1))=-\mathtt{i}^{i+j+1}\\
&\mathrm{sgn} (K^{-1}((2n-2i-1,0),(2n,2j+1)))=\mathrm{sgn}(K^{-1}((2i+1,2n),(0,2n-2j-1))=\mathtt{i}^{3i+3j}
\end{split}
\end{equation}

We now prove Theorem~\ref{thm:uniform}.

\begin{proof}[Proof of Theorem~\ref{thm:uniform}]

In Lemma~\ref{uniform:lem:sign}, we found the expressions for $H^{i,j}_n(\mathtt{w},\mathtt{b})$ for all $i,j \in \{0,1\}$.  We substitute these expressions for $H^{i,j}_n(\mathtt{w},\mathtt{b})$ into the formula given in Lemma~\ref{uniform:lem:movevertices} which gives the generating function for $K^{-1}$.  

\end{proof}

\section{Biased One-Periodic Case} \label{section One periodic case}

The inverse Kasteleyn matrix for biased, one-periodic tilings was first computed in \cite{CJY:12}.  The formula was guessed based on the one-to-two lifting from the interlaced particle system \cite{Joh:05}.   We are now able to do this in a much more systematic way, following the framework laid out in Section~\ref{section Uniform case}.  As before, we compute the inverse Kasteleyn matrix by computing a multivariate generating function for its entries.  The formula strictly generalizes that of Theorem~\ref{thm:uniform}, adding one new parameter: $a$, the bias.

The Kasteleyn matrix for the one-periodic Aztec diamond is given by
\begin{equation} \label{bgfa:K}
      K_a(x,y)=\left\{ \begin{array}{ll}
              1 & \mbox{if } x-y=\pm e_1 \\ 
              a  \mathtt{i} &\mbox{if } x-y=\pm e_2\\
              0 &\mbox{otherwise} 
             \end{array} \right.
\end{equation}
for $x \in \mathtt{B}$ and $y \in \mathtt{W}$.  Unless stated otherwise, we shall always assume that $x=(x_1,x_2)$ and $y=(y_1,y_2)$

Let $f_n(t)=(1-t^n)/(1-t)$ denote the sum of the geometric series $1+t+\cdots+t^{n-1}$.  Let
\begin{equation}  \nonumber
	F_n(w,b,a)=-  \mathtt{i}/(1 + a^2) a  f_n \left( \frac{(1 + b a  \mathtt{i}) (1 + w a \mathtt{i} )}{1 + a^2} \right).
\end{equation}
Further, for $\mathtt{w}=(w_1,w_2)$ and $\mathtt{b}=(b_1,b_2)$ set 
\begin{equation} \label{bfga:h00}
	F^{0,0}_n( \mathtt{w},\mathtt{b},a)= F_n(w_1^2,b_2^2,a) w_1 b_2,
\end{equation}
\begin{equation}\label{bfga:h02n}
	F^{0,1}_n(\mathtt{w},\mathtt{b},a)= F_n(-1/w_1^2,-b_2^2,a^{-1}) w_1^{2n-1} b_1^{2n} b_2 \mathtt{i}/a, 
\end{equation}
\begin{equation}\label{bfga:h2n0}
	F^{1,0}_n(\mathtt{w},\mathtt{b},a)= F_n(-w_1^2,-1/b_2^2,a^{-1}) w_1 w_2^{2n} b_2^{2n-1} \mathtt{i}/a
\end{equation}
and
\begin{equation} \label{bfga:h2n2n}
	F^{1,1}_n(\mathtt{w},\mathtt{b},a)= F_n(1/w_1^2,1/b_2^2,a) w_1^{2n-1} w_2^{2n} b_1^{2n} b_2^{2n-1}. 
\end{equation}

Let 
\begin{equation}
	G_n(\mathtt{w},\mathtt{b})=\sum_{x \in \mathtt{W}, y \in \mathtt{B}} \mathtt{w}^x \mathtt{b}^y K_a^{-1}(x,y)
\end{equation}
where $\mathtt{w}^x=w_1^{x_1} w_2^{x_2}$ and $\mathtt{b}^y=b_1^{y_1} b_2^{y_2}$ denote the generating function of the inverse Kasteleyn matrix of the one-periodic weighting of the Aztec diamond with Kasteleyn orientation given by multiplying all vertical edges (the vector $e_2$ is vertical) by $\mathtt{i}$.

\begin{thma} \label{thm:oneperiodic}
\begin{equation} 
\begin{split}
&G_n(\mathtt{w},\mathtt{b})= \frac{w_1 w_2^2 b_2  f_{n+1}(w_1^2 b_1^2) f_n(w_2^2 b_2^2)}{C(w_1,w_2)} \\
&+(1+\mathtt{i} a w_1)\frac{ (1+ \mathtt{i}a b_2^2)F^{0,0}_n+b_1^2\left(b_2w_1f_n(b_1^2 w_1^2)+(\mathtt{i} a +b_2^2) F^{0,1}_n \right)}{C(w_1,w_2) C(b_1,b_2)}  \\
&+(\mathtt{i} a +w_1^2)w_2^2 \frac{ b_1^2 b_2^{2n+1} w_1 w_2^{2n} f_n(b_1^2 w_1^2) + (1+ \mathtt{i} a b_2^2)F_n^{1,0} +b_1^2(\mathtt{i} a+b_2^2) F_n^{1,1}}{C(w_1,w_2) C(b_1,b_2)}
\end{split}
\end{equation}
where $C(r_1,r_2)=1+r_1^2 r_2^2+ \mathtt{i} a(r_1^2+r_2^2)$ and $F^{i,j}_n=F^{i,j}_n(\mathtt{w},\mathtt{b},a)$ for $i,j\in \{0,1\}$.
\end{thma}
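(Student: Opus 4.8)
The plan is to follow \emph{verbatim} the three-step framework used to prove Theorem~\ref{thm:uniform}: (i) compute the unoriented boundary generating function, now carrying the bias $a$; (ii) use the matrix identities $K_a\cdot K_a^{-1}=\mathbbm{I}$ and $K_a^{-1}\cdot K_a=\mathbbm{I}$ to express $G_n(\mathtt{w},\mathtt{b})$ as a linear combination of the boundary generating functions $H_n^{i,j}(\mathtt{w},\mathtt{b})$; and (iii) attach the correct Kasteleyn sign to each boundary so that $H_n^{i,j}=F_n^{i,j}$. The only genuinely new ingredient is the bookkeeping of powers of $a$ and of $1+a^2$, since the Kasteleyn orientation is identical to the uniform case and so the signs are unchanged.

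For Step~(i) I would redo the computation of Lemma~\ref{lem:oneperiodicrecurrence} with vertical edges of weight $a$ and horizontal edges of weight $1$. Applying urban renewal to the $n^2$ faces with centres $(2k+1,2l+1)$ now multiplies the partition function by $(1+a^2)^{n^2}$ rather than $2^{n^2}$, since the four edges around each such face are $1,a,1,a$ so that $ac+bd=1+a^2$; the new diagonal and square edges acquire weights rational in $a$, and a gauge transformation at the white vertices restores a genuine biased Aztec diamond of size $n-1$, giving $Z_n=(1+a^2)^n Z_{n-1}$ and hence $Z_n=(1+a^2)^{n(n+1)/2}$. Tracking the auxiliary edges exactly as before produces a recurrence for $Z(i,j,a,n)/Z_n$ with $a$-dependent coefficients in place of the factors $\tfrac12$ of~\eqref{oneperiodic:recurrence}. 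Multiplying by $w^ib^jz^n$ and summing, as in Lemma~\ref{lem:oneperiodicbgf}, yields a rational boundary generating function $Z_\partial(w,b,a,z)$; extracting the coefficient of $z^n$ gives $\tfrac{a}{1+a^2}f_n$ evaluated at the quadratic $(1+aw)(1+ab)/(1+a^2)$, which will become $F_n(w,b,a)$ after the sign insertion $w\mapsto\mathtt{i}w$, $b\mapsto\mathtt{i}b$ together with an overall factor $-\mathtt{i}$.

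For Step~(ii) the five-term relations analogous to~\eqref{internalrecurrence1} and~\eqref{internalrecurrence2} now carry $a\mathtt{i}$ on the vertical edges, by~\eqref{bgfa:K}. Repeating the generating-function manipulation of Lemma~\ref{uniform:lem:movevertices} term by term, the operator coefficient becomes $C(w_1,w_2)=1+w_1^2w_2^2+\mathtt{i}a(w_1^2+w_2^2)$, and the factors $(1+\mathtt{i}w_1^2)$ and $(\mathtt{i}+w_1^2)$ are replaced by $(1+\mathtt{i}aw_1^2)$ and $(\mathtt{i}a+w_1^2)$; solving for $G_n^0$ and $G_n^n$ in terms of the $H_n^{i,j}$ and substituting produces precisely the claimed formula, with $H_n^{i,j}$ in place of $F_n^{i,j}$. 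Step~(iii) is then immediate from the remark following Lemma~\ref{uniform:lem:sign}: the sign of each boundary entry of $K_a^{-1}$ depends only on the orientation, which is the same as in Section~\ref{section Uniform case}, so the signs $-\mathtt{i}^{i+j+1}$ and $\mathtt{i}^{3i+3j}$ are reused unchanged, converting the unoriented boundary generating functions into $F_n^{0,0},\dots,F_n^{1,1}$.

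The main obstacle will be the asymmetric $a$-dependence of the four boundary terms. Whereas in the uniform case all four $F_n^{i,j}$ came from a single function $F_n$, here reflecting the Aztec diamond across a diagonal interchanges the horizontal and vertical edge weights and so inverts the bias: the mixed-boundary terms $F_n^{0,1}$ and $F_n^{1,0}$ must be built from $F_n(\cdot,\cdot,a^{-1})$ together with a normalising factor $\mathtt{i}/a$, while $F_n^{0,0}$ and $F_n^{1,1}$ retain the argument $a$. Making these powers of $a$ and $a^{-1}$ match requires care in the gauge transformations of Step~(i) and in the reflection arguments of Step~(iii); once they are verified, substituting $H_n^{i,j}=F_n^{i,j}$ into the formula of Step~(ii) completes the proof exactly as for Theorem~\ref{thm:uniform}.
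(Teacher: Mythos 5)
Your proposal is correct and follows essentially the same route as the paper: the same three-step framework (biased boundary recurrence with urban renewal factor $1+a^2$ and gauge transformation giving $Z_n=(1+a^2)^nZ_{n-1}$, the five-term relations from $K_a\cdot K_a^{-1}=K_a^{-1}\cdot K_a=\mathbbm{I}$ with $\mathtt{i}\mapsto a\mathtt{i}$, and the unchanged Kasteleyn signs), including the key subtlety that the mixed boundaries $F_n^{0,1}$ and $F_n^{1,0}$ require interchanging the horizontal and vertical weights, i.e.\ the substitution $a\mapsto a^{-1}$ together with the gauge factor $\mathtt{i}/a$. The only organizational difference is that the paper routes the first step through a general-weight boundary recurrence (Lemma~\ref{general:lem:recurrence}) set up for reuse in later sections, whereas you redo the biased computation directly; the specialization is the same calculation.
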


The proof of Theorem~\ref{thm:oneperiodic} is given in the next but one subsection.  In the next subsection, we introduce a general boundary recurrence relation which is a generalization of the methods used in Lemma~\ref{lem:oneperiodicrecurrence}.  This general boundary recurrence is used in the proof of Theorem~\ref{thm:oneperiodic} and the subsequent proofs of the other weightings.

\subsection{General boundary recurrence} \label{section:general}
In this subsection, we introduce a  general boundary recurrence relation.  Essentially, this is built by a  generalization of part of the proof of Lemma~\ref{lem:oneperiodicrecurrence} by using an Aztec diamond graph with arbitrary edge weights.
Although the notation for this subsection is self-contained, we will refer to it later in the paper. 

Let $w_{0,0}(k,l)$,$w_{0,1}(k,l)$,$w_{1,0}(k,l)$ and $w_{1,1}(k,l)$ be the edge weights of the four edges surrounding the face whose center has coordinates $(2k+1,2l+1)$ for $ 0 \leq k,l \leq n-1$ for an Aztec diamond of size $n$.  That is, the weights of the edges $((2k,2l+1),(2k+1,2l+2))$, $((2k+1,2l+2),(2k+2,2l+1))$,$((2k,2l+1),(2k+1,2l))$ and $((2k+1,2l),(2k+2,2l+1))$ are given by $w_{0,0}(k,l)$,$w_{0,1}(k,l)$,$w_{1,0}(k,l)$ and $w_{1,1}(k,l)$ respectively for $0 \leq k,l \leq n-1$. We denote the urban renewal factor of the face whose center has coordinates $(2k+1,2l+1)$ by $\Delta(k,l)$ for $0 \leq k,l \leq n-1$. Explicitly, we have
$$\Delta(k,l)=w_{0,0}(k,l) w_{1,1}(k,l)+w_{0,1}(k,l) w_{1,0}(k,l)$$
for $0\leq k,l \leq n-1$.
Let $\mathcal{Z}_n$ be the number of weighted dimer covers of the Aztec diamond of size $n$ (with the above weighting) and let $\mathcal{Z}_n(i,j)$ denote the weighted number of  dimer coverings of the Aztec diamond of size $n$ (with the above weighting) with the vertices $(2i+1,0)$ and $(0,2j+1)$ removed from the graph, for fixed $0 \leq i,j \leq n-1$.
 
To the above Aztec diamond, we apply urban renewal to the faces with centers $(2k+1,2l+1)$ for all  $0 \leq k,l \leq n-1$ (i.e. apply urban renewal $n^2$ times), removal of all pendant edges and edge contraction of each edge which is incident to a vertex incident to exactly two edges which gives an Aztec diamond of size $n-1$.  This deformation is detailed in the proof of Lemma~\ref{lem:oneperiodicrecurrence} --- see computing the partition function recurrence. For this new graph, the edge weights around the face whose center has coordinates $(2k+1,2l+1) $ for $0 \leq k,l \leq n-2$ are given by $w_{0,0}(k,l+1)/\Delta(k,l+1)$, $w_{0,1}(k+1,l+1)/\Delta(k+1,l+1)$, $w_{1,0}(k,l)/\Delta(k,l)$ and $w_{1,1}(k+1,l)/\Delta(k+1,l)$.  That is, the weights of the edges $((2k,2l+1),(2k+1,2l+2))$, $((2k+1,2l+2),(2k+2,2l+1))$,$((2k,2l+1),(2k+1,2l))$ and $((2k+1,2l),(2k+2,2l+1))$ are given by $w_{0,0}(k,l+1)/\Delta(k,l+1)$, $w_{0,1}(k+1,l+1)/\Delta(k+1,l+1)$, $w_{1,0}(k,l)/\Delta(k,l)$ and $w_{1,1}(k+1,l)/\Delta(k+1,l)$ respectively for $0 \leq k,l \leq n-2$. 
This will be shown in the proof of the lemma given below.
For this graph, let $\tilde{\mathcal{Z}}_{n-1}$ denote the number of weighted dimer covers and let $\tilde{\mathcal{Z}}_{n-1}(i,j)$ denote the number (weighted) of dimer covers when removing the vertices $(2i+1,0)$ and $(0,2j+1)$ from this graph for fixed $0 \leq i,j \leq n-2$.

\begin{lemma} \label{general:lem:recurrence}
For $\mathcal{Z}_n$,$\tilde{\mathcal{Z}}_n$,$\mathcal{Z}_{n}(i,j)$ and $\tilde{\mathcal{Z}}_{n}(i,j)$ be given above. We have
\begin{equation}\label{general:eqn:partition}
\mathcal{Z}_n =  \tilde{\mathcal{Z}}_{n-1}\prod_{k,l=0}^{n-1} \Delta (k,l)
\end{equation} 
and for fixed $0 \leq i,j \leq n-1$,
\begin{equation} \label{general:eqn:recurrence}
\begin{split}
\mathcal{Z}_n(i,j)&=\frac{ w_{0,1}(0,0)}{\Delta (0,0)} \tilde{\mathcal{Z}}_{n-1}\prod_{r,s=0}^{n-1} \Delta (r,s) \\
+& \left(\prod_{r,s=0}^{n-1} \Delta (r,s) \right)
\sum_{ \substack{k \in \{i-1,i\} \\ {l \in \{j-1,j\}}}} \frac{w_{0,i-k}(i,0)}{\Delta(i,0)} \frac{w_{1+l-j,1}(0,j)}{\Delta(0,j)}
\tilde{\mathcal{Z}}_{n-1} (k,l) \mathbbm{I}_{0 \leq k \leq n-2} \mathbbm{I}_{0 \leq l \leq n-2}
\end{split}
\end{equation}
\end{lemma}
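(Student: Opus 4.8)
The plan is to rerun the three-part deformation from the proof of Lemma~\ref{lem:oneperiodicrecurrence} --- urban renewal on all $n^2$ faces with odd--odd centers, then edge contraction, then removal of pendant edges --- but now carrying the arbitrary weights $w_{0,0}(k,l),w_{0,1}(k,l),w_{1,0}(k,l),w_{1,1}(k,l)$ through each step rather than the constant weight one. For \eqref{general:eqn:partition} I would first observe that the faces with centers $(2k+1,2l+1)$ are pairwise edge-disjoint (they meet only at corner vertices), so urban renewal can be applied to all of them; by the identity $Z_H=(ac+bd)Z_{H^*}$ recalled in Section~\ref{section Prerequisites}, the renewal at face $(k,l)$ multiplies the partition function by $\Delta(k,l)$, so the $n^2$ renewals together contribute $\prod_{k,l=0}^{n-1}\Delta(k,l)$. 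Since contraction and pendant removal leave the partition function unchanged, it remains only to check that the deformed graph is an Aztec diamond of size $n-1$ carrying precisely the weights listed just before the lemma, after which \eqref{general:eqn:partition} is immediate.

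That last check is the combinatorial heart of Step~1. Writing the four weights around a face in clockwise order as $w_{0,0},w_{0,1},w_{1,1},w_{1,0}$, urban renewal sends each edge to the weight of its opposite edge divided by $\Delta$; thus on the small square of old face $(k',l')$ the edges originally weighted $w_{0,0},w_{0,1},w_{1,1},w_{1,0}$ acquire weights $w_{1,1}/\Delta,\,w_{1,0}/\Delta,\,w_{0,0}/\Delta,\,w_{0,1}/\Delta$ (all arguments $(k',l')$). After the squares shrink toward their centers and the intervening degree-two vertices are contracted, the upper-left, upper-right, lower-left and lower-right edges of the new face centered at $(2k+1,2l+1)$ are inherited respectively from the lower-right edge of old face $(k,l+1)$, the lower-left edge of $(k+1,l+1)$, the upper-right edge of $(k,l)$ and the upper-left edge of $(k+1,l)$; substituting the renewed weights returns exactly $w_{0,0}(k,l+1)/\Delta(k,l+1)$, $w_{0,1}(k+1,l+1)/\Delta(k+1,l+1)$, $w_{1,0}(k,l)/\Delta(k,l)$ and $w_{1,1}(k+1,l)/\Delta(k+1,l)$, as claimed.

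For \eqref{general:eqn:recurrence} I would use the auxiliary-vertex device of Lemma~\ref{lem:oneperiodicrecurrence}: adjoin pendant vertices $v_0$ at $(2i+1,0)$ and $v_1$ at $(0,2j+1)$ via weight-one edges, so that $\mathcal{Z}_n(i,j)$ equals the partition function of the augmented graph. Running the same deformation and following the two auxiliary edges, they survive as edges at new boundary vertices $v_2$ (bottom) and $v_3$ (left) of the size-$(n-1)$ diamond. Applying the Step~1 weight computation to face $(i,0)$ shows that $v_2$ is joined to $(2i+1,0)$ and $(2i-1,0)$ with weights $w_{0,0}(i,0)/\Delta(i,0)$ and $w_{0,1}(i,0)/\Delta(i,0)$, that is $w_{0,i-k}(i,0)/\Delta(i,0)$ for $k\in\{i,i-1\}$; symmetrically $v_3$ contributes $w_{1+l-j,1}(0,j)/\Delta(0,j)$ for $l\in\{j,j-1\}$. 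Matching $v_2$ to $(2k+1,0)$ and $v_3$ to $(0,2l+1)$ deletes those two vertices and leaves $\tilde{\mathcal{Z}}_{n-1}(k,l)$; summing over the four choices produces the double sum, with the indicators $\mathbbm{I}_{0\leq k\leq n-2}\mathbbm{I}_{0\leq l\leq n-2}$ enforcing that the deleted vertices actually lie in the smaller diamond (this absorbs the $i\in\{0,n-1\}$ and $j\in\{0,n-1\}$ boundary cases automatically). The one remaining configuration, available only when $(i,j)=(0,0)$, matches $v_2$ directly to $v_3$ across the edge of weight $w_{0,1}(0,0)/\Delta(0,0)$ and leaves an untouched size-$(n-1)$ diamond, contributing the separate first term. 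Reinstating the overall urban-renewal factor $\prod_{r,s=0}^{n-1}\Delta(r,s)$ then yields \eqref{general:eqn:recurrence}.

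I expect the main obstacle to be the weight bookkeeping in Step~1 --- in particular pinning down the index shifts $(k,l+1),(k+1,l+1),(k,l),(k+1,l)$ by which the four edges of each new face are drawn from four distinct old faces --- together with the matching determination of the weights incident to $v_2$ and $v_3$ and the clean separation of the $(i,j)=(0,0)$ and boundary cases. Everything else is a faithful transcription of the uniform argument, with the constants $2$ and $1/2$ replaced throughout by $\Delta(k,l)$ and the renewed boundary weights.
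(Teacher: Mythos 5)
Your proposal is correct and follows essentially the same route as the paper's proof: rerun the urban-renewal/contraction/pendant-removal deformation of Lemma~\ref{lem:oneperiodicrecurrence} with general weights, verify via the opposite-edge rule and the index shifts $(k,l+1),(k+1,l+1),(k,l),(k+1,l)$ that the resulting size-$(n-1)$ diamond carries exactly the weights defining $\tilde{\mathcal{Z}}_{n-1}$, and track the auxiliary edges at $v_2$, $v_3$ together with the special $(v_2,v_3)$ matching of weight $w_{0,1}(0,0)/\Delta(0,0)$ in the $(i,j)=(0,0)$ case. The weight bookkeeping you identify as the main obstacle is precisely what the paper carries out, and your stated values for the edges incident to $v_2$ and $v_3$ agree with the paper's.
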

In the above lemma, we chose to remove vertices  from the left and bottom boundaries of the Aztec diamond.  Analogous results can be obtained for the other boundary pairings. 

\begin{proof}
We first show that the edges weight of $\tilde{\mathcal{Z}}_{n-1}$ are obtained from applying the urban renewals, removal of pendant edges and edge contraction as detailed in Lemma~\ref{lem:oneperiodicrecurrence} and then show~\eqref{general:eqn:partition}.  Finally, we show~\eqref{general:eqn:recurrence}.

For the Aztec diamond of size $n$ which corresponds to $\mathcal{Z}_n$, the edges around the face whose center has coordinates $(2k+1,2l+1)$  have weight $$\{ w_{0,0}(k,l), w_{0,1}(k,l), w_{1,0}(k,l), w_{1,1}(k,l)\}$$ where we use the same labeling procedure as above. The urban renewal factor from this face is exactly equal to $\Delta (k,l)$ and when we apply urban renewal to this face, the edge weights around the small square (same labeling procedure as above) read
\begin{equation} \label{general:eqn:weightchange}
\left\{ \frac{w_{1,1}(k,l)}{\Delta(k,l)},\frac{w_{1,0}(k,l)}{\Delta(k,l)}, \frac{w_{0,1}(k,l)}{\Delta(k,l)},\frac{w_{0,0}(k,l)}{\Delta(k,l)}\right\}.
\end{equation}  
When applying the removal of pendant edges and edge contractions, as given in Lemma~\ref{lem:oneperiodicrecurrence}, we obtain an Aztec diamond of order $n-1$. Moreover, this Aztec diamond of order $n-1$ is contained within the Aztec diamond of size $n$: the Aztec diamond of size $n-1$ consists of the vertices and edges around the faces $(2r+2,2s+2)$ for $0 \leq r,s \leq n-2$ of an Aztec diamond with a weight change explained from applying urban renewal $n^2$ times (i.e. to the faces with coordinates $(2k+1,2l+1)$ for all $0 \leq k,l \leq n-1$).  To find these edge weights of the Aztec diamond of size $n-1$, it is enough to find the edge weights around the face with center $(2r+2,2s+2)$ for $0 \leq r,s \leq n-2$ for an Aztec diamond of size $n$ where the edge weights around the face with center $(2k+1,2l+1)$ are given by~\eqref{general:eqn:weightchange}.  We find that these edge weights  are given by 
$$\left\{ \frac{w_{0,0}(r,s+1)}{\Delta(r,s+1)} , \frac{ w_{0,1}(r+1,s+1)}{\Delta(r+1,s+1)}, \frac{ w_{1,0}(r,s)}{\Delta (r,s)} , \frac{w_{1,1}(r+1,s)}{\Delta(r+1,s)}\right\}. $$
Hence, we have that the edge weights around the face with center $(2k+1,2l+1)$ for an Aztec diamond of size $n-1$ are given by
$$\left\{ \frac{w_{0,0}(k,l+1)}{\Delta(k,l+1)} , \frac{ w_{0,1}(k+1,l+1)}{\Delta(k+1,l+1)}, \frac{ w_{1,0}(k,l)}{\Delta (k,l)} , \frac{w_{1,1}(k+1,l)}{\Delta(k+1,l)} \right\} $$
which are exactly the edge weights encoded by $\tilde{\mathcal{Z}}_{n-1}$.

Equation~\eqref{general:eqn:partition} follows exactly from the construction of $\tilde{\mathcal{Z}}_{n}$ from $\mathcal{Z}_n$ because the product on the right-hand side of~\eqref{general:eqn:partition} is exactly equal to the product of all the urban renewal factors.

For equation~\eqref{general:eqn:recurrence}, we follow the steps given in the proof of Lemma~\ref{lem:oneperiodicrecurrence} which generate a recurrence for $\mathcal{Z}_n(i,j)$ in the case when all the edge weights are set to equal to $1$.  As in the proof of Lemma~\ref{lem:oneperiodicrecurrence}, we add an auxiliary edge incident to $(2i+1,0)$ and another auxiliary edge incident to $(0,2j+1)$.  As the shape of the graph  remains the same after applying the same steps given in Lemma~\ref{lem:oneperiodicrecurrence} (for the computation of $\mathcal{Z}_n(i,j)$), we only need to compute the analog of~\eqref{lemproof:recurrencefirststep} for the above choice of edge weights, which is the effect on $\mathcal{Z}_n(i,j)$ after applying urban renewal ($n^2$ times), removal of pendant edges and edge contractions.   Therefore, we follow the computation to obtain~\eqref{lemproof:recurrencefirststep} in Lemma~\ref{lem:oneperiodicrecurrence} noting the following differences due to the choice of edge weights:
\begin{itemize}
\item The product of the urban renewal factors is given by $
\prod_{r,s=0}^{n-1} \Delta (r,s)$ (as opposed to $2^{n^2}$).
\item For $(i,j) \not = (0,0)$, the edge $(v_2,(2k+1,0))$ for $0 \leq k \leq n-2$ has weight $w_{0,0}(i,0)/\Delta(i,0)$ if $k=i$ and weight $w_{0,1}(i,0)/\Delta(i,0)$ if $k=i-1$.  Note that the vertex $v_2$ has the same definition as given in Lemma~\ref{lem:oneperiodicrecurrence}. 
\item For $(i,j) \not = (0,0)$, the edge $(v_3,(0,2l+1))$ for $0 \leq l \leq n-2$ has weight $w_{1,1}(0,j)/\Delta(0,j)$ if $l=j$ and weight $w_{0,1}(0,j)/\Delta(0,j)$
if $l=j-1$.  The vertex $v_3$ has the same definition as given in Lemma~\ref{lem:oneperiodicrecurrence}.
\item For $(i,j)=(0,0)$, the edges $(v_2,(1,0))$ and $(v_3,(0,1))$  have weights $w_{0,0}(0,0)/\Delta(0,0)$ and $w_{1,1}(0,0)/\Delta(0,0)$ respectively.  The edge $(v_2,v_3)$ has weight $w_{0,1}(0,0)/\Delta (0,0)$ and recall that when this edge is matched, the resulting graph is an Aztec diamond of order $n-1$ whose weighted number of matchings is equal to $\tilde{\mathcal{Z}}_{n-1}$.
\end{itemize}
The above edges $(v_2, (2k+1,0))$ and $(v_3,(0,2l+1))$ are a consequence of the auxiliary edges --- they are the resulting edges after applying urban renewal ($n^2$ times), removal of pendant edges and edge contractions. After matching the edges $(v_2, (2k+1,0))$ and $(v_3,(0,2l+1))$, the number of weighted matchings on the remaining graph is equal to $\tilde{\mathcal{Z}}_{n-1}(k,l)$. 
By following the computation detailed in Lemma~\ref{lem:oneperiodicrecurrence} to find~\eqref{lemproof:recurrencefirststep} with the edge weights found above, we obtain~\eqref{general:eqn:recurrence}.  

\end{proof}

\subsection{Proof of Theorem~\ref{thm:oneperiodic}}

The proof of Theorem~\ref{thm:oneperiodic} follows the same steps as the one for Theorem \ref{thm:uniform}.  Generalizing the proof to arbitrary $a$ requires no new ideas, but rather increases the complexity and amount of bookkeeping required:  
\begin{itemize}
\item we first find the boundary recurrence relation (without the Kasteleyn orientation) which is the analog of Lemma~\ref{lem:oneperiodicrecurrence} for biased tilings using Section~\ref{section:general}.
\item From this boundary recurrence relation, we find the analog of Lemma~\ref{lem:oneperiodicbgf} which is given in Lemma~\ref{bgf:oneperiodicanalog}
\item We next find the analogs of Lemmas~\ref{uniform:lem:movevertices} and~\ref{uniform:lem:sign} which are  given in Lemma~\ref{oneperiodic:lem:movevertices} and~\ref{oneperiodic:lem:sign} 
\item Finally, we use the above lemmas to give the proof of Theorem~\ref{thm:oneperiodic}.
\end{itemize}

Let $Z_n(r,a)$ denote the partition function of an Aztec diamond of size $n$ with weights $r$ and $ ra$ for horizontal and vertical edges respectively.  Let $Z(i,j,r,a,n)$ denote the number of matchings of an Aztec diamond of size $n$ with weights $r$ and $r a$ for horizontal and vertical edges respectively with the vertices $(2i+1,0)$ and $(0,2j+1)$ removed from the graph. 

\begin{lemma} \label{oneperiodicbias:lem:linearrecurrence}
For $i,j \in \{0,1,\dots, n-1\}$ we have
\begin{equation}\label{oneperiodbias:linearrecurrence}
	\frac{Z(i,j,1,a,n)}{Z_n(1,a)}=\sum_{\substack{k,l\in\{0,1\}\\ (i-k,l-k)\not=(-1,-1)}} a^{k+l} \frac{Z(i-k,j-l,1,a,n-1)}{Z_{n-1}(1,a)(1+a^2)}  +\frac{a}{1+a^2} \mathbb{I}_{(i,j)=(0,0),n\geq 1}.
\end{equation}
and $Z(i,j,1,a,n-1)=0$ unless both $i$ and $j$ are in $\{0,1,\dots,n-2\}$. 
\end{lemma}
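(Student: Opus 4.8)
The plan is to specialize the general boundary recurrence of Lemma~\ref{general:lem:recurrence} to the one-periodic biased weighting, exactly as the outline preceding this statement suggests. I would set every $e_1$-edge to weight $1$ and every $e_2$-edge to weight $a$; in the notation of Section~\ref{section:general} this means $w_{0,0}(k,l)=w_{1,1}(k,l)=1$ and $w_{0,1}(k,l)=w_{1,0}(k,l)=a$ for all $0\leq k,l\leq n-1$, so that each urban-renewal factor is $\Delta(k,l)=w_{0,0}w_{1,1}+w_{0,1}w_{1,0}=1+a^2$ and $\prod_{r,s=0}^{n-1}\Delta(r,s)=(1+a^2)^{n^2}$. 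With this identification, $\mathcal{Z}_n=Z_n(1,a)$ and $\mathcal{Z}_n(i,j)=Z(i,j,1,a,n)$.

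First I would record the partition function recurrence. By Lemma~\ref{general:lem:recurrence}, the reduced diamond $\tilde{\mathcal{Z}}_{n-1}$ carries all $e_1$-edges of weight $1/(1+a^2)$ and all $e_2$-edges of weight $a/(1+a^2)$, i.e. the one-periodic weighting with bias $a$ and overall scale $1/(1+a^2)$. A gauge transformation multiplying each of the $(n-1)n$ white vertices by $1+a^2$ restores the weights $1$ and $a$, so $\tilde{\mathcal{Z}}_{n-1}=(1+a^2)^{-(n-1)n}Z_{n-1}(1,a)$. Substituting into \eqref{general:eqn:partition} gives $Z_n(1,a)=(1+a^2)^{n^2-(n-1)n}Z_{n-1}(1,a)=(1+a^2)^{n}Z_{n-1}(1,a)$, which will serve as the normalization at the end.

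Next I would read off the coefficients in \eqref{general:eqn:recurrence}. Writing $k'=i-k$ and $l'=j-l$, so that $k',l'\in\{0,1\}$ index the summation in the target statement, the weight $w_{0,i-k}(i,0)$ is the $e_1$-weight $1$ when $k'=0$ and the $e_2$-weight $a$ when $k'=1$, and likewise $w_{1+l-j,1}(0,j)$ equals $1$ when $l'=0$ and $a$ when $l'=1$; hence their product is $a^{k'+l'}$ and the full prefactor is $a^{k'+l'}/(1+a^2)^2$. This is precisely where the factor $a^{k+l}$ in \eqref{oneperiodbias:linearrecurrence} originates: shifting the removed vertex one step over forces a vertical edge of weight $a$. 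Since $\tilde{\mathcal{Z}}_{n-1}(k,l)$ has $(n-1)n-1$ white vertices, the same gauge transformation yields $\tilde{\mathcal{Z}}_{n-1}(k,l)=(1+a^2)^{-((n-1)n-1)}Z(k,l,1,a,n-1)$, while the inhomogeneous term uses $w_{0,1}(0,0)/\Delta(0,0)=a/(1+a^2)$. Substituting all of these together with $\prod\Delta=(1+a^2)^{n^2}$ into \eqref{general:eqn:recurrence} and dividing by $Z_n(1,a)=(1+a^2)^{n}Z_{n-1}(1,a)$, every power of $(1+a^2)$ collapses and I obtain exactly \eqref{oneperiodbias:linearrecurrence}; the convention that $Z(\cdot,\cdot,1,a,n-1)$ vanishes for out-of-range indices absorbs the indicators $\mathbb{I}_{0\leq k\leq n-2}\mathbb{I}_{0\leq l\leq n-2}$ and the excluded slot $(i-k,j-l)=(-1,-1)$.

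The only real difficulty is the exponent bookkeeping: the gauge transformation contributes $(1+a^2)$ raised to the number of white vertices, which is $(n-1)n$ for the full diamond but $(n-1)n-1$ after removing the white vertex $(2i+1,0)$, and one must check that these powers, the factor $(1+a^2)^{n^2}$ from urban renewal, and the normalization $(1+a^2)^{n}$ cancel to leave the clean coefficients $a^{k+l}/(1+a^2)$ and $a/(1+a^2)$. A secondary point to verify is the combinatorial identification of the $e_2$-weight $a$ in the $k'=1$ and $l'=1$ slots, which is the source of the $a^{k+l}$ factor. (I note in passing the apparent typo $(i-k,l-k)$ for $(i-k,j-l)$ in the summation condition of the stated lemma.)
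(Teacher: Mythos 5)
Your proposal is correct and follows essentially the same route as the paper: specialize Lemma~\ref{general:lem:recurrence} with $w_{0,0}=w_{1,1}=1$, $w_{0,1}=w_{1,0}=a$, use the gauge transformation multiplying white vertices by $1+a^2$ to renormalize $\tilde{\mathcal{Z}}_{n-1}$ and $\tilde{\mathcal{Z}}_{n-1}(k,l)$, and divide by the partition function recurrence $Z_n(1,a)=(1+a^2)^nZ_{n-1}(1,a)$; your exponent bookkeeping checks out. You are also right that the summation condition in the statement should read $(i-k,j-l)\neq(-1,-1)$.
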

\begin{proof}

By using Lemma~\ref{general:lem:recurrence} (i.e. set $w_{0,0}(k,l)=w_{1,1}(k,l)=1$ and $w_{0,1}(k,l)=w_{1,0}(k,l)=a$ for all $0\leq k,l \leq n-1$) and in particular from~\eqref{general:eqn:partition}, we have 
\begin{equation}
Z_n(1,a)= (1+a^2)^{n^2} Z_{n-1}\left(\frac{1}{1+a^2},a \right) 
\end{equation}
and by applying a gauge transformation which multiplies all the white vertices by $1+a^2$  we have
\begin{equation}
Z_n(1,a) = (1+a^2)^n Z_{n-1}(1,a).
\end{equation}
where $Z_0(1,a)=1$.
For $Z(i,j,1,a,n)$, from~\eqref{general:eqn:recurrence}, we have that
\begin{equation}
\begin{split}
Z(i,j,1,a,n)&=(1+a^2)^{n^2} \sum_{ \substack{k \in \{i-1,i\} \\ {l \in \{j-1,j\}}}}  \frac{a^{i-k+l-j}}{(1+a^2)^2} Z\left(k,l,\frac{1}{1+a^2},a,n-1 \right) 
\mathbbm{I}_{0 \leq k \leq n-2} \mathbbm{I}_{0 \leq l \leq n-2}\\
&+\frac{a}{1+a^2} (1+a^2)^{n^2} Z_{n-1}\left(\frac{1}{1+a^2},a \right)
\end{split}
\end{equation} 
and we apply gauge transformations to both terms which multiplies to the white vertices by $1+a^2$ giving
\begin{equation}
\begin{split}
Z(i,j,1,a,n)&=(1+a^2)^{n-1} \sum_{ \substack{k \in \{i-1,i\} \\ {l \in \{j-1,j\}}}}  a^{i-k+l-j} Z\left(k,l,1,a,n-1 \right) 
\mathbbm{I}_{0 \leq k \leq n-2} \mathbbm{I}_{0 \leq l \leq n-2}\\
&+\frac{a}{1+a^2} (1+a^2)^{n} Z_{n-1}\left(1,a \right)
\end{split}
\end{equation} 
We divide the recurrence for $Z(i,j,1,a,n)$ by the recurrence for $Z_n(1,a)$ and rearranging the summation, we obtain the lemma.
\end{proof}

Let
\begin{equation}
Z_{\partial}(w,b,a,z)=\sum_{n \geq 0 } \sum_{i=0}^{n-1} \sum_{j=0}^{n-1} \frac{Z (i,j,1,a,n)}{Z_n(1,a)}w^i b^j z^n
\end{equation}
which denotes the boundary generating function without Kasteleyn orientation for the bottom and left boundaries where $w$ and $b$ mark the white and black vertices respectively while $z$ marks the size of the Aztec diamond. 

\begin{lemma} \label{bgf:oneperiodicanalog}
\begin{equation}
Z_{\partial}(w,b,a,z)= \frac{a z}{(1-z)(1+a^2-z(1+wa)(1+ba) )}
\end{equation}
\end{lemma}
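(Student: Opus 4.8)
The plan is to follow exactly the template established in Lemma~\ref{lem:oneperiodicbgf}: translate the linear recurrence of Lemma~\ref{oneperiodicbias:lem:linearrecurrence} into a functional equation for the generating function $Z_{\partial}(w,b,a,z)$, and then solve that functional equation algebraically. The only difference from the uniform case is that the recurrence now carries the bias factors $a^{k+l}$, which will appear as factors of $(1+wa)$ and $(1+ba)$ after summing against $w^i b^j$, rather than the symmetric $(1+w)(1+b)$ we saw before.

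First I would multiply the recurrence~\eqref{oneperiodbias:linearrecurrence} by $w^i b^j z^n$ and sum over $0 \le i \le n-1$, $0 \le j \le n-1$ and $n \ge 0$. The left-hand side is precisely $Z_{\partial}(w,b,a,z)$ by definition. For the main sum on the right, the key observation is that summing $a^{k+l}\,Z(i-k,j-l,1,a,n-1)/[Z_{n-1}(1,a)(1+a^2)]$ against $w^i b^j z^n$, after reindexing $i \mapsto i-k$, $j \mapsto j-l$, $n \mapsto n-1$, factors as
\begin{equation}
	\frac{z}{1+a^2}\Big(\sum_{k\in\{0,1\}}(wa)^k\Big)\Big(\sum_{l\in\{0,1\}}(ba)^l\Big)\,Z_{\partial}(w,b,a,z)
	= \frac{z(1+wa)(1+ba)}{1+a^2}\,Z_{\partial}(w,b,a,z),
\end{equation}
where the constraint $(i-k,j-l)\neq(-1,-1)$ is harmless because those terms are absorbed into the convention $Z(-1,-1,\cdot)=0$ and do not contribute to the generating function. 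The inhomogeneous term $\tfrac{a}{1+a^2}\mathbb{I}_{(i,j)=(0,0),\,n\ge 1}$ contributes $\tfrac{a}{1+a^2}\sum_{n\ge 1} z^n = \tfrac{a}{1+a^2}\cdot\tfrac{z}{1-z}$, since the indicator forces $i=j=0$ so the $w,b$ dependence drops out.

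Collecting these pieces gives the functional equation
\begin{equation}
	Z_{\partial}(w,b,a,z) = \frac{z(1+wa)(1+ba)}{1+a^2}\,Z_{\partial}(w,b,a,z) + \frac{a}{1+a^2}\cdot\frac{z}{1-z},
\end{equation}
which is linear in $Z_{\partial}$ and solves immediately: isolating $Z_{\partial}$ on the left, dividing by the coefficient $1 - \tfrac{z(1+wa)(1+ba)}{1+a^2}$, and clearing the denominator $1+a^2$ yields
\begin{equation}
	Z_{\partial}(w,b,a,z) = \frac{az}{(1-z)\big(1+a^2 - z(1+wa)(1+ba)\big)},
\end{equation}
which is the claimed formula. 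As a sanity check, setting $a=1$ recovers $z/[(1-z)(2-z(1+w)(1+b))]$, matching Lemma~\ref{lem:oneperiodicbgf}.

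The computation is entirely routine; there is essentially no obstacle. The one point requiring a moment's care is the bookkeeping of the index-constraint $(i-k,j-l)\neq(-1,-1)$ together with the convention $Z(i,j,1,a,n-1)=0$ whenever $i$ or $j$ lies outside $\{0,\dots,n-2\}$: I would verify that these boundary conventions are exactly what make the double sum factor cleanly into the product of the two geometric pieces $(1+wa)$ and $(1+ba)$ times a clean shift in $n$, with no stray correction terms. Once that is confirmed, the functional equation and its solution follow by elementary algebra.
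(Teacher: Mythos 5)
Your proof is correct and follows exactly the paper's own argument: multiply the recurrence of Lemma~\ref{oneperiodicbias:lem:linearrecurrence} by $w^i b^j z^n$, sum to obtain the functional equation $Z_{\partial}=\frac{z(1+wa)(1+ba)}{1+a^2}Z_{\partial}+\frac{a}{1+a^2}\frac{z}{1-z}$, and solve. The factorization of the double sum into $(1+wa)(1+ba)$ and the handling of the boundary conventions are exactly as in the paper.
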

\begin{proof}
As given in the proof of Lemma~\ref{lem:oneperiodicbgf}, we multiply~\eqref{oneperiodbias:linearrecurrence} by $w^i b^j z^n$ and sum of $0\leq i,j \leq n-1$ and $n\geq 0$ and we obtain 
\begin{equation}\label{oneperidicbias:gffirst}
Z_{\partial}(w,b,a,z)= \frac{z(1+w a)(1+ba )}{1+a^2} Z_{\partial}(w,b,a,z)+ \frac{z}{1-z} \frac{a}{1+a^2}
\end{equation}
which is the boundary generating function at the bottom and left boundary of the biased one-periodic weighting of the Aztec diamond. Solving~\eqref{oneperidicbias:gffirst} gives the lemma.
\end{proof}

Let
\begin{equation}
\label{oneperiodic:bgfa:def:Hn}
H_n^{i,j,a}(\mathtt{w},\mathtt{b}) \sum_{\substack{1 \leq x_1\leq 2n-1,x_1 \mod2=1\\ 1 \leq y_2 \leq 2n-1,y_2 \mod2=1} } K_a^{-1}((x_1,2ni),(2nj,y_2)) w_1^{x_1} w_2^{2ni} b_1^{2nj} b_2^{y_2}
\end{equation}

\begin{lemma} \label{oneperiodic:lem:movevertices}
\begin{equation} 
\begin{split}
&G_n(\mathtt{w},\mathtt{b})= \frac{w_1 w_2^2 b_2  f_{n+1}(w_1^2 b_1^2) f_n(w_2^2 b_2^2)}{C(w_1,w_2)} \\
&+(1+\mathtt{i}a w_1)\frac{ (1+a \mathtt{i} b_2^2)H^{0,0,a}_n+b_1^2\left(b_2w_1f_n(b_1^2 w_1^2)+(a\mathtt{i}  +b_2^2) H^{0,1,a}_n \right)}{C(w_1,w_2) C(b_1,b_2)}  \\
&+(\mathtt{i}a +w_1^2)w_2^2 \frac{ b_1^2 b_2^{2n+1} w_1 w_2^{2n} f_n(b_1^2 w_1^2) + (1+ \mathtt{i} b_2^2)H_n^{1,0,a} +b_1^2(a\mathtt{i} +b_2^2) H_n^{1,1,a}}{C(w_1,w_2) C(b_1,b_2)}
\end{split}
\end{equation}
where $C(r_1,r_2)=1+r_1^2r_2^2+a \mathtt{i}(r_1^2+r_2^2)$ and $H_n^{i,j,a}=H_n^{i,j,a}(\mathtt{w},\mathtt{b})$ defined in~\eqref{oneperiodic:bgfa:def:Hn}.
\end{lemma}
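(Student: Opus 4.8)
The plan is to follow the proof of Lemma~\ref{uniform:lem:movevertices} essentially line by line, the only change being that the matrix $K_a$ of~\eqref{bgfa:K} assigns weight $a\mathtt{i}$ rather than $\mathtt{i}$ to every vertical edge. Since $K_a$ remains sparse with at most four nonzero entries per row, expanding the two identities $K_a\cdot K_a^{-1}=\mathbbm{I}$ and $K_a^{-1}\cdot K_a=\mathbbm{I}$ entrywise and comparing against $\mathbbm{I}$ again yields two five-term recurrences for the entries of $K_a^{-1}$, exactly of the shape of~\eqref{internalrecurrence1} and~\eqref{internalrecurrence2} but with each coefficient $\mathtt{i}$ in front of an $e_2$-shifted term replaced by $a\mathtt{i}$. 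No new structural phenomenon appears; the entire additional content is tracking where the extra factors of $a$ land.

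First I would record the analog of~\eqref{internalrecurrence1}: for $x,y\in\mathtt{B}$,
\begin{equation}
K_a^{-1}(x+e_1,y)\delta_{x_1<2n}+a\mathtt{i}\,K_a^{-1}(x+e_2,y)\delta_{x_1>0}+a\mathtt{i}\,K_a^{-1}(x-e_2,y)\delta_{x_1<2n}+K_a^{-1}(x-e_1,y)\delta_{x_1>0}=\delta_{x=y},
\end{equation}
together with the corresponding analog of~\eqref{internalrecurrence2} for $x,y\in\mathtt{W}$. Multiplying the first by $\mathtt{w}^x\mathtt{b}^y$ and summing over $\mathtt{B}$, I would carry out the same four changes of summation variable that produced~\eqref{bgfa:thm:expand:eqn1}--\eqref{bgfa:thm:expand:eqn4}; the two $e_2$-terms now contribute a factor $a$, so the coefficient of $G_n(\mathtt{w},\mathtt{b})$ assembles into the biased $C(w_1,w_2)=1+w_1^2w_2^2+a\mathtt{i}(w_1^2+w_2^2)$, while the boundary-restricted generating functions $G_n^0$ and $G_n^n$ (now built from $K_a^{-1}$) appear with coefficients $1+a\mathtt{i} w_1^2$ and $(a\mathtt{i}+w_1^2)w_2^2$. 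The right-hand side is unchanged from the uniform case because $\delta_{x=y}$ carries no $a$, so this step reproduces the analog of~\eqref{internalrecurrencefirst}.

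Next I would freeze the white vertex on the bottom (respectively top) boundary and run the $K_a^{-1}\cdot K_a=\mathbbm{I}$ recurrence, summing as in the derivations of~\eqref{internalrecurrence3} and~\eqref{internalrecurrence4} to express $G_n^0$ and $w_2^{2n}G_n^n$ in terms of the oriented boundary generating functions $H_n^{i,j,a}$ of~\eqref{oneperiodic:bgfa:def:Hn}; here the factors of $a$ attach instead to the $b_2$-monomials arising from the $e_2$-shifts, and $C(b_1,b_2)$ acquires the same bias as $C(w_1,w_2)$. Substituting these expressions for $G_n^0$ and $w_2^{2n}G_n^n$ back into the analog of~\eqref{internalrecurrencefirst} and collecting terms then gives the stated identity.

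The argument requires no idea absent from the uniform case, so the sole obstacle is bookkeeping. Because the bias breaks the horizontal--vertical symmetry, the factors of $a$ enter only through the vertical-edge weight and hence ride along with the $\mathtt{i}$-coefficients coming from the $e_2$-shifts, whereas the purely horizontal contributions (the $\delta_{x=y}$ diagonal sums and the $f_n(b_1^2w_1^2)$ terms) carry no $a$. The main risk is therefore a single misplaced power of $a$ in one of the assembled coefficients; as a consistency check I would set $a=1$ and confirm that the formula collapses back to Lemma~\ref{uniform:lem:movevertices}.
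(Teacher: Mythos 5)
Your proposal is correct and is essentially identical to the paper's own proof, which simply says to repeat the argument of Lemma~\ref{uniform:lem:movevertices} with $K$ replaced by $K_a$, i.e.\ setting $\mathtt{i}\mapsto a\mathtt{i}$ in the five-term recurrences and $H_n^{i,j}\mapsto H_n^{i,j,a}$. Your more explicit tracking of where the factors of $a$ land (only on the $e_2$-shifted terms, assembling into the biased $C(w_1,w_2)$ and $C(b_1,b_2)$) and the $a=1$ consistency check are exactly the right bookkeeping.
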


\begin{proof}
We follow the proof of Lemma~\ref{uniform:lem:movevertices} but replace $K$  by $K_a$ defined in~\eqref{bgfa:K} and replace $K^{-1}$ by $K_a^{-1}$.  This results in setting $\mathtt{i}$ to $\mathtt{i} a$ and $H_n^{i,j}$ to $H_n{i,j,a}$ in the proof of Lemma~\ref{uniform:lem:movevertices}.

\end{proof}

\begin{lemma} \label{oneperiodic:lem:sign}
For $i,j \in \{0,1\}$, we have
\begin{equation}
H_n^{i,j,a}(\mathtt{w},\mathtt{b})=F_n^{i,j}(\mathtt{w},\mathtt{b},a^{1-2(j(1-i)-i(1-j))})
\end{equation}	
where $H_n^{i,j,a}(\mathtt{w},\mathtt{b})$ is defined in~\eqref{oneperiodic:bgfa:def:Hn} and $F_n^{0,0}(\mathtt{w},\mathtt{b},a),F_n^{0,1}(\mathtt{w},\mathtt{b},a^{-1}), F_n^{1,0}(\mathtt{w},\mathtt{b},a^{-1})$ and $F_n^{1,1}(\mathtt{w},\mathtt{b},a)$ are given in equations~\eqref{bfga:h00},~\eqref{bfga:h02n},\eqref{bfga:h2n0} and~\eqref{bfga:h2n2n} respectively.
\end{lemma}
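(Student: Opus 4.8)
The plan is to follow the three-step structure of the proof of Lemma~\ref{uniform:lem:sign} almost verbatim, the only genuinely new phenomenon being the behaviour of the bias $a$ under the reflection symmetries of the Aztec diamond. First I would \emph{separate phase from modulus}. Because the bias $a$ is a positive real it contributes only to the modulus of the entries of $K_a^{-1}$, never to their complex phase; hence the phase of every boundary entry of $K_a^{-1}$ coincides with the phase computed in the uniform case. I would therefore quote the four weight-independent sign formulas recorded immediately after Lemma~\ref{uniform:lem:sign}, e.g. $\mathrm{sgn}(K_a^{-1}((2i+1,0),(0,2j+1)))=-\mathtt{i}^{i+j+1}$ and its three companions, without recomputation. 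The modulus is supplied by the unoriented boundary generating function of Lemma~\ref{bgf:oneperiodicanalog}: extracting the coefficient of $z^n$ gives
\[
\frac{a}{1+a^2}\,f_n\!\left(\frac{(1+wa)(1+ba)}{1+a^2}\right),
\]
whose coefficient of $w^ib^j$ is $|K_a^{-1}((2i+1,0),(0,2j+1))|=Z(i,j,1,a,n)/Z_n(1,a)$.

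For the $(0,0)$ boundary I would reassemble the \emph{signed} generating function by injecting the phase $-\mathtt{i}^{i+j+1}$ into the modulus, i.e.\ via the substitution $w\mapsto\mathtt{i} w_1^2$, $b\mapsto\mathtt{i} b_2^2$ together with the overall factor $-\mathtt{i} w_1 b_2$; this reproduces $F_n(w_1^2,b_2^2,a)\,w_1 b_2$, which is exactly $F_n^{0,0}$ of \eqref{bfga:h00}, so $H_n^{0,0,a}=F_n^{0,0}$. The remaining three boundaries are reached from $(0,0)$ by the symmetries of the square, and here is the one place the argument departs from the uniform case. In Kasteleyn coordinates the two edge directions are $e_1=(1,1)$ and $e_2=(-1,1)$, so each axis reflection $x_1\mapsto 2n-x_1$ or $x_2\mapsto 2n-x_2$ interchanges $e_1\leftrightarrow e_2$, and therefore interchanges the horizontal and vertical edge weights $1\leftrightarrow a$, whereas the central rotation $x\mapsto 2n-x$ fixes both directions. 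Consequently the $(1,1)$ boundary, obtained by the central rotation, still carries bias $a$, while the $(0,1)$ and $(1,0)$ boundaries, obtained by the two axis reflections, are governed by the edge-swapped diamond whose natural parameter is $a^{-1}$. This edge-swapped diamond has horizontal weight $a$ and vertical weight $1$, i.e.\ it is the bias-$a^{-1}$ diamond with every edge reweighted by $a$; since deleting two boundary vertices removes exactly one edge from every matching, this reweighting contributes a single overall factor $a^{-1}$. This is precisely the origin of the inner argument $a^{-1}$ and the prefactor $\mathtt{i}/a$ in the definitions \eqref{bfga:h02n} and \eqref{bfga:h2n0} of $F_n^{0,1}$ and $F_n^{1,0}$.

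Concretely, for each of $(0,1)$, $(1,0)$, $(1,1)$ I would (i) use the relevant reflection to rewrite the modulus in terms of the boundary generating function of the appropriate diamond with reflected index, (ii) inject the weight-independent phase via the substitutions $w\mapsto\pm\mathtt{i}/w_1^2$, $b\mapsto\pm\mathtt{i} b_2^2$ dictated by the $\mathtt{i}^{3i+3j}$ sign factors, together with the index-reversing monomials $w_1^{2n-1},w_2^{2n},b_1^{2n},b_2^{2n-1}$, and (iii) match against \eqref{bfga:h00}--\eqref{bfga:h2n2n}. Step (iii) reduces in each case to the two identities $\frac{a^{-1}}{1+a^{-2}}=\frac{a}{1+a^2}$ and $(a-\mathtt{i} r)(a-\mathtt{i} s)=a^2(1-\mathtt{i} r/a)(1-\mathtt{i} s/a)$, which convert between the bias-$a$ and bias-$a^{-1}$ forms of the argument of $f_n$. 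Substituting the resulting expressions for $H_n^{i,j,a}$ into Lemma~\ref{oneperiodic:lem:movevertices} then yields Theorem~\ref{thm:oneperiodic}.

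The main obstacle I anticipate is bookkeeping rather than conceptual: one must simultaneously track the phase change under reflection, the inversion $a\mapsto a^{-1}$ of the bias, and the lone power of $a^{-1}$ from the edge-reweighting, and verify that these combine to reproduce exactly the prefactors in \eqref{bfga:h02n} and \eqref{bfga:h2n0}. A convenient sanity check at every stage is the case $n=1$, where $f_1\equiv 1$ and $F_n(\cdot,\cdot,a)=F_n(\cdot,\cdot,a^{-1})$, so the bias inversion is invisible and only the phase and the single factor $a^{-1}$ remain to be confirmed.
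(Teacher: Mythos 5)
Your proposal is correct and takes essentially the same route as the paper: the $(0,0)$ and $(1,1)$ cases repeat the uniform-case sign argument with the biased boundary generating function of Lemma~\ref{bgf:oneperiodicanalog}, while the $(0,1)$ and $(1,0)$ cases use the axis reflection that swaps the horizontal and vertical weights together with a gauge transformation relating the weight-$(a,1)$ diamond to the bias-$a^{-1}$ diamond, picking up the single factor $a^{-1}$ from the one-fewer-vertex count. Your phrasing of that gauge transformation as multiplying every edge by $a$ and counting edges is literally the paper's multiplication of every white vertex by $1/a$ (each edge meets exactly one white vertex), so the two arguments coincide.
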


\begin{proof}
The computations for $(i,j)=(0,0)$ and $(i,j)=(1,1)$ are analogs of Lemma~\ref{uniform:lem:sign} using the same Kasteleyn orientation but using the boundary generating function (without Kasteleyn orientation) from Lemma~\ref{bgf:oneperiodicanalog} instead. 

For $(i,j)=(1,0)$, we cannot use the boundary generating function computed in Lemma~\ref{bgf:oneperiodicanalog} directly ---
 we need to interchange the vertical and horizontal edge weights so that we can use our previous computations (this interchange accounts for the top leftmost edge having weight 1 while the bottom leftmost edge having weight $a$). Consider an Aztec diamond with all horizontal edges having weight $a$  and all vertical edges having weight $1$.  By the above notation, its partition function is $Z_n(a,1/a)$ and the partition function when removing two vertices $(2i+1,0)$ and $(0,2j+1)$ is given by $Z(i,j,a,1/a,n)$.  By multiplying all the white vertices by $1/a$, we recover an Aztec diamond with horizontal edges having weight $1$ and vertical edges having weight $1/a$.  Because there is one less white vertex in $Z(i,j,a,1/a,n)$ that $Z_n(a,1/a)$, this gauge transformation leads to
\begin{equation}
	\frac{Z(i,j,a,1/a,n)}{Z_n(a,1/a)}=\frac{1}{a} \frac{Z(i,j,1,a^{-1},n)}{Z_n(1,a^{-1})}
\end{equation}
From the above equation, we compute the boundary generating function for $Z(i,j,a,1/a,n)/Z_n(a,1/)$ (i.e. set $a$ to $1/a$ and multiply by $1/a$
in Lemma~\ref{bgf:oneperiodicanalog}).  From this boundary generating function, we follow the proof in Lemma~\ref{uniform:lem:sign} for the Kasteleyn orientation and change of variables and we recover~\eqref{bfga:h02n}.  A similar computation holds for $(i,j)=(0,1)$.

\end{proof}

We now give the proof of Theorem~\ref{thm:oneperiodic}.
\begin{proof}[Proof of Theorem~\ref{thm:oneperiodic}]

In Lemma~\ref{oneperiodic:lem:sign}, we found the expressions for $H^{i,j,a}_n(\mathtt{w},\mathtt{b})$ for all $i,j \in \{0,1\}$.  We substitute these expressions for $H^{i,j,a}_n(\mathtt{w},\mathtt{b})$ into the formula given in Lemma~\ref{oneperiodic:lem:movevertices} which gives the generating function for $K_a^{-1}$.  

\end{proof}

\section{$q^{\mathrm{vol}}$ weighting} \label{section q-vol for the Aztec Diamond}

As mentioned in the introduction, it is possible to associate a discrete stepped surface to a covering of the Aztec diamond.  This function is called the \emph{height function}.  The most concrete way of viewing the height function is as the surface of a certain stack of blocks, called \emph{Levitov blocks}~\cite{Lev:89,Lev:90}.  One can construct an edge weighting of the Aztec diamond in several ways, so that the contribution from each covering is proportional to $q^{\#\{\text{Levitov Blocks}\}}$ - that is, adding a block to a covering multiplies the weight of the covering by $q$.  
Concretely, this means that for each face, the product of the edges parallel to $e_2$ divided by the product of edges parallel to $e_1$ is equal to $q$ or $q^{-1}$, depending on the parity of the face.  Such a choice of weightings is not unique, and we refer to any such choice as a \emph{$q^{\mathrm{vol}}$ weighting} of the Aztec diamond.  For technical reasons, we make use of two such weightings here.

We call the weighting $q^{\mathrm{col}}$ to be the choice of weights where all edges parallel to $e_1$ have weight 1 and the edges parallel to $e_2$ have edge weights organized in columns given by $ a q^{2n}, a q^{1-2n} , a q^{2n-2},\dots, a q^{-1}$ reading from left to right -- see Figure \ref{fig:qvol weightings}.   We let $K_{\mathrm{col}}$ denote the Kasteleyn matrix with $q^{\mathrm{col}}$ weights and its entries are given by 
\begin{equation}\label{Kcol}
	K_{\mathrm{col}}(x,y)=\left\{ \begin{array}{ll}
	        1 & y-x=\pm e_1 \\
	        a q^{-2n+x_1-1} \mathtt{i} & y-x=e_2 \\
	        a q^{2n-x_1} \mathtt{i} & y-x=-e_2\\
	        0 &\mbox{otherwise}	        
	       \end{array} \right.
\end{equation}
where $x=(x_1,x_2)\in \mathtt{B}$ and $y\in \mathtt{W}$.  

\begin{figure}
\caption{ The two $q^{\mathrm{\mathrm{vol}}}$ weightings used in this paper for Aztec diamond of size 3.  The figure on the left is $q^{\mathrm{col}}$ weighting and the figure on the right is the $q^{\mathrm{diag}}$. }
\includegraphics[height=3in]{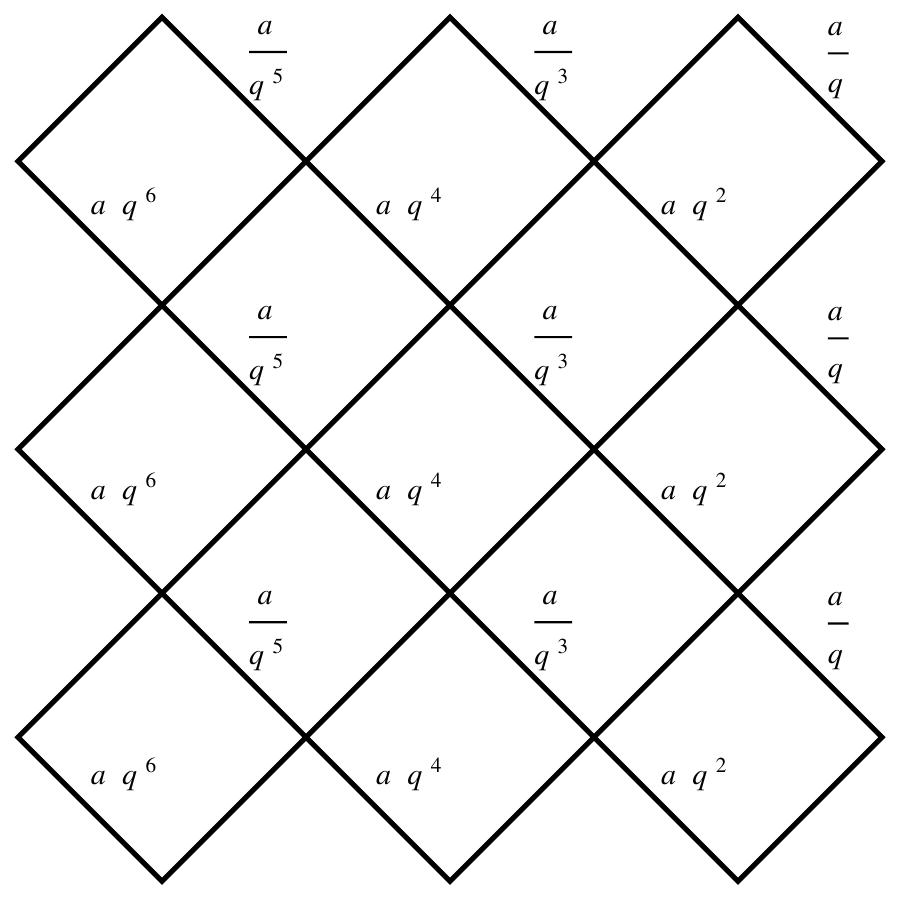}
\includegraphics[height=3in]{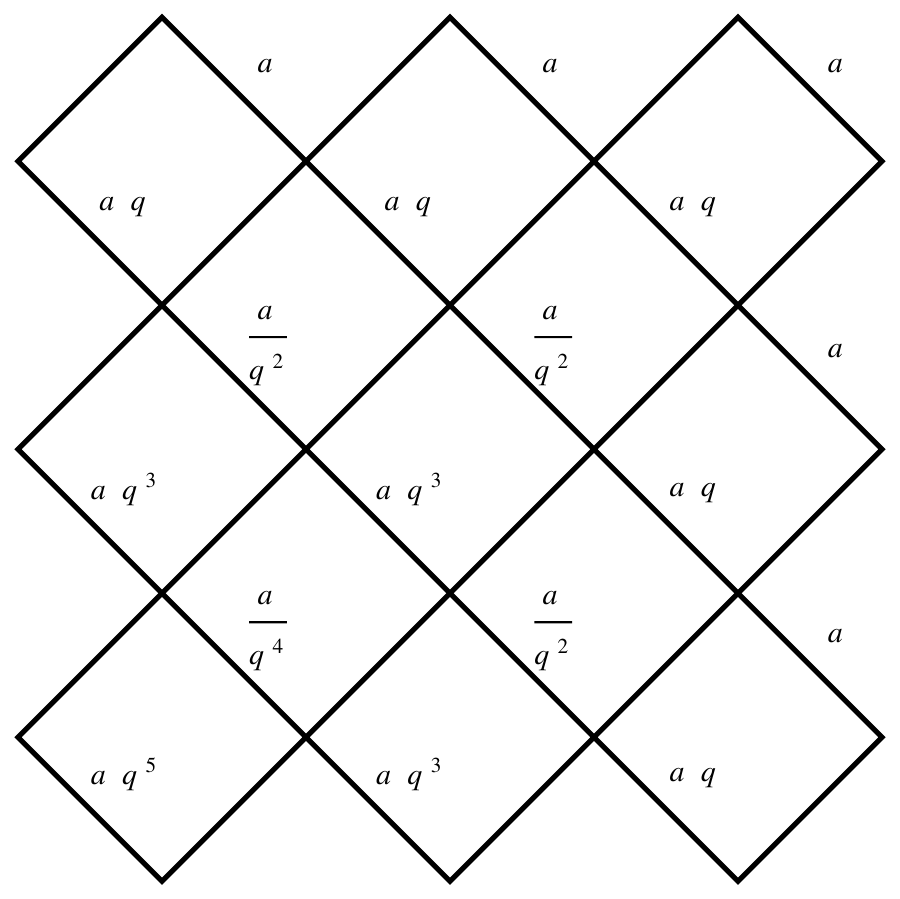}
\label{fig:qvol weightings}
\end{figure}

\begin{thma} \label{thm:qvolinverseK}
The entries of the inverse of $K_{\mathrm{col}}$ are given by 
\begin{equation}
      K_{\mathrm{col}}^{-1}(x,y)= \left\{ \begin{array}{ll} 
	                      f_1(x,y) & \mbox{if } x_1<y_1+1 \\
	                      f_1(x,y) + f_2(x,y) & x_1 \geq y_1+1
	                     \end{array} \right.
\end{equation}
for $x=(x_1,x_2) \in \mathtt{W}$ and $y=(y_1,y_2)\in \mathtt{B}$, where 
\begin{equation}
 \begin{split}
	f_1(x,y)&=\frac{\mathtt{i}^{(-x_1+x_2+y_1-y_2+2)/2} q^{(4 + 4 n - x_1 + x_2 - y_1 - y_2) (x_1 - x_2 - y_1 + y_2)/4}}{(2\pi \mathtt{i})^2} \int_{\Gamma_{1/a,q}^n} \int_{\Gamma_0} dz \, dw \\
	& \frac{w^{y_1/2}}{z^{(x_1+1)/2} (w-z)} \frac{ \prod_{k=0}^{x_2/2-1} (z+a q^{-2k-1+y_2-1}) \prod_{k=0}^{n-x_2/2-1} (azq^{2k+x_2+3-y_2}-1)}{\prod_{k=0}^{(y_2-1)/2} (w+a q^{2k-1}) \prod_{k=0}^{n-(y_2+1)/2} (a w q^{2k+2} -1)}
 \end{split} \label{qvol:f1}
\end{equation}
and
\begin{equation}
 \begin{split} \label{qvol:f2}
	f_2(x,y)&=\frac{ \prod_{k=0}^{(x_1-y_1-3)/2}(- \mathtt{i} q^{-2n+x_1-4-2k}) \prod_{k=0}^{(x_2-y_2-3)/2} \mathtt{i} q^{2n-x_1+4+2k}}{2\pi \mathtt{i}} \int_{\Gamma^n_{1/a,q}} dw \\
	& w^{y_1/2-(x_1+1)/2}   \frac{ \prod_{k=0}^{x_2/2-1} (w+a q^{-2k-1+y_2-1}) \prod_{k=0}^{n-x_2/2-1} (awq^{2k+x_2+3-y_2}-1)}{\prod_{k=0}^{(y_2-1)/2} (w+a q^{2k-1}) \prod_{k=0}^{n-(y_2+1)/2} (a w q^{2k+2} -1)}
\end{split}
\end{equation}
where $\Gamma_0$ is a contour surrounding the origin and $\Gamma^n_{1/a,q}$ is a contour surrounding $1/(aq^2)$, $1/(a q^4)$,$\dots$, $1/(a q^{2n})$ which does not intersect with $\Gamma_0$.

\end{thma}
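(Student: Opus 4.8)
The plan is to treat Theorem~\ref{thm:qvolinverseK} as a \emph{verification} of a guessed formula rather than a derivation from scratch. Write $M(x,y)$ for the proposed right-hand side, so that $M=f_1$ when $x_1<y_1+1$ and $M=f_1+f_2$ when $x_1\geq y_1+1$, with $f_1,f_2$ given by~\eqref{qvol:f1} and~\eqref{qvol:f2}. Since $K_{\mathrm{col}}$ is a finite invertible matrix, its inverse is unique, so it suffices to check the single matrix identity $K_{\mathrm{col}}\,M=\mathbbm{I}$; that is,
\[
\sum_{v\sim x',\,v\in\mathtt{W}} K_{\mathrm{col}}(x',v)\,M(v,y)=\delta_{x'=y}
\]
for every pair of black vertices $x',y$. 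This is the $q^{\mathrm{col}}$-analog of the five-term relation~\eqref{internalrecurrence1}, and verifying it for all $x'$ at once supplies both the interior recurrence and the boundary conditions, so no separate solution of a boundary recurrence is logically required. The guess itself is of course informed by our general method --- most conveniently by computing the boundary generating function in the $q^{\mathrm{diag}}$ weighting and transferring to $q^{\mathrm{col}}$ by a gauge transformation --- but that motivation does not enter the proof.

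First I would record how the row operator $K_{\mathrm{col}}(x',\cdot)$ acts on the integrands. By~\eqref{Kcol}, summing over the white neighbours $v=x'\pm e_1,\,x'\pm e_2$ acts as a four-term difference operator in the white argument $x$ of the kernel; since $e_1=(1,1)$ and $e_2=(-1,1)$, each neighbour shifts $x_1$ and $x_2$ simultaneously by $\pm1$. In~\eqref{qvol:f1} the $x_1$-dependence sits in the monomial $z^{-(x_1+1)/2}$ and the $x_2$-dependence in the finite products $\prod_k (z+aq^{\cdots})$ and $\prod_k(azq^{\cdots}-1)$. The algebraic heart of the argument --- the step I would establish first --- is that these $q$-products are built precisely so that the neighbour shifts telescope: a unit shift of $x_2$ moves a product index by one, a unit shift of $x_1$ multiplies by $z^{\mp 1/2}$, and the column-dependent weights $aq^{\pm(2n-x_1)}\mathtt{i}$ are exactly those needed so that the four contributions combine into the $z$-integral of an integrand that is regular inside $\Gamma_0$. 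For an interior $x'\neq y$ this collapse leaves no pole enclosed by the prescribed contours, so both integrals vanish and the recurrence holds with right-hand side $0$.

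The value $1$ on the diagonal $x'=y$ is produced by a single residue: after the telescoping collapse the surviving integrand has one simple pole enclosed by $\Gamma_0$ or $\Gamma^n_{1/a,q}$, and the contours --- $\Gamma_0$ about the origin, $\Gamma^n_{1/a,q}$ about $1/(aq^2),\dots,1/(aq^{2n})$ --- are placed exactly so that its residue evaluates to $1$. On the boundary rows (black $x'$ missing one or two neighbours) the same computation applies, the ``missing'' neighbour terms being accounted for by residues that the chosen contours do not enclose. The single integral $f_2$ has the same $w$-integrand as the residue of the $f_1$-integrand at its pole $z=w$, and the dichotomy between $x_1<y_1+1$ and $x_1\geq y_1+1$ records whether, in carrying out the reduction, a deformation of the $z$-contour $\Gamma_0$ sweeps across this pole: for $x_1<y_1+1$ it does not and $f_2$ is absent, while for $x_1\geq y_1+1$ the crossing contributes $f_2$. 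I would then check that across the threshold $x_1=y_1+1$, where the recurrence couples a vertex in one regime to a neighbour in the other, the $f_2$ corrections entering from the adjacent terms combine so that the collapsed identity is unaffected by the case split.

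The main obstacle I anticipate is precisely this bulk telescoping: verifying that the four-term action of $K_{\mathrm{col}}$ on the $q$-product integrand of~\eqref{qvol:f1}--\eqref{qvol:f2} genuinely reduces to a regular, pole-free integrand is a long and delicate manipulation of finite $q$-products, made fiddly by the $x_1$-dependent shift of the multiplicative weights and by the need to track the $f_2$ correction near $x_1=y_1+1$. A reliable guide and consistency check throughout is the limit $q\to1$, under which~\eqref{qvol:f1}--\eqref{qvol:f2} should collapse to the one-periodic double-contour formula behind Theorem~\ref{thm:oneperiodic}; a sign or index slip in the telescoping typically manifests as a failure of this limit.
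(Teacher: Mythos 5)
Your proposal matches the paper's own proof: the paper likewise takes the formula as a guess (motivated by the boundary generating function in the $q^{\mathrm{diag}}$ weighting transferred by gauge transformation) and proves it by verifying $K_{\mathrm{col}}\cdot L=\mathbbm{I}$ entrywise, splitting into the interior case $0<x_1<2n$ and the boundary rows $x_1=0$, $x_1=2n$, showing the four-term combination of the $f_1$-integrands telescopes to zero, identifying $f_2$ as (minus) the residue of the $f_1$-integrand at $z=w$, and producing $\delta_{x=y}$ from a single residue in the $f_2$ terms at $x_1=y_1$. The only difference is one of emphasis: the paper leans on computer algebra for the $q$-Pochhammer simplifications that you describe as the anticipated "long and delicate manipulation," which is exactly where the work lies.
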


When $q =1$, we recover the double contour integral form for $K^{-1}$ of a one-periodic weighting of the Aztec diamond with edge weights 1 and $a$ for the horizontal and vertical edges.  Unlike the formulas for the one-periodic case (Theorems~\ref{thm:uniform} and~\ref{thm:oneperiodic}), we were unable to find a generating function form for $K_{\mathrm{col}}^{-1}$.  However, we believe that Theorem~\ref{thm:qvolinverseK} is written in the most suitable form for asymptotic computations --- one can apply the saddle point analysis to compute both local and global behavior of the model.

The rest of this section is organized as follows:
similar to the proof of the one-periodic case, we first derive the boundary generating function for a gauge equivalent weighting of the $q^{\mathrm{col}}$ weighting.  From this boundary generating function, we show how we can obtain coefficients of the boundary generating function of the $q^{\mathrm{col}}$ weighting.  Unlike the one-periodic and two-periodic cases, we do not derive $K^{-1}_{\mathrm{col}}$ directly from the expansion of the boundary generating function.  Instead, we guess a formula for $K^{-1}_{\mathrm{col}}$ which is outlined in Section~\ref{subsection Finding K}.  We then prove that this guess is correct in Section~\ref{Appendixqvol}.

\subsection{Boundary Generating functions on the top and right boundaries}

We focus on extracting the coefficients of the boundary generating function for $q^{\mathrm{col}}$ weights on the top boundary (the white vertices $(2k-1,2n)$ for $0\leq k \leq n-1$) and the right boundary (the black vertices $(2n,2k+1)$ for $0 \leq k \leq n-1$).  For the rest of this section, label the white vertices on the top boundary $0$ to $n-1$ with the label $i$ representing the vertex $(2n-2i-1,2n)$.  Similarly, label the black vertices on the right boundary $0$ to $n-1$ with the label $j$ representing the vertex $(2n,2n-1-2j)$.  

Although we can find a boundary recurrence relation for the $q^{\mathrm{col}}$ weighting, it seems intractable (to us at least) to find an explicit solution.  This inconvenience  can be bypassed by working on another (gauge equivalent) choice of $q^{\mathrm{vol}}$ weighting.  

We let $q^{\mathrm{diag}}$ be the $q^{\mathrm{vol}}$ weighting where all edges parallel to $e_1$ have weight 1 and all edges parallel to $e_2$ on the top and right boundary have weight $a$.  Because the $q^{\mathrm{diag}}$ weighting is a $q^{\mathrm{vol}}$ weighting which has all horizontal edges equal to $1$, this choice of boundary weights uniquely determines the remaining edge weights -- see Figure~\ref{fig:qvol weightings}.   Let $K_{\mathrm{diag}}$ denote the Kasteleyn matrix for the $q^{\mathrm{diag}}$ weighting.  Its entries are given by 
\begin{equation} \label{qvol:Kdiag}
	K_{\mathrm{diag}}(x,y)=\left\{ \begin{array}{ll}
	        1 & y-x=\pm e_1 \\
	        \mathtt{i} a q^{-2\min(n-x_1/2,n-(x_2+1)/2)}  & y-x=e_2 \\
	        \mathtt{i} a q^{2\min(n-x_1/2-1,n-(x_2+1)/2))+1}  & y-x=-e_2\\
	        0 &\mbox{otherwise}	        
	       \end{array} \right.
\end{equation}
where $x=(x_1,x_2) \in \mathtt{B}$ and $y \in \mathtt{W}$.

\begin{lemma} \label{lem:qvolgaugeequivalent}

The gauge transformation from the $q^{\mathrm{col}}$ weighting to the $q^{\mathrm{diag}}$ weighting is given by multiplying the white vertices  $y=(y_1,y_2)$ by $q^{((y_2-y_1+1)/2)^2}$ if $y_2>y_1$ and $q^{(y_2-y_1+1)/2}$ if $y_1>y_2$ and multiplying the black vertices $x=(x_1,x_2)$ by $q^{-((x_2-x_1+1)/2)^2}$ if $x_1<x_2$ and $q^{(x_1-x_2-1)/2}$ if $x_1>x_2$.
\end{lemma}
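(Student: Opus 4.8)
The plan is to verify the asserted gauge relation edge by edge. Write $g_W(y)$ and $g_B(x)$ for the proposed multipliers attached to a white vertex $y=(y_1,y_2)\in\mathtt{W}$ and a black vertex $x=(x_1,x_2)\in\mathtt{B}$. By the definition of a gauge transformation recalled in Section~\ref{section Prerequisites} (``multiplying a vertex by a constant'' scales every incident edge), applying the $g_W$ to all white vertices and the $g_B$ to all black vertices sends the weight of the edge $(x,y)$ to $g_B(x)\,g_W(y)\,K_{\mathrm{col}}(x,y)$. Hence it suffices to prove
\begin{equation}
K_{\mathrm{diag}}(x,y)=g_B(x)\,g_W(y)\,K_{\mathrm{col}}(x,y)
\end{equation}
for each of the four edge types $y-x=\pm e_1,\pm e_2$; no separate verification of the products around faces is required, since the displayed identity already characterises the gauge transformation. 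I would use throughout that $x_1$ is even and $x_2$ odd (and dually for $y$), so that $x_1-x_2$ and $y_1-y_2$ are odd; in particular the degenerate cases $x_1=x_2$ and $y_1=y_2$ never occur, and $g_B,g_W$ are always well defined.

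First I would dispose of the horizontal edges $y-x=\pm e_1$, for which both $K_{\mathrm{col}}(x,y)$ and $K_{\mathrm{diag}}(x,y)$ equal $1$, so the claim is $g_B(x)g_W(y)=1$. In either case $y_2-y_1=x_2-x_1$, and splitting on the sign of $x_1-x_2$, the two quadratic exponents cancel when $x_1<x_2$ while the two linear exponents cancel when $x_1>x_2$, giving $g_B(x)g_W(y)=q^0=1$.

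The substance lies in the diagonal edges $y-x=\pm e_2$. Here I would first resolve the minimum appearing in~\eqref{qvol:Kdiag} via $\min(n-\tfrac{x_1}{2},n-\tfrac{x_2+1}{2})=n-\max(\tfrac{x_1}{2},\tfrac{x_2+1}{2})$, which turns the target ratio $K_{\mathrm{diag}}(x,y)/K_{\mathrm{col}}(x,y)$ into a power of $q$ with exponent $\max(x_1,x_2+1)-x_1+1$ for $y-x=e_2$ and $x_1+1-\max(x_1+2,x_2+1)$ for $y-x=-e_2$. Evaluating the maxima on the two sides of the diagonal $x_1\lessgtr x_2$, these collapse to $q^{x_2-x_1+2}$ or $q$ (for $e_2$) and to $q^{x_1-x_2}$ or $q^{-1}$ (for $-e_2$). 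On the other side, with $d=x_2-x_1$, the product $g_B(x)g_W(y)$ produces differences of squared exponents, and the two algebraic identities $\big(\tfrac{d+3}{2}\big)^2-\big(\tfrac{d+1}{2}\big)^2=d+2$ and $\big(\tfrac{d-1}{2}\big)^2-\big(\tfrac{d+1}{2}\big)^2=-d$ are exactly what reduce these to the required linear exponents.

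The main obstacle is the bookkeeping of the case distinctions: because $g_W$ and $g_B$ each switch between a quadratic and a linear branch according to whether the vertex sits above or below the diagonal, the two endpoints of a diagonal edge may lie on the same side or on opposite sides of it. This forces mixed-branch sub-cases right at the diagonal, namely $x_1-x_2=1$ for $y-x=e_2$ and $x_2-x_1=1$ for $y-x=-e_2$, in which one endpoint uses the quadratic exponent and the other the linear one; I would check these boundary sub-cases by hand and confirm that they still yield $q$ and $q^{-1}$ respectively, matching the ratios above. Once every branch is seen to agree, the displayed identity holds on all edges, which is precisely the assertion of the lemma.
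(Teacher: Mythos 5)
Your proposal is correct and follows essentially the same route as the paper: both arguments verify edge by edge that $g_B(x)\,g_W(y)\,K_{\mathrm{col}}(x,y)=K_{\mathrm{diag}}(x,y)$, splitting on the sign of $x_1-x_2$ and using the telescoping of the quadratic exponents on the diagonal edges. Your explicit treatment of the mixed-branch sub-cases at $|x_1-x_2|=1$ is a minor refinement the paper glosses over (the two branches happen to agree there), but it does not change the argument.
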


\begin{proof}
Consider the $q^{\mathrm{col}}$ weighting with the gauge transformation described in the lemma.   Label this gauge transformation $g$ so that $g(v)$ represents the gauge transform which multiplies the vertex $v$ by $g(v)$. 

From the above gauge transformation, for $x=(x_1,x_2) \in \mathtt{B}$, we compute a new Kasteleyn matrix, labeled $\tilde{K}$ which is given entry wise by $K_{\mathrm{col}} (x,y) g(x)g(y)$ where $y \in \mathtt{W}$ is a nearest neighbor vertex to $x$. We find that for $x_1<x_2$, the entries of this new Kasteleyn matrix are  
\begin{itemize}
\item  $\tilde{K}_{\mathrm{col}}(x,x+e_1).q^{-((x_2-x_1+1)/2)^2}.q^{((x_2-x_1+1)/2)^2}=1$,
\item $\tilde{K}_{\mathrm{col}}(x,x-e_1).q^{-((x_2-x_1+1)/2)^2}.q^{((x_2-x_1+1)/2)^2}=1$,
\item $\tilde{K}_{\mathrm{col}}(x,x+e_2).q^{-((x_2-x_1+1)/2)^2}.q^{((x_2-x_1+3)/2)^2}=aq^{x_2-1-2n}\mathtt{i}$ and
\item $\tilde{K}_{\mathrm{col}}(x,x-e_2).q^{-((x_2-x_1+1)/2)^2}.q^{((x_2-x_1-1)/2)^2}=aq^{2n-x_2}\mathtt{i}$.
\end{itemize} 
For $x=(x_1,x_2) \in \mathtt{B}$, we similarly compute $K_{\mathrm{col}}(x,y) g(x) g(y)$ for $x_1>x_2$ and we obtain
\begin{itemize}
\item  $\tilde{K}_{\mathrm{col}}(x,x+e_1).q^{(x_1-x_2+1)/2}.q^{-(x_1-x_2+1)/2}=1$,
\item $\tilde{K}_{\mathrm{col}}(x,x-e_1).q^{(x_1-x_2+1)/2}.q^{-(x_1-x_2+1)/2)}=1$,
\item $\tilde{K}_{\mathrm{col}}(x,x+e_2).q^{(x_1-x_2+1)/2}.q^{-(x_1-x_2-1)/2}=aq^{x_1-2n}\mathtt{i}$ and
\item $\tilde{K}_{\mathrm{col}}(x,x-e_2).q^{(x_1-x_2+1)/2}.q^{-(x_1-x_2+3)/2}=aq^{2n-x_1-1}\mathtt{i}$.
\end{itemize} 
Using~\eqref{qvol:Kdiag}, it follows that $\tilde{K}$ is equal to $K_{\mathrm{diag}}$.
\end{proof}

We now write out the partition function and boundary recurrence for the $q^{\mathrm{vol}}$ weighting of the Aztec diamond. The latter recurrence will be written in terms of the vertices on the top and right boundaries, whereas in Section~\ref{section:general} we considered a recurrence with the bottom  and left  boundaries.  To fix this, we work with the faces with centers having coordinates $(2n-2k-1,2n-2l-1)$ and interchange $w_{0,1}$ and $w_{1,0}$ for an Aztec diamond of size $n$ for the remainder of this section.  That is, the edge weights around a face whose center has co-ordinates $(2n-2k-1,2n-2l-1)$ are given by $w_{0,0}(k,l),w_{1,0}(k,l),w_{0,1}(k,l)$ and $w_{1,1}(k,l)$ for $0\leq k ,l \leq n-1$ where we have used the same labeling procedure as given in Section~\ref{section:general} (i.e. the weights correspond to the edges $((2n-2k-2,2n-2l-1),(2n-2k-1,2n-2l)), ((2n-2k-1,2n-2l),(2n-2k,2n-2l-1)) , ((2n-2k-2,2n-2l-1),(2n-2k-1,2n-2l-2))$ and $((2n-2k-1,2n-2l-2),(2n-2k,2n-2l-1))$).
 With this notation change, let $Z_n^{\mathrm{diag}}(b,a,q)$ denote the partition function of the Aztec diamond with weights
\begin{equation}
\begin{split} \label{qvol:weighting1}
w_{0,0}(k,l)=b, \hspace{8mm} 
w_{1,0}(k,l)=a q^{-2\min(k,l)},  \\
w_{0,1}(k,l)=a q^{2\min(k,l)+1}\hspace{2mm} \mbox{and }
w_{1,1}(k,l)=b
\end{split} 
\end{equation}
for $0 \leq k,l \leq n-1$,
i.e. $Z_n^{\mathrm{diag}}(b,a,q)$ is the partition function of the $q^{\mathrm{diag}}$ weighting of an Aztec diamond of size $n$. Let $\tilde{Z}_{n-1}^{\mathrm{diag}}(b,a,q)$ be the partition function of an Aztec diamond of size $n-1$ whose faces have edge weights
\begin{equation}
\begin{split} \label{qvol:weighting2}
\tilde{w}_{0,0}(k,l)=b, \hspace{8mm} 
\tilde{w}_{1,0}(k,l)=a q^{-2\min(k,l))},  \\
\tilde{w}_{0,1}(k,l)=a q^{2 \min (k,l)+3}\hspace{2mm} \mbox{and }
\tilde{w}_{1,1}(k,l)=b
\end{split} 
\end{equation}
for $0\leq k,l\leq n-2$. We denote $Z^{\mathrm{diag}}(i,j,a,q,n)$ to be the partition function of an Aztec diamond of size $n$ whose edge weights are given by~\eqref{qvol:weighting1} with $b=1$ and where $i$ and $j$ denote removing the vertices $(2n-2i-1,2n)$ and $(2n,2n-2j-1)$ respectively from the graph.  We denote $Z^{\mathrm{diag}}(i,j,b,a,q,n-1)$  to be the partition function of an Aztec diamond of size $n-1$ whose edge weights are given by~\eqref{qvol:weighting2} where $i$ and $j$ denote removing the vertices $(2n-2i-3,2n-2)$ and $(2n-2,2n-2j-3)$ respectively from the graph.  We set $Z^{\mathrm{diag}}(i,j,a,q,n)=Z^{\mathrm{diag}}(i,j,c,a,q,n)=0$ if either $i$ or $j$ (or both) are not contained in $\{0,1,\dots, n-1\}$.

\begin{lemma} \label{lem:qvolrecurrence}
We have that $Z^{\mathrm{diag}}(i,j,a,q,n)/Z^{\mathrm{diag}}_n(1,a,q)$ satisfies the following recurrence

      \begin{equation}
       \frac{Z^{\mathrm{diag}}(i,j,a,q,n)}{Z^{\mathrm{diag}}_n(1,a,q)}=q^{i+j+1} \left(\sum_{\substack{ (k,l) \in \{0,1\}\\ (i-k,j-k) \\ \not =(-1,-1)}} a ^{k+l} \frac{Z^{\mathrm{diag}}(i-k,j-l,aq,q,n-1)}{(1+a^2 q)Z_{n-1}^{\mathrm{diag}}(1,aq,q)} +\frac{a  \mathbbm{I}_{(i,j)=(0,0),n\geq 1}}{1+a^2 q} \right).
      \end{equation}

\end{lemma}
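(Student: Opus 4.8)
The plan is to specialize the general boundary recurrence of Lemma~\ref{general:lem:recurrence} to the $q^{\mathrm{diag}}$ weights~\eqref{qvol:weighting1}, in exactly the way the one-periodic recurrence (Lemma~\ref{oneperiodicbias:lem:linearrecurrence}) was extracted from the same machinery. The one structural difference is that the removed vertices now lie on the top and right boundaries, so I would invoke Lemma~\ref{general:lem:recurrence} with the faces re-centred at $(2n-2k-1,2n-2l-1)$ and the roles of $w_{0,1}$ and $w_{1,0}$ interchanged, as arranged in the paragraph preceding the lemma. Everything then reduces to evaluating the urban-renewal factors, identifying the contracted graph, and normalizing by the partition function.

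First I would compute the urban-renewal factor of~\eqref{qvol:weighting1}, namely
\begin{equation}
\Delta(k,l)=w_{0,0}(k,l)w_{1,1}(k,l)+w_{0,1}(k,l)w_{1,0}(k,l)=b^2+a^2 q^{2\min(k,l)+1}q^{-2\min(k,l)}=b^2+a^2 q,
\end{equation}
which is pleasantly independent of $(k,l)$. Hence~\eqref{general:eqn:partition} collapses to $Z^{\mathrm{diag}}_n(1,a,q)=(1+a^2q)^{n^2}\tilde{Z}^{\mathrm{diag}}_{n-1}$, the clean analogue of the $(1+a^2)^{n^2}$ in the one-periodic case. Next I would verify that the contracted weights produced by Lemma~\ref{general:lem:recurrence} are precisely~\eqref{qvol:weighting2}, and that these are gauge equivalent to the $q^{\mathrm{diag}}$ weights with parameter $aq$: the product of the four edge weights around each face equals $a^2q^3$ both for~\eqref{qvol:weighting2} and for the $q^{\mathrm{diag}}(aq)$ weighting, and this equality of face products is the condition guaranteeing that a gauge transformation between them exists. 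This is the origin of the shift $a\mapsto aq$ in passing from size $n$ to size $n-1$.

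With these two ingredients I would substitute into the vertex-removal recurrence~\eqref{general:eqn:recurrence}. On the boundary one has $\min(i,0)=\min(0,j)=0$, so, after the $w_{0,1}\leftrightarrow w_{1,0}$ interchange, the prefactors $w_{0,i-k}(i,0)/\Delta(i,0)$ and $w_{1+l-j,1}(0,j)/\Delta(0,j)$ are simply $1$ or $a$ divided by $1+a^2q$; it is exactly this interchange that makes the base $a$ rather than $aq$, producing the $a^{k+l}$ and the $1/(1+a^2q)$ of the stated recurrence, while the isolated $w_{0,1}(0,0)/\Delta(0,0)$ term supplies the $a/(1+a^2q)\,\mathbbm{I}_{(i,j)=(0,0)}$ source. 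Dividing the resulting identity for $Z^{\mathrm{diag}}(i,j,a,q,n)$ by the partition-function recurrence and converting each reduced $\tilde{Z}^{\mathrm{diag}}_{n-1}(i-k,j-l)$ into $Z^{\mathrm{diag}}(i-k,j-l,aq,q,n-1)$ by the gauge transformation would then assemble the claimed formula.

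The hard part will be the $q$-exponent bookkeeping that produces the overall prefactor $q^{i+j+1}$. Unlike the one-periodic case, where the standardizing gauge multiplies every white vertex by the same constant, the gauge transformation carrying~\eqref{qvol:weighting2} to the $q^{\mathrm{diag}}$ weighting with parameter $aq$ is position-dependent, with $q$-exponents quadratic and linear in the coordinates of the kind computed in Lemma~\ref{lem:qvolgaugeequivalent}. The deleted white and black vertices of the contracted diamond sit at positions governed by the shifted labels $i-k$ and $j-l$, so their uncancelled gauge multipliers carry $(k,l)$-dependence. The crux is to show that this dependence recombines with the boundary weight factors so that the summand is exactly $a^{k+l}Z^{\mathrm{diag}}(i-k,j-l,aq,q,n-1)$ while the residual, $(k,l)$-independent exponent collapses to the single global factor $q^{i+j+1}$. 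I would establish this by writing out the gauge multiplier of each deleted vertex explicitly from Lemma~\ref{lem:qvolgaugeequivalent} and matching exponents term by term, with the source term $\mathbbm{I}_{(i,j)=(0,0)}$ treated as a degenerate case.
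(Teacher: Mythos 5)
Your overall strategy coincides with the paper's: specialize Lemma~\ref{general:lem:recurrence} to the weights~\eqref{qvol:weighting1}, identify the contracted diamond with a gauge-equivalent copy of the $q^{\mathrm{diag}}$ weighting at parameter $aq$, and track the gauge factors at the two removed vertices. However, the proposal contains a concrete error in the one step it evaluates explicitly, and it defers precisely the computation that constitutes the content of the lemma.

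The error: the boundary prefactors $w_{0,i-k}(i,0)/\Delta(i,0)$ and $w_{1+l-j,1}(0,j)/\Delta(0,j)$ equal $1$ or $aq$ divided by $1+a^2q$, not $1$ or $a$; from~\eqref{qvol:weighting1} one has $w_{0,1}(i,0)=aq^{2\min(i,0)+1}=aq$, and the interchange of $w_{0,1}$ and $w_{1,0}$ does not alter this. The base $a$ in the final formula emerges only through the substitutions $a\mapsto aq^{-1}$ and $a\mapsto aq$ that bracket the gauge transformation in the paper's argument, which your write-up omits. This is not cosmetic: the extra $q^{k+l}$ carried by the boundary weights is exactly what cancels the $(k,l)$-dependence of the gauge multipliers at the removed vertices. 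The relevant gauge (the one in~\eqref{qvol:weightchange3}) multiplies the removed white vertex at label $k'$ by $q^{1+k'}$ and the removed black vertex at label $l'$ by $q^{l'}$, contributing $q^{k'+l'+1}=q^{i-k+j-l+1}$, and only the product $(aq)^{k+l}\,q^{i-k+j-l+1}=a^{k+l}q^{i+j+1}$ collapses to a $(k,l)$-independent prefactor; with boundary weights $a^{k+l}$ the claimed collapse fails. This gauge computation is the crux you leave as a plan, and the source you point to for it is wrong: Lemma~\ref{lem:qvolgaugeequivalent} gives the gauge from $q^{\mathrm{col}}$ to $q^{\mathrm{diag}}$, not from the contracted weighting~\eqref{qvol:weighting2} to $q^{\mathrm{diag}}$ with parameter $aq$; the correct gauge is linear (not quadratic) in the coordinates and must be constructed separately. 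Finally, your stated criterion for gauge equivalence (equality of the full product of the four edge weights around a face) is not gauge-invariant in general — the invariant is the alternating face product — and it happens to give the right conclusion here only because all horizontal edges have weight $1$.
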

      
\begin{proof}
We first compute the recurrence for $Z_n^{\mathrm{diag}}(1,a,q)$ and then compute the recurrence for $Z^{\mathrm{diag}}(i,j,a,q,n)$.

For $Z_n^{\mathrm{diag}}(1,a,q)$ we compute the urban renewal factors and using the notation from Section~\ref{section:general} we find
\begin{equation}
\Delta(k,l)=w_{0,0}(k,l)w_{1,1}(k,l)+w_{0,1}(k,l) w_{1,0}(k,l)=1+a^2 q
\end{equation}
 for all $0\leq k,l\leq n-1$. Because of the change in notation from Section~\ref{section:general}, the edge weights under the deformation of the Aztec diamond of size $n$ to an Aztec diamond of size $n-1$ as detailed in Section~\ref{section:general} are equal to $w_{0,0}(k+1,l)/\Delta(k,l)$, $w_{1,0}(k,l)/\Delta(k,l)$, $w_{0,1}(k+1,l+1)/\Delta(k,l)$ and $w_{1,1}(k,l+1)/\Delta(k,l)$ around the face whose center has coordinates $(2n-2k-3,2n-2l-3)$. Comparing with~\eqref{qvol:weighting2}, we have
\begin{equation}
w_{r,s}(k+(1-r),l+s) = \tilde{w}_{r,s}(k,l) \Delta(k,l)
\end{equation}
for $r,s \in \{0,1\}$ and so we  use~\eqref{general:eqn:partition} to obtain
\begin{equation} \label{qvol:weightchange0}
	\frac{Z^{\mathrm{diag}}_n(1,a,q)}{(1+a^2 q)^{n^2}}=\tilde{Z}_{n-1}^{\mathrm{diag}} \left( \frac{1}{1+a^2 q}, \frac{a}{1+a^2 q},q \right) 
 \end{equation}
As all the edge weights on the right-hand side are divided by $1+a^2 q$, we  apply a gauge transformation to  $Z_{n-1}^{\mathrm{diag}}(\cdot)$ on the right-hand side of the above equation which multiplies the white vertices by $(1+a^2 q)$. As this term encodes $n(n-1)$ white vertices, we find
 \begin{equation} \label{qvol:weightchange1}
	\frac{Z^{\mathrm{diag}}_n(1,a,q)}{(1+a^2q)^n}= \tilde{Z}_{n-1}^{\mathrm{diag}} (1,a,q) 
 \end{equation}
Setting $a \mapsto a q^{-1}$ gives
\begin{equation} \label{qvol:weightchange2}
\left.      \frac{Z^{\mathrm{diag}}_n(1,a,q)}{(1+a^2 q)^n} \right|_{a \mapsto a q^{-1}} =\tilde{Z}_{n-1}^{\mathrm{diag}}(1,aq^{-1},q)
\end{equation}
To the right-hand side of the above equation, we apply a gauge transformation which multiplies the white vertices at $(2n-2i-3,2n-2)$ by $q^{1+i}$ for all $0 \leq i \leq n-2$, the black vertices at $(2n-2,2n-2j-3)$ by $q^j$ for all $0 \leq j \leq n-2$ and the remaining black and white vertices are multiplied by the appropriate factor so that all the horizontal edges have weight 1 (that is, multiply each white vertex $(x_1,x_2)$ by $q^{(x_2-x_1-1)/2}$ and each black vertex $(x_1,x_2)$ by $q^{(x_1-x_2-1)/2}$). Due to the overall multiplication factor of this gauge transform is $q^{0}$, we find that 
\begin{equation} \label{qvol:weightchange3}
	\tilde{Z}_{n-1}^{\mathrm{diag}} (1,aq^{-1},q)=Z_{n-1}^{\mathrm{diag}}(1,a,q)
\end{equation}
and after substituting~\eqref{qvol:weightchange3} into~\eqref{qvol:weightchange2} and setting $a \mapsto a q$ on both sides in~\eqref{qvol:weightchange2}, we find
\begin{equation} \label{qvol:partitionrecurrence}
      Z^{\mathrm{diag}}_n(1,a,q)=(1+a^2 q)^n Z_{n-1}^{\mathrm{diag}} (1,a q ,q).
\end{equation}

We now find the recurrence for $Z^{\mathrm{diag}}(i,j,a,q,n)$ following similar  transformations used in computing the partition function recurrence.  To make the computations simpler, we will divide through by $(1+a^2 q)^{n-1}$ and set $a \mapsto a q^{-1}$ and so we will  compute a relation for 
\begin{equation}\label{qvol:weightrecurrence1}
\left. \frac{Z^{\mathrm{diag}} (i,j,a,q,n)}{(1+a^2 q)^{n-1}} \right|_{a \mapsto a q^{-1}}
\end{equation}
For the above equation, we  use~\eqref{general:eqn:recurrence} but as the formulas are rather long we shall treat each term of the right-hand side of~\eqref{general:eqn:recurrence} separately. Substituting the first term on the right-hand side of~\eqref{general:eqn:recurrence} with the $q^{\mathrm{diag}}$ weighting into~\eqref{qvol:weightrecurrence1}, because $w_{0,1}(0,0)/\Delta(0,0)=aq/(1+a^2 q)$
\begin{equation} \label{qvol:weightrecurrence2}
\begin{split}
& \left[ a q(1+a^2 q)^{n^2-n}  \tilde{Z}^{\mathrm{diag}}_{n-1}\left( \frac{1}{1+a^2q} , \frac{a}{1+a^2 q} ,q \right) \right]_{a\mapsto a q^{-1}}
=a \tilde{Z}_{n-1}^\mathrm{diag} (1,a q^{-1},q) 
= a Z_{n-1}^{\mathrm{diag}}(1,a,q)
\end{split}
\end{equation}
where the second line follows from the gauge transformation multiplying all $n(n-1)$ white vertices by $(1+a^2 q)$ as given computed in~\eqref{qvol:weightchange0} followed by setting $a\mapsto a q^{-1}$  while the third line follows from~\eqref{qvol:weightchange3}.
Substituting the second term on the right-hand side of~\eqref{general:eqn:recurrence} with the $q^{\mathrm{diag}}$ weighting, we find that because $w_{0,1}(r,s)=a q$ for all $0\leq r\leq n-1$ provided $s=0$ and for all $0\leq s \leq n-1$ provided $r=0$, the
 second term of~\eqref{qvol:weightrecurrence1} is equal to
\begin{equation}
\begin{split} \label{qvol:weightrecurrence3}
&=	\left. \left( (1+a^2 q)^{n^2-n+1} \sum_{ \substack{ k \in \{i-1,i \} \\ l \in \{j-1,j\}}} \frac{(aq)^{i-k+j-l}}{(1+a^2 q)^2}  \tilde{Z}^{\mathrm{diag}}\left(k,l,\frac{1}{1+a^2 q},\frac{a}{1+a^2 q},q,n-1\right) \mathbbm{I}_{0 \leq k,l \leq n-2 }\right) \right|_{a \mapsto a q^{-1}} \\
&= \left.\left( \sum_{\substack{k \in \{i-1,i \} \\ l\in \{j-1,j\}} } (aq)^{i-k+j-l}\tilde{ Z}^{\mathrm{diag}} (k,l,1,a,q,n-1) \mathbbm{I}_{0 \leq k,l \leq n-2 }\right) \right|_{a \mapsto a q^{-1}}\\
&=  \sum_{\substack{k \in \{i-1,i \} \\ l\in \{j-1,j\}} }  a^{i-k+j-l} \left( \left. \tilde{ Z}^{\mathrm{diag}} (k,l,1,a,q,n-1)  \mathbbm{I}_{0 \leq k,l \leq n-2 }\right|_{a \mapsto a q^{-1}} \right)\\
&=    \sum_{\substack{k \in \{i-1,i \} \\ l\in \{j-1,j\}} } q^{k+l+1}  a^{i-k+j-l}    Z^{\mathrm{diag}} (k,l,a,q,n-1)  \mathbbm{I}_{0 \leq k,l \leq n-2 }
\end{split}
\end{equation}
where the first line to the second line follows from using apply a gauge transformation which multiplies the $n(n-1)-1$ white vertices  by $(1+a^2 q)$.  The third to fourth line in the above block of equations is due to the gauge transformation used in~\eqref{qvol:weightchange3} which is be seen by the following: $\tilde{ Z}^{\mathrm{diag}} (k,l,1,a,q,n-1) |_{a \mapsto a q^{-1}}$ has the same weights as $\tilde{Z}_{n-1}^{\mathrm{diag}}(1,a q^{-1},q)$ with the vertices $(2n-2k-3,2n-2)$ and $(2n-2,2n-2l-3)$ removed from the Aztec diamond. We apply the same gauge transformation given in~\eqref{qvol:weightchange3} and the factor $q^{k+l+1}$ compensates for these two removed vertices.  Using~\eqref{qvol:weightrecurrence3} and~\eqref{qvol:weightrecurrence2},~\eqref{qvol:weightrecurrence1} becomes
\begin{equation}
\left. \frac{Z^{\mathrm{diag}} (i,j,a,q,n)}{(1+a^2 q)^{n-1}} \right|_{a\mapsto a q^{-1}}= q\sum_{\substack{k \in \{i-1,i\} \\ l \in \{j-1,j\}}} q^{k+l}a^{i-k+j-l}Z^{\mathrm{diag}} (k,l,a,q,n-1)  \mathbbm{I}_{0 \leq k,l \leq n-2 }+ a Z_{n-1}^{\mathrm{diag}} (1,a,q)
\end{equation}
We  set $a \mapsto aq$ on both sides of the above equation and divide by the partition function recurrence given in~\eqref{qvol:partitionrecurrence}.  A change of summation index gives the result.

\end{proof}

Following the steps outlined in Section~\ref{section Uniform case}, we now find the boundary generating function (ignoring the Kasteleyn orientation) for the $q^{\mathrm{diag}}$ weighting of the Aztec diamond.
Let $G_{NE}^{\mathrm{diag}} (w,b,a,q)$ denote the two variable boundary generating function for the $q^{\mathrm{diag}}$ weighting of an Aztec diamond of size $n$, that is
\begin{equation}
    G_{NE}^{\mathrm{diag}} (w,b,a,q)=\sum_{i=0}^{n-1} \sum_{j=0}^{n-1} \frac{Z^\mathrm{diag}(i,j,a,q,n)}{Z_n^\mathrm{diag}(1,a,q)} w^i b^j
\end{equation}
For this generating function, the variable $w$ marks the white vertices and the variable $b$ marks the black vertices.

 We also let
\begin{equation}
      \overline{G}_{NE}^{\mathrm{diag}} (w,b,a,q,z)=\sum_{n=0}^\infty G_{NE}^{\mathrm{diag}} (w,b,a,q)z^n,
\end{equation}
which denotes the three variable boundary generating function for the $q^{\mathrm{diag}}$ weighting of the Aztec diamond. For this generating function, the variables $w$ and $b$ mark the white and black vertices respectively, while the variable $z$ marks the size of the Aztec diamond.

\begin{lemma} \label{lem:qvolboundarygeneratingfunction}
The boundary generating function for the $q^{\mathrm{diag}}$ weighting of an Aztec diamond of size $n$ is given by
\begin{equation}
      G_{NE}^{\mathrm{diag}} (w,b,a,q)= \sum_{i=0}^{n-1} \frac{a q^{i+1} }{1+q^{2i+1} a^2} \prod_{k=0}^i \frac{q (1+q^{2k+1} a b) (1+q^{2k+1} a w ) }{1+a^2 q^{2k+1}}
\end{equation} 
\end{lemma}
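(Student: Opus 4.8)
The plan is to turn the scalar recurrence of Lemma~\ref{lem:qvolrecurrence} into a functional recurrence that relates the two--variable boundary generating function at consecutive sizes, and then to verify the claimed closed form by induction on $n$. To keep the size visible I write $G_n(w,b,a):=G_{NE}^{\mathrm{diag}}(w,b,a,q)$, so the assertion is that $G_n$ equals the displayed finite sum, with the empty sum giving $G_0\equiv 0$.

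First I would multiply the recurrence of Lemma~\ref{lem:qvolrecurrence} by $w^i b^j$ and sum over $0\le i,j\le n-1$. The prefactor $q^{i+j+1}$ plays a double role. After the change of summation index $i\mapsto i-k$, $j\mapsto j-l$ (legitimate because $Z^{\mathrm{diag}}(i',j',\cdot,n-1)$ vanishes outside $0\le i',j'\le n-2$, so the constraint $(i-k,j-l)\neq(-1,-1)$ is automatic), the powers $q^{i'},q^{j'}$ rescale the arguments of the lower--order generating function to $G_{n-1}(qw,qb,aq)$, while the remaining $q^{k+l}$ combines with $a^{k+l}w^k b^l$ into the factor $(1+aqw)(1+aqb)$ and the stray $q^{1}$ becomes an overall factor. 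Carrying this out, and noting that the normalisation $1+a^2q$ and the parameter shift $a\mapsto aq$ are inherited directly from the recurrence, I obtain a functional recurrence of the shape
\[
G_n(w,b,a)=\frac{q(1+aqw)(1+aqb)}{1+a^2q}\,G_{n-1}(qw,qb,aq)+\frac{aq}{1+a^2q},
\]
the final term being the contribution of the $(i,j)=(0,0)$ indicator. The base case $n=1$ is then checked directly from the size--one diamond, which is a single face with urban--renewal factor $\Delta(0,0)=1+a^2q$.

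The engine of the induction is a clean re--indexing identity. Under the substitution $(w,b,a)\mapsto(qw,qb,aq)$, each factor $\tfrac{q(1+q^{2k+1}aw)(1+q^{2k+1}ab)}{1+a^2q^{2k+1}}$ is sent to the same factor with $k$ replaced by $k+1$, and the leading coefficient $\tfrac{aq^{i+1}}{1+a^2q^{2i+1}}$ is sent to the one with $i$ replaced by $i+1$; this is because $q^{2k+1}\cdot a\cdot q\cdot q=q^{2(k+1)+1}a$ and $(aq)^2 q^{2i+1}=a^2 q^{2i+3}$. Consequently, assuming the closed form for size $n-1$, the series $G_{n-1}(qw,qb,aq)$ is exactly the size--$n$ series with its summation index shifted up by one, i.e.\ its tail stripped of the initial data. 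Multiplying by the new leading factor $\tfrac{q(1+aqw)(1+aqb)}{1+a^2q}$ telescopes the shifted product back into a product starting at $k=0$, and restoring the $i=0$ term reconstitutes the full size--$n$ sum; comparing with the functional recurrence closes the induction.

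The main obstacle, and the step demanding the most care, is the precise bookkeeping in the passage from the scalar recurrence to the functional one: keeping the two simultaneous substitutions $w\mapsto qw$, $b\mapsto qb$ (from the $q^{i+j+1}$ prefactor) and $a\mapsto aq$ (from the recurrence) correctly synchronised, and pinning down exactly how the source term coming from the $(i,j)=(0,0)$ indicator sits relative to the multiplier $\tfrac{q(1+aqw)(1+aqb)}{1+a^2q}$. This placement is what fixes the exact range of the inner product in each summand, and so it must be matched carefully against the explicit base case to get the constant right; everything downstream is the routine telescoping described above.
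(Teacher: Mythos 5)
Your proposal is correct and follows essentially the same route as the paper: both hinge on the functional recurrence obtained by multiplying Lemma~\ref{lem:qvolrecurrence} by $w^ib^j$ and resumming, and on the observation that the substitution $(w,b,a)\mapsto(qw,qb,aq)$ shifts the index of each factor by one; the paper merely packages this as a three-variable generating function in $z$ solved by infinite iteration (discarding the homogeneous part via $zq<1$) and then extracts the coefficient of $z^n$, whereas you run the equivalent telescoping as a finite induction on $n$, which avoids any convergence discussion. One point deserves attention: the careful bookkeeping you yourself flag as the delicate step will show that the inner product in each summand runs over $0\le k\le i-1$ (empty for $i=0$, so that the $(i,j)=(0,0)$ source term is exactly the $i=0$ summand, and $G_1=aq/(1+a^2q)$ as a direct check of the size-one diamond confirms), rather than $0\le k\le i$ as in the printed statement of the lemma; your induction closes against the former, which is also what the paper's own iteration produces, so the displayed upper limit $k=i$ appears to be a typo in the statement rather than a defect in your argument.
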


\begin{proof}
      Multiplying the recurrence relation given in Lemma \ref{lem:qvolrecurrence} by $w^i b^j z^n$ and summing over $n \geq 0$, $0\leq i \leq n-1$ and $0 \leq j \leq n-1$ we obtain

      \begin{equation} \label{qvolrecurrence1}
      \overline{G}_{NE}^{\mathrm{diag}} (w,b,a,q,z)=\frac{z q(1+a b q) (1+a w q)}{1+a^2 q} \overline{G}_{NE}^{\mathrm{diag}} (wq,bq,aq,q,z)+\frac{aq}{1+a^2q} \frac{z}{1-z} 
      \end{equation}
The above equation is a recurrence for the $q^{\mathrm{diag}}$ boundary generating function.  To solve this recurrence, 
we let $F(w,b,a)= \overline{G}_{NE}^{\mathrm{diag}} (w,b,a,q,z)$,
\begin{equation}
      R(w,b,a)=  \frac{z q(1+a b q) (1+a w q)}{1+a^2 q},
\end{equation}
and
\begin{equation}
      S(a )=\frac{aq}{1+a^2q} \frac{z}{1-z}. 
\end{equation}
This means, we  rewrite~\eqref{qvolrecurrence1} as
\begin{equation}\label{qvolrecurrence2}
	F(w,b,a)= R(w,b,a) F(qw,qb,qa)+S(a)
\end{equation}
By applying the recurrence relation~\eqref{qvolrecurrence2} iteratively we have
\begin{equation}
      F(w,b,a) =\prod_{i=0}^\infty R(w q^i,bq^i , a q^i) F(0,0,0) +\sum_{i=0}^\infty S(a q^i) \prod_{k=0}^{i-1} R(wq^i,bq^i,a q^i)  
\end{equation}
The first term on the right-hand side of the above equation goes to zero by choosing $z q<1$, and so we obtain
\begin{equation}
      \overline{G}_{NE}^{\mathrm{diag}} (w,b,a,q,z)=\sum_{i=0}^\infty S(a q^i) \prod_{k=0}^{i-1} R(wq^i,bq^i,a q^i).
\end{equation}
To obtain the lemma, we extract the $n^{th}$ coefficient of the above equation which is computed using the fact that
\begin{equation}
	\prod_{k=0}^{i-1} R(w q^i , bq^i,a q^i)= z^i \prod_{k=0}^{i-1} \frac{ q(1+a bq^{2k+1})(1+ a w q^{2k+1})}{(1+a^2 q^{2k+1})}
\end{equation}
and
\begin{equation}
	S(a q^i) = \frac{a q^{i+1}}{ 1+ a^2 q^{2i+1}} \sum_{n=1}^\infty z^n.
\end{equation}

\end{proof}

Let $Z_n^{\mathrm{col}}(a,q)$  denote the partition function of  the Aztec diamond with $q^{\mathrm{col}}$ and $Z^{\mathrm{col}}(i,j,a,q,n)$ denote the partition function of  the Aztec diamond with $q^{\mathrm{col}}$ weights with vertices $(2n-2i-1,2n)$ and $(2n,2n-1-2j)$ removed. 

\begin{lemma} \label{lem:qvolboundarygeneratingfunction2}
For $0 \leq i,j \leq n-1$, $n \in \mathbb{N}$ and $a, q >0$ we have
\begin{equation}
\frac{Z^{\mathrm{col}} (i,j,a,q,n)}{Z_n^{\mathrm{col}}(a,q)} = \frac{1}{(2\pi \mathtt{i})^2} \int_{\Gamma_0} \int_{\Gamma_0} q^{(i+1)^2+j} \frac{G_{NE}^{\mathrm{diag}} (w,b,a,q) }{w^i b^j} \frac{dw}{w} \frac{db}{b}
\end{equation}
\end{lemma}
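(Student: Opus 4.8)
The plan is to treat this as a bridge lemma: it converts the $q^{\mathrm{col}}$ partition-function ratios (which are what ultimately feed into $K_{\mathrm{col}}^{-1}$) into coefficient extractions from the $q^{\mathrm{diag}}$ boundary generating function that has already been computed in closed form in Lemma~\ref{lem:qvolboundarygeneratingfunction}. Accordingly the proof splits into two independent ingredients: first, a gauge-transformation step that rewrites $Z^{\mathrm{col}}(i,j,a,q,n)/Z_n^{\mathrm{col}}(a,q)$ in terms of $Z^{\mathrm{diag}}(i,j,a,q,n)/Z_n^{\mathrm{diag}}(1,a,q)$ up to an explicit power of $q$; and second, a routine Cauchy coefficient extraction turning the ratio on the diag side into the double contour integral of $G_{NE}^{\mathrm{diag}}$.

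For the gauge step I would use Lemma~\ref{lem:qvolgaugeequivalent}, which exhibits a gauge transformation $g$ carrying the $q^{\mathrm{col}}$ weighting to the $q^{\mathrm{diag}}$ weighting (with horizontal weight $b=1$). The key elementary observation is that multiplying each vertex $v$ by $g(v)$ rescales the weight of any single dimer cover by $\prod_{v} g(v)$ taken over the vertices that the cover actually touches, since each such vertex lies on exactly one matched edge. For a perfect matching of the full Aztec diamond this product is a global constant $P=\prod_v g(v)$, giving $Z_n^{\mathrm{diag}}(1,a,q)=P\,Z_n^{\mathrm{col}}(a,q)$; for a matching of the diamond with the two boundary vertices $\mathtt{w}=(2n-2i-1,2n)$ and $\mathtt{b}=(2n,2n-1-2j)$ deleted, the same product is $P/(g(\mathtt{w})g(\mathtt{b}))$, so that $Z^{\mathrm{diag}}(i,j,a,q,n)=\big(P/(g(\mathtt{w})g(\mathtt{b}))\big)Z^{\mathrm{col}}(i,j,a,q,n)$. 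In the normalized ratio the bulk factor $P$ cancels and only $g(\mathtt{w})g(\mathtt{b})$ survives. Plugging the two deleted vertices into Lemma~\ref{lem:qvolgaugeequivalent}: the white vertex satisfies $y_2>y_1$, giving $g(\mathtt{w})=q^{((y_2-y_1+1)/2)^2}=q^{(i+1)^2}$, while the black vertex satisfies $x_1>x_2$, giving $g(\mathtt{b})=q^{(x_1-x_2-1)/2}=q^{j}$. Hence $Z^{\mathrm{col}}(i,j,a,q,n)/Z_n^{\mathrm{col}}(a,q)=q^{(i+1)^2+j}\,Z^{\mathrm{diag}}(i,j,a,q,n)/Z_n^{\mathrm{diag}}(1,a,q)$.

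For the second ingredient, the ratio $Z^{\mathrm{diag}}(i,j,a,q,n)/Z_n^{\mathrm{diag}}(1,a,q)$ is by the very definition of $G_{NE}^{\mathrm{diag}}(w,b,a,q)$ the coefficient of $w^i b^j$, which I would extract by the standard double Cauchy formula $\frac{1}{(2\pi\mathtt{i})^2}\int_{\Gamma_0}\int_{\Gamma_0} G_{NE}^{\mathrm{diag}}(w,b,a,q)\,w^{-i}b^{-j}\,\frac{dw}{w}\frac{db}{b}$ over contours around the origin; multiplying by the prefactor $q^{(i+1)^2+j}$ from the previous paragraph yields exactly the stated identity. I expect the only genuine subtlety — the main obstacle — to be careful bookkeeping at the transition from Lemma~\ref{lem:qvolgaugeequivalent}, whose statement is phrased for the Kasteleyn \emph{matrices} $K_{\mathrm{col}}$ and $K_{\mathrm{diag}}$ (carrying the $\mathtt{i}$'s and the column structure), to an identity between the \emph{positive-weight partition functions} $Z^{\mathrm{col}}$ and $Z^{\mathrm{diag}}$; this is legitimate because all gauge factors $g(v)$ are positive powers of $q$, so they act identically on magnitudes and the partition function, being a sum of products of positive edge weights, simply scales by the product of the relevant $g(v)$. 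The remaining delicate points are pinning down the two exponents correctly — in particular the quadratic $(i+1)^2$, which arises precisely because the removed white vertex lies in the half $y_2>y_1$ of the diamond where the gauge is quadratic — and confirming that the bulk product $P$ cancels so that the final prefactor depends only on the two removed boundary vertices.
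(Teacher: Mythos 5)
Your proposal is correct and follows essentially the same route as the paper: the gauge transformation of Lemma~\ref{lem:qvolgaugeequivalent} yields $Z^{\mathrm{col}}(i,j,a,q,n)/Z_n^{\mathrm{col}}(a,q)=q^{(i+1)^2+j}\,Z^{\mathrm{diag}}(i,j,a,q,n)/Z_n^{\mathrm{diag}}(1,a,q)$, and the ratio on the diag side is extracted as the coefficient of $w^i b^j$ in $G_{NE}^{\mathrm{diag}}$ by the double Cauchy integral. Your accounting of the gauge factors at the two removed boundary vertices (giving $q^{(i+1)^2}$ and $q^j$) and the cancellation of the bulk product is exactly the content the paper leaves implicit.
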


\begin{proof}
 The gauge transformation from the proof of Lemma~\ref{lem:qvolgaugeequivalent} gives 
\begin{equation} \label{qvol:gaugetransforminlemma}
	q^{(i+1)^2+j} \frac{Z^{\mathrm{diag}}(i,j,a,q,n)}{Z_n^{\mathrm{diag}}(1,a,q)}=\frac{Z^{\mathrm{col}}(i,j,a,q,n)}{Z_n^{\mathrm{col}}(a,q)}.
\end{equation}
From Lemma~\ref{lem:qvolboundarygeneratingfunction}, the coefficient of $w^i b^j$ in  $G_{NE}^{\mathrm{diag}}(w,b,a,q)$ is equal to $Z_n^{\mathrm{diag}}(i,j,a,q,n)/Z_n^{\mathrm{diag}}(1,a,q)$ and this is given by
\begin{equation}
 \frac{1}{(2\pi \mathtt{i})^2} \int_{\Gamma_0} \int_{\Gamma_0}  \frac{G_{NE}^{\mathrm{diag}} (w,b,a,q) }{w^i b^j} \frac{dw}{w} \frac{db}{b}
\end{equation}

\end{proof}

\subsection{Finding $K_{\mathrm{col}}^{-1}$}
\label{subsection Finding K}

In this subsection, we explain how we guessed the formula for $K^{-1}_{\mathrm{col}}$ which appears in Theorem~\ref{thm:qvolinverseK}.  The proof of Theorem~\ref{thm:qvolinverseK} appears in the following subsection.  It is a correct but somewhat unilluminating argument, since we essentially demonstrate that the formula for $K^{-1}_{\mathrm{col}}$ satisfies the equation $K_{\mathrm{col}}\cdot K^{-1}_{\mathrm{col}} = I$;  the purpose of this section is to describe the heuristics behind the guess that we made.

The first step is to rewrite the formula given in Lemma~\ref{lem:qvolboundarygeneratingfunction2} as a double contour integral formula where the contours of integration are given by $\Gamma_0$ and $\Gamma_{1/a,q}^n$.  We found that 
\begin{equation}
\begin{split} \label{qvol:assertion1}
\frac{Z^{\mathrm{col}}(i,j,a,q,n)}{Z_n^{\mathrm{col}}(a,q)}&= \frac{q^{(2+i+j)^2-1}}{(2 \pi i)^2} \int_{\Gamma_0} \int_{\Gamma_{a,q}^n} dw \, dz \\
& \frac{ w^n \prod_{k=0}^{n-1} a q^{-2k-1+2(n-j-1)} +z}{(w-z)z^{n-i} \prod_{k=0}^{n-j-1} (a q^{2k-1} +w ) \prod_{k=0}^j (-1+ a w q^{2k+2}) }
\end{split}
\end{equation}
We guessed~\eqref{qvol:assertion1} using the following: 
\begin{itemize}
\item we knew that such a formula holds in the case when $q=1$ by comparing the formulas for the absolute value of the inverse Kasteleyn matrix on the boundary given in Section~\ref{section One periodic case} and~\cite{CJY:12} which writes the formula for the inverse Kasteleyn matrix as a double contour integral formula. We compared the absolute values due to different Kasteleyn orientations.  We also found a direct computation between these two formulas however, we were unable to extend this computation when $q \not =1$. 
\item From Lemma~\ref{lem:qvolboundarygeneratingfunction2}, we had an approximate structure of the formula: for example, each $q$ appears as $q^{2k+1}$ for some $k$.  
\item We guessed that the poles with respect to $w$ split when $q \not=1$ (because this occurs for the lozenge tiling case-- e.g. see Theorem 2.25 in~\cite{BF:08}).
\item We used  small examples and the above steps to guess formula~\eqref{qvol:assertion1}.
\end{itemize}
To prove~\eqref{qvol:assertion1} is correct, we setup a boundary recurrence relation for $Z^{\mathrm{col}}(i,j,a,q,n)/Z^{\mathrm{col}}_n(a,q)$ similar to Lemma~\ref{lem:qvolrecurrence} and showed that~\eqref{qvol:assertion1} satisfied the boundary recurrence and its initial condition.

We next found  values of $K^{-1}_{\mathrm{col}}$ by multiplying the sign from the Kasteleyn orientation to equation~\eqref{qvol:assertion1} and writing the formula in terms of the Kasteleyn coordinates.  Due to the $q^{\mathrm{col}}$ weighting having the same Kasteleyn orientation as the uniform case, the sign of the boundary values of the inverse Kasteleyn matrix are the same.  In the proof of Lemma~\ref{uniform:lem:sign}, we found that the sign of $K^{-1}((x_1,2n),(2n,y_2))$ is equal to $-\mathtt{i}^{i+j+1}$where $i=(2n-1-x_1)/2$ and $j=(2n-1-y_2)/2$.  Multiplying~\eqref{qvol:assertion1} by $-\mathtt{i}^{i+j+1}$ and setting $i=(2n-1-x_1)/2$ and $j=(2n-1-y_2)/2$, we obtain
\begin{equation}
\begin{split}
      K_{\mathrm{col}}^{-1}((x_1,2n),(2n,y_2)) &=\frac{\mathtt{i}^{(4n-x_1-y_2+2)/2} q^{(4+4n -x_1-y_2)(x_1+y_2-4n)/4}}{( 2\pi \mathtt{i})^2}  \int_{\Gamma_{a,q}^n}\int_{\Gamma_0} dz \, dw \\
      &\frac{w^n}{z^{(x_1+1)/2}(w-z)} \frac{\prod_{k=0}^{n-1} (z+ aq^{-2k+y_2-2})}{\prod_{k=0}^{(y_2-1)/2} (w+aq^{2k-1}) \prod_{k=0}^{n-(y_1+1)/2} (a w q^{2k+2} -1)}.  
      \end{split}
\end{equation}

We found $K^{-1}_{\mathrm{col}} (x,y)$ for all $x \in \mathtt{W}$ and $y \in \mathtt{B}$ by  treating the  entry-wise expansions of the matrix equations $K_{\mathrm{col}}\cdot K^{-1}_{\mathrm{col}} =\mathbbm{I}$ and $K_{\mathrm{col}}^{-1}\cdot K_{\mathrm{col}} =\mathbbm{I}$ as recurrence relations whose initial condition is given by $K^{-1}$ on the top and right boundaries which is given in the above equation.  Details on the reason why these two matrix equations give a recurrence relations can be found in Section~\ref{section Uniform case}.

\subsection{Proof of Theorem~\ref{thm:qvolinverseK}} \label{Appendixqvol}
In this subsection we prove Theorem~\ref{thm:qvolinverseK}.  As the computations are particularly messy, we used computer algebra to help with the simplifications.  We will use the following notations:
let $(a;q)_n=\prod_{i=0}^{n-1} (1-aq^i)$ be the $q$-Pochhammer symbol. For $x=(x_1,x_2)\in \mathtt{W}$ and $y=(y_1,y_2) \in \mathtt{B}$, we set
\begin{equation}
\begin{split}
&g_1(w,z,x,y)=
\mathtt{i} w^{\frac{1}{2} \left(y_1-y_2-1\right)} (-1)^{\frac{1}{4} \left(-x_1-x_2+y_1+y_2\right)} z^{\frac{1}{2} \left(-x_1+x_2-1\right)}\\
&\times q^{-\frac{1}{4} \left(x_1-x_2-y_1+y_2\right) \left(4 n-x_1+x_2-y_1-y_2+4\right)}  
\frac{ \left(-\frac{a q^{y_2-2}}{z};\frac{1}{q^2}\right){}_{\frac{x_2}{2}} \left(a q^{x_2-y_2+3} z;q^2\right){}_{n-\frac{x_2}{2}}}{(w-z) \left(-\frac{a}{q w};q^2\right){}_{\frac{1}{2} \left(y_2+1\right)} \left(a q^2 w;q^2\right){}_{n-\frac{y_2}{2}+\frac{1}{2}}},
\end{split}
\end{equation}
\begin{equation}
\begin{split}
&g_2(w,x,y)=-(-1)^{\frac{1}{4} \left(7 x_1+x_2-7 y_1-y_2\right)} \mathtt{i}^{-x_2+y_2+1} w^{\frac{1}{2} \left(-x_1+x_2+y_1-y_2-2\right)} \\
&\times q^{\frac{1}{4} \left(x_1-x_2-y_1+y_2\right) \left(-4 n+x_1-x_2+y_1+y_2-4\right)}
 \frac{\left(-\frac{a q^{y_2-2}}{w};\frac{1}{q^2}\right){}_{\frac{x_2}{2}} \left(a q^{x_2-y_2+3} w;q^2\right){}_{n-\frac{x_2}{2}}}{\left(-\frac{a}{q w};q^2\right){}_{\frac{1}{2} \left(y_2+1\right)} \left(a q^2 w;q^2\right){}_{n-\frac{y_2}{2}+\frac{1}{2}}}
\end{split}
\end{equation}
and
\begin{equation}
\begin{split} \label{Appendix:qvol:g2tilde}
&\tilde{g}_2(w,x,y)=- (-1)^{\frac{1}{4} \left(7 x_1+x_2-7 y_1-y_2\right)} \mathtt{i}^{-x_2+y_2+1} w^{\frac{1}{2} \left(-x_1+x_2+y_1-y_2-2\right)}\\
&\times q^{-\frac{1}{4} \left(x_1-x_2-y_1+y_2\right) \left(4 n-x_1+x_2-y_1+y_2+4\right)} 
 \frac{ \left(-\frac{a}{q^2 w};\frac{1}{q^2}\right){}_{\frac{x_2}{2}} \left(a q^{x_2+3} w;q^2\right){}_{n-\frac{x_2}{2}}}{\left(-\frac{a q^{-y_2-1}}{w};q^2\right){}_{\frac{1}{2} \left(y_2+1\right)} \left(a q^{y_2+2} w;q^2\right){}_{n-\frac{y_2}{2}+\frac{1}{2}}}
\end{split}
\end{equation}
Note that, by reversing the order of summation, we have  
\begin{equation}
\left(-\frac{a}{q^2 w} ; \frac{1}{q^2} \right)_{(y_2+1)/2} = \left(-\frac{a q^{-y_2-1}}{w} ; q^2\right)_{(y_2+1)/2}
\label{Appendix:qvol:rearrange}
\end{equation}
We have chosen $g_1$ and $g_2$ to be integrands of $f_1$ and $f_2$ and $\tilde{g}_2$ to be the integrand of $f_2$ under the change of variables $w \mapsto w q^{y_2}$.  This means that
\begin{equation}
	\frac{1}{(2\pi \mathtt{i})^2} \int_{\Gamma^n_{1/a,q}} \int_{\Gamma_0} g_1(w,z,x,y) dz \, dw = f_1(x,y),
\end{equation}
and
\begin{equation}
	\frac{1}{2\pi \mathtt{i}} \int_{\Gamma^n_{a^{-1} q^{-y_2},q}} \tilde{g}_2(w,x,y) dw= \frac{1}{2 \pi \mathtt{i}} \int_{\Gamma^n_{1/a,q}} g_2(w,x,y)  \, dw = f_2(x,y).
\end{equation}
 Note that we also have
\begin{equation} \label{Appendix:qvol:g1tog2}
	g_2(w,x,y)=-\lim_{z\to w} (w-z) g_1(w,z,x,y)
\end{equation}
for $x \in \mathtt{W}$ and $y \in \mathtt{B}$.

\begin{proof}
Throughout the proof, we assign $x=(x_1,x_2) \in \mathtt{B}$ and $y=(y_1,y_2)\in\mathtt{B}$.
To prove Theorem~\ref{thm:qvolinverseK}, we have to verify  the equation $K_{\mathrm{col}}\cdot L=\mathbbm{I}$ where we set
\begin{equation}
L(x,y)= \left\{ \begin{array}{ll}
	f_1(x,y) & x_1<y_1+1\\
	f_1(x,y)+f_2(x,y) & x_1 \geq y_1+1
	\end{array} \right.
\end{equation}
where $f_1(x,y)$ and $f_2(x,y)$ are as given in Theorem~\ref{thm:qvolinverseK} and $x=(x_1,x_2) \in \mathtt{W}$ and $y=(y_1,y_2) \in \mathtt{B}$.

We first expand  out $K_{\mathrm{col}}\cdot L $ entry-wise by   using the definition of $K_{\mathrm{col}}$ given in equation~\eqref{Kcol}.  By comparing with the identity matrix, we want to verify the equation
\begin{equation}\label{Appendix:qvol:KK1}
	(L(x+e_1,y)  + a \mathtt{i} q^{x_1-2n-1} L (x+e_2,y) ) \delta_{x_1<2n} +(L(x-e_1,y) + a \mathtt{i} q^{2n-x_1} L(x-e_2,y)) \delta_{x_1>0} =\delta_{x=y}
\end{equation}
where $x, y \in \mathtt{B}$ and the delta functions account for $x$ is on the left or right boundaries of the Aztec diamond.  There are three cases to consider for~\eqref{Appendix:qvol:KK1}, namely $x_1=0$, $0<x_1<2n$ and $x_1=2n$. 

For $0<x_1<2n$, from~\eqref{Appendix:qvol:KK1} we want to verify the equation 
\begin{equation} \label{Appendix:qvol:firstcase}
L(x+e_1,y) +L(x-e_1,y)+ a \mathtt{i} q^{x_1-2n-1} L(x+e_2,y) +a \mathrm{i}q^{2n-x_1} L(x-e_2,y)=\delta_{x=y}
\end{equation}
for $x=(x_1,x_2),y \in \mathtt{B}$. We first consider the term $g_1(w,z,x,y)$ when substituted into the left-hand side of the above equation.  We find that after some simplification
\begin{equation}
\begin{split} \label{Appendix:qvol:g1}
&g_1(w,z,x+e_1,y) + g_1(w,z,x-e_1,y)+a \mathtt{i} q^{x_1-2n-1} g_1(w,z,x+e_2,y) +a \mathrm{i}q^{2n-x_1} g_1(w,z,x-e_2,y)\\
&=\frac{(-1)^{\frac{1}{4} \left(-x_1-x_2+y_1+y_2\right)} z^{\frac{1}{2} \left(-x_1+x_2-3\right)} q^{\frac{1}{4} \left(-2 x_1 \left(2 n+x_2-y_2+2\right)+\left(x_2+y_1-y_2\right) \left(4 n+x_2-y_1-y_2\right)+x_1^2+4 y_1-8 y_2-4\right)}}
{w^{-\frac{1}{2} \left(y_1-y_2-1\right)} (w-z) \left(-\frac{a}{q w};q^2\right){}_{\frac{1}{2} \left(y_2+1\right)} \left(a q^2 w;q^2\right){}_{n-\frac{y_2}{2}+\frac{1}{2}}} \\
&\times
 \left( \frac{ \left(-\frac{a q^{y_2-2}}{z};\frac{1}{q^2}\right){}_{\frac{1}{2} \left(x_2+1\right)} \left(a q^{x_2-y_2+4} z;q^2\right){}_{n-\frac{x_2}{2}-\frac{1}{2}}}{ z^{-1} q^{-x_2-1}\left(q^{y_2}-a z q^{x_2+2}\right)^{-1}}- \frac{\left(-\frac{a q^{y_2-2}}{z};\frac{1}{q^2}\right){}_{\frac{1}{2} \left(x_2-1\right)} \left(a q^{x_2-y_2+2} z;q^2\right){}_{n-\frac{x_2}{2}+\frac{1}{2}}}{q^{-y_2} \left(a q^{y_2}+z q^{x_2+1}\right)^{-1}} \right)\\
&=0
\end{split}
\end{equation} 
where the first equality follows from computer algebra simplification and the second equality follows because we are able to factor the $q$-Pochhammer symbols in the third line, that is, the third line in the above equation is equal to 
\begin{equation}
\begin{split}
  &\left(-\frac{a q^{y_2-2}}{z};\frac{1}{q^2}\right){}_{\frac{1}{2} \left(x_2-1\right)} \left(a q^{x_2-y_2+4} z;q^2\right){}_{n-\frac{x_2}{2}-\frac{1}{2}}
\\&\times \left(
q^{1+x_2} \left(1+\frac{a q^{-1-x_2+y_2}}{z}\right) z \left(q^{y_2}-a q^{2+x_2} z\right)-q^{y_2} \left(a q^{y_2}+q^{1+x_2} z\right) \left(1-a q^{2+x_2-y_2} z\right)\right)=0
\end{split}
\end{equation}
It follows from~\eqref{Appendix:qvol:g1} that
\begin{equation} \label{Appendix:qvol:f1case1}
f_1(x+e_1,y) +f_1(x-e_1,y)+ a \mathtt{i} q^{x_1-2n-1} f_1(x+e_2,y) +a \mathrm{i}q^{2n-x_1} f_1(x-e_2,y)= 0
\end{equation}
for $x=(x_1,x_2), y \in \mathtt{B}$ and $0<x_1<2n$.  Due to the method of computation, we  note that the above equation holds for any black vertices $x,y$, including vertices outside the Aztec diamond.

To substitute the term $g_2$ into the left-hand side of~\eqref{Appendix:qvol:firstcase}, we have to consider $x_1=y_1$ and $x_1 \geq y_1+2$ for $x,y \in \mathtt{B}$ separately due to the split definition of $L$.  For $x_1 \geq y_1+2$, all four terms of $g_2$ are present in the left-hand side of~\eqref{Appendix:qvol:firstcase} and using~\eqref{Appendix:qvol:g1tog2} and~\eqref{Appendix:qvol:g1}, it follows that for $x_1 \geq y_1+2$ and $x,y \in \mathtt{B}$
\begin{equation}\label{Appendix:qvol:f2case1}
f_2(x+e_1,y)+f_2(x-e_1,y)+a \mathtt{i} q^{x_1-2n-1} f_2(x+e_2,y) + a \mathtt{i} q^{2n-x_1}f_2(x-e_2,y)=0,
\end{equation}
whereas, for $x_1=y_1$, we have that the left-hand side of~\eqref{Appendix:qvol:firstcase} reads
\begin{equation}\label{Appendix:qvol:f2first2}
f_2((x_1+1,x_2+1),(y_1,y_2))+a \mathtt{i}q^{2n-x_1} f_2((x_1+1,x_2-1),(y_1,y_2)).
\end{equation}
Using~\eqref{Appendix:qvol:g2tilde}, we  write the integrand of the above equation using $\tilde{g}_2$.  We find that this is given by
\begin{equation}
\begin{split} \label{Appendix:qvol:g2tilde1}
&\frac{(-1)^{\frac{1}{4}  \left(y_2-x_2\right)} w^{\frac{1}{2} \left(x_2-y_2-4\right)} q^{\frac{1}{4} \left(y_2 \left(-4 n+2 x_1-4\right)+x_2 \left(4 n-2 x_1\right)+x_2^2-y_2^2-4\right)}}
{\left(-\frac{a q^{-y_2-1}}{w};q^2\right){}_{\frac{1}{2} \left(y_2+1\right)} \left(a q^{y_2+2} w;q^2\right){}_{n-\frac{y_2}{2}+\frac{1}{2}}}\\
&
 \left.\left(a \left(-\frac{a}{q^2 w};\frac{1}{q^2}\right){}_{ \frac{x_2-1}{2}} \left(a q^{x_2+2} w;q^2\right){}_{\frac{2n-x_2+1}{2}}-w q^{x_2+1} \left(-\frac{a}{q^2 w};\frac{1}{q^2}\right){}_{\frac{x_2+1}{2}} \left(a q^{x_2+4} w;q^2\right){}_{\frac{n-x_2-1}{2}}\right)\right).
\end{split}
\end{equation} 
We  simplify the second line of the above equation by expanding the $q$-Pochhammer symbols and factorizing.  This is given by
\begin{equation}
-q^{x_2+1}(1+a^2 q)w \left(-\frac{a}{q^2 w};\frac{1}{q^2}\right){}_{\frac{1}{2} \left(x_2-1\right)} \left(a q^{x_2+4}w;q^2\right){}_{n-\frac{x_2}{2}-\frac{1}{2}}.
\end{equation}
Using the above simplification of~\eqref{Appendix:qvol:g2tilde1},~\eqref{Appendix:qvol:f2first2} is equal to 
\begin{equation}~\label{Appendix:qvol:f2first3}
\begin{split}
& \frac{1}{2\pi \mathtt{i}} \int_{\Gamma^n_{1/(a q^{y_2}),q}} dw \, \frac{(-1)^{\frac{1}{4}  \left(y_2-x_2\right)} w^{\frac{1}{2} \left(x_2-y_2-2\right)} q^{\frac{1}{4} \left(y_2 \left(-4 n+2 x_1-4\right)+x_2 \left(4 n-2 x_1\right)+x_2^2-y_2^2-4\right)}}
{\left(-\frac{a q^{-y_2-1}}{w};q^2\right){}_{\frac{1}{2} \left(y_2+1\right)} \left(a q^{y_2+2} w;q^2\right){}_{n-\frac{y_2}{2}+\frac{1}{2}}}\\
&\times
\left(-q^{x_2+1}(1+a^2 q) \left(-\frac{a}{q^2 w};\frac{1}{q^2}\right){}_{\frac{1}{2} \left(x_2-1\right)} \left(a q^{x_2+4}w;q^2\right){}_{n-\frac{x_2}{2}-\frac{1}{2}}\right)
\end{split}
\end{equation}
We  now compute the above integral for different values of $x_2$ and $y_2$.  First, notice that for $x_2=y_2$,~\eqref{Appendix:qvol:f2first3} simplifies to
\begin{equation}
 \frac{1}{2\pi \mathtt{i}} \int_{\Gamma^n_{1/(a q^{y_2}),q}} dw \, \frac{-(1+a^2 q)}{(a q^{-1-y_2}+w)(1-a q^{2+y_2} w)}=1
\end{equation}
To compute~\eqref{Appendix:qvol:f2first3} for other choices of $x_2$ and $y_2$, we write out the parts of the integrand which contain $w$.  This is given by
\begin{equation}
\begin{split}
&w^{\frac{1}{2} \left(x_2-y_2-2\right)}\frac{ \left(-\frac{a}{q^2 w};\frac{1}{q^2}\right){}_{\frac{1}{2} \left(x_2-1\right)} \left(a q^{x_2+4}w;q^2\right){}_{n-\frac{x_2}{2}-\frac{1}{2}} }{\left(-\frac{a q^{-y_2-1}}{w};q^2\right){}_{\frac{1}{2} \left(y_2+1\right)} \left(a q^{y_2+2} w;q^2\right){}_{n-\frac{y_2}{2}+\frac{1}{2}}}\\
&=\frac{ \prod_{k=0}^{\frac{x_2-3}{2}} w+a q^{-2k-2} \prod_{k=0}^{n-\frac{x_2}{2}-\frac{3}{2}}1+aq^{x_2+4+2k}w}{ \prod_{k=0}^{\frac{y_2-1}{2}} w+a q^{-2k-2} \prod_{k=0}^{n-\frac{y_2}{2}-\frac{1}{2}}1+aq^{y_2+2+2k}w}
\end{split}
\end{equation}
where we have used~\eqref{Appendix:qvol:rearrange} to rearrange a $q-$Pochhammer symbol.  
For $x_2<y_2$ which means that $x_2 \leq y_2-2$, there are no poles in above equation for $w$ inside the contour $\Gamma^n_{1/(aq^{y_2}),q}$ and so~\eqref{Appendix:qvol:f2first3} is equal to zero.  
For $x_2>y_2$,  the degree of the numerator of the above equation in $w$ is $n-1$ while the degree of the denominator is $n+1$.  Therefore,  we move the contour of integration of the integral in~\eqref{Appendix:qvol:f2first3} through infinity so that it now surrounds $\{-a q^{-2},-a q^{-4},\dots, -a q^{-y_2-1}\}$.  But from the above equation, the integrand of~\eqref{Appendix:qvol:f2first3} has no poles at these points for $x_2>y_2$ which means that ~\eqref{Appendix:qvol:f2first3} is equal to zero.
This  means we  conclude
\begin{equation} \label{Appendix:qvol:f2first4}
f_2((x_1+1,x_2+1),(x_1,y_2))+a \mathtt{i}q^{2n-x_1} f_2((x_1+1,x_2-1),(x_1,y_2))=\delta_{x_2=y_2}
\end{equation}
for $x=(x_1,x_2), y=(y_1,y_2) \in \mathtt{B}$. By our method of computation, the above relation holds for $0\leq x_1 \leq 2n$.
It follows from~\eqref{Appendix:qvol:f1case1},~\eqref{Appendix:qvol:f2case1} and~\eqref{Appendix:qvol:f2first4} that we have verified~\eqref{Appendix:qvol:firstcase}.

For $x_1=0$ from~\eqref{Appendix:qvol:KK1} we want to verify 
\begin{equation}\label{Appendix:qvol:secondcase}
L(x+e_1,y) + a \mathtt{i}q^{2n} L(x-e_2,y)=\delta_{x=y}
\end{equation}
for $x=(0,x_2),y \in \mathtt{B}$.  Due to the split definition of $L$, when we substitute the term $f_2$ into the left-hand side of~\eqref{Appendix:qvol:secondcase}, we are required to have $y=(0,y_2)$.  Using  ~\eqref{Appendix:qvol:f2first4} with $x_1=0$, we obtain
\begin{equation}\label{Appendix:qvol:f2second}
f_2((1,x_2+1),(y_1,y_2))+a \mathtt{i}q^{2n} f_2((1,x_2-1),(y_1,y_2))=\delta_{x=y}.
\end{equation}
When we substitute term $f_1$ into the left-hand side of~\eqref{Appendix:qvol:secondcase}, notice that we have
\begin{equation}
f_1((-1,w_2),(y_1,y_2))=0
\end{equation}
for $w_2 \mod2=0$ because there is no residue at $z=0$  in~\eqref{qvol:f1} and as~\eqref{Appendix:qvol:f1case1} holds for any value of $x_1$, we  conclude that
\begin{equation}\label{Appendix:qvol:f1second}
f_1((x_1+1,x_2+1),(x_1,y_2))+a \mathtt{i}q^{2n} f_1((x_1+1,x_2-1),(x_1,y_2))=0.
\end{equation}
It follows from~\eqref{Appendix:qvol:f2second} and~\eqref{Appendix:qvol:f1second} that we have verified~\eqref{Appendix:qvol:secondcase}.

For $x_1=2n$, from~\eqref{Appendix:qvol:KK1} we want to verify 
\begin{equation}\label{Appendix:qvol:thirdcase}
L(x-e_1,y) + a \mathtt{i}q^{-1} L(x+e_2,y)=\delta_{x=y}
\end{equation}
We first consider substituting in $f_1$ into the left-hand side above equation, which means we consider
\begin{equation}
g_1(w,z,x-e_1,y) + a \mathtt{i}q^{-1} g_1(w,z,x+e_2,y).
\end{equation}
for $x=(2n,x_2)$.  The above expression is equal to  
\begin{equation}
\begin{split}
&\frac{i^{-n} w^{\frac{y_1-y_2-1}{2}} (-1)^{\frac{-x_2+y_1+y_2}{4}} q^{-\left(n-\frac{y_1}{2}+1\right){}^2-1} z^{\frac{1}{2} \left(-2 n+x_2-1\right)} }{(w-z) \left(-\frac{a}{q w};q^2\right){}_{\frac{1}{2} \left(y_2+1\right)} \left(a q^2 w;q^2\right){}_{n-\frac{y_2}{2}+\frac{1}{2}}} \left(-q^{\frac{1}{4} \left(x_2-y_2+2\right){}^2+1} \left(-\frac{a q^{y_2-2}}{z};\frac{1}{q^2}\right){}_{\frac{x_2-1}{2}} 
\right. \\
&\left. \left(a q^{x_2-y_2+2} z;q^2\right){}_{\frac{n-x_2+1}{2}}-a z q^{\frac{1}{4} \left(x_2-y_2+4\right){}^2} \left(-\frac{a q^{y_2-2}}{z};\frac{1}{q^2}\right){}_{\frac{x_2+1}{2}} \left(a q^{x_2-y_2+4} z;q^2\right){}_{\frac{n-x_2-1}{2}}\right)
\end{split}
\end{equation}
where we used computer algebra to make the simplifications.
In the above expression, we select the terms that only involve $z$ and factor them by collecting the appropriate $q$-Pochhammer symbols which gives
\begin{equation}
\begin{split}\label{Appendix:qvol:rightsimplify}
& \frac{z^{\frac{x_2-2n-1}{2}}}{(w-z)}\left(-\frac{a q^{y_2-2}}{z};\frac{1}{q^2}\right){}_{\frac{x_2-1}{2}} \left(a q^{x_2-y_2+4} z;q^2\right){}_{\frac{n-x_2-1}{2}}\\
&\times \left(
-a q^{\frac{1}{4} \left(4+x_2-y_2\right){}^2} \left(1+\frac{a q^{-1-x_2+y_2}}{z}\right) z-q^{1+\frac{1}{4} \left(2+x_2-y_2\right){}^2} \left(1-a q^{2+x_2-y_2} z\right)
\right)\\
&=  -q^{\frac{1}{4} \left(8+x_2^2-2 x_2 \left(-2+y_2\right)-4 y_2+y_2^2\right)} \left(1+a^2 q\right)\frac{
\prod_{k=0}^{\frac{x_2-3}{2}} z+a q^{y_2-2-2k} \prod_{k=0}^{\frac{2n-x_2-3}{2}} 1+a q^{4+x_2-y_2+2k} z}{z^n(w-z)}.
\end{split}
\end{equation}
This means 
\begin{equation}\frac{1}{2 \pi \mathtt{i}} \int_{\Gamma_0} g_1(w,z,x-e_1,y) + a \mathtt{i}q^{-1} g_1(w,z,x+e_2,y) dz=-g_2(w,z,x-e_1,y) - a \mathtt{i}q^{-1} g_2(w,z,x+e_2,y) \label{Appendix:qvol:rightg1tog2}
\end{equation}
which is be seen by pushing the contour through infinity which picks up a residue at $z=w$ because from~\eqref{Appendix:qvol:rightsimplify} (with respect to $z$),  the integrand in~\eqref{Appendix:qvol:rightg1tog2}  is a polynomial of degree $n-1$ divided by a  polynomial of degree $n+1$ for $x=(2n,x_2)$, and using~\eqref{Appendix:qvol:g1tog2} to evaluate the integral.  From~\eqref{Appendix:qvol:rightg1tog2}, we compute the integral with respect to $w$ over the contour $\Gamma^n_{1/a,q}$ to find for $x=(2n,x_2)$
\begin{equation}
\begin{split} \label{Appendix:qvol:f1tof2}
f_1(x-e_1,y)+a \mathtt{i}q^{-1}f_1(x+e_2,y) = - f_2(x-e_1,y)-a \mathtt{i}q^{-1}f_2(x+e_2,y) 
\end{split}
\end{equation}
By the definition of $L$, for $2n=x_1 \geq y_1+2$ (i.e. $y_1 \leq 2n-2$) and $x=(2n,x_2),y=(y_1,y_2)\in \mathtt{B}$ and using the above equation we obtain
\begin{equation} \label{Appendix:qvol:case3eqn1}
\begin{split}
	L(x-e_1,y)+a \mathtt{i}q^{-1} L(x+e_2,y)&=f_1(x-e_1,y)+f_2(x-e_1,y) \\&+a \mathtt{i}q^{-1} (f_1(x+e_2,y)+f_2(x+e_2,y))=0
\end{split}
\end{equation}
For $x_1=y_1=2n$, we have using~\eqref{Appendix:qvol:f1tof2},~\eqref{Appendix:qvol:f2case1} and~\eqref{Appendix:qvol:f2first4}
\begin{equation} \label{Appendix:qvol:case3eqn2}
\begin{split}
	L(x-e_1,y)+a \mathtt{i}q^{-1} L(x+e_2,y)&=f_1(x-e_1,y) +a \mathtt{i} q^{-1}f_1(x+e_2,y)\\
&=-f_2(x-e_1,y) -a \mathtt{i} q^{-1}f_2(x+e_2,y)\\
&=f_2(x+e_2,y)+a \mathtt{i} f_2(x-e_2,y)\\
&=\delta_{x=y}
\end{split}
\end{equation}
for $x=(2n,x_2),y\in \mathtt{B}$.   Equations~\eqref{Appendix:qvol:case3eqn1} and~\eqref{Appendix:qvol:case3eqn2} mean that we have verified~\eqref{Appendix:qvol:thirdcase}.

As we have verified~\eqref{Appendix:qvol:firstcase} for $0\leq x_1 \le 2n$,~\eqref{Appendix:qvol:secondcase} for $x_1=0$ and~\eqref{Appendix:qvol:thirdcase} for $x_1=2n$, we have verified~\eqref{Appendix:qvol:KK1}.

\end{proof}

\section{Two-Periodic Weighting} \label{section Diablo Tilings on the Fortress}

In this section, we compute the generating function of the inverse Kasteleyn matrix for the two-periodic weighting of an Aztec diamond of size $4m$.  This is much akin to the parameter $a$ introduced in Section~\ref{section One periodic case}, except now there are two parameters, $a$ and $b$, which decorate the edges of the Aztec diamond in a checkerboard fashion described below.  As remarked in the introduction, one of the special cases of this model is equivalent to a different, uniform tiling problem: namely, the so-called diabolo tilings of the fortress introduced in~\cite{Pro:03}.  The dual graph of the fortress is the \emph{square-octagon lattice} with certain boundary conditions.  This model was the main motivation for this work. 

Because of this periodicity, the problem becomes complicated in two ways: the recurrence relation increases in \emph{order}, and the generating function becomes a \emph{vector} of generating functions.  Finally, the relation which we solve in order to compute the boundary generating function involves a certain matrix multiplication which must be explicitly diagonalized before the recurrence can be solved.

We begin, as before, by explicitly writing down the Kasteleyn matrix $K$ to invert.  In fact, we will have two Kasteleyn matrices, $K$ (for even-order diamonds) and $\tilde{K}$ (for odd-order diamonds); for brevity, we will only invert $K$, and that only for Aztec diamonds of order $4m$.  But we will compute \emph{boundary generating functions} for both $K$ and $\tilde{K}$.  These are generating functions which give those entries of the inverses of $K$, $\tilde{K}$ corresponding to two white and black vertices on the boundary of the Aztec diamond.  The boundary generating functions appear in Lemma~\ref{fortress tilings:lemma:K1boundary}.

The matrix $K$ has rows indexed by black vertices and columns indexed by white vertices.  Due to the periodicity of the weights, we have two types of white and black vertices.  We denote for $i\in \{0,1\}$
\begin{equation}
    \mathtt{W}_i= \{ (x_1,x_2): (x_1 +x_2) \mod 4 = 2i+1, (x_1,x_2) \in \mathtt{W} \}
\end{equation}
and
\begin{equation}
    \mathtt{B}_i= \{ (x_1,x_2):( x_1 +x_2)\mod4= 2i+1, (x_1,x_2) \in \mathtt{B} \}.
\end{equation}

For an Aztec diamond of size $n$, we give weights $a$ and $b$ to the Aztec diamond in the following way: if the size of the Aztec diamond is even, i.e. $n=2r$, then the edge weights around each face with center $(2i+1,2j+1)$ for $0\leq i,j \leq n-1$ are given weight $a$ if $(i+j) \mod 2=0$ and weight $b$ if $(i+j) \mod2=1$.  Conversely, if $n=2r-1$, then the edge weights are obtained from embedding the diamond in an Aztec diamond of order $2r$.  Figure~\ref{fig:diabolo weights} shows this choice of edge weights.

\begin{figure}
\caption{ The two-periodic weighting of the Aztec diamond with the picture on the left being an even ordered Aztec diamond and the picture on the right being an odd ordered Aztec diamond. The edges surrounding each face are given the same weight, denoted by the parameter in the center of each face with the exception of the boundary of the odd ordered Aztec diamond, where the edge weights  are given by the adjacent parameter.  }
\includegraphics[height=3in]{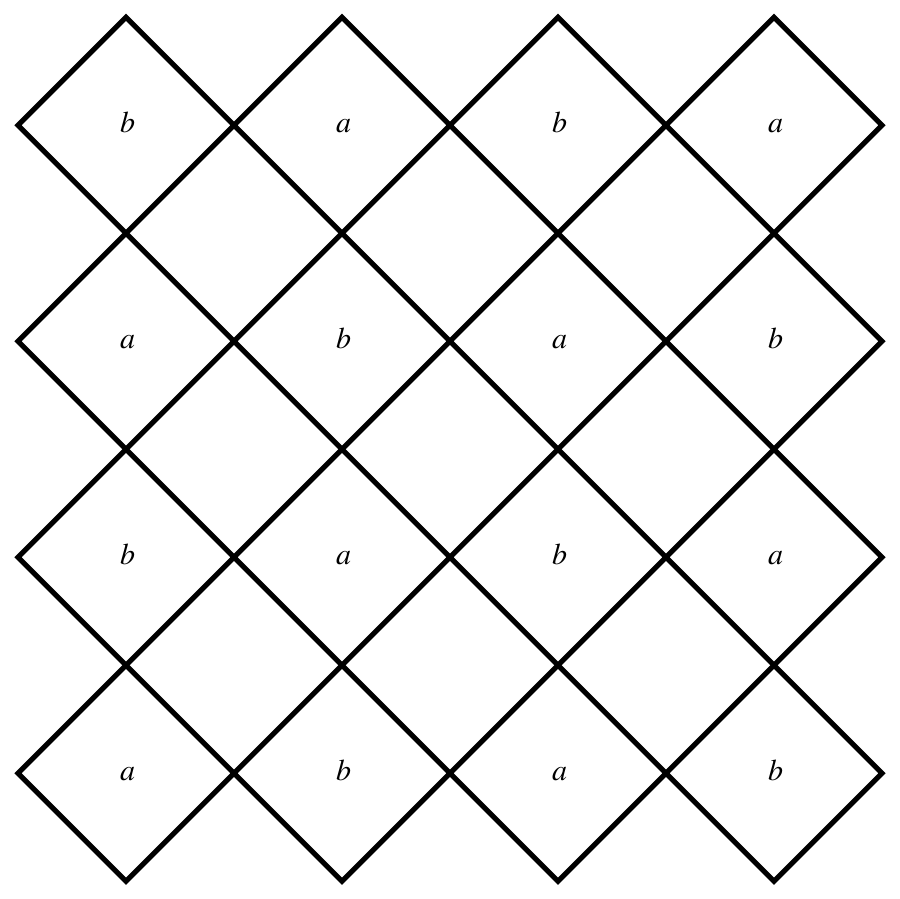}
\includegraphics[height=3in]{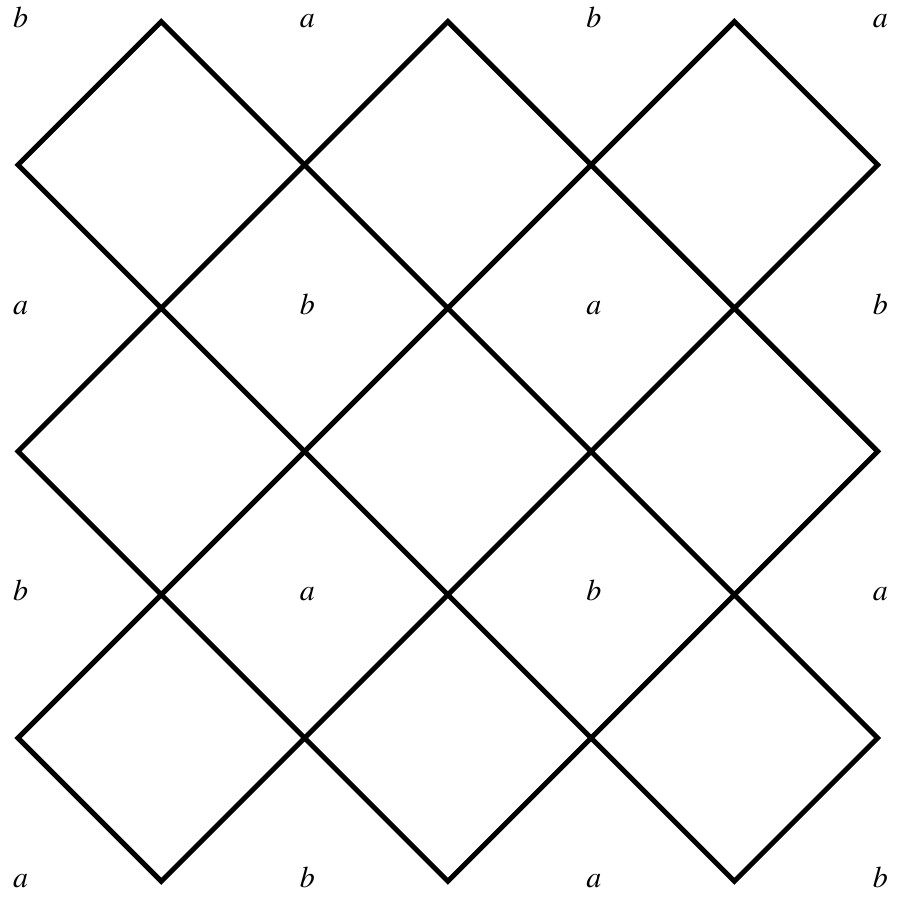}
\label{fig:diabolo weights}
\end{figure}

Let $K$ denote the Kasteleyn matrix for an Aztec diamond when $n$ is even and $\tilde{K}$ denote the Aztec diamond when $n$ is odd.  We have
\begin{equation} \label{fortress tilings:K}
      K(x,y)=\left\{ \begin{array}{ll}
                     a (1-i) + b i  & \mbox{if } y=x+e_1, x \in \mathtt{B}_i \\
                     (a i +b (1-i) ) \mathtt{i} & \mbox{if } y=x+e_2, x \in \mathtt{B}_i\\
                     a i + b (1-i)  & \mbox{if } y=x-e_1, x \in \mathtt{B}_i \\
                     (a (1-i) +b i ) \mathtt{i} & \mbox{if } y=x-e_2, x \in \mathtt{B}_i
                     \end{array} \right.
\end{equation}
and
\begin{equation}  \label{fortress tilings:Ktilde}
      \tilde{K}(x,y)=\left\{ \begin{array}{ll}
                     a i + b (1-i)  & \mbox{if } y=x+e_1, x \in \mathtt{B}_i \\
                     (a i +b (1-i) ) \mathtt{i} & \mbox{if } y=x+e_2, x \in \mathtt{B}_i\\
                     a(1-i) + b i  & \mbox{if } y=x-e_1, x \in \mathtt{B}_i \\
                     (a (1-i) +b i ) \mathtt{i} & \mbox{if } y=x-e_2, x \in \mathtt{B}_i
                     \end{array} \right.
\end{equation}
for $x \in \mathtt{B}$ and $y \in \mathtt{W}$.  

We now give the entries of $K^{-1}$ for white and black vertices on the bottom and left boundaries respectively for $K$  defined in~\eqref{fortress tilings:K} and the size of the Aztec diamond is even. To shorten the length of the formulas,  we will write $[i]_2=i \mod 2$.
%We show that we can give the generating function for $K^{-1}$, where $K$ is defined in~\eqref{fortress tilings:K} with $b=1$.  We first introduce a contour integral which allows us to determine  the boundary values of $K^{-1}$ where $K$ is defined in~\eqref{fortress tilings:K} (with general $b$). 

\begin{lemma} \label{fortress tilings:lemma:K1boundary}
For an Aztec diamond of size $n$, let $n=4m$ and let $K$ denote the Kasteleyn matrix given in~\eqref{fortress tilings:K}.  Let $L(a,b,i,j)$ denote $K^{-1}((2i+1,0),(0,2j+1))$ for the Kasteleyn matrix with parameters $a$ and $b$.  We have 
 \begin{equation}
 \begin{split}
  L(a,b,i,j)&=  \frac{- \mathtt{i}^{i+j+1}}{(2\pi \mathtt{i})^2} \int_{|z|=1} \int_{|w|=1} \\
  &\sum_{r=0}^{m-1} \sum_{k,l\in \{0,1\} } \frac{ g^{2k+l+1}_{2[i]_2+[j]_2+1}(a,b,w,z)  \alpha_k^r(a,b,w) \alpha_l^r(a,b,z)}{ w^{\lfloor i/2 \rfloor +1} z^{\lfloor j/2 \rfloor+1}} dw \, dz 
 \end{split}
 \end{equation}
where for $1 \leq i,j\leq 4$ we have $g_i^j(a,b,w,z)=\mathbf{N}_{i,j} (a,b,w,z)$ which is given in Appendix~\ref{appendix} and
  \begin{equation} \label{fortress tilings: alpha}
      \alpha_k^r(a,b,w) =  \frac{(\beta_0(a,b,w))^{2r} +(-1)^k ( \beta_1(a,b,w))^{2r}}{4 \sqrt{ ab}(a^2+b^2)(\sqrt{w} \sqrt{(b^4 + a^4) w + a^2 b^2 (1 + w^2)})^k} 
  \end{equation}
  for $k \in \{0,1\}$ with 
  \begin{equation} \label{fortress tilings: beta}
  \beta_l(a,b,w) = \frac{  \left( (a^2 + b^2) \sqrt{w} -(-1)^l  \sqrt{(b^4 + a^4) w + a^2 b^2 (1 + w^2)}\right)}{\sqrt{2ab(a^2+b^2)}}
   \end{equation}
\end{lemma}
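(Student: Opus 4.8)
The plan is to follow the same three-stage template used for Theorems~\ref{thm:uniform} and~\ref{thm:oneperiodic} --- compute the unsigned boundary generating function, then attach the Kasteleyn sign --- but adapted to the increased complexity forced by the period-two weighting. First I would specialize the general boundary recurrence of Lemma~\ref{general:lem:recurrence} to the two-periodic weights, setting each $w_{k',l'}(k,l)\in\{a,b\}$ according to the checkerboard pattern of Figure~\ref{fig:diabolo weights}. Because the $a,b$ pattern shifts under urban renewal, a single shuffle step no longer closes up on the ratio $\mathcal{Z}_n(i,j)/\mathcal{Z}_n$ alone: it relates the boundary quantities at size $n$ to those at size $n-1$ with the roles of $a$ and $b$ interchanged. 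Iterating twice restores the original weighting but yields a recurrence of order four in $n$, and --- crucially --- the coefficients depend on the parities $[i]_2,[j]_2$. I would therefore organize the four boundary quantities indexed by $([i]_2,[j]_2)\in\{0,1\}^2$ into a vector and propagate them simultaneously.

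Second, I would pass to generating functions, using a variable $w$ to mark the reduced white-boundary coordinate $\lfloor i/2\rfloor$ and $z$ to mark the reduced black-boundary coordinate $\lfloor j/2\rfloor$, while keeping the parities as discrete vector indices. Summing the recurrence then produces a \emph{matrix} functional equation of the form $\vec{G}=M(w,z)\,\vec{G}^{\,\prime}+(\text{source})$, where the source comes from the $\mathbb{I}_{(i,j)=(0,0)}$ term and $M$ is a transfer matrix built from the urban-renewal factors $\Delta(k,l)$ and the local weights $a,b$. Unlike the scalar one-periodic case, this cannot be solved by mere rearrangement; one must iterate. The product structure $\alpha_k^r(a,b,w)\,\alpha_l^r(a,b,z)$ visible in the statement signals that $M$ factors (after a change of basis) as a tensor product of a $w$-directional transfer matrix and a $z$-directional one, so the two directions decouple and can be handled separately.

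Third, I would diagonalize each $2\times2$ directional transfer matrix. Its eigenvalues are $\beta_0(a,b,w)^2$ and $\beta_1(a,b,w)^2$, the two roots being governed by the radicand $(b^4+a^4)w+a^2b^2(1+w^2)$ appearing in~\eqref{fortress tilings: beta}. Raising the diagonalized matrix to the power corresponding to the iterations and re-expressing in the original basis yields exactly the symmetric and antisymmetric combinations $\beta_0^{2r}+(-1)^k\beta_1^{2r}$ that define $\alpha_k^r$ in~\eqref{fortress tilings: alpha}, and the finite sum $\sum_{r=0}^{m-1}$ records the steps needed to solve the order-four recurrence down to its base case when $n=4m$. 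Extracting the coefficient of $w^{\lfloor i/2\rfloor}z^{\lfloor j/2\rfloor}$ as the contour integral over $|w|=|z|=1$, and reinstating the parity-dependent coupling through the matrix entries $g^{2k+l+1}_{2[i]_2+[j]_2+1}=\mathbf{N}_{2[i]_2+[j]_2+1,\,2k+l+1}$, produces the unsigned boundary generating function.

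Finally, the sign. As in Lemma~\ref{uniform:lem:sign}, removing the two vertices $(2i+1,0)$ and $(0,2j+1)$ from a common face does not alter the Kasteleyn orientation, and that orientation is the one used throughout the paper; hence $\mathrm{sgn}\,K^{-1}((2i+1,0),(0,2j+1))=-\mathtt{i}^{i+j+1}$ independently of $a$ and $b$. Multiplying the unsigned generating function by this factor gives the stated formula. The main obstacle is the diagonalization together with the simplification of the resulting $4\times4$ coupling matrix into the closed forms $\mathbf{N}_{i,j}$ of the appendix: the eigenvectors carry the awkward $\sqrt{w}$ and nested-radical factors of~\eqref{fortress tilings: alpha} and~\eqref{fortress tilings: beta}, the matrix entries are genuinely unwieldy, and --- as the authors remark --- carrying this computation through by hand is infeasible, so computer algebra is essential both for the diagonalization and for verifying that the proposed formula satisfies the recurrence and its initial condition.
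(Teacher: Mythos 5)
Your proposal follows essentially the same route as the paper: a vector boundary recurrence indexed by the parities of the removed boundary vertices (Lemma~\ref{fortress tilings:lemma:bfgequations}), composition of the even/odd shuffle steps into a single $4\times4$ transfer matrix $\mathbf{M}$ whose eigensystem factors over the $w$- and $z$-directions into the $\beta_l^2$ eigenvalues (Lemma~\ref{fortress tilings:lemma:bfgequations2}), a change of basis producing the symmetric/antisymmetric combinations $\alpha_k^r$ and the coupling matrix $\mathbf{N}$, contour-integral coefficient extraction, and the sign $-\mathtt{i}^{i+j+1}$ carried over from Lemma~\ref{uniform:lem:sign}. The only minor imprecision is that a single shuffle step sends the parameters $(a,b)$ to $(1/(2a),1/(2b))$ with the odd-order weighting rather than literally interchanging $a$ and $b$, but this does not affect the argument.
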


Note that the expressions $\alpha_0(a,b,w)$ and $\alpha_1(a,b,w)$ are polynomials in $w$ and the $L(a,b,i,j)$ is a rational function in $a$ and $b$.   

It is clearly also possible to find the boundary entries of the inverse Kasteleyn matrix for the size of the Aztec diamond equal to $4m-1$, $4m-2$ and $4m-3$, but we will not do so here.
The proof of the lemma is given in the next subsection.  For the generating function of $K^{-1}$, we need the following terms:
\begin{equation}\begin{split}
        c_{\partial}(w_1,w_2)&=2(1+a^2) +a (w_1^2+w_1^{-2}) (w_2^2+w_2^{-2}),\\
     s_{i,0}(w_1,w_2)&=-a \left(w_1^{-2} w_2^{-2} + w_1^2 w_2^{-2} \right) -a \mathtt{i} w_1^2 +a \mathtt{i} w_1^{-2} -2 a^{2i}, \\
     s_{i,2n}(w_1,w_2)&= w_2^{2n} \left(-a \left( w_1^2 w_2^2 +w_1^{-2} w_2^2 \right) +a \mathtt{i} w_1^{2} - a \mathtt{i} w_1^{-2} -2a^{2(1-i)} \right).
     \end{split} 
\end{equation}
for $i \in \{0,1\}$. We set $f_n(x)=(1-x^{n})/(1-x)$ -- the sum of a geometric series and we also let for $n=2r$ and $\mathtt{w}=(w_1,w_2)$ and $\mathtt{b}=(b_1,b_2)$,
\begin{equation}
\begin{split}
 &d(\mathtt{w},\mathtt{b})=  f_{r}(w_2^4b_2^4)f_{r}(w_1^4b_1^4) b_2 w_1 (a+w_1^2 b_1^2+w_2^2b_2^2(1+a w_1^2b_1^2)) (w_2^2+b_1^2-\mathtt{i}(1+w_2^2b_1^2)) \\
 &+\left(\left( 1-\mathtt{i} w_2^2 \right) w_1^{-1}  b_2(1+a w_2^2 b_2^2)+\left( w_2^2-\mathtt{i} \right)  w_1^{2n+1}  b_1^{2n}b_2 (a+ w_2^2 b_2^2)\right) f_{r}(w_2^4 b_2^4)\\
 & -\frac{d_{\mathrm{sides}}(\mathtt{w},\mathtt{b})}{c_{\partial}(b_1,b_2)} f_{r}(w_1^4 b_1^4)
 \end{split}
 \end{equation}
where
\begin{equation}
 \begin{split} \label{dsides}
d_{\mathrm{sides}} (\mathtt{w},\mathtt{b})&=  s_{0,0}(w_1,w_2)  \left(a(  b_1^2 - \mathtt{i})  w_1 b_2 + w_1  b_2^{-1}(1-\mathtt{i} b_1^2) \right)\\
&+s_{1,0}(w_1,w_2)  \left(  ( b_1^2 - \mathtt{i} ) w_1^3 b_1^2 b_2 + a w_1^3 b_1^2 b_2^{-1} (1-\mathtt{i} b_1^2 ) \right) \\ 
&+ s_{0,2n}(w_1,w_2) \left( (1-\mathtt{i} b_1^2) w_1b_2^{2n-1} +a w_1  b_2^{2n+1} (b_1^2-\mathtt{i})\right)\\
&+s_{1,2n}(w_1,w_2) \left( a(1 -\mathtt{i} b_1^2) w_1^3 b_1^2 b_2^{2n-1} +  w_1^3 b_1^2 b_2^{2n+1} (b_1^2-\mathtt{i}) \right).
 \end{split}
\end{equation}

We denote the generating function of $K^{-1}$ for $K$ defined in~\eqref{fortress tilings:K} as
\begin{equation}
      G(a,b,\mathtt{w},\mathtt{b}) = \sum_{x \in \mathtt{W}} \sum_{y \in \mathtt{B}} K^{-1} (x,y) \mathtt{w}^x \mathtt{b}^y
\end{equation}
where $ \mathtt{w}^x=w_1^{x_1} w_2^{x_2}$ and $\mathtt{b}^y=b_1^{y_1} b_2^{y_2}$ for $x=(x_1,x_2)$ and $y=(y_1,y_2)$.

\begin{thma} \label{fortress tilings:mainthm}
For an Aztec diamond of size $n$, the generating function of $K^{-1}$ for $K$ defined in~\eqref{fortress tilings:K} with $b=1$, $n=4m$ for $m\in\mathbb{N}$, $\mathtt{w}=(w_1,w_2)$ and $\mathtt{b}=(b_1,b_2)$ is given by
\begin{equation}
 \begin{split}\label{fortress tilings:mainthmeqn1}
     & G(a,1,\mathtt{w},\mathtt{b})=\frac{d(\mathtt{w},\mathtt{b})}{c_{\partial}(w_1,w_2)} \\& +       \left( \sum_{i,j\in \{0,1\}} \sum_{k,l \in \{0,2n\}} \sum_{(x_1,k) \in \mathtt{W}_i} \sum_{(l,y_2) \in \mathtt{B}_j}\frac{ s_{i,k}(w_1,w_2) s_{j,l}(b_2,b_1) K^{-1} ((x_1,k),(l,y_2))}{c_{\partial}(w_1,w_2)c_{\partial}(b_1,b_2)} w_1^{x_1} b_2^{y_2}\right) \\
    \end{split}
\end{equation}
where
\begin{equation}
\begin{split} \label{fortress tilings:mainthmeqn2}
      K^{-1}((x_1,0),(0,y_2)) &= L\left(a,1,\frac{x_1-1}{2},\frac{y_2-1}{2} \right), \\
      K^{-1}((x_1,0),(2n,y_2)) &=\mathtt{i}^{2n-1-x_1+y_2} L\left(1,a,n-\frac{x_1+1}{2},\frac{y_2-1}{2} \right), \\
      K^{-1}((x_1,2n),(0,y_2)) &= \mathtt{i}^{2n-1+x_1-y_2} L\left(1,a,\frac{x_1-1}{2},n-\frac{y_2+1}{2} \right)
 \hspace{2mm} \mathrm{and} \\
      K^{-1}((x_1,2n),(2n,y_2)) &= L\left(a,1,n-\frac{x_1+1}{2},n-\frac{y_2+1}{2} \right).
\end{split}
\end{equation}
and $L(a,b,i,j)$ is given in Lemma~\ref{fortress tilings:lemma:K1boundary}.

\end{thma}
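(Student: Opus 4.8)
The plan is to follow the three-step architecture of the proofs of Theorems~\ref{thm:uniform} and~\ref{thm:oneperiodic}, but to replace the scalar recurrence $K\cdot K^{-1}=\mathbbm{I}$ by the ``Laplacian'' recurrences $K^*\cdot K\cdot K^{-1}=K^*$ and $K^{-1}\cdot K\cdot K^*=K^*$ flagged in the introduction. The reason for passing to $K^*K$ is that the symbol of $K$ is itself a $2\times2$ matrix, owing to the two parity classes $\mathtt{W}_0,\mathtt{W}_1$ and $\mathtt{B}_0,\mathtt{B}_1$, whereas $K^*K$ is a \emph{scalar} discrete Laplacian on a single colour class. First I would compute its action explicitly. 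Every white vertex is joined by $K^*K$ to itself and to its eight same-colour neighbours; the diagonal entry is the sum of $|K(x,w)|^2$ over the four black neighbours $x$ of $w$, which equals $2(a^2+b^2)$ and so is the constant $2(1+a^2)$ of $c_{\partial}$ when $b=1$. The straight shifts $\pm(2,0),\pm(0,2)$ each receive two contributions, $+ab\,\mathtt{i}$ and $-ab\,\mathtt{i}$, from their two common black neighbours (which lie in opposite parity classes), so they \emph{cancel}; each diagonal shift $\pm(2,2),\pm(2,-2)$ has a single common black neighbour and survives with weight $ab$. This reproduces the symbol $c_{\partial}(w_1,w_2)=2(1+a^2)+a(w_1^2+w_1^{-2})(w_2^2+w_2^{-2})$, and the mirror computation on black vertices gives $c_{\partial}(b_1,b_2)$.

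Next I would turn $(K^*K)\,K^{-1}=K^*$ into a generating-function identity by multiplying by $\mathtt{w}^x\mathtt{b}^y$ and summing over $x\in\mathtt{W}$, $y\in\mathtt{B}$, just as in the proof of Lemma~\ref{uniform:lem:movevertices}. The left-hand side becomes $c_{\partial}(w_1,w_2)\,G(a,1,\mathtt{w},\mathtt{b})$ minus the ``phantom'' contributions of the diagonal shifts that leave the diamond through the rows $x_2=0$ and $x_2=2n$; these are supported on the white boundary rows and are collected precisely by the polynomials $s_{i,0}(w_1,w_2)$ and $s_{i,2n}(w_1,w_2)$, the index $i$ recording the parity class. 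The right-hand side is the generating function of $K^*$, whose summation over all adjacent white--black pairs resums into the geometric series $f_{r}(w_1^4b_1^4)$ and $f_{r}(w_2^4b_2^4)$ (the exponent $4$ and the index $r=n/2$ reflecting the period-$2$ checkerboard). Repeating the argument with $K^{-1}\,(KK^*)=K^*$ in the black variable then rewrites the white-boundary partial generating functions in terms of the four corner quantities $K^{-1}((x_1,k),(l,y_2))$ with $k,l\in\{0,2n\}$; this introduces the second factor $c_{\partial}(b_1,b_2)$, the polynomials $s_{j,l}(b_2,b_1)$, and the $d_{\mathrm{sides}}/c_{\partial}(b_1,b_2)$ piece sitting inside $d(\mathtt{w},\mathtt{b})$. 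Assembling the two reductions yields~\eqref{fortress tilings:mainthmeqn1} with the corner values left symbolic.

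It then remains to match those four corner generating functions with Lemma~\ref{fortress tilings:lemma:K1boundary}. For the bottom-left corner $K^{-1}((x_1,0),(0,y_2))$ this is the lemma itself, with $(i,j)=((x_1-1)/2,(y_2-1)/2)$, giving the first line of~\eqref{fortress tilings:mainthmeqn2}. The remaining three corners I would read off from the reflection symmetries of the Aztec diamond: a reflection carries one boundary pair to another, interchanges the weights $a$ and $b$ (hence $L(a,1,\cdot)\mapsto L(1,a,\cdot)$ at $b=1$), and, since it does not preserve the chosen Kasteleyn orientation, multiplies the entry by an explicit power of $\mathtt{i}$. Determining that power is the analogue of the sign computation in Lemma~\ref{uniform:lem:sign} and produces the factors $\mathtt{i}^{2n-1-x_1+y_2}$ and $\mathtt{i}^{2n-1+x_1-y_2}$ in the remaining lines of~\eqref{fortress tilings:mainthmeqn2}.

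The main obstacle is the second paragraph: performing the two generating-function reductions honestly. Two-periodicity splits every case into its two parity classes, the Laplacian couples white vertices at distance two so that four boundary rows must be tracked rather than the two edges of the uniform case, and the identities needed to recognise the source term as $d(\mathtt{w},\mathtt{b})$ --- in particular the cancellation of the straight shifts and the resummation into the $f_{r}$ factors --- are long enough that, as remarked in the introduction, they are only tractable with computer algebra. By comparison the symbol computation of Step~1 and the reflection/sign bookkeeping of Step~3 are routine once the machinery of Sections~\ref{section Uniform case} and~\ref{section One periodic case} is in hand.
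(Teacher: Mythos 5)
Your plan is the paper's proof in outline: the same passage to the scalar Laplacian relations $K^*\cdot K\cdot K^{-1}=K^*$ and $K^{-1}\cdot K\cdot K^*=K^*$, the same identification of the symbol $c_{\partial}$ from the cancellation of the straight shifts and survival of the shifts $\pm 2e_1,\pm 2e_2$, the same two-stage reduction to the four corner blocks, and the same reflection-plus-Kasteleyn-sign bookkeeping (as in Lemma~\ref{uniform:lem:sign}) for the three lines of~\eqref{fortress tilings:mainthmeqn2} beyond Lemma~\ref{fortress tilings:lemma:K1boundary}.

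There is, however, one concrete step your reduction does not close. The white-vertex Laplacian relation (the paper's~\eqref{fortress tilings:KstarK}) produces boundary corrections not only on the rows $x_2=0,2n$ but also on the columns $x_1=1$ and $x_1=2n-1$, because the straight-shift cancellation fails there too; this leaves the full column generating functions $G|_{x=(1,x_2)}$ and $G|_{x=(2n-1,x_2)}$ as unknowns, and these are \emph{not} expressible through the top/bottom boundary data that $s_{i,0}$ and $s_{i,2n}$ collect. The paper eliminates them by going back to the first-order relation $K\cdot K^{-1}=\mathbbm{I}$ restricted to those columns (equations~\eqref{fortress tilings:side recurrence}--\eqref{fortress tilings:side recurrence4}), which expresses each column term through the two corner values $G|_{x=(1,0)},G|_{x=(1,2n)}$ (respectively $G|_{x=(2n-1,0)},G|_{x=(2n-1,2n)}$) plus a delta source; substituting back, the coefficients of those four corner terms vanish identically, yielding~\eqref{fortress tilings:whiteGF}. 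Without this mixed use of the first- and second-order relations, and without the (nonobvious) cancellation of the corner coefficients, the reduction to the four corner blocks $K^{-1}((x_1,k),(l,y_2))$, $k,l\in\{0,2n\}$, does not go through. You should add this ingredient; the rest of your outline, including the computer-algebra-assisted identification of the source terms with $d(\mathtt{w},\mathtt{b})$ and $d_{\mathrm{sides}}$, matches the paper.
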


For the proof of this theorem, we are not able to follow the approach exactly as given in Theorem~\ref{thm:uniform} because the relations $K.K^{-1}=\mathbbm{I}$ and $K^{-1}. K=\mathbbm{I}$, while technically sufficient, are not of a suitably nice form to allow any progress.  Instead, we need to use two further recurrences $K^*. K.K^{-1}= K^* \mathbbm{I}$ and $K^{-1}.K.K^*=\mathbbm{I}.K^{*}$ where $K^*$ denotes complex conjugate transpose.  These identities have an interpretation in terms of the discrete Laplacian interpretation, see \cite{Ken:00} --- though this interpretation is only heuristically relevant here.

\subsection{Boundary Generating Function}

In this section, we find the boundary recurrence relation and solve the recurrence.  We rely on the computations given in Section~\ref{section:general}. Because we have a difference recurrence for each type of black and white vertex, we obtain a matrix equation explaining the boundary recurrence.  This matrix equation is also periodic.
  Due to the nature of the computations, we had to rely heavily on computer algebra in this subsection. 

Let $Z_P(a,b,n)$ denote the partition function of an Aztec diamond of size $2n$, whose Kasteleyn matrix is given by~\eqref{fortress tilings:K}.  Let $\tilde{Z}_P(a,b,n)$ denote the partition function of an Aztec diamond of size $2n-1$, whose Kasteleyn matrix is given by~\eqref{fortress tilings:Ktilde}.   Let $Z_{kl}(i,j,a,b,n)$ count the number of weighted dimer coverings of an Aztec diamond of size $2n$ with the vertices $(4i+2k+1,0)$  and $(0,4j+2l+1)$ removed, whose Kasteleyn matrix is given by~\eqref{fortress tilings:K} but omitting the removed vertices from the matrix. Let $\tilde{Z}_{kl}(i,j,a,b,n)$ count the number of weighted dimer coverings of an Aztec diamond of size $2n-1$  with the vertices $(4i+2k+1,0)$  and $(0,4j+2l+1)$, whose Kasteleyn matrix is given by~\eqref{fortress tilings:Ktilde} but omitting the removed vertices from the matrix. Note that we have the constraint that $Z_{kl}(i,j,a,b,n)=0$ if either $2i+k$ or $2j+l$ (or both) are not in $\{0,1,\dots,2n-1\}$ and $\tilde{Z}_{kl}(i,j,a,b,n)=0$ if either $2i+k$ or $2j+l$ (or both) are not in $\{0,1,\dots,2n-2\}$. 

We introduce the following generating functions: denote
\begin{equation}
      G_{kl}(a,b,x,y,z)=\sum_{n=0}^\infty \sum_{i=0}^{n-1} \sum_{j=0}^{n-1} \frac{Z_{kl}(i,j,a,b,n)}{Z_P(a,b,n)} x^i y^j z^n
\end{equation}
and
\begin{equation}
      \tilde{G}_{kl}(a,b,x,y,z)=\sum_{n=0}^\infty \sum_{i=0}^{n-1} \sum_{j=0}^{n-1}  \frac{\tilde{Z}_{kl}(i,j,a,b,n)}{\tilde{Z}_P(a,b,n)} x^i y^j z^n.
\end{equation}
for $k,l\in\{0,1\}$.  Let
\begin{equation}
      {\bf G}(a,b,x,y,z)= \left( \begin{array}{c}
                            G_{00}(a,b,x,y,z) \\
                            G_{01}(a,b,x,y,z) \\
                            G_{10}(a,b,x,y,z)\\
                            G_{11}(a,b,x,y,z)
                           \end{array} \right)
\end{equation}
and similarly, let $\tilde{\bf G}$ denote the corresponding vector for $\tilde{G}_{kl}$.  
\begin{lemma} \label{fortress tilings:lemma:bfgequations}
The boundary generating functions $G$ and $\tilde{G}$ satisfy the following recurrences
\begin{equation} \label{twoperiodic:generating1}
      {\bf G}(a,b,x,y,z)= {\bf A}(a,b,x,y). \tilde{\bf G}\left(\frac{1}{2a},\frac{1}{2b},x,y,z \right)+ {\bf B}(a)\frac{z}{1-z}
\end{equation}
and
\begin{equation} \label{twoperiodic:generating2}
      \tilde{\bf G}(c,d,x,y,z) =z {\bf C}(c,d,x,y) .{\bf G}(c,d,x,y,z) + {\bf D}(c,d) \frac{z}{1-z} 
\end{equation}
where
\begin{equation}
      {\bf A} (a,b,x,y)= \left( \begin{array}{cccc}
                          \frac{1}{4a^2} &\frac{y}{4a^2}&\frac{x}{4a^2}&\frac{xy}{4a^2} \\
                          \frac{1}{4a b} &\frac{1}{4a b }&\frac{x}{4a b}&\frac{x}{4a b} \\
                          \frac{1}{4ab} &\frac{y}{4ab}&\frac{1}{4ab}&\frac{y}{4a b} \\
                          \frac{1}{4b^2} &\frac{1}{4b^2}&\frac{1}{4b^2}&\frac{1}{4b^2}
                           \end{array} \right), \hspace{10mm}
       {\bf B}(a)= \left( \begin{array}{c}
                            \frac{1}{2 a} \\
                             0 \\
                             0 \\
                            0
                           \end{array} \right)
\end{equation}
\begin{equation}
      {\bf C} (c,d,x,y)=  \frac{1}{c^2+d^2} \left( \begin{array}{cccc}
                          d^2 & cd y & cd x & c^2 x y \\
                          d^2 & cd  & cd x & c^2  x \\
                          d^2 & cd y & cd  & c^2 y  \\
                          d^2 & cd  & cd  & c^2  \\
                          \end{array} \right) \hspace{5mm}\mbox{and} \hspace{5mm}
       {\bf D}(c,d)= \left( \begin{array}{c}
                            \frac{c}{c^2 +d^2} \\
                             0 \\
                             0 \\
                            0
                           \end{array} \right).
\end{equation}
\end{lemma}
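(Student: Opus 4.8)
The plan is to establish the two stated recurrences by specialising the general boundary recurrence of Lemma~\ref{general:lem:recurrence} to the two one-step urban-renewal moves that underlie the two-periodic model: the move from an even diamond of size $2n$ (weighted by $K$) to an odd diamond of size $2n-1$ (weighted by $\tilde K$), which produces~\eqref{twoperiodic:generating1}, and the move from an odd diamond of size $2n-1$ down to the even diamond of size $2n-2$, which produces~\eqref{twoperiodic:generating2}. The new feature, compared with the one-periodic computation of Lemma~\ref{oneperiodicbias:lem:linearrecurrence}, is that the removed boundary vertex $(4i+2k+1,0)$ sits on an $a$-face or a $b$-face according to the parity $k\in\{0,1\}$, so the single scalar recurrence becomes a $4\times 4$ matrix recurrence in the vector $\mathbf{G}=(G_{00},G_{01},G_{10},G_{11})$, one component for each parity pair $(k,l)\in\{0,1\}^2$.

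For~\eqref{twoperiodic:generating1} I would start from the even weighting~\eqref{fortress tilings:K}: since every face of an even diamond carries four equal edge weights, the urban-renewal factor is $\Delta(k,l)=2a^2$ on the $a$-faces and $2b^2$ on the $b$-faces, and~\eqref{general:eqn:weightchange} shows that an all-$a$ face becomes an all-$\tfrac{1}{2a}$ face. Hence the resulting odd diamond is exactly the $\tilde K$-weighting at parameters $(\tfrac{1}{2a},\tfrac{1}{2b})$, which explains the shifted arguments of $\tilde{\mathbf{G}}$. Applying~\eqref{general:eqn:recurrence} for each $(k,l)$, every coefficient is a product $\tfrac{w_{0,\cdot}(i,0)}{\Delta(i,0)}\cdot\tfrac{w_{\cdot,1}(0,j)}{\Delta(0,j)}\in\{\tfrac{1}{4a^2},\tfrac{1}{4ab},\tfrac{1}{4b^2}\}$, matching the entries of $\mathbf{A}$; the two index shifts $k\in\{i-1,i\}$ and $l\in\{j-1,j\}$, reorganised into the period-four labelling $4i+2k+1$, produce the $x$ and $y$ factors and the precise $4\times 4$ pattern. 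The diagonal term $\tfrac{w_{0,1}(0,0)}{\Delta(0,0)}=\tfrac{1}{2a}$ of~\eqref{general:eqn:recurrence} feeds only the $(0,0)$ component and, after summation $\sum_{n\ge 1}z^n=\tfrac{z}{1-z}$, gives the inhomogeneous vector $\mathbf{B}(a)\tfrac{z}{1-z}$. Dividing throughout by $Z_P$ and summing against $x^i y^j z^n$ then yields~\eqref{twoperiodic:generating1}, with no extra $z$ multiplying $\tilde{\mathbf{G}}$ because sizes $2n$ and $2n-1$ share the same index $n$.

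For~\eqref{twoperiodic:generating2} the same scheme is applied to the odd weighting~\eqref{fortress tilings:Ktilde}, but now the faces carry two distinct edge weights, so $\Delta=c^2+d^2$; this is the source of the denominator $c^2+d^2$ in $\mathbf{C}$ and in $\mathbf{D}(c,d)=(\tfrac{c}{c^2+d^2},0,0,0)^{\top}$. One urban-renewal step now sends the odd diamond of size $2n-1$ to an even diamond of size $2n-2=2(n-1)$; following the gauge-transformation bookkeeping used for the partition function in Lemma~\ref{oneperiodicbias:lem:linearrecurrence}, one multiplies the white vertices by the appropriate factor so that the smaller even diamond is again $K$-weighted at the \emph{same} parameters $(c,d)$, closing the recurrence on $\mathbf{G}(c,d,\cdot)$. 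The drop of the size index from $n$ to $n-1$ is precisely the factor $z$ multiplying $\mathbf{C}$, while the diagonal term, being a pure number rather than a smaller diamond's partition function, carries no such $z$ and again contributes only $\mathbf{D}(c,d)\tfrac{z}{1-z}$. Reading off the weight ratios $\tfrac{w}{\Delta}$ and the $x,y$ shifts as before identifies $\mathbf{C}$ and completes~\eqref{twoperiodic:generating2}.

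The main obstacle, and the reason computer algebra is indispensable, is the precise combinatorial bookkeeping of the checkerboard weights through the two moves: one must verify that the even diamond's uniform-weight faces become the odd diamond's mixed-weight faces (and conversely), track how the parity $k$ of a removed boundary vertex is carried to the smaller diamond together with the correct block shift, and confirm that the gauge transformation in the odd-to-even step restores $(c,d)$ rather than $(\tfrac{1}{2c},\tfrac{1}{2d})$. These parity-dependent shifts are exactly what force the full $4\times 4$ matrix form, and each entry of $\mathbf{A}$ and $\mathbf{C}$ must be checked individually.
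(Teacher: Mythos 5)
Your proposal is correct and follows essentially the same route as the paper: both specialise the general boundary recurrence of Lemma~\ref{general:lem:recurrence} to the even-to-odd and odd-to-even urban-renewal steps, read off the urban renewal factors $2a^2$, $2b^2$ and $c^2+d^2$, track the parameter shift to $(\tfrac{1}{2a},\tfrac{1}{2b})$ and the gauge transformation restoring $(c,d)$, and assemble the parity-dependent coefficients $\tfrac{w}{\Delta}\cdot\tfrac{w}{\Delta}$ into the matrices $\mathbf{A}$ and $\mathbf{C}$ with the inhomogeneous terms $\mathbf{B}$, $\mathbf{D}$ arising from the $(0,0)$ case. Your accounting of where the factor $z$ does and does not appear (sizes $2n$ and $2n-1$ sharing the index $n$, versus the drop to $2(n-1)$) matches the paper's argument exactly.
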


\begin{proof}

To prove the lemma, we will use Lemma~\ref{general:lem:recurrence} and the notation of Section~\ref{section:general} to compute recurrences for $Z_P(a,b,n)$, $\tilde{Z}_P(c,d,n)$, $Z_{st}(i,j,a,b,n)$ and $\tilde{Z}_{st}(i,j,c,d,n)$ for $s,t \in \{0,1\}$.   We first compute the recurrence starting with $Z_P(a,b,n)$ followed by the recurrence starting from $Z_{st}(i,j,a,b,n)$ for $s,t \in \{0,1\}$ which will lead to obtaining~\eqref{twoperiodic:generating1}.  We will then compute the recurrence starting from $\tilde{Z}_P(c,d,n)$ and finally compute the recurrence starting from $\tilde{Z}_{st}(i,j,c,d,n)$ for $s,t \in \{0,1\}$ which will lead to obtaining~\eqref{twoperiodic:generating2}.  

We write the edge weights of the Aztec diamond encoded by $Z_P(a,b,n)$ using the notation from Section~\ref{section:general}.  We find that  the edge weights around the face whose center has coordinates $(2k+1,2l+1)$ are given by 
\begin{equation}\label{fortress:eqn:firstweights}
w_{i,j}(k,l)= \left \{ \begin{array}{ll}
	a & \mbox{if } (k+l)\mod 2=0\\
	b & \mbox{if } (k+l) \mod 2=1 \end{array}
\right.
\end{equation}
for $i,j \in \{0,1\}$ and for all $0\leq k,l \leq 2 n-1$ where $w_{i,j}(k,l)$ is described in Section~\ref{section:general}.  
For the faces with coordinates $(2k+1,2l+1)$ with $0 \leq k, l \leq 2n-1$, the urban renewal factors $\Delta(k,l)$, are given by $2a^2$ if $k+l \mod 2 =0$ and $2b^2$ otherwise for $0\leq k,l \leq 2n-1$ and so $$\prod_{k,l=0}^{2n-1} \Delta(k,l)=(4a b)^{2n^2}.$$
The edge weights under the deformation of an Aztec diamond of size $2n$ to an Aztec diamond of size $2n-1$ as detailed in Section~\ref{section:general} are equal to 
\begin{equation}
\begin{split}\label{fortress:eqn:firstweights1}
&\left\{ \frac{w_{0,0}(k,l+1)}{\Delta(k,l+1)},\frac{w_{0,1}(k+1,l+1)}{\Delta(k+1,l+1)},\frac{w_{0,0}(k,l)}{\Delta(k,l)},\frac{w_{1,1}(k+1,l)}{\Delta(k+1,l)} \right\}\\
&=\left\{ \begin{array}{ll}
\{1/(2a),1/(2b),1/(2a),1/(2b)\} &\mbox{if }k+l\mod 2=0\\
\{1/(2b),1/(2a),1/(2b),1/(2a)\} & \mbox{if }k+ l \mod2=1
\end{array} \right.
\end{split}
\end{equation}
for all $0 \leq k,l \leq 2n-2$.  These edge weights are the same edge weights as encoded by $\tilde{Z}_P(1/(2a),1/(2b),n)$.
  From~\eqref{general:eqn:partition} in Lemma~\ref{general:lem:recurrence}, we conclude
\begin{equation} \label{fortress:eqn:partition1}
Z_P(a,b,n)=(4ab)^{2n^2} \tilde{Z}_P\left(\frac{1}{2a},\frac{1}{2b},n\right)
\end{equation}
We now write the relation starting from $Z_{st}(i,j,a,b,n)$ for $s,t \in \{0,1\}$ using~\eqref{general:eqn:recurrence} and the edge weights given in~\eqref{fortress:eqn:firstweights}.  Recall that $Z_{st}(i,j,a,b,n)$ corresponds to the partition function of the Aztec diamond with weights given by~\eqref{fortress:eqn:firstweights} with the vertices $(4i+2s+1,0)$ and $(0,4j+2t+1)$ removed.  Using the notation from Section~\ref{section:general}, we write $\mathcal{Z}_{2n}(2i+s,2j+t)=Z_{st}(i,j,a,b,n)$. We  now list the contributions for $\tilde{\mathcal{Z}}_{2n-1}(\cdot,\cdot)$.  Under the deformation of an Aztec diamond of size $2n$ to an Aztec diamond of size $2n-1$ as detailed in Section~\ref{section:general}, the edge weights are given by~\eqref{fortress:eqn:firstweights1}.  This means that $\tilde{\mathcal{Z}}_{2n-1}=\tilde{Z}_P(1/(2a),1/(2b),n)$ and $\tilde{\mathcal{Z}}_{2n-1}(\cdot,\cdot)$ can written in terms of $\tilde{Z}_{kl} (\cdot,\cdot, 1/(2a),1/(2b),n)$ where $k,l \in \{0,1\}$ while the removed vertices need to be determined.
By comparing all the different combinations of $\tilde{\mathcal{Z}}_{2n-1}(\cdot,\cdot)$  given in~\eqref{general:eqn:recurrence}, we find that for $k,l \in \{0,1\}$
\begin{equation}
\label{fortress:eqn:recurrencecomp1}
\tilde{\mathcal{Z}}_{2n-1}(2i+s-k,2j+t-l)=\tilde{Z}_{kl}\left(i-k(1-s),j-l(1-t),\frac{1}{2a},\frac{1}{2b},n \right)
\end{equation}
From~\eqref{fortress:eqn:firstweights} we also have
\begin{equation}
\label{fortress:eqn:recurrencecomp2}
	\frac{w_{0,r}(2i+s,0)}{\Delta (2i+s,0)}= \frac{1}{2 a^{1-s} b^s} \hspace{5mm}\mbox{and} \hspace{5mm} \frac{w_{p,1}(0,2j+t)}{\Delta(0,2j+t)} =\frac{1}{2a^{1-t}b^t}
\end{equation} 
for $p,r \in \{0,1\}$. We rewrite part of the sum in~\eqref{general:eqn:recurrence} under a change of summation, that is, we write (i.e. setting $i \mapsto 2i+s$, $j\mapsto 2j+s$ and $n \mapsto 2n$ in the last line of~\eqref{general:eqn:recurrence} and ignore the product of the urban renewal factors)
\begin{equation}
\begin{split}
&\sum_{\substack{k \in \{2i+s-1,2i+s\}\\ l \in \{2j+t-1,2j+t\}}}
\frac{w_{0,2i+s-k}(2i+s,0)}{\Delta(2i+s,0)} \frac{w_{1+2j+t-l,1}(0,2j+t)}{\Delta(0,2j+t)} \tilde{\mathcal{Z}}_{2n-1} (k,l) \mathbbm{I}_{0\leq k,l \leq 2n-2} \\
&= \sum_{ \substack{ k, l \in \{0,1\} \\(i-k(1-s),j-l(1-t)) \\ \not = (-1,-1) }} \frac{ w_{0,k(1-s)+s(1-k)} (2i+s,0) w_{1-(l(1-t)+t(1-l)),1}(0,2j+t) }{ \Delta(2i+s,0) \Delta(0,2j+t) } \\
&\hspace{2mm} \times \tilde{Z}_{kl}\left(i-k(1-s),j-l(1-t),\frac{1}{2a},\frac{1}{2b},n \right)
\end{split}
\end{equation}
where we have used~\eqref{fortress:eqn:recurrencecomp1}.
Using the above equation  and~\eqref{fortress:eqn:recurrencecomp2},  for $s,t \in \{0,1\}$ we  write~\eqref{general:eqn:recurrence} as 
 \begin{equation}
\begin{split}
        Z_{st}(i,j,a,b,n)&= 
\sum_{\substack{k,l\in \{0,1\} \\ (i-k,j-l)\not=(-1,-1)  } } \frac{(4 a b)^{2n^2}}{4 a^{2-s-t} b^{s+t}} \tilde{Z}_{kl}\left(i-k(1-s),j-l(1-t),\frac{1}{2a},\frac{1}{2b},n \right)\\ &+ \frac{(4a b)^{2n^2}}{2a}\tilde{Z}_P\left(\frac{1}{2 a},\frac{1}{2b} ,n \right)  \mathbbm{I}_{(i,j,s,t)=(0,0,0,0)}
\end{split}
\end{equation}
 We divide both sides of the above equation by $Z_P(a,b,n)$ and use \eqref{fortress:eqn:partition1} which gives
\begin{equation}
 \frac{Z_{st}(i,j,a,b,n)}{Z_P(a,b,n)} = \sum_{\substack{k,l\in \{0,1\} \\ (i-k,j-l)\not=(-1,-1)  } } \frac{ \tilde{Z}_{kl}\left(i-k(1-s),j-l(1-t),\frac{1}{2a},\frac{1}{2b},n \right)}{4a^{2-s-t}b^{s+t} \tilde{Z}_P\left(\frac{1}{2 a},\frac{1}{2b} ,n \right)} +\frac{\mathbbm{I}_{(i,j,s,t)=(0,0,0,0)}}{2a} 
\end{equation}
For the recurrence equation given in the equation above, we multiply by $x^iy^jz^n$ and sum over the relevant quantities which gives
\begin{equation}
\begin{split}
G_{st}(a,b,x,y,z)=\sum_{k,l \in \{0,1\}} \tilde{G}_{kl}(a,b,x,y,z) \frac{x^{k(1-s)} y^{l (1-t)} }{4 a^{2-s-t} b^{s+t}}+ \frac{ z}{2a(1-z)} \mathbbm{I}_{(s,t)=(0,0)} 
\end{split}
\end{equation}
for $s,t \in \{0,1\}$ which is exactly equal to the row $2s+t+1$ of~\eqref{twoperiodic:generating1} for $s,t \in \{0,1\}$.

We write the edge weights of the Aztec diamond encoded by $\tilde{Z}_P(a,b,n)$ using the notation from Section~\ref{section:general}.  We find that  the edge weights around the face whose center has coordinates $(2k+1,2l+1)$ are given by 
\begin{equation}\label{fortress:eqn:secondweights}
w_{i,j}(k,l)= \left \{ \begin{array}{ll}
	c & \mbox{if } (i+j) \mod 2=1 \mbox{ and }(k+l)\mod 2=0\\
	d & \mbox{if } (i+j) \mod 2=0 \mbox{ and }(k+l)\mod 2=0\\
	d & \mbox{if } (i+j) \mod 2=1 \mbox{ and }(k+l)\mod 2=1\\
	c & \mbox{if } (i+j) \mod 2=0 \mbox{ and }(k+l)\mod 2=1\\
\end{array}
\right.
\end{equation}
for $i,j \in \{0,1\}$ and for all $0\leq k,l \leq 2 n-2$ where $w_{i,j}(k,l)$ is described in Section~\ref{section:general}.  
For the faces with coordinates $(2k+1,2l+1)$ with $0 \leq k, l \leq 2n-2$, the urban renewal factors $\Delta(k,l)$, are given by $c^2+d^2$ for all $0\leq k,l \leq 2n-2$ and so $$\prod_{k,l=0}^{2n-2} \Delta(k,l)=(c^2+d^2)^{(2n-1)^2}.$$
The edge weights under the deformation of an Aztec diamond of size $2n-1$ to an Aztec diamond of size $2n-2$ as detailed in Section~\ref{section:general} are equal to 
\begin{equation}
\begin{split}\label{fortress:eqn:secondweights1}
&\left\{ \frac{w_{0,0}(k,l+1)}{\Delta(k,l+1)},\frac{w_{0,1}(k+1,l+1)}{\Delta(k+1,l+1)},\frac{w_{0,0}(k,l)}{\Delta(k,l)},\frac{w_{1,1}(k+1,l)}{\Delta(k+1,l)} \right\}\\
&=\left\{ \begin{array}{ll}
\{c/(c^2+d^2),c/(c^2+d^2),c/(c^2+d^2),c/(c^2+d^2)\} &\mbox{if }k+l\mod 2=0\\
\{d/(c^2+d^2),d/(c^2+d^2),d/(c^2+d^2),d/(c^2+d^2)\} & \mbox{if }k+ l \mod2=1
\end{array} \right.
\end{split}
\end{equation}
for all $0 \leq k,l \leq 2n-3$.  These edge weights are the same edge weights as encoded by ${Z}_P(c/(c^2+d^2),d/(c^2+d^2),n-1)$.
  From~\eqref{general:eqn:partition} in Lemma~\ref{general:lem:recurrence}, we conclude
\begin{equation} 
\begin{split}
\label{fortress:eqn:partition2}
\tilde{Z}_P(c,d,n)&=(c^2+d^2)^{(2n-1)^2} {Z}_P\left(\frac{c}{c^2+d^2},\frac{d}{c^2+d^2},n-1\right)\\
&= (c^2+d^2)^{2n-1} Z_P(c,d,n-1)
\end{split}
\end{equation}
 where the last line follows by applying a gauge transformation which multiplies all the white vertices by $c^2+d^2$.

We now write the relation starting from $\tilde{Z}_{st}(i,j,a,b,n)$ for $s,t \in \{0,1\}$ using~\eqref{general:eqn:recurrence} and the edge weights given in~\eqref{fortress:eqn:secondweights}.  Recall that $\tilde{Z}_{st}(i,j,a,b,n)$ corresponds to the partition function of the Aztec diamond with weights given by~\eqref{fortress:eqn:secondweights} with the vertices $(4i+2s+1,0)$ and $(0,4j+2t+1)$ removed.  Using the notation from Section~\ref{section:general}, we write $\mathcal{Z}_{2n-1}(2i+s,2j+t)=\tilde{Z}_{st}(i,j,a,b,n)$. We now list the contributions for $\tilde{\mathcal{Z}}_{2n-2}(\cdot,\cdot)$.  Under the deformation of an Aztec diamond of size $2n-1$ to an Aztec diamond of size $2n-2$ as detailed in Section~\ref{section:general}, the edge weights are given by~\eqref{fortress:eqn:secondweights1}.  This means that $\tilde{\mathcal{Z}}_{2n-2}={Z}_P(c/(c^2+d^2),d/(c^2+d^2),n-1)$ and $\tilde{\mathcal{Z}}_{2n-2}(\cdot,\cdot)$ can written in terms of ${Z}_{kl} (\cdot,\cdot, c/(c^2+d^2),d/(c^2+d^2),n-1)$ where $k,l \in \{0,1\}$ while the removed vertices need to be determined.
By comparing all the different combinations of $\tilde{\mathcal{Z}}_{2n-2}(\cdot,\cdot)$  given in~\eqref{general:eqn:recurrence}, we find that for $k,l \in \{0,1\}$
\begin{equation}
\label{fortress:eqn:recurrencecomp3}
\tilde{\mathcal{Z}}_{2n-2}(2i+s-k,2j+t-l)={Z}_{kl}\left(i-k(1-s),j-l(1-t),\frac{c}{c^2+d^2},\frac{d}{c^2+d^2},n-1 \right)
\end{equation}
From~\eqref{fortress:eqn:secondweights}, we write out the edge weights that are found in the right-hand side of~\eqref{general:eqn:recurrence}.  These are given by
\begin{equation}
\begin{split}
\label{fortress:eqn:recurrencecomp4}
	\frac{w_{0,r}(2i+s,0)}{\Delta (2i+s,0)}&= \frac{c^{(1-r)s+r(1-s)}d^{1-((1-r)s+r(1-s))}}{c^2+d^2} \hspace{5mm} \mbox{and}\\
\frac{w_{p,1}(0,2j+t)}{\Delta(0,2j+t)} &=\frac{c^{1-((1-t)p+(1-p)t)} d^{(1-p)t+(1-t)p)}}{c^2+d^2}
\end{split}
\end{equation} 
for $p,r \in \{0,1\}$.
We rewrite part of the sum in~\eqref{general:eqn:recurrence} under a change of summation, that is, we write (i.e. setting $i \mapsto 2i+s$, $j\mapsto 2j+s$ and $n \mapsto 2n-1$ in the last line of~\eqref{general:eqn:recurrence} and ignoring the product of the urban renewal factors)
\begin{equation}
\begin{split} \label{fortress:eqn:recurrencecomp5}
&\sum_{\substack{k \in \{2i+s-1,2i+s\}\\ l \in \{2j+t-1,2j+t\}}}
\frac{w_{0,2i+s-k}(2i+s,0)}{\Delta(2i+s,0)} \frac{w_{1+2j+t-l,1}(0,2j+t)}{\Delta(0,2j+t)} \tilde{\mathcal{Z}}_{2n-2} (k,l) \mathbbm{I}_{0\leq k,l \leq 2n-3} \\
&= \sum_{ \substack{ k, l \in \{0,1\} \\(i-k(1-s),j-l(1-t)) \\ \not = (-1,-1) }} \frac{ w_{0,k(1-s)+s(1-k)} (2i+s,0) w_{1-(l(1-t)+t(1-l)),1}(0,2j+t) }{ \Delta(2i+s,0) \Delta(0,2j+t) } \\
&\hspace{2mm} \times  {Z}_{kl}\left(i-k(1-s),j-l(1-t),\frac{c}{c^2+d^2},\frac{d}{c^2+d^2},n-1 \right)
\end{split}
\end{equation}
where we use have used~\eqref{fortress:eqn:recurrencecomp3}.
In the above equation we have
\begin{equation} \label{fortress:eqn:recurrencecomp6}
 \frac{w_{0,k(1-s)+s(1-k)} (2i+s,0)}{\Delta(2i+s,0)} = \frac{c^k d^{1-k}}{c^2+d^2}
\end{equation}
which is be seen by evaluating the cases for $s=0$ and $s=1$ separately and using~\eqref{fortress:eqn:recurrencecomp4}.  We also find
\begin{equation} \label{fortress:eqn:recurrencecomp7}
\frac{w_{1-(l(1-t)+t(1-l)),1}(0,2j+t) }{\Delta(0,2j+t)} = \frac{c^l d^{1-l}}{c^2+d^2}.
\end{equation}
Using~\eqref{fortress:eqn:recurrencecomp5},~\eqref{fortress:eqn:recurrencecomp6} and~\eqref{fortress:eqn:recurrencecomp7} for $s,t \in \{0,1\}$ the recurrence in~\eqref{general:eqn:recurrence} is equal to
 \begin{equation}
\begin{split}
& \tilde{Z}_{st}(i,j,c,d,n)
=\frac{c}{c^2+d^2} (c^2+d^2)^{(2n-1)^2} Z_P\left(\frac{c}{c^2+d^2},\frac{d}{c^2+d^2},n-1\right) \mathbbm{I}_{(i,j,s,t)=(0,0,0,0)} \\
&+\sum_{\substack{k,l\in \{0,1\} \\ (i-k,j-l) \\ \not=(-1,-1)  } } \frac{d^{2-k-l} c^{k+l}}{(c^2+d^2)^2} (c^2+d^2)^{(2n-1)^2} Z_{kl}\left(i-k(1-s),j-l(1-t),\frac{c}{c^2+d^2},\frac{d}{c^2+d^2},n-1 \right)\\
&=c (c^2+d^2)^{2n-2} Z_P(c,d,n-1)  \mathbbm{I}_{(i,j,s,t)=(0,0,0,0)} \\
&+\sum_{\substack{k,l\in \{0,1\} \\ (i-k,j-l)\not=(-1,-1)  } } d^{2-k-l} c^{k+l}(c^2+d^2)^{2n-2}Z_{kl}\left(i-k(1-s),j-l(1-t),{c},{d},n-1 \right)
\end{split}
\end{equation}
where the last line follows by applying a gauge transformation which multiplies all the white vertices in each expression by $c^2+d^2$. We divide the above equation by the partition function recurrence given in~\eqref{fortress:eqn:partition2} which gives
\begin{equation}\begin{split}
 \frac{\tilde{Z}_{st}(i,j,c,d,n)}{\tilde{Z}_P(a,b,n)}&=\sum_{\substack{k,l\in \{0,1\} \\ (i-k,j-l)\not=(-1,-1)  } } \frac{d^{2-k-l} c^{k+l}}{c^2+d^2}\frac{Z_{kl}(i-k(1-s),j-l(1-t),c,d,n-1)}{Z_P(c,d,n-1)} \\&+\frac{c}{c^2+d^2}\mathbbm{I}_{(i,j,s,t)=(0,0,0,0)}. 
\end{split}
 \end{equation}
For the recurrence equation given in the equation above, we multiply by $x^iy^jz^n$ and sum over the relevant quantities. This gives
\begin{equation}
\tilde{G}_{st} (a,b,x,y,z)=\sum_{k,l\{0,1\}} G_{kl}(a,b,x,y,z) \frac{c^{k+l}d^{2-k-l} x^{k(1-s)} y^{l(1-t)}}{c^2+d^2} + \frac{cz}{(c^2 +d^2)(1-z)} \mathbbm{I}_{(s,t)=(0,0)}
\end{equation}
which is exactly equal to row $2s+t+1$ of~\eqref{twoperiodic:generating2} for $s,t \in \{0,1\}$.

\end{proof}

Let 
\begin{equation}
\mathbf{M}(a,b,x,y) = \mathbf{A}(a,b,x,y) .\mathbf{C}\left((2a)^{-1},(2b)^{-1},x,y \right) .\mathbf{A}\left((2a)^{-1},(2b)^{-1},x,y\right). \mathbf{C}(a,b,x,y)
\end{equation}
and define the vectors
\begin{equation}
\mathbf{B_1}(a,b,x,y) = \mathbf{A}(a,b,x,y) . \mathbf{D} \left( (2a)^{-1},(2b)^{-1} \right) + \mathbf{B}(a)
\end{equation}
and
\begin{equation}
\mathbf{B_2}(a,b,x,y)= \mathbf{A}(a,b,x,y). \mathbf{C}\left((2a)^{-1},(2b)^{-1},x,y \right) . \mathbf{B_1}\left((2a)^{-1},(2b)^{-1},x,y\right).
\end{equation}

\begin{lemma}\label{fortress tilings:lemma:bfgequations2}
For $n=4m$, the $n^{th}$ coefficient of $z$ of the generating function $\mathbf{G}(a,b,x,y,z)$ is given by
\begin{equation}
      \sum_{i=0}^{m-1} \mathbf{\Gamma}\mathbf{ \Lambda}^i \mathbf{\Gamma}^{-1} . \left( \mathbf{B_1} + \mathbf{B_2} \right)
\end{equation}
where $(\mathbf{\Lambda}, \mathbf{\Gamma})$ is the eigensystem of $\mathbf{M}(a,b,x,y)$.  Explicitly, the eigenvalues of $\mathbf{M}(a,b,x,y)$ are given by
\begin{equation}
      \lambda_{2i+j+1} (a,b,x,y)= \beta_i (a,b,x)^2 \beta_j(a,b,y)^2 
\end{equation}
and the eigenvectors are given by
\begin{equation}
      v_{2i+j+1}= \left( \begin{array}{c}
                           \frac{ \left((b^2 - a^2) x -(-1)^i \sqrt{x} \sqrt{(a^4 + b^4) x + a^2 b^2 (1 + x^2)}\right) \left((b^2 - a^2) y -(-1)^j \sqrt{y}\sqrt{(a^4 + b^4) y + a^2 b^2 (1 + y^2)}\right)}{a^2 b^2 (1+x)(1+y)} \\
			    \frac{ ((b^2 - a^2) x -(-1)^i \sqrt{x} \sqrt{(a^4 + b^4) x + a^2 b^2 (1 + x^2)})}{a b(1+x)} \\
			   \frac{(b^2 - a^2) y -(-1)^j \sqrt{y}\sqrt{(a^4 + b^4) y + a^2 b^2 (1 + y^2)}}{a b(1+y)} \\
			    1
			    \end{array} \right)
\end{equation}
\end{lemma}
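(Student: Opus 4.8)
The plan is to eliminate $\tilde{\mathbf{G}}$ from the coupled system of Lemma~\ref{fortress tilings:lemma:bfgequations}, collapsing it into a single self-referential recurrence for $\mathbf{G}(a,b,x,y,z)$ alone, to solve that recurrence by a geometric series, and finally to read off the coefficient by diagonalising $\mathbf{M}$. First I would substitute~\eqref{twoperiodic:generating2}, evaluated at $(c,d)=((2a)^{-1},(2b)^{-1})$, into~\eqref{twoperiodic:generating1}. Since $\mathbf{A}(a,b,x,y)\,\mathbf{D}((2a)^{-1},(2b)^{-1})+\mathbf{B}(a)=\mathbf{B_1}(a,b,x,y)$ by definition, this gives a single-step relation expressing $\mathbf{G}(a,b)$ through $\mathbf{G}((2a)^{-1},(2b)^{-1})$ with one factor of $z$ and inhomogeneous term $\tfrac{z}{1-z}\mathbf{B_1}$. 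The structural observation that drives the whole argument is that $(a,b)\mapsto((2a)^{-1},(2b)^{-1})$ is an \emph{involution}, so applying the single-step relation a second time returns the parameters to $(a,b)$. Collecting the four resulting matrix factors into $\mathbf{M}$ and the surviving source terms into $\mathbf{B_1}$ and $\mathbf{B_2}$ (exactly as those objects are defined just before the lemma) yields
\begin{equation}
(\mathbbm{I}-z^2\mathbf{M}(a,b,x,y))\,\mathbf{G}(a,b,x,y,z)=\frac{z}{1-z}\mathbf{B_1}(a,b,x,y)+\frac{z^2}{1-z}\mathbf{B_2}(a,b,x,y).
\end{equation}

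Next I would invert the formal power series $\mathbbm{I}-z^2\mathbf{M}$ (legitimate since it equals $\mathbbm{I}$ at $z=0$) to write $\mathbf{G}=\sum_{p\ge0}z^{2p}\mathbf{M}^p\bigl(\tfrac{z}{1-z}\mathbf{B_1}+\tfrac{z^2}{1-z}\mathbf{B_2}\bigr)$, and extract the coefficient of $z^{2m}$, which by the definition of $\mathbf{G}$ records the Aztec diamond of size $4m$. Using $\tfrac{z}{1-z}=\sum_{q\ge1}z^q$ and $\tfrac{z^2}{1-z}=\sum_{q\ge2}z^q$, the constraint $2p+q=2m$ forces $0\le p\le m-1$ for \emph{both} the $\mathbf{B_1}$ and the $\mathbf{B_2}$ contribution, so the two ranges coincide and combine into $\sum_{i=0}^{m-1}\mathbf{M}^i(\mathbf{B_1}+\mathbf{B_2})$. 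Writing $\mathbf{M}=\mathbf{\Gamma}\mathbf{\Lambda}\mathbf{\Gamma}^{-1}$ then produces the claimed $\sum_{i=0}^{m-1}\mathbf{\Gamma}\mathbf{\Lambda}^i\mathbf{\Gamma}^{-1}(\mathbf{B_1}+\mathbf{B_2})$.

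It remains to diagonalise $\mathbf{M}$ and confirm the stated eigensystem, and here I would exploit a tensor factorisation. Indexing rows and columns by $(s,t),(k,l)\in\{0,1\}^2$, the entries $\mathbf{A}_{(st),(kl)}=x^{k(1-s)}y^{l(1-t)}/(4a^{2-s-t}b^{s+t})$ and $\mathbf{C}_{(st),(kl)}=c^{k+l}d^{2-k-l}x^{k(1-s)}y^{l(1-t)}/(c^2+d^2)$ each factor as (a $2\times2$ matrix in $x$)$\,\otimes\,$(a $2\times2$ matrix in $y$) of identical functional form, after splitting the scalar $1/(c^2+d^2)$ symmetrically. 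Since $(\mathbf{U}_1\otimes\mathbf{V}_1)(\mathbf{U}_2\otimes\mathbf{V}_2)=(\mathbf{U}_1\mathbf{U}_2)\otimes(\mathbf{V}_1\mathbf{V}_2)$, the product defining $\mathbf{M}$ collapses to $\mathbf{M}=\mathbf{M}_x\otimes\mathbf{M}_y$, where $\mathbf{M}_x=P(a,b,x)\,Q((2a)^{-1},(2b)^{-1},x)\,P((2a)^{-1},(2b)^{-1},x)\,Q(a,b,x)$ is a product of four $2\times2$ matrices in $x$ and $\mathbf{M}_y$ is the same function of $y$. Consequently the eigenvalues of $\mathbf{M}$ are products of those of the $2\times2$ factors and its eigenvectors are Kronecker products of theirs: diagonalising one $2\times2$ matrix by the quadratic formula gives eigenvalues $\beta_0(a,b,x)^2,\beta_1(a,b,x)^2$ with $\beta_l$ as in~\eqref{fortress tilings: beta}, hence $\lambda_{2i+j+1}=\beta_i(a,b,x)^2\beta_j(a,b,y)^2$, while the Kronecker product of the normalised $2\times2$ eigenvectors $(u_i(x),1)^{\top}$ and $(w_j(y),1)^{\top}$ reproduces the four-component vector $v_{2i+j+1}$; exhibiting these four independent eigenvectors also establishes diagonalisability.

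The main obstacle is the final step: actually carrying out the $2\times2$ product $\mathbf{M}_x$ with the reciprocal substitutions $a\mapsto(2a)^{-1}$, $b\mapsto(2b)^{-1}$, and checking that its trace and determinant collapse to give the characteristic roots $\beta_i^2$ and the eigenvector entries in the closed forms~\eqref{fortress tilings: beta}--\eqref{fortress tilings: alpha}. Although conceptually routine once the Kronecker factorisation is in hand, these are cumbersome rational functions of $a,b,x$, so this is exactly where the computer-algebra assistance mentioned in the subsection is indispensable; by contrast, the involution, the inversion of $\mathbbm{I}-z^2\mathbf{M}$, and the coefficient extraction are all elementary bookkeeping.
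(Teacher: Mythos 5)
Your proposal is correct, and its first half --- eliminating $\tilde{\mathbf{G}}$ from the system of Lemma~\ref{fortress tilings:lemma:bfgequations}, exploiting that $(a,b)\mapsto((2a)^{-1},(2b)^{-1})$ is an involution to close the recurrence as $(\mathbbm{I}-z^2\mathbf{M})\mathbf{G}=\tfrac{z}{1-z}\mathbf{B_1}+\tfrac{z^2}{1-z}\mathbf{B_2}$, inverting by a geometric series, and extracting the coefficient indexed by an Aztec diamond of size $4m$ --- is exactly the paper's argument; you are in fact more explicit than the paper about why the $\mathbf{B_1}$ and $\mathbf{B_2}$ contributions both run over the same range $0\le i\le m-1$, which is the only place a reader could stumble. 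Where you genuinely diverge is the eigensystem: the paper simply states the eigenvalues and eigenvectors and remarks that they ``can be verified by $\mathbf{M}.v=\lambda v$,'' i.e.\ a direct computer-assisted check of the $4\times4$ problem, whereas you derive them structurally by observing that $\mathbf{A}$ and $\mathbf{C}$ factor as Kronecker products of $2\times2$ matrices in $x$ and in $y$ of identical functional form, so that $\mathbf{M}=\mathbf{M}_x\otimes\mathbf{M}_y$ and the spectrum factors accordingly. That factorization is visibly consistent with the stated answer --- the eigenvalues are the products $\beta_i(a,b,x)^2\beta_j(a,b,y)^2$ and each $v_{2i+j+1}$ has the form $(u_iw_j,\,u_i,\,w_j,\,1)^{\top}=(u_i,1)^{\top}\otimes(w_j,1)^{\top}$ --- and it both explains where the closed forms in~\eqref{fortress tilings: beta} come from and reduces the unavoidable computer-algebra step from a $4\times4$ eigenproblem to a $2\times2$ one. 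Neither route escapes a final rational-function computation, but yours is the more illuminating derivation; the paper's is the more economical to state.
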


Although the above lemma does give the boundary generating function for a two-periodic Aztec diamond, the expression is complicated.  We believe that the expression given in Lemma~\ref{fortress tilings:lemma:K1boundary} is more feasible for potential asymptotic computations.

\begin{proof}
 From Lemma~\ref{fortress tilings:lemma:bfgequations}, we have a generating function equation for $\mathbf{G}$ and $\tilde{\mathbf{G}}$. We write
 \begin{equation}
 \begin{split}
  \mathbf{G}(a,b,x,y,z)& = z\mathbf{A}(a,b,x,y). \mathbf{C}\left((2a)^{-1},(2b)^{-1},x,y\right). \mathbf{G}\left((2a)^{-1},(2b)^{-1},x,y,z \right) \\ 
  &+\left(\mathbf{A}(a,b,x,y). \mathbf{D}\left((2a)^{-1}, (2b)^{-1} \right) +\mathbf{B} (a) \right) \frac{z}{1-z}
  \end{split}
 \end{equation}
Apply the above equation to itself, we write
\begin{equation}\begin{split}
  \mathbf{G}(a,b,x,y,z) &= z^2 \mathbf{M}(a,b,x,y) \mathbf{G}(a,b,x,y,z)+ \frac{z}{1-z} \mathbf{B_1}(a,b,x,y) + \frac{z^2}{1-z} \mathbf{B_2} (a,b,x,y)\\
			 &= \sum_{k=0}^\infty z^{2k} (\mathbf{M}(a,b,x,y))^{k}  .\left(\frac{z}{1-z} \mathbf{B_1}(a,b,x,y) + \frac{z^2}{1-z} \mathbf{B_2} (a,b,x,y)\right)
  \end{split}
 \end{equation}
by using the expansion of a geometric series of matrices. The above equation can be solved but it does not appear to give a tractable answer. As we are interested in the $n^{th}$ coefficient, we use the above expansion and the expansion of $(1-z)^{-1}$ to find the coefficient of $z^n$.  This is given by
\begin{equation}
    \sum_{k=0}^{m-1}  (\mathbf{M}(a,b,x,y))^{k}  .\left(\mathbf{B_1}(a,b,x,y) + \mathbf{B_2} (a,b,x,y)\right).
\end{equation}
The eigenvalues and eigenvectors of $\mathbf{M}$ can be verified by $\mathbf{M}.v=\lambda v$ where $\lambda$ is the eigenvalue for the eigenvector $v$.
\end{proof}

We now prove Lemma~\ref{fortress tilings:lemma:K1boundary}.

\begin{proof}[Proof of Lemma \ref{fortress tilings:lemma:K1boundary}]
From Lemma~\ref{fortress tilings:lemma:bfgequations2}, we write the $n^{th}$ coefficient of $\mathbf{G}(a,b,x,y,z)$ as
\begin{equation} \label{fortress tilings:lemmaproof:K1boundary}
       \sum_{i=0}^{m-1} \sum_{j=1}^4 \mathbf{X_j} \l_j^i
\end{equation}
where $\{\l_i\}_{i=1}^4$ are the eigenvalues of $\mathbf{M}(a,b,x,y)$ and $\mathbf{X_i}$ are four column vectors which are the coefficients of $a_i$ in the following expression 
\begin{equation}
    \mathbf{\Gamma} . \mathrm{diag}\left(a_1,a_2,a_3,a_4 \right) . \mathbf{\Gamma^{-1}}.\left(\mathbf{B_1}(a,b,x,y) + \mathbf{B_2} (a,b,x,y)\right).
\end{equation}
where $\mathrm{diag}(a_1,a_2,a_3,a_4)$ denotes a diagonal matrix with four entries $a_1, \dots, a_4$.  We rewrite~\eqref{fortress tilings:lemmaproof:K1boundary} as
\begin{equation}
    \sum_{k=0}^{m-1} \mathbf{Y} . \left( \begin{array}{rrrr}
                                   1 &  1 &  1  &  1 \\
                                   1 &  -1 & 1  & -1 \\
                                   1 & 1 &  -1  & -1 \\
                                   1 & -1 & -1  &  1
                                  \end{array} \right).   \left(\begin{array}{c}
                                  \l_1^i \\ \l_2^i \\ \l_3^i \\ \l_4^i \end{array}\right)
\end{equation}
where 
\begin{equation}
    \mathbf{Y}=\left[\begin{array}{rrrr}
                \mathbf{X_1} & \mathbf{X_2} & \mathbf{X_3} & \mathbf{X_4}
               \end{array} \right] .\left( \begin{array}{rrrr}
                                   1 &  1 &  1  &  1 \\
                                   1 &  -1 & 1  & -1 \\
                                   1 & 1 &  -1  & -1 \\
                                   1 & -1 & -1  &  1
                                  \end{array} \right) 
\end{equation}
A computation shows that for
\begin{equation}
    \mathbf{D_1}=16ab(a^2+b^2)^2\mathrm{diag}\left( \begin{array}{c}
                                                1\\
                                                \sqrt{y} \sqrt{(b^4 + a^4) y + a^2 b^2 (1 + y^2)}\\
                                                \sqrt{x} \sqrt{(b^4 + a^4) x + a^2 b^2 (1 + x^2)}\\
                                               \sqrt{x} \sqrt{(b^4 + a^4) x + a^2 b^2 (1 + x^2)} \sqrt{y} \sqrt{(b^4 + a^4) y + a^2 b^2 (1 + y^2)}
                                               \end{array}  \right)  
\end{equation}
that

\begin{equation}
\mathbf{N}(a,b,x,y)= \mathbf{Y} . \mathbf{D_1}
\end{equation}
where $\mathbf{N}(a,b,x,y)$ is the matrix defined in~\eqref{fortress tilings:N} and 
\begin{equation}\begin{split}
    \left( \begin{array}{c}
     \alpha_0^i(a,b,x) \alpha_0^i(a,b,y) \\
     \alpha_0^i(a,b,x) \alpha_1^i(a,b,y)\\
     \alpha_1^i(a,b,x) \alpha_0^i(a,b,y)\\
     \alpha_1^i(a,b,x) \alpha_1^i(a,b,y)
    \end{array} \right)
&= \mathbf{D_1}^{-1}.
                                               \left( \begin{array}{cccc}
                                   1 &  1 &  1  &  1 \\
                                   1 & -1 & 1  & -1 \\
                                   1 & 1 &  -1  & -1 \\
                                   1 & -1 & -1  &  1
                                  \end{array} \right).   \left(\begin{array}{c}
                                  \l_1^i \\ \l_2^i \\ \l_3^i \\ \l_4^i \end{array}\right)\end{split}
\end{equation}
where $\alpha_j$ is defined in the statement of Lemma~\ref{fortress tilings:lemma:K1boundary} for $j\in \{0,1\}$. This means that the coefficient of $z^n$ in the expression $\mathbf{G}(a,b,x,y,z)$ is equal to 
\begin{equation}
    \mathbf{N} (a,b,x,y).  \left( \begin{array}{c}
     \alpha_0^i(a,b,x) \alpha_0^i(a,b,y) \\
     \alpha_0^i(a,b,x) \alpha_1^i(a,b,y)\\
     \alpha_1^i(a,b,x) \alpha_0^i(a,b,y)\\
     \alpha_1^i(a,b,x) \alpha_1^i(a,b,y)
    \end{array} \right).
\end{equation}
We extract the relevant coefficient of the above equation to find $|L (a,b,i,j)|$ which means we now only need to compute the sign of $L(a,b,i,j)$.  Because the Kasteleyn orientation is the same as the Kasteleyn orientation as the Aztec diamond with uniform weights,  the same computation from the proof of Lemma~\ref{uniform:lem:sign} holds. We conclude  that the sign of $L(a,b,i,j)$ is given by $-\mathtt{i}^{i+j+1}$.

\end{proof}

\subsection{Generating Function of $K^{-1}$}

For notational purposes in the proof, we write $G=G(a,1,\mathtt{w}, \mathtt{b})$ and let 
\begin{equation} \label{fortress tilings:gdef1}
\left. G \right|_{x=(i,j)} = \sum_{\substack{ x=(x_1,x_2),x_1=i,x_2=j \\ y \in \mathtt{B}}} K^{-1} (x,y) \mathtt{w}^x \mathtt{b}^y = \sum_{y \in \mathtt{B}} K^{-1}((i,j),y) \mathtt{b}^y w_1^i w_2^j
\end{equation}
where $\mathtt{w}^x=w_1^{x_1} w_2^{x_2}$, $\mathtt{b}^y=b_1^{y_1} b_2^{y_2}$,
for $i \in \{1,2n-1\}$ or $j \in \{0,2n \}$. As an abuse of notation, for $j \in \{0,2n\}$, we will also write
\begin{equation}  \label{fortress tilings:gdef2}
\left. G\right|_{x=(x_1,j)}= \sum_{\substack{1 \leq i \leq 2n-1,\\ i\mod2=1}} \left. G \right|_{x=(i,j)}= \sum_{\substack{1 \leq i \leq 2n-1,\\ i\mod2=1}} \sum_{y \in \mathtt{B}} K^{-1}((i,j),y) \mathtt{b}^y  w_1^i w_2^j
\end{equation} 
and for $i \in \{1,2n-1\}$ 
\begin{equation}  \label{fortress tilings:gdef3}
\left. G\right|_{x=(i,x_2)}= \sum_{\substack{0 \leq j \leq 2n,\\ j\mod2=0}} \left. G \right|_{x=(i,j)}=\sum_{\substack{0 \leq j \leq 2n,\\ j\mod2=0}} \sum_{y \in \mathtt{B}} K^{-1}((i,j),y) \mathtt{b}^y w_1^i w_2^j
\end{equation} 
We will also denote
\begin{equation}  \label{fortress tilings:gdef4}
\left. G\right|_{ \substack{x=(x_1,j) \\ x \in \mathtt{W}_i}}= \sum_{\substack{ 0 \leq x_1 \leq 2n-1, x_2 =j \\ x \in \mathtt{W}_i, y \in \mathtt{B} }} K^{-1}(x,y) \mathtt{w} ^x \mathtt{b}^y
\end{equation} 
for $j \in \{0,2n\}$ and $i \in \{0,1 \}$.

The proof of Theorem~\ref{fortress tilings:mainthm} has similar structure to the proof of Theorem~\ref{thm:uniform} albeit it is slightly more involved: we have to use the additional recurrences derived from $K^*\cdot K\cdot K^{-1}=K^*\cdot \mathbbm{I}$ and $K^{-1}\cdot K\cdot K^*=\mathbbm{I}\cdot K^*$, where $K^*$ denotes the complex transpose of $K$.

\begin{proof}[Proof of Theorem~\ref{fortress tilings:mainthm}]
Recall that $K^*$ is the conjugate transpose of $K$ which means that the rows of $K^*$ are indexed by white vertices, the columns are indexed by black vertices and the entries corresponding to vertical edges have sign $-\mathtt{i}$.  We remind the reader that $K$ is a sparse matrix: each row has at most four nonzero, one for each neighbor of the vertex indexing the row.

Applying $K^{*}$ to both sides of the equation  $K\cdot K^{-1}=\mathbbm{I}$, we obtain $K^{*} \cdot K\cdot K^{-1}=K^{*}\cdot \mathbbm{I}$.  To obtain the left-hand side of this equation, we have that $K^{*}\cdot K$ is an operator on white vertices with 
\begin{equation} \label{fortress tilings:KstarKdef}
	(K^{*}\cdot K) f(x)= \sum_{\substack{w \sim b\\ w \in \mathtt{W}}} \sum_{\substack{b \sim x \\ b\in \mathtt{B}}} K^{*} (w,b) K(b,x) f(w)
\end{equation}
where $f:\mathtt{W} \to \mathtt{W}$, $b \sim x, b \in \mathtt{B}$ means that $b$ is a nearest neighbored black vertex to $x$ and $w \sim b,w \in \mathtt{W}$ means that $w$ is a nearest neighbored white vertex to $b$.  In the above sum, if the coefficient of $f(\tilde{w})$ for $\tilde{w} \in \mathtt{W}$ and $\tilde{w} \not = x$, is given by $K^*(\tilde{w},b_1) K(b_1, x)+ K^*(\tilde{w},b_2) K(b_2,x)$ for $b_1 \not= b_2$ with $b_1, b_2 \in \mathtt{B}$, then this coefficient is zero. This follows from the fact that $K^*$ is the conjugate transpose of $K$, that $K$ has the Kasteleyn orientation and the choice of weighting. Because $K$ and $K^*$ are sparse matrices, it is possible to expand the double sum in~\eqref{fortress tilings:KstarKdef} to obtain an expression for $K^*\cdot K f(x)$, taking care to include the boundary of the Aztec diamond.  
To do so, we first list the possible choices for $w$ in~\eqref{fortress tilings:KstarKdef} for the different possibilities of $x=(x_1, x_2) \in \mathtt{W}$, where the first item in the following list corresponds to $x=(x_1,x_2)$ in the interior while the remaining items correspond to $x=(x_1,x_2)$ close to  or on the boundary.
\begin{itemize}
\item If  $3 \leq x_1 \leq 2n-3$ and $2\leq x_2\leq 2n-2$, the possible choices for $w$ are $x$, $ x\pm 2 e_1$ and $x \pm 2e_2$.  
\item If $x_1=1$ and $2 \leq x_2 \leq 2n-2$, the possible choices for $w$ are $x$, $x +2e_1$ and $x -2 e_2$.
\item If $x_1=1$ and $x_2=0$, the possible choices for $w$ are $x$, $x+2 e_1$ and $x+e_1-e_2$.
\item If $x_1=1$ and $x_2=2n$, the possible choices for $w$ are $x$, $x-2e_2$ and $x+e_1-e_2$.
\item If $x_1=2n-1$ and $2 \leq x_2 \leq 2n-2$, the possible choices for $w$ are $x$, $x -2e_1$ and $x +2 e_2$.
\item If $x_1=2n-1$ and $x_2=0$, the possible choices for $w$ are $x$, $x+2 e_2$ and $x-e_1+e_2$.
\item If $x_1=2n-1$ and $x_2=2n$, the possible choices for $w$ are $x$, $x-2e_1$ and $x-e_1+e_2$.
\item If $3 \leq x_1 \leq 2n-3$ and $x_2=0$, the possible choices for $w$ are $x$, $x+2 e_1$,$x+2 e_2$, $x+e_1-e_2$ and $x-e_1+e_2$.
\item If $3 \leq x_1 \leq 2n-3$ and $x_2=2n$, the possible choices for $w$ are $x$, $x-2 e_1$,$x-2 e_2$, $x+e_1-e_2$ and $x-e_1+e_2$.
\end{itemize}
For the last two items of the above list, notice that $x+e_2-e_1$ and $x+e_1-e_2$ changes  the parity of the white vertex, that is $x+e_2-e_1,x+e_2-e_1 \in \mathtt{W}_{1-i}$ for $x \in \mathtt{W}_i$ for $i \in \{0,1\}$.   From the above list, we will evaluate the coefficient of $f(w)$ given in~\eqref{fortress tilings:KstarKdef} for each possibility of $w$.
 For example, if we set $w=x+2e_1$, from the above list we have $x_1 \not = 2n-1$ or $x_2 \not= 2n$ and so the coefficient of $f(w+2e_1)$ is equal to $$ K^*(x+2e_1,x+ e_1) K(x+e_1,x) \delta_{x_1<2n-1} \delta_{x_2 <2n}=a\delta_{x_1<2n-1} \delta_{x_2 <2n}$$
because $K(x+e_1,x)=a^{1-i}$ and $K^*(x+2 e_1,x+e_1)=a^{i}$ for $x \in \mathtt{W}_i$ and $i \in \{0,1\}$.  Continuing for the rest of the choices of $w$ in the above list, we find that the expansion of the double sum in~\eqref{fortress tilings:KstarKdef} is given by
\begin{equation} \label{fortress tilings:KstarK1}
\begin{split}
(K^{*} K) f(x)=& a(f(x+2 e_1) \delta_{x_2<2n} \delta_{x_1<2n-1}+ f (x+2 e_2) \delta_{x_2<2n} \delta_{x_1>1}\\& +f (x-2 e_1) \delta_{x_2>0} \delta_{x_1>1}+f (x-2 e_2) \delta_{x_2>0} \delta_{x_1<2n-1})\\&+2\left(1+a^2 -\delta_{x_2=0}( \delta_{x\in \mathtt{W}_0}+a^2 \delta_{x \in \mathtt{W}_1}) -\delta_{x_2=2n} (a^2\delta_{x \in \mathtt{W}_0}+\delta_{x \in \mathtt{W}_1}) \right)f(x)\\&+  a \mathtt{i}\delta_{x_2=0} \left(- f (x+e_2-e_1) \delta_{x_1>1} + f(x+e_1-e_2) \delta_{x_1<2n-1} \right) \\&+  a \mathtt{i}\delta_{x_2=2n} \left( f (x+e_2-e_1) \delta_{x_1>1} -f(x+e_1-e_2) \delta_{x_1<2n-1} \right)
 \end{split}
\end{equation}
 Using~\eqref{fortress tilings:KstarK1} we find that an entry of the matrix equation $K^*\cdot K\cdot K^{-1}=K^*\cdot \mathbbm{I}$ is given by
\begin{equation} \label{fortress tilings:KstarK}
\begin{split}
 & a(K^{-1} (x+2 e_1,y) \delta_{x_2<2n} \delta_{x_1<2n-1}+ K^{-1} (x+2 e_2,y) \delta_{x_2<2n} \delta_{x_1>1}\\& +K^{-1} (x-2 e_1,y) \delta_{x_2>0} \delta_{x_1>1}+K^{-1} (x-2 e_2,y) \delta_{x_2>0} \delta_{x_1<2n-1})\\&+2\left(1+a^2 -\delta_{x_2=0}( \delta_{x\in \mathtt{W}_0}+a^2 \delta_{x \in \mathtt{W}_1}) -\delta_{x_2=2n} (a^2\delta_{x \in \mathtt{W}_0}+\delta_{x \in \mathtt{W}_1}) \right)K^{-1}(x,y)\\&+  a \mathtt{i}\delta_{x_2=0} \left(- K^{-1} (x+e_2-e_1,y) \delta_{x_1>1} + K^{-1}(x+e_1-e_2,y) \delta_{x_1<2n-1} \right) \\&+  a \mathtt{i}\delta_{x_2=2n} \left( K^{-1} (x+e_2-e_1,y) \delta_{x_1>1} - K^{-1}(x+e_1-e_2,y) \delta_{x_1<2n-1} \right) =K^{*}(\delta_{x+ \cdot=y}(x))
 \end{split}
  \end{equation}
for $x=(x_1,x_2)\in\mathtt{W}$ and $y\in \mathtt{B}$ and where we denote 
\begin{equation}
	K^*( \delta_{x+\cdot=y} (x)) = \sum_{ \tilde{x} \sim x, \tilde{x} \in \mathtt{B}} K^*(x,\tilde{x}) \delta_{\tilde{x}=y}
\end{equation}
where $\tilde{x} \sim x$ means that $\tilde{x}$ is a nearest neighbored black vertex to $x$.
A consequence of~\eqref{fortress tilings:KstarK} is that it `moves' the white vertices, that is, there is no change in the black vertices in the above equation.

We multiply both sides of equation~\eqref{fortress tilings:KstarK} by $\mathtt{w}^x\mathtt{b}^y=w_1^{x_1} w_2^{x_2} b_1^{y_1} b_2^{y_2}$ and sum over all the white and black vertices of the Aztec diamond.  We then rewrite each term of~\eqref{fortress tilings:KstarK} using $G(a,1,\mathtt{w},\mathtt{b})$, an expression with white vertices on the top or bottom boundary and an expression with $x_1=1$ or $x_1=2n-1$. These terms can both be written using~\eqref{fortress tilings:gdef1}, ~\eqref{fortress tilings:gdef2} and ~\eqref{fortress tilings:gdef3}.  We give an example of this computation for the first term in~\eqref{fortress tilings:KstarK}
\begin{equation}
\begin{split}
&      \sum_{\substack{x \in \mathtt{W} \\ y \in \mathtt{B}}} \mathtt{w}^x \mathtt{b}^y K^{-1}(x+2e_1,y)\delta_{x_2<2n} \delta_{x_1<2n-1}
\\&= \frac{ \left( \sum_{x \in\mathtt{W},y\in \mathtt{B}}-\sum_{\substack{x_1=1,x_2\not = 0 \\ x \in \mathtt{W}, y\in \mathtt{B}}} - \sum_{\substack{x_2=0 \\ x\in \mathtt{W}, y \in \mathtt{B}}} \right) \mathtt{w}^x \mathtt{b}^y K^{-1}(x,y) }{w_1^2 w_2^2}\\
&= \frac{G - \left. G \right|_{x=(1,x_2)} +\left. G \right|_{x=(1,0)}-  \left. G \right|_{x=(x_1,0)}}{w_1^2 w_2^2}
\end{split}
\end{equation}
The computations for the remaining terms in~\eqref{fortress tilings:KstarK} are given in Appendix~\ref{AppendixA2}.  Using the above equation and the computations in Appendix~\ref{AppendixA2}, we collect terms and we find that
\begin{equation}
\begin{split}
&(2(1+a^2)+a(w_1^2+w_1^{-2})(w_2^2+w_2^{-2}))G \\
&-a \left. G \right|_{x=(1,x_2)}\left( \frac{ 1+w_2^4}{ w_1^2 w_2^2} \right) -a \left. G \right|_{x=(2n-1,x_2)} \left( \frac{w_1^2(1+w_2^4)}{w_2^2} \right) \\
&+a \left. G\right|_{x=(1,0)} \left( \frac{1+w_2^4}{w_1^2 w_2^2} -\frac{w_2^2}{w_1^2} -w_1^{-2} \mathtt{i} \right)
+a \left. G \right|_{x=(1,2n)} \left( \frac{1+w_2^4}{w_1^2 w_2^2} -\frac{1}{w_1^2 w_2^2} + w_1^{-2} \mathtt{i} \right)\\
 &+a \left. G\right|_{x=(2n-1,0)} \left( \frac{w_1^2(1+w_2^4)}{ w_2^2} -w_1^2 w_2^2 +w_1^2 \mathtt{i} \right)
+a \left. G \right|_{x=(2n-1,2n)} \left( \frac{w_1^2(1+w_2^4)}{ w_2^2} -\frac{w_1^2}{ w_2^2} - w_1^{2} \mathtt{i} \right)\\
&+\left. G \right|_{\substack{x=(x_1,0) \\ x\in \mathtt{W}_0}} \left( -a(w_1^{-2} w_2^{-2} +w_1^2 w_2^{-2} ) -a w_1^2 \mathtt{i} +a w_1^{-2} \mathtt{i} -2 \right) \\
 &+\left. G \right|_{\substack{x=(x_1,0)\\ x \in \mathtt{W}_1}} \left( -a(w_1^{-2} w_2^{-2} +w_1^2 w_2^{-2} ) -a w_1^2 \mathtt{i} +a w_1^{-2} \mathtt{i} -2a^2 \right) \\
&+\left. G \right|_{\substack{x=(x_1,2n) \\ x\in \mathtt{W}_0}} \left( -a(w_1^{2} w_2^{2} +w_1^{-2} w_2^{2} ) +a w_1^2 \mathtt{i} -a w_1^{-2} \mathtt{i} -2a^2 \right) \\
 &+\left. G \right|_{\substack{x=(x_1,2n)\\ x \in \mathtt{W}_1}} \left( -a(w_1^{2} w_2^{2} +w_1^{-2} w_2^{2} ) +a w_1^2 \mathtt{i} -a w_1^{-2} \mathtt{i} -2 \right)= \sum_{\substack{ x\in \mathtt{W} \\ y \in \mathtt{B}}} K^* (\delta_{x+\cdot=y}(x) )\mathtt{w}^x \mathtt{b}^y \\
\end{split} \label{fortress tilings:whiterecurrence}
\end{equation}

We remark that the coefficient of $G$ in the above equation is exactly $c_{\partial}(w_1,w_2)$.  We also remark that the coefficients of $\left. G \right|_{x=(x_1,0),x \in \mathtt{W}_i}$ and  $\left. G \right|_{x=(x_1,2n),x \in \mathtt{W}_i}$ in the above equation are given by  $s_{i,0}(w_1,w_2)$ and  $w_2^{-2n}s_{i,2n}(w_1,w_2)$ respectively for $i \in \{0,1\}$, where the latter we divide by $w_2^{2n}$ because there is already a factor of $w_2^{2n}$ in $\left.G\right|_{x=(x_1,2n),x \in \mathtt{W}_i}$.

The terms  $\left. G \right|_{x=(1,x_2)}$ and $ \left. G \right|_{x=(2n-1,x_2)}$ involve white vertices away from the top and bottom boundary. For both terms, we use the recurrence relation obtained $K\cdot K^{-1} = \mathbbm{I}$ to write these expressions in terms of the boundary vertices.  For $\left. G \right|_{x=(1,x_2)}$, we extract the entry $(x+e_2,y)$ of $K^{-1}\cdot K$ and compare the entry $(x+e_2,y)$ of $\mathbbm{I}$ where $x \in \mathtt{W}$ with $x=(1,x_2)$ and $y \in \mathtt{B}$.  This gives
\begin{equation}\label{fortress tilings:side recurrence}
K^{-1} (x+e_2+e_1,y) = \frac{1}{a^{1-i}} \delta_{x+e_2=y} -\mathtt{i} K^{-1}(x,y)
\end{equation}
for $x=(1,x_2) \in \mathtt{W}_i$ for $i \in \{0,1\}$.  We multiply the above equation by $w_1 w_2^{x_2} \mathtt{b}^y$ and sum over all the black vertices and the white vertices $x=(1,x_2)$. This gives
\begin{equation}
w_2^{-2}  \left. G\right|_{x=(1,x_2)}-w_2^2 \left. G\right|_{x=(1,0)} =
\sum_{i \in \{0,1\}} \sum_{\substack{x_1=1 \\x_2 \not =2n, x\in\mathtt{W}_i \\ y \in\mathtt{B}}}\frac{1}{a^{1-i}} \delta_{x+e_2=y}\mathtt{w}^x\mathtt{b}^y -\mathtt{i} \left. G\right|_{x=(1,0)} +\mathtt{i}  \left. G\right|_{x=(1,2n)} .\label{fortress tilings:side recurrence2} 
\end{equation}
Rearranging~\eqref{fortress tilings:side recurrence2}, we find that 
\begin{equation}
\left. G \right|_{x=(1,x_2)} (w_2^{-2} + \mathtt{i} ) = \sum_{i \in \{0,1\}} \sum_{\substack{x_1=1 \\x_2 \not =2n, x\in\mathtt{W}_i \\ y \in\mathtt{B}}}\frac{1}{a^{1-i}} \delta_{x+e_2=y}\mathtt{w}^x\mathtt{b}^y +w_2^{-2}  \left. G \right|_{x=(1,0)}+ \mathtt{i} \left. G \right|_{x=(1,2n)}.\label{fortress tilings:side recurrence3}
\end{equation}

Similar to the computation of $\left. G \right|_{x=(1,x_2)}$ given above, to compute $\left. G \right|_{x=(2n-1,x_2)}$ we use the recurrence relation
\begin{equation}
K^{-1}(x+e_2+e_1,y) \mathtt{i} = \frac{1}{a^i} \delta_{x+e_1=y}-K^{-1}(x,y)  
\end{equation}
for $x=(2n-1,x_2) \in \mathtt{W}_i$ for $i\in \{0,1\}$, which is derived from the entry $(x+e_1,y)$ of the matrix equation $K\cdot K^{-1}=\mathbbm{I}$ with $x=(2n-1,x_2)$.  We now repeat the steps that we used in computing $\left. G \right|_{x=(1,x_2)}$ for the computation of $\left. G \right|_{x=(2n-1,x_2)}$.  This gives 
\begin{equation}
\left. G \right|_{x=(2n-1,x_2)} (w_2^{-2}  \mathtt{i}+1 ) = \sum_{i \in \{0,1\}} \sum_{\substack{x_1=2n-1 \\x_2 \not =2n, x\in\mathtt{W}_i \\ y \in\mathtt{B}}}\frac{1}{a^{i}} \delta_{x+e_1=y}\mathtt{w}^x\mathtt{b}^y+w_2^{-2} \mathtt{i}  \left. G \right|_{x=(2n-1,0)}+  \left. G \right|_{x=(2n-1,2n)}\label{fortress tilings:side recurrence4}
\end{equation}

From~\eqref{fortress tilings:side recurrence3} and~\eqref{fortress tilings:side recurrence4}, we now have expressions for $G|_{x=(1,x_2)}$ and $G|_{x=(2n-1,x_2)}$ which can then be substituted into~\eqref{fortress tilings:whiterecurrence}.  After doing this, we find that the coefficient of $a\left. G \right|_{x=(1,0)}$ in~\eqref{fortress tilings:whiterecurrence} is given by
\begin{equation}
	\frac{1+w_2^4}{w_1^2 w_2^2} -\frac{w_2^2}{w_1^2} - w_1^{-2} \mathtt{i} - \frac{w_2^2 (1+w_2^4)}{(w_2^{-2}+\mathtt{i}) w_1^2w_2^2} =0.
\end{equation}
Similarly, we also find that the coefficients of $a\left. G \right|_{x=(1,2n)}$, $a\left. G \right|_{x=(2n-1,0)}$ and $a\left. G \right|_{x=(2n-1,2n)}$ are also zero.  This means we reduce~\eqref{fortress tilings:whiterecurrence} to 
\begin{equation}
\begin{split} \label{fortress tilings:whiteGF}
&c_{\partial}(w_1,w_2)G +\sum_{i \in \{0,1\}} s_{i,0}(w_1,w_2) \left. G \right|_{\substack{ x=(x_1,0) \\ x \in \mathtt{W}_i}} +w_2^{-2n} \sum_{i \in \{0,1\}} s_{i,2n}(w_1,w_2) \left. G \right|_{\substack{ x=(x_1,2n) \\ x \in \mathtt{W}_i}}=d_w(\mathtt{w},\mathtt{b})
\end{split}
\end{equation} 
where we have defined
\begin{equation}
\begin{split} \label{fortress tilings:eqn:dw}
d_w(\mathtt{w},\mathtt{b})&= \frac{a(1+w_2^4)}{w_1^2 w_2^2 (w_2^{-2}+\mathtt{i})} \sum_{i \in \{0,1\}} \sum_{\substack{x_1=1 \\x_2 \not =2n, x\in\mathtt{W}_i \\ y \in\mathtt{B}}}\frac{1}{a^{1-i}} \delta_{x+e_2=y}\mathtt{w}^x\mathtt{b}^y \\
&+\frac{a (1+w_2^4) w_1^{2}}{w_2^2(w_2^{-2}\mathtt{i} +1)}
\sum_{i \in \{0,1\}} \sum_{\substack{x_1=2n-1 \\x_2 \not =2n, x\in\mathtt{W}_i \\ y \in\mathtt{B}}}\frac{1}{a^{i}} \delta_{x+e_1=y}\mathtt{w}^x\mathtt{b}^y
+\sum_{\substack{ x \in \mathtt{W} \\ y\in \mathtt{B}}} K^* (\delta_{x+\cdot=y)}(x)) \mathtt{w}^x \mathtt{b}^y
\end{split}
\end{equation}
We evaluate $d_w(\mathtt{w},\mathtt{b})$ in Appendix~\ref{AppendixA3} and we find that
\begin{equation}
	d_w(\mathtt{w},\mathtt{b})=d(\mathtt{w},\mathtt{b})+\frac{d_{\mathrm{sides}}(\mathtt{w},\mathtt{b})}{c_{\partial}(b_1,b_2)} f_r(w_1^4 b_1^4)
\end{equation}
and we refer the reader there for the details of the computation.

To find $G(a,1,\mathtt{w},\mathtt{b})$ in the above equation~\eqref{fortress tilings:whiteGF}, we need to find expressions for $G|_{x=(x_1,0),x \in \mathtt{W}_i}$ and $G|_{x=(x_1,2n),x \in \mathtt{W}_i}$ for $i\in \{0,1\}$.  This is a very similar to the computation used to find~\eqref{fortress tilings:whiteGF} and so we outline the main steps.  First, we  derive a relation in terms of entries of $K^{-1}$ from an entry-wise expansion of the matrix equation $K^{-1}\cdot K\cdot K^*=\mathbbm{I}\cdot K^*$.  This relation is analogous to the relation given in~\eqref{fortress tilings:KstarK} but acts on the black vertices of $K^{-1}$, keeping the white vertices of $K^{-1}$ fixed.  To this new relation, we multiply both sides of the equation by $\mathtt{b}^y=b_1^{y_1}b_2^{y_2}$ and sum over all $y \in \mathtt{B}$. We then apply the long simplification procedure that is detailed above to find~\eqref{fortress tilings:whiteGF}.  We find that for a fixed $x \in \mathtt{W}$ we have
\begin{equation}
\begin{split} \label{fortress tilings:blackGF}
&c_{\partial}(b_1,b_2) \sum_{y \in \mathtt{B}} K^{-1}(x,y)\mathtt{b}^y+\sum_{j \in \{0,1\}} s_{j,0}(b_2,b_1) \sum_{ y=(0,y_2) \in \mathtt{B}_j} K^{-1}(x,(0,y_2)) b_2^{y_2} \\
&+ \sum_{j \in \{0,1\}} s_{j,2n}(b_2,b_1) \sum_{ y=(2n,y_2) \in \mathtt{B}_j} K^{-1}(x,(2n,y_2)) b_2^{y_2} = \tilde{d}_b(x,\mathtt{b})
\end{split}
\end{equation}
where
\begin{equation}
\begin{split} \label{fortress tilings:dtildeb}
\tilde{d}_b(x,\mathtt{b}) &= \frac{a(1+b_1^4)}{b_1^2 b_2^2 (b_1^{-2}+\mathtt{i})} \sum_{j \in \{0,1\}} \sum_{\substack{y_2=1 \\y_1 \not =2n, y\in\mathtt{B}_j }}\frac{1}{a^{1-j}} \delta_{y-e_2=x}\mathtt{b}^y \\
&+\frac{a (1+b_1^4) b_2^{2}}{b_1^2(b_1^{-2}\mathtt{i} +1)}
\sum_{j \in \{0,1\}} \sum_{\substack{y_2=2n-1 \\y_1 \not =2n, y\in\mathtt{B}_j}}\frac{1}{a^{j}} \delta_{y+e_1=x}\mathtt{b}^y
+\sum_{\substack{  y\in \mathtt{B}}} K^* (\delta_{y+\cdot=x}(y))  \mathtt{b}^y
\end{split}
\end{equation}
where 
\begin{equation}
K^*(\delta_{y+\cdot =x}(y)) =\sum_{ \tilde{y}\sim y,\tilde{y} \in \mathtt{W}} K^*(\tilde{y},y) \delta_{\tilde{y}=x}
\end{equation}
where $\tilde{y} \sim y$ means that $\tilde{y}$ is a nearest neighbored vertex to $y$.  Note that~\eqref{fortress tilings:blackGF} can also be obtained by symmetry using~\eqref{fortress tilings:whiteGF} because the model is symmetric under the map $w_1 \mapsto b_2$ and $w_2 \mapsto b_1$. 
  We now choose $x$ in~\eqref{fortress tilings:blackGF} to be either $x=(x_1,0)\in\mathtt{W}_0$, $x=(x_1,0) \in \mathtt{W}_1$, $x=(x_1,2n)\in\mathtt{W}_0$ or $x=(x_1,2n) \in \mathtt{W}_1$ and for each case, we multiply~\eqref{fortress tilings:blackGF} by $w_1^{x_1} w_2^{2nk}$ and sum over $(x_1,2nk) \in \mathtt{W}_i$ for $i,k \in \{0,1\}$.  This gives an expression for $G|_{x=(x_1,2nk),x\in \mathtt{W}_i}$ for $i,k \in \{0,1\}$ because by applying the above operations to the first term in~\eqref{fortress tilings:blackGF} gives
\begin{equation}
	c_{\partial}(b_1,b_2) \sum_{(x_1,2nk) \in \mathtt{W}_i}w_1^{x_1} w_2^{2nk} \sum_{y\in \mathtt{B}} K^{-1}((x_1,2nk),y) \mathtt{b}^y= c_{\partial}(b_1,b_2)\left. G \right|_{x=(x_1,2nk), x \in \mathtt{W}_i}. 
\end{equation}
We substitute these expressions for $G|_{x=(x_1,2nk),x\in \mathtt{W}_i}$ for $i,k\in \{0,1\}$ into~\eqref{fortress tilings:whiteGF}.  We obtain
\begin{equation}
\begin{split} \label{fortress tilings:penultimateeqn}
&c_{\partial}(w_1,w_2)G -
\sum_{i,j\in \{0,1\}} \sum_{k,l \in \{0,2n\}} \sum_{(x_1,k) \in \mathtt{W}_i} \sum_{(l,y_2) \in \mathtt{B}_j} s_{i,k}(w_1,w_2) s_{j,l}(b_2,b_1) \frac{K^{-1} ((x_1,k),(l,y_2))}{c_{\partial}(b_1,b_2)} w_1^{x_1} b_2^{y_2} \\
&+ \sum_{i,k \in \{0,1\}} \sum_{x=(x_1,2nk)\in \mathtt{W}_i}s_{i,2kn}(w_1,w_2)\frac{\tilde{d}_b((x_1,2n k),\mathtt{b})}{c_{\partial}(b_1,b_2)} w_1^{x_1}  = d_w(\mathtt{w},\mathtt{b})\end{split}
\end{equation} 
A computation in Appendix~\ref{AppendixA4} shows that
\begin{equation}
\sum_{i,k \in \{0,1\}} \sum_{x=(x_1,2nk)\in \mathtt{W}_i} s_{i,2kn}(w_1,w_2){\tilde{d}_b((x_1,2n k),\mathtt{b})}w_1^{x_1} =d_{\mathrm{sides}}(\mathtt{w},\mathtt{b}) f_{n/2}(w_1^4 b_1^4) 
 \label{fortress tilings:dtilde}
\end{equation}
and hence~\eqref{fortress tilings:penultimateeqn} is equal to~\eqref{fortress tilings:mainthmeqn1}.

We have computed $K^{-1}( (x_1,0) , (0,y_2))$ in Lemma~\ref{fortress tilings:lemma:K1boundary}.  By symmetry, we have that
 $$K^{-1}((x_1,0),(0,y_2))=K^{-1}((2n-x_1,2n),(2n,2n-y_2))$$
which gives the last equation in~\eqref{fortress tilings:mainthmeqn2}. 
To compute $|K^{-1}((x_1,2n),(0,2n-y_2))|$, we use the expression from $|K^{-1}((x_1,0),(0,y_2))|$ provided we interchange between $a$ and $b$.  To obtain the sign of $K^{-1}((x_1,2n),(0,2n-y_2))$ as given in the third equation of
~\eqref{fortress tilings:mainthmeqn2}, we use the same sign found in the proof of  Lemma~\ref{uniform:lem:sign} for the vertices removed from the top and left boundaries of the Aztec diamond because the Kasteleyn orientation are the same and account for the fact that we assigned $L(a,b,i,j,n)$ a sign (equal  to $-\mathtt{i}^{i+j+1}$).   From $K^{-1}((x_1,2n),(0,2n-y_2))$, we find $K^{-1}((2n-x_1,0),(2n,y_2))$ by symmetry. 
\end{proof}

\begin{appendix}
\section{The Matrix $\mathbf{N}$} \label{appendix}

In this subsection, we give the matrix $\mathbf{N}$.  We have

\begin{equation}
 \begin{split}\label{fortress tilings:N}
      \mathbf{N}(a,b,w,z)_{1,1} &= 4 b^5 (1 + w z)+ 
	  a^2 b^3 (7 + 3 w + 3 z + 5 w z) + 2 a^4 b (2 + w + z + w z)  \\
      \mathbf{N}(a,b,w,z)_{1,2} &=-4 b^7 z (1 + w z)- a^2 b^5 z (5 + 5 w + 5 z + 5 w z + 2 w z^2)\\
	    &-  a^4 b^3 z (2 + 6 w + 5 z + 5 w z + z^2 + w z^2) -  2 a^6 b z (w + z + w z)  \\ 
	\mathbf{N}(a,b,w,z)_{1,3} &=-4 b^7 w (1 + w z) - a^2 b^5 w (5 + 5 w + 5 z + 5 w z + 2 w^2 z) \\ 
  &- a^4 b^3 w (2 + 5 w + w^2 + 6 z + 5 w z + w^2 z)  - 2 a^6 b w (w + z + w z) \\
  \mathbf{N}(a,b,w,z)_{1,3} &= 4 b^9 w z (1 + w z) + 
  a^2 b^7 w z (3 + 7 w + 7 z + 5 w z + 2 w^2 z + 2 w z^2)\\ 
  & +  a^4 b^5 w z (3 + 9 w + 2 w^2 + 9 z + 10 w z + w^2 z + 2 z^2 + 
     w z^2 + w^2 z^2) \\ 
     &  + a^6 b^3 w z (7 + 6 w + w^2 + 6 z + 7 w z + w^2 z + z^2 + w z^2) 
 \\& + 2 a^8 b w z (2 + w + z + w z)
\end{split}
\end{equation}
\begin{equation}
 \begin{split}
  \mathbf{N}(a,b,w,z)_{2,1} &=  2 a b^4 (1 + w + w z) + 
  a^3 b^2 (5 + 7 w + z + w z)+4 a^5 (1 + w)  \\
  \mathbf{N}(a,b,w,z)_{2,2} &=
	-2 a b^6 (2 + z + w z + w z^2) 
	- a^3 b^4 (7 + 3 w + 7 z + 3 w z + 2 z^2 + 6 w z^2) \\
	&- a^5 b^2 (4 + 2 w + 7 z + 7 w z + 3 z^2 + 3 w z^2) 
	-4 a^7 (1 + w) z\\
   \mathbf{N}(a,b,w,z)_{2,3} &= 
	  - 2 a b^6 w (1 + w + w z)
	- a^3 b^4 w (6 + 9 w + w^2 + 2 z + w z + w^2 z)\\
	&-   a^5 b^2 w (9 + 9 w + 2 w^2 + z + w z)
	-4 a^7 w (1 + w)    \\
  \mathbf{N}(a,b,w,z)_{2,4} &= 2 a b^8 w (2 + z + w z + w z^2)  \\
	 & +  a^3 b^6 w (5 + 5 w + 6 z + 3 w z + w^2 z + 3 z^2 + 6 w z^2 + 
     w^2 z^2) \\ 
	   &+  a^5 b^4 w (2 + 5 w + w^2 + 5 z + 10 w z + w^2 z + 7 z^2 + 5 w z^2 +  2 w^2 z^2) \\
	 & +  a^7 b^2 w (2 w + 7 z + 9 w z + 2 w^2 z + 3 z^2 + 3 w z^2) 
	  +4 a^9 w (1 + w) z 
 \end{split}
\end{equation}
\begin{equation}
 \begin{split}
    \mathbf{N}(a,b,w,z)_{3,1} &=  2 a b^4 (1 + z + w z) 
	      + a^3 b^2 (5 + w + 7 z + w z)+
	      4 a^5 (1 + z)  \\
  \mathbf{N}(a,b,w,z)_{3,2} &=
    - 2 a b^6 z (1 + z + w z) 
  - a^3 b^4 z (6 + 2 w + 9 z + w z + z^2 + w z^2)\\
  &- a^5 b^2 z (9 + w + 9 z + w z + 2 z^2)
   -4 a^7 z (1 + z) \\
  \mathbf{N}(a,b,w,z)_{3,3} &=
    -   2 a b^6 (2 + w + w z + w^2 z)
    -   a^3 b^4 (7 + 7 w + 2 w^2 + 3 z + 3 w z + 6 w^2 z)\\
      &- a^5 b^2 (4 + 7 w + 3 w^2 + 2 z + 7 w z + 3 w^2 z) 
      -4 a^7 w (1 + z) \\
  \mathbf{N}(a,b,w,z)_{3,4} &=  2 a b^8 z (2 + w + w z + w^2 z) \\
&  +  a^3 b^6 z (5 + 6 w + 3 w^2 + 5 z + 3 w z + 6 w^2 z + w z^2 + 
     w^2 z^2) \\
   & + a^5 b^4 z (2 + 5 w + 7 w^2 + 5 z + 10 w z + 5 w^2 z + z^2 + w z^2 + 
     2 w^2 z^2)\\
 & +  a^7 b^2 z (7 w + 3 w^2 + 2 z + 9 w z + 3 w^2 z + 2 w z^2) 
  + 4 a^9 w z (1 + z) 
     \end{split}
\end{equation}
\begin{equation}
 \begin{split}
  \mathbf{N}(a,b,w,z)_{4,1} &= 
  a^2 b^3 (3 + w + z + w z)+ 2 a^4 b (5 + w + z)
 + 8 a^6 b^{-1}   \\
  \mathbf{N}(a,b,w,z)_{4,2} &= -a^2 b^5 (2 + z + w z + z^2 + w z^2)
	  - a^4 b^3 (5 + w + 5 z + w z + 6 z^2 + 2 w z^2) \\ &
	  -2 a^6 b (2 + 5 z + w z + 3 z^2)  -8 a^8 b^{-1} z   \\
  \mathbf{N}(a,b,w,z)_{4,3} &= - a^2 b^5 (2 + w + w^2 + w z + w^2 z)
   - a^4 b^3 (5 + 5 w + 6 w^2 + z + w z + 2 w^2 z) \\
  &-   2 a^6 b (2 + 5 w + 3 w^2 + w z)  -8 a^8 b^{-1} w  \\
  \mathbf{N}(a,b,w,z)_{4,4} &=a^2 b^7 (4 + 2 w + 2 z + 3 w z + w^2 z + w z^2 + w^2 z^2) \\
  & +   a^4 b^5 (7 + 7 w + 2 w^2 + 7 z + 6 w z + 3 w^2 z + 2 z^2 + 
     3 w z^2 + 5 w^2 z^2)\\
  &  + 
  a^6 b^3 (4 + 7 w + 3 w^2 + 7 z + 9 w z + 6 w^2 z + 3 z^2 + 
     6 w z^2 + 5 w^2 z^2)\\
  &+   2 a^8 b (2 w + 2 z + 5 w z + 3 w^2 z + 3 w z^2) +8 a^{10} b^{-1} w z 
 \end{split}
\end{equation}

\section{Appendix to Proof of Theorem~\ref{fortress tilings:mainthm}} 

\subsection{Generating function for the white vertices}\label{AppendixA2}

The following computations are the simplifications from multiplying~\eqref{fortress tilings:KstarK} by $\mathtt{w}^x \mathtt{b}^y$ and summing over the white and black vertices of the Aztec diamond.  We computed the first term in the proof of the theorem.  The next three terms can computed similarly:
\begin{equation}
\begin{split}
&      \sum_{\substack{x \in \mathtt{W} \\ y \in \mathtt{B}}} \mathtt{w}^x \mathtt{b}^y K^{-1}(x+2e_2,y)\delta_{x_2<2n} \delta_{x_1>1}
\\&= \frac{ \left( \sum_{x \in\mathtt{W},y\in \mathtt{B}}-\sum_{\substack{x_1=2n-1,x_2\not = 0 \\ x \in \mathtt{W}, y\in \mathtt{B}}} - \sum_{\substack{x_2=0 \\ x\in \mathtt{W}, y \in \mathtt{B}}} \right) \mathtt{w}^x \mathtt{b}^y K^{-1}(x,y) }{w_1^{-2} w_2^2}\\
&= \frac{G - \left. G \right|_{x=(2n-1,x_2)} +\left. G \right|_{x=(2n-1,0)}-  \left. G \right|_{x=(x_1,0)}}{w_1^{-2} w_2^2},
\end{split}
\end{equation}
\begin{equation}
\begin{split}
&      \sum_{\substack{x \in \mathtt{W} \\ y \in \mathtt{B}}} \mathtt{w}^x \mathtt{b}^y K^{-1}(x-2e_1,y)\delta_{x_2>0} \delta_{x_1>1}
\\&= \frac{ \left( \sum_{x \in\mathtt{W},y\in \mathtt{B}}-\sum_{\substack{x_1=2n-1,x_2\not = 2n \\ x \in \mathtt{W}, y\in \mathtt{B}}} - \sum_{\substack{x_2=2n \\ x\in \mathtt{W}, y \in \mathtt{B}}} \right) \mathtt{w}^x \mathtt{b}^y K^{-1}(x,y) }{w_1^{-2} w_2^{-2}}\\
&= \frac{G - \left. G \right|_{x=(2n-1,x_2)}+\left. G \right|_{x=(2n-1,2n)} -  \left. G \right|_{x=(x_1,2n)}}{w_1^{-2} w_2^{-2}}
\end{split}
\end{equation}
and
\begin{equation}
\begin{split}
&      \sum_{\substack{x \in \mathtt{W} \\ y \in \mathtt{B}}} \mathtt{w}^x \mathtt{b}^y K^{-1}(x-2e_2,y)\delta_{x_2>0} \delta_{x_1<2n-1}
\\&= \frac{ \left( \sum_{x \in\mathtt{W},y\in \mathtt{B}}-\sum_{\substack{x_1=1,x_2\not = 2n \\ x \in \mathtt{W}, y\in \mathtt{B}}} - \sum_{\substack{x_2=2n \\ x\in \mathtt{W}, y \in \mathtt{B}}} \right) \mathtt{w}^x \mathtt{b}^y K^{-1}(x,y) }{w_1^{2} w_2^{-2}}\\
&= \frac{G - \left. G \right|_{x=(1,x_2)}+\left. G \right|_{x=(1,2n)} -  \left. G \right|_{x=(x_1,2n)}}{w_1^{2} w_2^{-2}}.
\end{split}
\end{equation}
For $i \in \{0,1\}$, we have
\begin{equation}
      \sum_{\substack{x \in \mathtt{W} \\ y \in \mathtt{B}}} \mathtt{w}^x \mathtt{b}^y K^{-1}(x,y)\delta_{x_2=0} \delta_{x \in \mathtt{W}_i}= \left. G \right|_{\substack{x=(x_1,0)\\ x \in \mathtt{W}_i}}
\end{equation}
and
\begin{equation}
      \sum_{\substack{x \in \mathtt{W} \\ y \in \mathtt{B}}} \mathtt{w}^x \mathtt{b}^y K^{-1}(x,y)\delta_{x_2=2n} \delta_{x \in \mathtt{W}_i}= \left. G \right|_{\substack{x=(x_1,2n)\\ x \in \mathtt{W}_i}}.
\end{equation}
For the computation involving the  last four terms on the left-hand side of~\eqref{fortress tilings:KstarK}, we have
\begin{equation}
\begin{split}
      \sum_{\substack{x \in \mathtt{W} \\ y \in \mathtt{B}}} \mathtt{w}^x \mathtt{b}^y K^{-1}(x+e_2-e_1,y)\delta_{x_2=0} \delta_{x_1>1}
 &= w_1^2 \sum_{ \substack{x_1 \not = 2n-1, x_2=0\\ x \in\mathtt{W},y\in \mathtt{B}}} \mathtt{w}^x \mathtt{b}^y K^{-1}(x,y)
\\&= w_1^2 \left( \left. G \right|_{x=(x_1,0)}-\left. G \right|_{x=(2n-1,0)} \right),
\end{split}
\end{equation}
\begin{equation}
\begin{split}
      \sum_{\substack{x \in \mathtt{W} \\ y \in \mathtt{B}}} \mathtt{w}^x \mathtt{b}^y K^{-1}(x+e_1-e_2,y)\delta_{x_2=0} \delta_{x_1<2n-1}
 &= w_1^{-2} \sum_{ \substack{x_1 \not = 1, x_2=0\\ x \in\mathtt{W},y\in \mathtt{B}}} \mathtt{w}^x \mathtt{b}^y K^{-1}(x,y)
\\&= w_1^{-2} \left( \left. G \right|_{x=(x_1,0)}-\left. G \right|_{x=(1,0)} \right),
\end{split}
\end{equation}
\begin{equation}
\begin{split}
      \sum_{\substack{x \in \mathtt{W} \\ y \in \mathtt{B}}} \mathtt{w}^x \mathtt{b}^y K^{-1}(x+e_2-e_1,y)\delta_{x_2=2n} \delta_{x_1>1}
 &= w_1^{2} \sum_{ \substack{x_1 \not = 2n-1, x_2=2n\\ x \in\mathtt{W},y\in \mathtt{B}}} \mathtt{w}^x \mathtt{b}^y K^{-1}(x,y)
\\&= w_1^{2} \left( \left. G \right|_{x=(x_1,2n)}-\left. G \right|_{x=(2n-1,2n)} \right)
\end{split}
\end{equation}
and
\begin{equation}
\begin{split}
      \sum_{\substack{x \in \mathtt{W} \\ y \in \mathtt{B}}} \mathtt{w}^x \mathtt{b}^y K^{-1}(x+e_1-e_2,y)\delta_{x_2=2n} \delta_{x_1<2n-1}
 &= w_1^{-2} \sum_{ \substack{x_1 \not = 1, x_2=2n\\ x \in\mathtt{W},y\in \mathtt{B}}} \mathtt{w}^x \mathtt{b}^y K^{-1}(x,y)
\\&= w_1^{-2} \left( \left. G \right|_{x=(x_1,2n)}-\left. G \right|_{x=(1,2n)} \right)
\end{split}
\end{equation}

\subsection{Computation of $d_w(\mathtt{w},\mathtt{b})$} \label{AppendixA3}

In this subsection, we evaluate $d_w(\mathtt{w},\mathtt{b})$ which is defined in~\eqref{fortress tilings:eqn:dw}.

We first evaluate the sum in the first term of $d_w(\mathtt{w},\mathtt{b})$.  We first split up the sum, then we sum over $y \in\mathtt{B}$ and finally we sum over $x \in \mathtt{W}$ in the following way:
\begin{equation}
\begin{split}
\sum_{i \in \{0,1\}} \sum_{\substack{x_1=1 \\x_2 \not =2n, x\in\mathtt{W}_i \\ y \in\mathtt{B}}}\frac{1}{a^{1-i}} \delta_{x+e_2=y}\mathtt{w}^x\mathtt{b}^y &=
 \sum_{\substack{x_1=1 \\x_2 \not =2n, x\in\mathtt{W}_0 \\ y \in\mathtt{B}}}\frac{1}{a} \delta_{x+e_2=y}\mathtt{w}^x\mathtt{b}^y 
+ \sum_{\substack{x_1=1 \\x_2 \not =2n, x\in\mathtt{W}_1 \\ y \in\mathtt{B}}} \delta_{x+e_2=y}\mathtt{w}^x\mathtt{b}^y \\
&=\sum_{\substack{x=(1,x_2) \in \mathtt{W}_0 \\ x_2\not=2n}} \frac{1}{a} w_1 w_2^{x_2}  b_2^{x_2+1}+  \sum_{\substack{x=(1,x_2) \in \mathtt{W}_1 \\ x_2\not=2n}} w_1 w_2^{x_2} b_2^{x_2+1}\\
&=\frac{1}{a} w_1 b_2  \sum_{j=0}^{n/2-1} b_2^{4j} w_2^{4j} + w_1 w_2^2 b_2^3 \sum_{j=0}^{n/2-1} b_2^{4j} w_2^{4j} \\
&= f_{n/2} (w_2^4 b_2^4) w_1 b_2(a^{-1} +w_2^2 b_2^2) 
\end{split}
\end{equation}
This means that the first term in $d_w(\mathtt{w},\mathtt{b})$ reads
\begin{equation}\label{AppendixA3eqn1}
 \frac{a(1+w_2^4)}{w_1^2 w_2^2 (w_2^{-2}+\mathtt{i})} f_{n/2} (w_2^4 b_2^4) w_1 b_2(a^{-1} +w_2^2 b_2^2) 
\end{equation}

In the same way, as computed above, we evaluate the sum in the second term of $d_w(\mathtt{w},\mathtt{b})$ which gives
\begin{equation}
\begin{split}
\sum_{i \in \{0,1\}} \sum_{\substack{x_1=2n-1 \\x_2 \not =2n, x\in\mathtt{W}_i \\ y \in\mathtt{B}}}\frac{1}{a^{i}} \delta_{x+e_1=y}\mathtt{w}^x\mathtt{b}^y &=
 \sum_{\substack{x_1=2n-1 \\x_2 \not =2n, x\in\mathtt{W}_0 \\ y \in\mathtt{B}}}\delta_{x+e_1=y}\mathtt{w}^x\mathtt{b}^y 
+ \sum_{\substack{x_1=2n-1 \\x_2 \not =2n, x\in\mathtt{W}_1 \\ y \in\mathtt{B}}} \frac{1}{a} \delta_{x+e_1=y}\mathtt{w}^x\mathtt{b}^y \\
&=\sum_{\substack{x=(2n-1,x_2) \in \mathtt{W}_0 \\ x_2\not=2n}}  w_1^{2n-1} w_2^{x_2} b_1^{2n} b_2^{x_2+1}+  \sum_{\substack{x=(2n-1,x_2) \in \mathtt{W}_1 \\ x_2\not=2n}} \frac{1}{a}w_1^{2n-1} w_2^{x_2} b_1^{2n} b_2^{x_2+1}\\
&=w_1^{2n-1} b_1^{2n} b_2  \sum_{j=0}^{n/2-1} b_2^{4j} w_2^{4j} + \frac{1}{a} w_1^{2n-1}b_1^{2n} w_2^2 b_2^3 \sum_{j=0}^{n/2-1} b_2^{4j} w_2^{4j} \\
&= f_{n/2}(w_2^4 b_2^4) w_1^{2n-1} b_1^{2n}   b_2(1 + a^{-1}w_2^2 b_2^2) 
\end{split}
\end{equation}
Using the above evaluation, the second term in $d_w(\mathtt{w},\mathtt{b})$ reads
\begin{equation}
\frac{a (1+w_2^4) w_1^{2}}{w_2^2(w_2^{-2}\mathtt{i} +1)} f_{n/2}(w_2^4 b_2^4) w_1^{2n-1} b_1^{2n}   b_2(1 + a^{-1}w_2^2 b_2^2) \label{AppendixA3eqn2}
\end{equation}
We have that adding and simplifying~\eqref{AppendixA3eqn1} and~\eqref{AppendixA3eqn2} gives the second term in $d(\mathtt{w},\mathtt{b})$.

We now have to evaluate the last term in $d_w(\mathtt{w},\mathtt{b})$ and it remains to show that this term is equal to the first term of $d(\mathtt{w},\mathtt{b})$.  We first expand out the definition of $K^* (\delta_{\tilde{x}=y}(x))$ which gives
\begin{equation}
\begin{split}
      K^* (\delta_{x+\cdot=y}(x))&=\sum_{\tilde{x} \sim x, \tilde{x} \in \mathtt{B}} K^*(x, \tilde{x}) \delta_{\tilde{x}=y} \\
&=\delta_{x_2>0} \left(a^{i} \delta_{x-e_1=y} -\mathtt{i} a^i \delta_{x-e_2=y}  \right)+\delta_{x_2<2n} \left( a^{1-i} \delta_{x+e_1=y}-\mathtt{i} a^{1-i} \delta_{x+e_2=y}  \right).
\end{split}
\end{equation}
where the delta functions $\delta_{x_2>0}$ and $\delta_{x_2<2n}$ account for the boundary of the Aztec diamond.
We multiply both sides by  $\mathtt{w}^x \mathtt{b}^y$ and we obtain
\begin{equation}
 \begin{split}
      &\sum_{\substack{x\in\mathtt{W}\\ y \in \mathtt{B}}} \mathtt{w}^x \mathtt{b}^y K^* (\delta_{x+\cdot=y}(x)) =\\
      &\sum_{i\in\{0,1\}} \sum_{x \in \mathtt{W}_i} (w_1b_1)^{x_1} (w_2 b_2)^{x_2} \left( -a^{i}\mathtt{i} \frac{b_1}{b_2} \delta_{x_2>0} + \frac{a^{i}}{b_1b_2} \delta_{x_2>0} + a^{1-i} b_1 b_2 \delta_{x_2<2n} - a^{1-i}\mathtt{i} \frac{b_2}{b_1}\delta_{x_2<2n} \right).
 \end{split}\label{fortress tilings:finalwhiterecurrence}
\end{equation}
In~\eqref{fortress tilings:finalwhiterecurrence}, we have the following four sums which can be evaluated directly
\begin{equation}
\begin{split}
\sum_{x \in \mathtt{W}_0, x_2>0} (w_1b_1)^{x_1} (w_2 b_2)^{x_2} &= w_2^4 b_2^4 w_1b_1 f_{r}(w_2^4 b_2^4) f_{r}(w_1^4 b_1^4)+w_1^3 b_1^3 w_2^2 b_2^2 f_r(w_1^4 b_1^4)f_r(w_2^4 b_2^4)\\
&=w_1b_1 w_2^2 b_2^2 (w_2^2 b_2^2+w_1^2 b_1^2) f_r(w_1^4 b_1^4)f_r(w_2^4b_2^4)
\end{split}
\end{equation} 
\begin{equation}
\begin{split}
\sum_{x \in \mathtt{W}_0, x_2<2n} (w_1b_1)^{x_1} (w_2 b_2)^{x_2} &=  w_1b_1 f_{r}(w_2^4 b_2^4) f_{r}(w_1^4 b_1^4)+w_1^3 b_1^3 w_2^2 b_2^2 f_r(w_1^4 b_1^4)f_r(w_2^4 b_2^4)\\
&=w_1 b_1(1+w_1^2 b_1^2 w_2^2 b_2^2)f_r(w_1^4 b_1^4)f_r(w_2^4 b_2^4)
\end{split}
\end{equation} 
\begin{equation}
\begin{split}
\sum_{x \in \mathtt{W}_1, x_2>0} (w_1b_1)^{x_1} (w_2 b_2)^{x_2} &= w_2^2 b_2^2 w_1b_1 f_{r}(w_2^4 b_2^4) f_{r}(w_1^4 b_1^4)+w_1^3 b_1^3 w_2^4 b_2^4 f_r(w_1^4 b_1^4)f_r(w_2^4 b_2^4)\\
&=w_1 b_1 w_2^2 b_2^2 (1+w_1^2 b_1^2 w_2^2 b_2^2)f_r(w_1^4 b_1^4 ) f_r(w_2^4 b_2^4)
\end{split}
\end{equation} 
\begin{equation}
\begin{split}
\sum_{x \in \mathtt{W}_1, x_2<2n} (w_1b_1)^{x_1} (w_2 b_2)^{x_2}&=  w_1^3b_1^3 f_{r}(w_2^4 b_2^4) f_{r}(w_1^4 b_1^4)+w_1 b_1 w_2^2 b_2^2 f_r(w_1^4 b_1^4)f_r(w_2^4 b_2^4)\\
&=w_1 b_1(w_1^2 b_1^2 +w_2^2 b_2^2) f_r(w_1^4 b_1^4) f_r(w_2^4b_2^4)
\end{split}
\end{equation} 
where $r=n/2$.  We substitute the above four sums back into~\eqref{fortress tilings:finalwhiterecurrence} and after some simplification, we obtain the first term of $d(\mathtt{w},\mathtt{b})$.

\subsection{Computation of~(\ref{fortress tilings:dtilde})} \label{AppendixA4}

In this subsection, we show~\eqref{fortress tilings:dtilde} which is another computation.  We first expand out the third term of $\tilde{d}_b(x,\mathtt{b})$ by expanding out the definition of $K^*$.  This gives
\begin{equation} \label{AppendixA4Kstar}
K^{*} (\delta_{y+\cdot=x}(y)) = \delta_{y_1>0}\left( - \mathtt{i} a^{1-j} \delta_{x=y-e_2} +a^{1-j} \delta_{x=y+e_1}  \right) +\delta_{y_1<2n} \left( a^j \delta_{x=y-e_1} - \mathtt{i} a^j \delta_{x=y+e_2} \right)
\end{equation}
for $y \in \mathtt{B}_j$ and $j \in \{0,1\}$ and the delta functions $\delta_{y_1>0}$ and $\delta_{y_1<2n}$ account for the boundary of the Aztec diamond.

We now split up the left-hand side in~\eqref{fortress tilings:dtilde} into four different cases $(x_1,0) \in \mathtt{W}_0$, $(x_1,0) \in \mathtt{W}_1$, $(x_1,2n) \in \mathtt{W}_0$ and $(x_1,2n) \in \mathtt{W}_1$.  

The first case we consider is
\begin{equation} 
\sum_{x=(x_1,0) \in \mathtt{W}_0} s_{0,0} (w_1,w_2) \tilde{d}_b(x,\mathtt{b}) w_1^{x_1}=s_{0,0}(w_1,w_2)\sum_{x=(x_1,0) \in \mathtt{W}_0} \tilde{d}_b(x,\mathtt{b}) w_1^{x_1}
\label{AppendixA4:dbtildebbW0}
\end{equation}
and we shall show that this is equal to the first term of $d_{\mathrm{sides}}(\mathtt{w},\mathtt{b})$ defined in~\eqref{dsides}.
We now expand out the three sums of $\tilde{d}_b(x,\mathtt{w})$ which are given in~\eqref{fortress tilings:dtildeb}.  When we insert the first term of $\tilde{d}_b(x,\mathtt{w})$ into~\eqref{AppendixA4:dbtildebbW0} because $(x_1,0)+e_2 \in \mathtt{B}_0$ for $(x_1,0) \in \mathtt{W}_0$, we have the following
\begin{equation}
\begin{split}
\frac{a(1+b_1^4)}{b_1^2b_2^2(b_1^{-2}+\mathtt{i})} \sum_{j \in \{0,1\}}\sum_{\substack{y_2=1,y_1\not = 2n\\ y=(y_1,y_2)\in\mathtt{B}\\ (x_1,0)\in \mathtt{W}_0}} \frac{1}{a^{1-j}} \delta_{y-e_2=x} \mathtt{b}^yw_1^{x_1}
&=\frac{a(1+b_1^4)}{b_1^2b_2^2(b_1^{-2}+\mathtt{i})} \sum_{\substack{y_2=1,y_1\not = 2n\\ (x_1,0)\in \mathtt{W}_0}} \frac{1}{a} \delta_{y-e_2=x} b_1^{y_1}b_2w_1^{x_1}\\
&= b_2^{-2}(1-\mathtt{i} b_1^2) \sum_{(x_1,0)\in \mathtt{W}_0} w_1^{x_1} b_1^{x_1-1} b_2\\
&=f_{n/2}  (w_1^4 b_1^4) w_1 b_2^{-1} (1-\mathtt{i} b_1^2). \label{AppendixA4:bbW0sum1}
\end{split}
\end{equation} 
When we insert the second term of $\tilde{d}_b(x,\mathtt{w})$ into~\eqref{AppendixA4:dbtildebbW0} we obtain zero because $\delta_{y+e_1=x}=0$ for $y=(y_1,2n-1)$ and $x=(x_1,0) \in \mathtt{W}_0$.  When we insert the third term of $\tilde{d}_b(x,\mathtt{w})$ into~\eqref{AppendixA4:dbtildebbW0} because $(x_1,0)+e_i \in \mathtt{B}_{2-i}$ for $(x_1,0)\in \mathtt{W}_i$ and $i \in \{1,2\}$, we find that
\begin{equation}
\begin{split}
\sum_{\substack{(x_1,0) \in \mathtt{W}_0 \\ y \in \mathtt{B}}} K^*(\delta_{y+\cdot=x}(y)) \mathtt{b}^y w_1^{x_1} &= -\mathtt{i} \sum_{(x_1,0) \in \mathtt{W}_0}a w_1^{x_1} b_1^{x_1-1} b_2 +  \sum_{(x_1,0) \in \mathtt{W}_0} a w_1^{x_1} b_1^{x_1+1} b_2\\
&=f_{n/2} (w_1^4 b_1^4) ( -\mathtt{i} a w_1 b_2 +a w_1 b_1^2 b_2) \label{AppendixA4:bbW0sum2}
\end{split}
\end{equation}
where we use the expansion of $K^*(\delta_{x=\tilde{y}}(y))$ given in~\eqref{AppendixA4Kstar} and use the fact that $\delta_{x=y+e_1}=0$ and $\delta_{x=y+e_2}=0$ for $x=(x_1,0) \in \mathtt{W}_0$.  We sum up all the contributions of $\tilde{d}_b(x,\mathtt{w})$ when inserted into~\eqref{AppendixA4:dbtildebbW0}, i.e. summing~\eqref{AppendixA4:bbW0sum1} and~\eqref{AppendixA4:bbW0sum2}, which gives the first term in $d_{\mathrm{sides}}(\mathtt{w},\mathtt{b})$ defined in~\eqref{dsides}.

The second case we consider is
\begin{equation} 
\sum_{x=(x_1,0) \in \mathtt{W}_1} s_{1,0} (w_1,w_2) \tilde{d}_b(x,\mathtt{b}) w_1^{x_1}=s_{1,0}(w_1,w_2)\sum_{x=(x_1,0) \in \mathtt{W}_1} \tilde{d}_b(x,\mathtt{b}) w_1^{x_1}
\label{AppendixA4:dbtildebbW1}
\end{equation}
and we shall show that this is equal to the second term of $d_{\mathrm{sides}}(\mathtt{w},\mathtt{b})$ defined in~\eqref{dsides}.
We now expand out the three sums of $\tilde{d}_b(x,\mathtt{w})$ which are given in~\eqref{fortress tilings:dtildeb}.  When we insert the first term of $\tilde{d}_b(x,\mathtt{w})$ into~\eqref{AppendixA4:dbtildebbW1}, because $(x_1,0)+e_2 \in \mathtt{B}_1$ for $(x_1,0) \in \mathtt{W}_1$, we have the following
\begin{equation}
\begin{split}
\frac{a(1+b_1^4)}{b_1^2b_2^2(b_1^{-2}+\mathtt{i})} \sum_{j \in \{0,1\}}\sum_{\substack{y_2=1,y_1\not = 2n\\ y=(y_1,y_2)\in\mathtt{B}\\ (x_1,0)\in \mathtt{W}_1}} \frac{1}{a^{1-j}} \delta_{y-e_2=x} \mathtt{b}^yw_1^{x_1}
&=\frac{a(1+b_1^4)}{b_1^2b_2^2(b_1^{-2}+\mathtt{i})} \sum_{\substack{y_2=1,y_1\not = 2n\\ (x_1,0)\in \mathtt{W}_1}}  \delta_{y-e_2=x} b_1^{y_1}b_2w_1^{x_1}\\
&=a b_2^{-2}(1-\mathtt{i} b_1^2) \sum_{(x_1,0)\in \mathtt{W}_1} w_1^{x_1} b_1^{x_1-1} b_2\\
&=f_{n/2}  (w_1^4 b_1^4) w_1^3 b_1^2 b_2^{-1} (1-\mathtt{i} b_1^2). \label{AppendixA4:bbW1sum1}
\end{split}
\end{equation} 
When we insert the second term of $\tilde{d}_b(x,\mathtt{w})$ into~\eqref{AppendixA4:dbtildebbW1} we obtain zero because $\delta_{y+e_1=x}=0$ for $y=(y_1,2n-1)$ and $x=(x_1,0) \in \mathtt{W}_0$.  When we insert the third term of $\tilde{d}_b(x,\mathtt{w})$ into~\eqref{AppendixA4:dbtildebbW1}, because $(x_1,0)+e_i \in \mathtt{B}_{i-1}$ for $(x_1,0) \in \mathtt{W}_1$ and $i \in \{1,2\}$, we find that
\begin{equation}
\begin{split}
\sum_{\substack{(x_1,0) \in \mathtt{W}_1 \\ y \in \mathtt{B}}} K^*(\delta_{y+\cdot=x}(y)) \mathtt{b}^y w_1^{x_1} &= -\mathtt{i} \sum_{(x_1,0) \in \mathtt{W}_1} w_1^{x_1} b_1^{x_1-1} b_2 +  \sum_{(x_1,0) \in \mathtt{W}_1}  w_1^{x_1} b_1^{x_1+1} b_2\\
&=f_{n/2} (w_1^4 b_1^4) ( -\mathtt{i}  w_1^3 b_1^2 b_2 + w_1^3 b_1^4 b_2) \label{AppendixA4:bbW1sum2}
\end{split}
\end{equation}
where we use the expansion of $K^*(\delta_{x=\tilde{y}}(y))$ given in~\eqref{AppendixA4Kstar} and use the fact that $\delta_{x=y+e_1}=0$ and $\delta_{x=y+e_2}=0$ for $x=(x_1,0) \in \mathtt{W}_1$.  We sum up all the contributions of $\tilde{d}_b(x,\mathtt{w})$ when inserted into~\eqref{AppendixA4:dbtildebbW1}, i.e. summing~\eqref{AppendixA4:bbW1sum1} and~\eqref{AppendixA4:bbW1sum2}, which gives the second term in $d_{\mathrm{sides}}(\mathtt{w},\mathtt{b})$ defined in~\eqref{dsides}.

The third case we consider is
\begin{equation} 
\sum_{x=(x_1,2n) \in \mathtt{W}_0} s_{0,2n} (w_1,w_2) \tilde{d}_b(x,\mathtt{b}) w_1^{x_1} =s_{0,2n}(w_1,w_2)\sum_{x=(x_1,2n) \in \mathtt{W}_0} \tilde{d}_b(x,\mathtt{b}) w_1^{x_1}
\label{AppendixA4:dbtildettW0}
\end{equation}
and we shall show that this is equal to the third term of $d_{\mathrm{sides}}(\mathtt{w},\mathtt{b})$ defined in~\eqref{dsides}
We now expand out the three sums of $\tilde{d}_b(x,\mathtt{w})$ which are given in~\eqref{fortress tilings:dtildeb}.  When we insert the first term of $\tilde{d}_b(x,\mathtt{w})$ into~\eqref{AppendixA4:dbtildettW0}, we find that it is equal to zero because $\delta_{y-e_2=x}=0$ for $x=(x_1,2n) \in \mathtt{W}_0$ for all $y=(1,y_2) \in \mathtt{B}$. 
When we insert the second term of $\tilde{d}_b(x,\mathtt{w})$ into~\eqref{AppendixA4:dbtildebbW0}, because $(x_1,2n)-e_1 \in \mathtt{B}_1$ for $(x_1,2n) \in \mathtt{W}_0$, we obtain 
\begin{equation}
\begin{split}
\frac{a(1+b_1^4)b_2^2}{b_1^2(b_1^{-2}\mathtt{i}+1)} \sum_{j \in \{0,1\}}\sum_{\substack{y_2=2n-1,y_1\not = 2n\\ y=(y_1,y_2)\in\mathtt{B}\\ (x_1,2n)\in \mathtt{W}_0}} \frac{1}{a^{j}} \delta_{y+e_1=x} \mathtt{b}^yw_1^{x_1}
&=\frac{a(1+b_1^4)b_2^2}{b_1^2(b_1^{-2}\mathtt{i}+1)} \sum_{\substack{y_2=2n-1,y_1\not = 2n\\ (x_1,2n)\in \mathtt{W}_0}} \delta_{y+e_1=x} b_1^{y_1}b_2^{y_2}w_1^{x_1}\\
&= a b_2^{2}(b_1^2-\mathtt{i} ) \sum_{(x_1,2n)\in \mathtt{W}_0} w_1^{x_1} b_1^{x_1-1} b_2^{2n-1}\\
&=af_{n/2}  (w_1^4 b_1^4) w_1  b_2^{2n+1} (b_1^2-\mathtt{i}). \label{AppendixA4:ttW0sum1}
\end{split}
\end{equation} 
When we insert the third term of $\tilde{d}_b(x,\mathtt{w})$ into~\eqref{AppendixA4:dbtildebbW0}, because $(x_1,2n) -e_i \in \mathtt{B}_{2-i}$ for $(x_1,2n) \in \mathtt{W}_0$ and $i \in \{1,2\}$, we find that
\begin{equation}
\begin{split}
\sum_{\substack{(x_1,2n) \in \mathtt{W}_0 \\ y \in \mathtt{B}}} K^*(\delta_{y+\cdot=x}(y)) \mathtt{b}^y w_1^{x_1} &= -\mathtt{i} \sum_{(x_1,2n) \in \mathtt{W}_0} w_1^{x_1} b_1^{x_1+1} b_2^{2n-1} +  \sum_{(x_1,2n) \in \mathtt{W}_0}  w_1^{x_1} b_1^{x_1-1} b_2^{2n-1}\\
&=f_{n/2} (w_1^4 b_1^4) ( -\mathtt{i}  w_1 b_2^{2n-1}b_1^2 + w_1  b_2^{2n-1}) \label{AppendixA4:ttW0sum2}
\end{split}
\end{equation}
where we use the expansion of $K^*(\delta_{x=\tilde{y}}(y))$ given in~\eqref{AppendixA4Kstar} and use the fact that $\delta_{x=y-e_1}=0$ and $\delta_{x=y-e_2}=0$ for $x=(x_1,2n) \in \mathtt{W}_0$.  We sum up all the contributions of $\tilde{d}_b(x,\mathtt{w})$ when inserted into~\eqref{AppendixA4:dbtildettW0}, i.e. summing~\eqref{AppendixA4:ttW0sum1} and~\eqref{AppendixA4:ttW0sum2}, which gives the third term in $d_{\mathrm{sides}}(\mathtt{w},\mathtt{b})$ defined in~\eqref{dsides}.

The final case we need to consider is
\begin{equation} 
\sum_{x=(x_1,2n) \in \mathtt{W}_1} s_{1,2n} (w_1,w_2) \tilde{d}_b(x,\mathtt{b}) w_1^{x_1} =s_{1,2n}(w_1,w_2)\sum_{x=(x_1,2n) \in \mathtt{W}_1} \tilde{d}_b(x,\mathtt{b}) w_1^{x_1}
\label{AppendixA4:dbtildettW1}
\end{equation}
and we shall show that this is equal to the final term of $d_{\mathrm{sides}}(\mathtt{w},\mathtt{b})$ defined in~\eqref{dsides}.
We now expand out the three sums of $\tilde{d}_b(x,\mathtt{w})$ which are given in~\eqref{fortress tilings:dtildeb}.  When we insert the first term of $\tilde{d}_b(x,\mathtt{w})$ into~\eqref{AppendixA4:dbtildettW1}, we find that it is equal to zero because $\delta_{y-e_2=x}=0$ for $x=(x_1,2n) \in \mathtt{W}_1$ for all $y \in \mathtt{B}$. 
When we insert the second term of $\tilde{d}_b(x,\mathtt{w})$ into~\eqref{AppendixA4:dbtildebbW1}, because $(x_1,2n) -e_1 \in \mathtt{B}_0$ for $(x_1,2n)\in \mathtt{W}_1$, we obtain  
\begin{equation}
\begin{split}
\frac{a(1+b_1^4)b_2^2}{b_1^2(b_1^{-2}\mathtt{i}+1)} \sum_{j \in \{0,1\}}\sum_{\substack{y_2=2n-1,y_1\not = 2n\\ y=(y_1,y_2)\in\mathtt{B}\\ (x_1,2n)\in \mathtt{W}_1}} \frac{1}{a^{j}} \delta_{y+e_1=x} \mathtt{b}^yw_1^{x_1}
&=\frac{a(1+b_1^4)b_2^2}{b_1^2(b_1^{-2}\mathtt{i}+1)}\frac{1}{a} \sum_{\substack{y_2=2n-1,y_1\not = 2n\\ (x_1,2n)\in \mathtt{W}_1}} \delta_{y+e_1=x} b_1^{y_1}b_2^{y_2}w_1^{x_1}\\
&= b_2^{2}(b_1^2-\mathtt{i} ) \sum_{(x_1,2n)\in \mathtt{W}_1} w_1^{x_1} b_1^{x_1-1} b_2^{2n-1}\\
&=f_{n/2}  (w_1^4 b_1^4) w_1^3 b_1^2  b_2^{2n+1} (b_1^2-\mathtt{i}). \label{AppendixA4:ttW1sum1}
\end{split}
\end{equation} 
When we insert the third term of $\tilde{d}_b(x,\mathtt{w})$ into~\eqref{AppendixA4:dbtildettW1}, because $(x_1,2n) -e_i \in \mathtt{B}_{i-1}$ for $(x_1,2n) \in \mathtt{W}_1$ and $i \in \{1,2\}$, we find that
\begin{equation}
\begin{split}
\label{AppendixA4:ttW1sum2}
\sum_{\substack{(x_1,2n) \in \mathtt{W}_1 \\ y \in \mathtt{B}}} K^*(\delta_{y+\cdot=x}(y)) \mathtt{b}^y w_1^{x_1} &= -a\mathtt{i} \sum_{(x_1,2n) \in \mathtt{W}_1} w_1^{x_1} b_1^{x_1+1} b_2^{2n-1} +a  \sum_{(x_1,2n) \in \mathtt{W}_1}  w_1^{x_1} b_1^{x_1-1} b_2^{2n-1}\\
&=f_{n/2} (w_1^4 b_1^4) ( -a\mathtt{i}  w_1^3b_1^4 b_2^{2n-1} +a w_3 b_1^2 b_2^{2n-1}) 
\end{split}
\end{equation}
where we use the expansion of $K^*(\delta_{x=\tilde{y}}(y))$ given in~\eqref{AppendixA4Kstar} and use the fact that $\delta_{x=y-e_1}=0$ and $\delta_{x=y-e_2}=0$ for $x=(x_1,2n) \in \mathtt{W}_1$.  We sum up all the contributions of $\tilde{d}_b(x,\mathtt{w})$ when inserted into~\eqref{AppendixA4:dbtildettW1}, i.e. summing~\eqref{AppendixA4:ttW1sum1} and~\eqref{AppendixA4:ttW1sum2}, which gives the final term in $d_{\mathrm{sides}}(\mathtt{w},\mathtt{b})$ defined in~\eqref{dsides}.

\end{appendix}

\bibliographystyle{plain}
\bibliography{ref}

\end{document}